\newtheorem{theorem}{Theorem}
\newtheorem{corollary}[theorem]{Corollary}
\newtheorem{lemma}[theorem]{Lemma}
\newtheorem{proposition}[theorem]{Proposition}
\theoremstyle{definition}
\newtheorem{definition}[theorem]{Definition}
\theoremstyle{remark}
\newtheorem{remark}[theorem]{\bf Remark}
\newtheorem{example}[theorem]{\bf Example}
\numberwithin{theorem}{section}
\numberwithin{figure}{section}
\numberwithin{equation}{section}
\begin{document}

\title{Multiradial SLE with spiral: \\ resampling property and boundary perturbation}
\bigskip{}
\author[1]{Chongzhi Huang\thanks{huangchzh2001prob@gmail.com}}
\author[2]{Eveliina Peltola\thanks{eveliina.peltola@hcm.uni-bonn.de}}
\author{Hao Wu\thanks{hao.wu.proba@gmail.com.}}
\affil[1]{Tsinghua University, China}
\affil[2]{Aalto University, Finland, and University of Bonn, Germany}
%\author{Chongzhi Huang, Eveliina Peltola, Hao Wu}

\date{}
% Use \authorrunning{Short Title} for an abbreviated version of
% your contribution title if the original one is too long%

%
% Use the package "url.sty" to avoid
% problems with special characters
% used in your e-mail or web address
%

%eve: added these
%%%%%%%%%%%%%%%%%%%%%%%%%%%%%%%%%%%%%%%%%%%%%%%%%%%%

\global\long\def\qnum#1{\left[#1\right]_q }
\global\long\def\qfact#1{\left[#1\right]_q! }
\global\long\def\qbin#1#2{\left[\begin{array}{c}
	#1\\
	#2 
	\end{array}\right]_q}

\global\long\def\defpatt{\shuffle}
\global\long\def\rainbow{\includegraphics[scale=0.15]{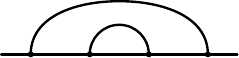}}
\global\long\def\rainbowBig{\includegraphics[scale=0.3]{figures/link-2}}

\newcommand{\nradpartfn}[2]{\mathcal{Z}^{#2}_{#1\mathrm{\textnormal{-}rad}}}
\global\long\def\covmap{h}%{f}

\global\long\def\mslitdriv{\omega}
\global\long\def\nnofloops{\mathscr{L}}

\global\long\def\Selberg{S}
\global\long\def\Diff{\Theta}%{\Xi}%{\Delta}
\global\long\def\SinDiff{\Xi}

\global\long\def\contour{\mathscr{C}}

%%%%general%%%%%%%%

%%%%%mathbb%%%%%%
\global\long\def\U{\mathbb{U}}
\global\long\def\T{\mathbb{T}}
\global\long\def\HH{\mathbb{H}}
\global\long\def\R{\mathbb{R}}
\global\long\def\C{\mathbb{C}}
\global\long\def\N{\mathbb{N}}
\global\long\def\Z{\mathbb{Z}}
\global\long\def\E{\mathbb{E}}
\global\long\def\PP{\mathbb{P}}
%\global\long\def\PPspiral{\mathbb{P}^{(\mu)}}%{\mathbb{P}_{\mu}}
%\global\long\def\PPnospiral{\mathbb{P}^{(0)}}%{\mathbb{P}_{0}}
%\global\long\def\ratespiral{\mathcal{J}^{(\mu)}}%{\mathcal{J}_{\mu}}
\global\long\def\QQ{\mathbb{Q}}
\global\long\def\A{\mathbb{A}}
\global\long\def\one{\mathbb{1}}

\newcommand{\PPspiral}[1]{\mathbb{P}^{\mu}_{#1\mathrm{\textnormal{-}rad}}}
\newcommand{\PPnospiral}[1]{\mathbb{P}^{0}_{#1\mathrm{\textnormal{-}rad}}}
\newcommand{\PPnospiralrho}{\mathbb{P}^{0; \bs{\rho}}}
\newcommand{\PPspiralrho}{\mathbb{P}^{\mu; \bs{\rho}}}

%%%%mathrm%%%%%%
\global\long\def\CR{\mathrm{CR}}
\global\long\def\ST{\mathrm{ST}}
\global\long\def\SF{\mathrm{SF}}
\global\long\def\cov{\mathrm{cov}}
\global\long\def\dist{\mathrm{dist}}
\global\long\def\SLE{\mathrm{SLE}}
\global\long\def\hSLE{\mathrm{hSLE}}
\global\long\def\CLE{\mathrm{CLE}}
\global\long\def\GFF{\mathrm{GFF}}
\global\long\def\inte{\mathrm{int}}
\global\long\def\ext{\mathrm{ext}}
\global\long\def\inrad{\mathrm{inrad}}
\global\long\def\outrad{\mathrm{outrad}}
\global\long\def\dimH{\mathrm{dim}}
\global\long\def\capa{\mathrm{cap}}
\global\long\def\diam{\mathrm{diam}}
\global\long\def\sign{\mathrm{sgn}}
\global\long\def\cat{\mathrm{Cat}}
\global\long\def\cst{\mathrm{C}}
\global\long\def\ck{\mathrm{C}_{\kappa}}
\global\long\def\free{\mathrm{free}}
\global\long\def\hF{{}_2\mathrm{F}_1}
\global\long\def\simple{\mathrm{simple}}
\global\long\def\even{\mathrm{even}}
\global\long\def\odd{\mathrm{odd}}
\global\long\def\st{\mathrm{ST}}
%\global\long\def\ust{\mathrm{UST}}
\global\long\def\usf{\mathrm{USF}}
\global\long\def\Leb{\mathrm{Leb}}
\global\long\def\LP{\mathrm{LP}}
\global\long\def\I{\mathrm{I}}
\global\long\def\II{\mathrm{II}}
\global\long\def\hcap{\mathrm{hcap}}

%%%%%mathcal%%%%%%%%%%%
\global\long\def\LA{\mathcal{A}}
\global\long\def\LB{\mathcal{B}}
\global\long\def\LC{\mathcal{C}}
\global\long\def\LD{\mathcal{D}}
\global\long\def\LF{\mathcal{F}}
\global\long\def\LK{\mathcal{K}}
\global\long\def\LE{\mathcal{E}}
\global\long\def\LG{\mathcal{G}}
\global\long\def\LGmu{\mathcal{G}_{\mu}}
\global\long\def\LI{\mathcal{I}}
\global\long\def\LJ{\mathcal{J}}
\global\long\def\LL{\mathcal{L}}
\global\long\def\LM{\mathcal{M}}
\global\long\def\LN{\mathcal{N}}
\global\long\def\OO{\mathcal{O}}
\global\long\def\LQ{\mathcal{Q}}
\global\long\def\LR{\mathcal{R}}
\global\long\def\LT{\mathcal{T}}
\global\long\def\LS{\mathcal{S}}
\global\long\def\LU{\mathcal{U}}
\global\long\def\LV{\mathcal{V}}
\global\long\def\LW{\mathcal{W}}
\global\long\def\LX{\mathcal{X}}
\global\long\def\LY{\mathcal{Y}}
\global\long\def\PartF{\mathcal{Z}}
\global\long\def\LH{\mathcal{H}}
\global\long\def\LJ{\mathcal{J}}

%%%%%mathfrak%%%%%%%%%%%%
%\global\long\def\blm{\mathfrak{m}}
\global\long\def\blm{m}

%%%partitionfunctions%%%%%%%%%%%%%%%%%%
\global\long\def\LZ{\mathcal{Z}}
\global\long\def\LZrp{\mathcal{Z}_{\alpha; \bs{s}}^{(p)}}
\global\long\def\LJrp{\mathcal{J}_{\alpha; \bs{s}}^{(p)}}
\global\long\def\chamberrp{\chamber_{\alpha; \bs{s}}^{(p)}}
\global\long\def\LErp{\mathcal{E}_{\alpha; \bs{s}}^{(p)}}
\global\long\def\Greenrp{G_{\alpha; \bs{s}}^{(p)}}
\global\long\def\Prp{P_{\alpha; \bs{s}}^{(p)}}
\global\long\def\norcst{\mathrm{C}_{\kappa}^{(\mathfrak{r})}}
\global\long\def\LZalphar{\mathcal{Z}_{\alpha}^{(\mathfrak{r})}}

%%%%partitionfunctionstwo%%%%%%%%%%%
\newcommand{\LZtwo}{\mathcal{Z}_{\includegraphics[scale=0.15]{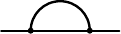}}}
\newcommand{\Etwo}{\mathbb{E}_{\includegraphics[scale=0.15]{figures/link-0}}}
\newcommand{\QQtwo}{\QQ_{\includegraphics[scale=0.15]{figures/link-0}}}
\newcommand{\LEtwo}{\mathcal{E}_{\includegraphics[scale=0.15]{figures/link-0}}}
\newcommand{\LItwo}{\mathcal{I}_{\includegraphics[scale=0.15]{figures/link-0}}}
\newcommand{\LUtwo}{\mathcal{U}_{\includegraphics[scale=0.15]{figures/link-0}}}
\newcommand{\LZtwor}{\LZtwo^{(\mathfrak{r})}}
\newcommand{\LHtwo}{\mathcal{H}_{\includegraphics[scale=0.15]{figures/link-0}}}
\newcommand{\LFtwo}{\mathcal{F}_{\includegraphics[scale=0.15]{figures/link-0}}}
\newcommand{\chambertwo}{\chamber_{\includegraphics[scale=0.15]{figures/link-0}}}

%%%partitionfunctionsfour%%%%%%%%%%%%
\newcommand{\LZfoura}{\mathcal{Z}_{\includegraphics[scale=0.15]{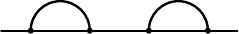}}}
\newcommand{\LZfourb}{\mathcal{Z}_{\includegraphics[scale=0.15]{figures/link-2}}}
\newcommand{\LHfoura}{\mathcal{H}_{\includegraphics[scale=0.15]{figures/link-1}}}
\newcommand{\LHfourb}{\mathcal{H}_{\includegraphics[scale=0.15]{figures/link-2}}}
\newcommand{\LFfoura}{\mathcal{F}_{\includegraphics[scale=0.15]{figures/link-1}}}
\newcommand{\LFfourb}{\mathcal{F}_{\includegraphics[scale=0.15]{figures/link-2}}}
\newcommand{\LFfouraRenorm}{\widehat{\mathcal{F}}_{\includegraphics[scale=0.15]{figures/link-1}}}
\newcommand{\LFfourbRenorm}{\widehat{\mathcal{F}}_{\includegraphics[scale=0.15]{figures/link-2}}}

%%%%%%%%%%%%%%%%%%%%%%%%%%%%%%%
\newcommand{\QQrainbow}[1]{\mathbb{Q}_{\rainbow_{#1}}}
\newcommand{\LZrainbow}[1]{\mathcal{Z}_{\rainbow_{#1}}}
\newcommand{\Frainbow}[1]{\mathscr{F}_{\rainbow_{#1}}}
\newcommand{\chamberrainbow}[1]{\chamber_{\rainbow_{#1}}}

\newcommand{\QQfusion}[1]{\mathbb{Q}_{\includegraphics[scale=0.5]{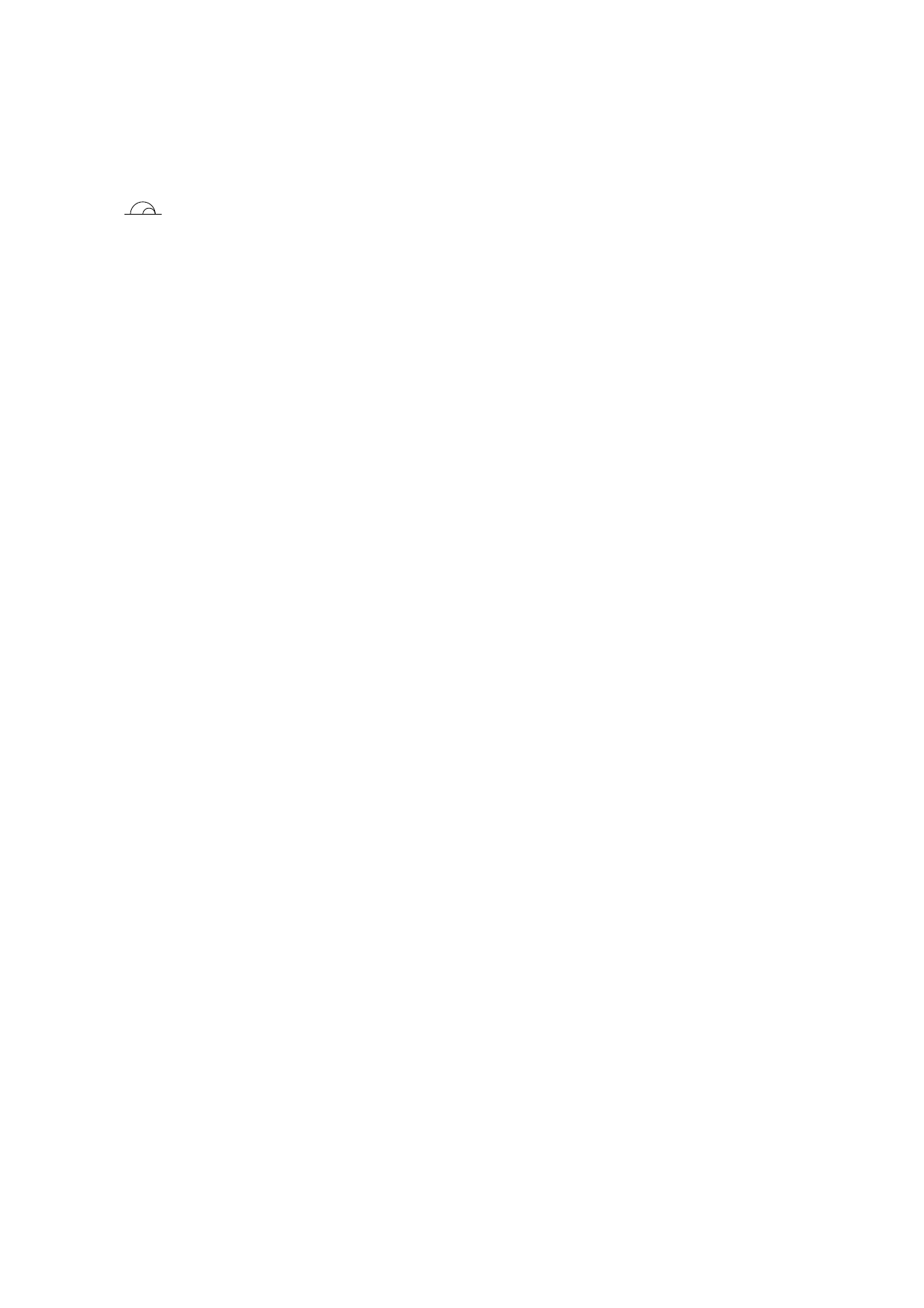}_{#1}}}
\newcommand{\LZfusion}[1]{\mathcal{Z}_{\includegraphics[scale=0.5]{figures/link4fusion}_{#1}}}
\newcommand{\LZfusionInv}[1]{\mathcal{Z}_{\rotatebox{180}{\scalebox{1}[-1]{\includegraphics[scale=0.5]{figures/link4fusion}}}_{#1}}}
\newcommand{\Ffusion}[1]{\mathscr{F}_{\includegraphics[scale=0.5]{figures/link4fusion}_{#1}}}

\newcommand{\coulombGasHRenorm}{\widehat{\coulombGasH}}
\newcommand{\LZthree}{\mathcal{Z}_{\includegraphics[scale=0.8]{figures/link211}}}

\global\long\def\coulomb{\LH}
\global\long\def\auxcoulomb{\hat{\coulomb}}%{\tilde{\coulomb}}
\global\long\def\coulombGas{\LF}
\global\long\def\coulombnew{\LK}%{\LG}
\global\long\def\coulombLine{\LG}%{\LG}
\global\long\def\kfunc{p}

\global\long\def\eps{\epsilon}
\global\long\def\ov{\overline}
\global\long\def\QQrp{\QQ_{\alpha; \bs{s}}^{(p)}}

\global\long\def\bn{\mathbf{n}}
\global\long\def\MR{MR}
\global\long\def\cond{\,|\,}
\global\long\def\bigcond{\,\big|\,}
\global\long\def\Bigcond{\;\Big|\;}
\global\long\def\la{\langle}
\global\long\def\ra{\rangle}
\global\long\def\tree{\Upsilon}
\global\long\def\prob{\mathbb{P}}
\global\long\def\hm{\mathrm{Hm}}
%
%\global\long\def\sf{\mathrm{SF}}
%\global\long\def\wr{\varrho}

\global\long\def\Im{\operatorname{Im}}
\global\long\def\Re{\operatorname{Re}}

%Eves macros
\global\long\def\ud{\mathrm{d}}
\global\long\def\pder#1{\frac{\partial}{\partial#1}}
\global\long\def\pdder#1{\frac{\partial^{2}}{\partial#1^{2}}}
\global\long\def\pddder#1{\frac{\partial^{3}}{\partial#1^{3}}}
\global\long\def\der#1{\frac{\ud}{\ud#1}}

\global\long\def\bZnn{\mathbb{Z}_{\geq 0}}
\global\long\def\bZpos{\mathbb{Z}_{> 0}}
\global\long\def\bZneg{\mathbb{Z}_{< 0}}

\global\long\def\Vfunc{\LG}%{\LH}%{\LV}
\global\long\def\gfunc{g^{(\rr)}}
\global\long\def\hfunc{h^{(\rr)}}

\global\long\def\SimplexInt{\rho}
\global\long\def\CubeInt{\widetilde{\rho}}

\global\long\def\ii{\mathrm{i}}%{\mathfrak{i}}
\global\long\def\ee{\mathrm{e}}
\global\long\def\rr{\mathfrak{r}}
\global\long\def\chamber{\mathfrak{X}}
\global\long\def\Wchamber{\mathfrak{W}}

\global\long\def\SimplexIntKappa8{\SimplexInt}

\global\long\def\nested{\boldsymbol{\underline{\Cap}}}
\global\long\def\unnested{\boldsymbol{\underline{\cap\cap}}}
%\global\long\def\unnested{\boldsymbol{\underline{\cap\scaleobj{0.85}{\,\cdots}\cap}}}
%\global\long\def\removeLink{/}
\global\long\def\unnested{\boldsymbol{\underline{\cap\cap}}}

\global\long\def\acycle{\vartheta}
\global\long\def\bcycle{\tilde{\acycle}}
%\global\long\def\Gloop{\Theta}
%\global\long\def\GGloop{\Theta'}

\global\long\def\metric{\mathrm{dist}}

\global\long\def\adj#1{\mathrm{adj}(#1)}

\global\long\def\bs{\boldsymbol}

\global\long\def\edge#1#2{\langle #1,#2 \rangle}
\global\long\def\graph{G}

\newcommand{\conn}{\varsigma}%{\vartheta}%{\LA}
\newcommand{\realacycle}{\smash{\mathring{\acycle}}}
\newcommand{\realpt}{\smash{\mathring{x}}}
\newcommand{\corrind}{\LC}
\newcommand{\bssymb}{\pi}%{\Pi}%{\zeta}
\newcommand{\coeff}{p}
\newcommand{\MainConst}{C}

\global\long\def\removeLink{/}

\global\long\def\domainofdef{\mathfrak{U}}
\global\long\def\Test_space{C_c^\infty}
\global\long\def\Distr_space{(\Test_space)^*}

\global\long\def\bs{\boldsymbol}
\global\long\def\cst{\mathrm{C}}

\newcommand{\red}{\textcolor{red}}
\newcommand{\blue}{\textcolor{blue}}
\newcommand{\green}{\textcolor{green}}
\newcommand{\magenta}{\textcolor{magenta}}
\newcommand{\cyan}{\textcolor{cyan}}

\newcommand{\coulombGasH}{\mathcal{H}}
\newcommand{\secondbeta}{\intloop}

\newcommand{\cev}[1]{\reflectbox{\ensuremath{\vec{\reflectbox{\ensuremath{#1}}}}}}

\global\long\def\anticonf{\zeta}
\global\long\def\intloop{\varrho}
\global\long\def\Gloop{\smash{\mathring{\intloop}}}

\global\long\def\SLEmeasure{\mathrm{P}}
\global\long\def\SLEmeasureEx{\mathrm{E}}

\global\long\def\fugacity{\nu}
%\global\long\def\clweight{Q}
\global\long\def\meanderMat{\mathcal{M}}
\global\long\def\LM{\mathcal{M}}
\global\long\def\meanderMatrix{\meanderMat_{\fugacity}}
\global\long\def\meanderMatrixPrime{\meanderMat_{\fugacity(\kappa')}}
\global\long\def\meanderRenorm{\widehat{\mathcal{M}}}

\global\long\def\PartFRenorm{\widehat{\PartF}}
\global\long\def\coulombGasRenorm{\widehat{\coulombGas}}

\global\long\def\hexa{\scalebox{1.3}{\hexagon}}

\global\long\def\np{p}

\global\long\def\FKdual{\mathcal{L}}

\global\long\def\fixedindex{\flat}

\maketitle
\vspace{-1cm}
\begin{center}
\begin{minipage}{0.95\textwidth}
\abstract{
We consider multiple radial SLE curves with various time parameterizations and possible spiraling behavior. We construct them by tilting independent radial SLEs with a suitable local martingale, generalizing the earlier construction by Healey and Lawler. We prove that the curves are almost surely transient (i.e., they emanate from boundary points and terminate at a common interior target point). We show that they enjoy the resampling property: conditional on all of the curves but one, the remaining curve is distributed as chordal SLE in the remaining domain. We also verify that the multiradial SLE measure satisfies a natural boundary perturbation property analogous to that of the known SLE variants, involving its partition function (which is finite).

Interestingly, in the parlance of Coulomb gas formalism in conformal field theory, partition functions of multiradial SLE processes with spiral involve both electric and magnetic charges. 
}

%\bigskip
%
%\noindent\textbf{Keywords:} \\ 
%\noindent\textbf{MSC:} 60J67
\end{minipage}
\end{center}
%\newpage
\tableofcontents

%\newpage

\section{Introduction}

Motivated by providing a description for scaling limits of interfaces in critical two-dimensional lattice models, 
in his pioneering work~\cite{Schramm:Scaling_limits_of_LERW_and_UST}, 
Oded Schramm introduced SLE curves and (among other things) showed that in the chordal case, 
they can be axiomatically understood as probability measures on curves connecting a pair of distinct boundary points of a simply connected domain, 
uniquely characterized by two natural properties: conformal invariance (CI) and domain Markov property (DMP). 
Indeed, this singles out a one-parameter family $(\SLE_\kappa)_{\kappa > 0}$ of such random chords. 
Early on, Schramm's $\SLE_\kappa$ curves turned out to be not only crucial in the rigorous description of scaling limits 
(e.g., in~\cite{Smirnov:Critical_percolation_in_the_plane, 
LSW:Conformal_invariance_of_planar_LERW_and_UST,
Camia-Newman:Critical_percolation_exploration_path_and_SLE_proof_of_convergence, 
Schramm-Sheffield:Contour_lines_of_2D_discrete_GFF,
CDHKS:Convergence_of_Ising_interfaces_to_SLE}), 
but were also used to solve important problems in probability theory 
(e.g.~\cite{LSW:The_dimension_of_the_planar_Brownian_frontier_is_four_thirds, LSW:Brownian_intersection_exponents1, LSW:Brownian_intersection_exponents2}), 
and later in many contexts in random conformal geometry and mathematical physics. 
Moreover, SLEs immediately gained a lot of attention of physicists, 
and were observed to share very deep connections to conformal field theory 
(CFT)~\cite{Bauer-Bernard:SLE_growth_processes_and_CFTs, 
Friedrich-Werner:Conformal_restriction_highest_weight_representations_and_SLE, 
Bauer-Bernard:CFTs_of_SLE_radial_case,
Kontsevich:CFT_SLE_and_phase_boundaries}.
They thus became a key object in many areas, and still remain a very active topic of research.

\smallskip

In the radial case, Schramm's curves lose global scale-invariance. 
In fact, a single radial SLE curve, which is a random curve connecting a boundary point to an interior point of a simply connected domain, 
is only characterized by (CI, DMP) up to a two-parameter family of probability measures of such curves, $(\SLE_\kappa^\mu)_{\kappa>0, \, \mu \in \R}$~\cite{Zhan:Reversibility_of_whole-plane_SLE, Miller-Sheffield:Imaginary_geometry4}. 
Namely, (CI, DMP) imply that the Loewner driving function of the curve (which describes its time evolution for $t \geq 0$) 
has to contain a multiple of (one-dimensional) Brownian motion, 
which in the radial case can also possess a linear drift:~$\sqrt{\kappa} B_t + \mu t$ (while in the chordal case the driving function is simply $\sqrt{\kappa} B_t$).
The parameter $\kappa>0$ is, similarly to the chordal case, the variance (speed) of the Brownian motion, which describes, e.g., the Hausdorff dimension of the 
curve~\cite{Rohde-Schramm:Basic_properties_of_SLE, Beffara:Dimension_of_SLE}.  
The parameter $\mu \in \R$ describes the spiraling rate of the curve around its interior endpoint, 
and is closely related to multifractal properties of the curve~\cite{Binder:Harmonic_measure_and_rotation_of_simply_connected_planar_domains, Duplantier-Binder:Harmonic_measure_and_winding_of_conformally_invariant_curves}.
In the parlance of Coulomb gas formalism in CFT, the spiral gives rise to a magnetic charge~\cite{Gruzberg:Stochastic_geometry_of_critical_curves_Schramm-Loewner_evolutions_and_conformal_field_theory, RBGW:Critical_curves_in_conformally_invariant_statistical_systems, BGR:Statistics_of_harmonic_measure_and_winding_of_critical_curves_from_conformal_field_theory}. 

\smallskip

When one considers models involving several interacting curves, the classification problem involves even more conformal moduli.
In the case of multiple chordal curves, the moduli are encoded by the positions of the points and the topological connectivity pattern of the random curves.
Namely, one can prove that given this data, the law of the curves is uniquely characterized by a resampling property~\cite{Miller-Sheffield:Imaginary_geometry2, BPW:On_the_uniqueness_of_global_multiple_SLEs, Zhan:Existence_and_uniqueness_of_nonsimple_multiple_SLE}: 
conditional on all of the curves but one, the remaining curve is distributed as chordal SLE in the remaining domain.
Such a property is naturally inherited from multiple interfaces in discrete lattice models.
The present work is concerned with the case of multiple radial curves, a generalization of the model constructed by Healey~\&~Lawler~\cite{Healey-Lawler:N_sided_radial_SLE},
where the curves are targeted to a common interior endpoint. 
In this case, the resampling property does not characterize the curve model uniquely.
Indeed, because the curves can spiral around their common endpoint, any linear combination of multiradial SLE measures with different spiraling rates satisfies the resampling property.
See~\cite{KWW:Commutation_relations_for_two-sided_radial_SLE} for two curves.

\smallskip

The construction of multiple SLEs and their inherent resampling property relies on their conformal restriction property~\cite{LSW:Conformal_restriction_the_chordal_case, 
Kozdron-Lawler:Configurational_measure_on_mutually_avoiding_SLEs, 
Lawler:Partition_functions_loop_measure_and_versions_of_SLE, 
Lawler:Defining_SLE_multiply_connected, 
Jahangoshahi-Lawler:Multiple-paths_SLE_in_multiply_connected_domains}. 
We use it in the form of a  ``boundary perturbation property'' formulated in terms of a modification of the underlying domain by chopping off part of it 
(for example, by discovering a curve in a multiple SLE). 
The restriction property played an instrumental role in the early breakthrough works of Lawler, Schramm, and Werner  
relating $\SLE_6$ and $\SLE_{8/3}$ to Brownian motion and self-avoiding walk~\cite{LSW:The_dimension_of_the_planar_Brownian_frontier_is_four_thirds, LSW:On_the_scaling_limit_of_planar_SAW}. 
It is also a manifestation of the deep relationship of SLE with CFT --- 
the ``conformal anomaly'' appearing upon deforming the domain is also 
the one that gives rise to the action of the central element of the Virasoro algebra with the appropriate central charge~\cite{Bauer-Bernard:SLE_growth_processes_and_CFTs, Kontsevich:CFT_SLE_and_phase_boundaries} 
(see also~\cite{Maibach-Peltola:From_the_conformal_anomaly_to_the_Virasoro_algebra} for a perspective on this).
This property lies at the heart of canonical SLE measures introduced by 
Lawler, Schramm~\&~Werner~\cite{LSW:Conformal_restriction_the_chordal_case, Werner:The_conformally_invariant_measure_on_self-avoiding_loops} and 
Kontsevich~\&~Suhov~\cite{Kontsevich-Suhov:On_Malliavin_measures_SLE_and_CFT}. 
See also~\cite{Chavez-Pickrell:Werners_measure_on_self-avoiding_loops_and_welding, Benoist-Dubedat:SLE2_loop_measure, Zhan:SLE_loop_measures, Baverez-Jego:The_CFT_of_SLE_loop_measures_and_the_Kontsevich-Suhov_conjecture} and references therein. 

\medskip

This article is organized as follows. Section~\ref{subsec::intro_preli} describes the setup and gathers notation. 
In Section~\ref{subsec::intro_multiradialSLE}, we define the multiradial SLE with spiral and briefly discuss its CFT/Coulomb gas interpretation. 
We state the radial resampling property in Theorem~\ref{thm::multiradial_resampling} in Section~\ref{subsec:resampling_property}, and describe the marginal law of one curve in a multiradial SLE sample. 
In Section~\ref{subsec:bdry_perturbation}, we state the boundary perturbation property for the multiradial SLE curves in Proposition~\ref{prop::multiradial_bp}.

Section~\ref{sec::multitime_mart} concerns the definition of multiradial SLE with spiral, the associated martingale, and basic properties.
In Section~\ref{sec::halfwatermelonSLE}, we consider half-watermelon SLEs, which are multichordal SLEs whose endpoints have been fused together. 
We show that when the endpoint of multiradial SLE tends to the boundary, the multiradial curves become half-watermelon curves.
Section~\ref{sec::halfwatermelonSLE} also involves delicate, important estimates for SLE partition functions. 
The last Section~\ref{sec::Multiradial_final} concludes with the proofs of the main results. 
In Appendix~\ref{app:transience}, we prove transience of radial SLE with multiple force points and spiral, generalizing Lawler's earlier work.

\subsection{Multiradial Loewner evolutions}
\label{subsec::intro_preli}

\paragraph*{Polygons.}
We say that $(\Omega; x^1, x^2, \ldots, x^p) =: (\Omega; \bs x)$ 
is a (topological) $p$-\emph{polygon} if $\Omega\subsetneq\C$ is simply connected and $x^1, x^2, \ldots, x^p \in \partial\Omega$ are distinct points lying counterclockwise along the boundary. 
We will also assume throughout that $\partial\Omega$ is locally connected and the marked boundary points $x^1, x^2, \ldots, x^p$ lie on $C^{1+\eps}$-boundary segments, for some $\eps>0$, so that derivatives of conformal maps on $\Omega$ are defined there.

We also consider $p$-polygons $(\Omega; \bs x; z)$ with a marked interior point $z\in\Omega$.
We frequently use the reference polygon $(\U; \ee^{\ii\bs{\theta}};0)$ which is the unit disc $\U := \{ z \in \C \colon |z| < 1 \}$ with marked boundary points $\ee^{\ii\bs{\theta}} :=(\ee^{\ii\theta^1},\ee^{\ii\theta^2}, \ldots, \ee^{\ii\theta^p})$
parameterized by angle coordinates $\bs{\theta} := (\theta^1, \theta^2, \ldots, \theta^p)\in \LX_p$ in the torus
\begin{align}\label{eqn::torus}
\LX_p := \big\{\bs{\theta} = (\theta^1, \theta^2, \ldots, \theta^p)\in\R^p \;|\;  \theta^1<\theta^2<\cdots<\theta^p<\theta^1+2\pi \big\} .
\end{align}

We will investigate probability measures on curves in polygons connecting some of the marked points together. 
In the Loewner description of such curves, a nontrivial detail that needs to be addressed is the parameterization of these curves when they are grown simultaneously. 
We focus on the radial case. 

\paragraph*{Radial Loewner equation.}
Fix $\theta\in \LX_1 = \R$ and $T \in (0,\infty)$.
Let $\gamma \colon [0,T]\to \overline{\U}$ be a continuous simple\footnote{We will be mainly focusing on $\SLE_\kappa$ curves with $\kappa \in (0,4]$, which are almost surely simple. In general, the mapping-out function should be defined as a map $g_t \colon U_t \to \U$, where $U_t$ is the connected component of $\U\setminus \gamma_{[0,t]}$ containing the origin (target point of the curve). Even though this situation will occasionally occur, for notational ease, we omit this detail throughout.} curve such that $\gamma_0=\ee^{\ii \theta}$ and $\gamma_{(0,T)} \subset \U \setminus \{0\}$. 
For each $t \in [0,T]$, let $g_t \colon \U\setminus \gamma_{[0,t]} \to \U$ be the unique conformal bijection 
normalized at the origin, i.e., 
$g_t(0)=0$ and $g'_t(0)>0$, which we call the \emph{mapping-out function of} $\gamma$.
We say that the curve $\gamma$ is \emph{parameterized by capacity} if $g'_t(0) = \ee^t$ for all $t \in [0,T]$. 
These maps $\{g_t \colon t \in [0,T] \}$ solve the \emph{radial Loewner equation}
\begin{align}\label{eq:single_radial_Loewner_equation}\tag{LE}
\partial_t g_t(z)=g_t(z) \, \frac{\ee^{\ii \xi_t}+g_t(z)}{\ee^{\ii \xi_t}-g_t(z)}, \qquad g_0(z)=z,
\end{align}
where $\xi \colon [0,T]\to \R$ is a continuous function called the Loewner \emph{driving function} of $\gamma$. 
It is convenient to use the \emph{covering map} $\covmap_t \colon \R \to \R$ 
of $g_t$ defined via $g_t(\ee^{\ii \theta}) = \exp( \ii \covmap_t(\theta) )$ satisfying the Loewner equation
\begin{align}\label{eq:single_radial_Loewner_equation_cov}
\partial_t \covmap_t(\theta) = \cot \bigg(\frac{\covmap_t(\theta) - \xi_t}{2}\bigg) , \qquad \covmap_0(\theta)=\theta .
\end{align}

\begin{figure}[ht!]
\begin{center}
\includegraphics[width=0.6\textwidth]{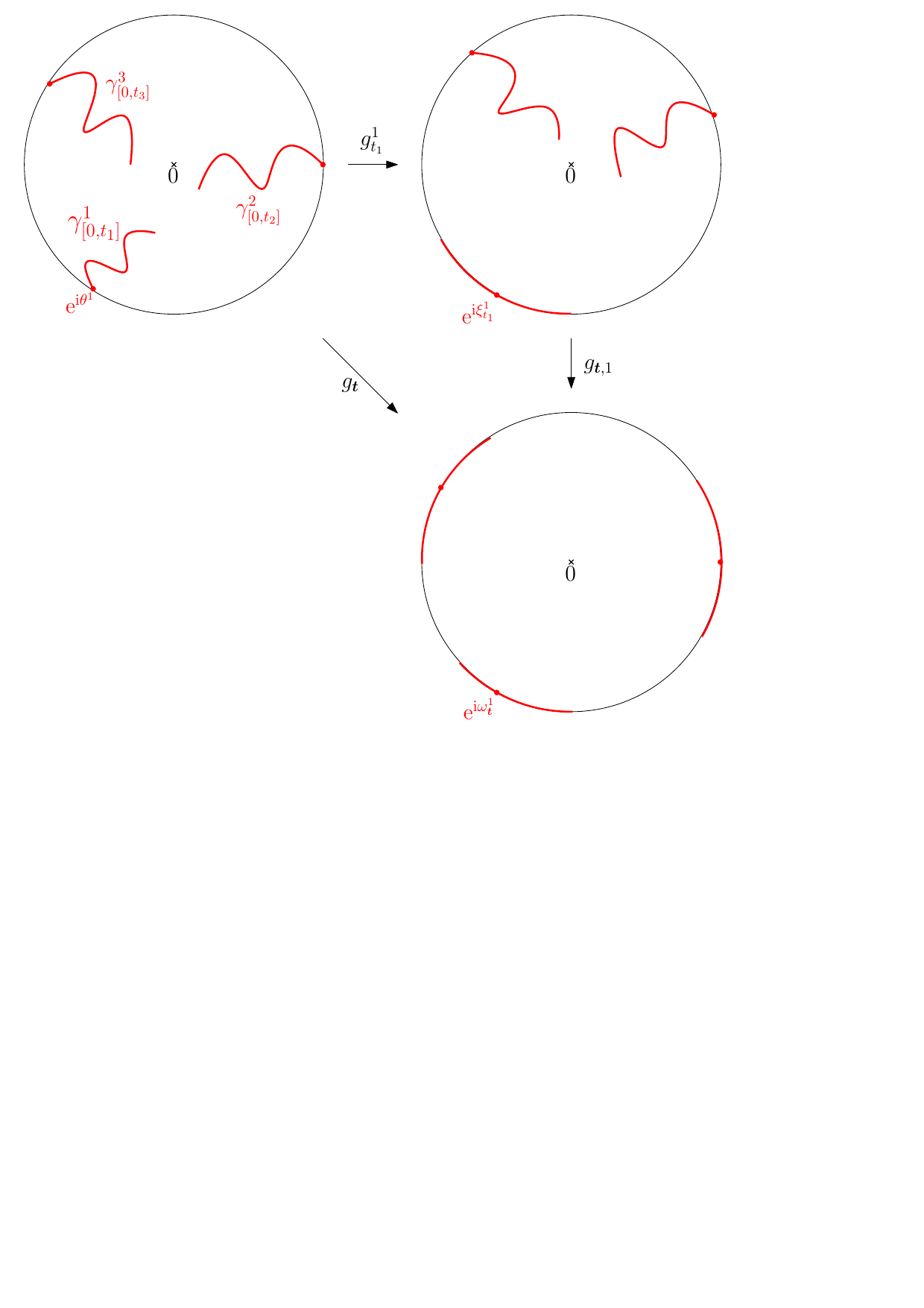}
\end{center}
\caption{\label{fig::multitime}Illustration for the notation related to the multi-time-parameter.}
\end{figure}

\paragraph*{Multi-time-parameter.}
(See Figure~\ref{fig::multitime}.)
Fix $p\ge 2$ and $\bs{\theta} = (\theta^1, \ldots, \theta^p)\in\LX_p$. Consider a $p$-tuple 
$\bs{\gamma}_{\bs{t}} := ( \gamma^{1}_{t_1}, \ldots, \gamma^{p}_{t_p} )$ 
of simple curves in the polygon $(\U; \ee^{\ii\bs{\theta}};0)$,
parameterized by $\bs{t} := (t_1,\ldots,t_p) \in [0,\infty)^p$,  
such that $\gamma^{j}_{0} = \ee^{\ii\theta^j}$ for each $j$
and the segments $\{ \gamma^{j}_{[0,t_j]}\}_{1\le j\le p}$ are disjoint. 
Consider the conformal bijection 
\begin{align*}
g_{\bs{t}} \colon \U \setminus \bigcup_{j=1}^p \gamma^{j}_{[0,t_j]} \longrightarrow \U ,
\qquad g_{\bs{t}}(0)=0, \; g'_{\bs{t}}(0)>0 ,
\end{align*}
normalized at the origin, 
and the mapping-out functions $g_{t_j}^{j}$ of $\gamma^{j}$, for $1\le j\le p$. 
Then, we have 
\begin{align*}
g_{\bs{t}}=g_{\bs{t},j} \circ g_{t_j}^{j} 
\qquad \textnormal{for all } 1\le j\le p, \qquad \textnormal{where } \;
g_{\bs{t},j} \colon \U \setminus g_{t_j}^{j} \Big( \bigcup_{i\neq j} \gamma^{i}_{[0,t_i]} \Big) \longrightarrow \U 
\end{align*}
is also normalized at the origin.
We say that the $p$-tuple $\bs{\gamma}_{\bs{t}}$ of curves has \emph{$p$-time-parameter} 
if the mapping-out functions of each curve are separately parameterized by capacity, i.e., 
$( g_{t_j}^{j} )'(0) = \ee^{t_j}$ for all $1\le j\le p$.
The construction of multi-time-parameter is well-defined up to the collision time which is the first time when any two curves in $\bs{\gamma}$ meet. 
Let $\covmap_{t_j}^{j},\covmap_{\bs{t}},\covmap_{\bs{t},j}$ be the covering maps of $g_{t_j}^{j},g_{\bs{t}},g_{\bs{t},j}$, respectively. 
Note that $\covmap_{\bs{t}} = \covmap_{\bs{t},j} \circ \covmap_{t_j}^{j}$. 
Denoting by $\xi^{j}$ the driving function of each $\gamma^{j}$ (as in~\eqref{eq:single_radial_Loewner_equation}), 
we can define the Loewner driving function of the multi-slit $\bs{\gamma}_{\bs{t}}$, started at $\bs{\mslitdriv}_{\bs{0}} = \bs{\theta} = (\theta^1, \ldots, \theta^p)$, 
by 
\begin{align}\label{eqn::Ntuple_driving}
\bs{\mslitdriv}_{\bs{t}} := \big(\mslitdriv^{1}_{\bs{t}},\ldots,\mslitdriv^{p}_{\bs{t}}\big),
\qquad \textnormal{with} \quad \mslitdriv_{\bs{t}}^{j}:=\covmap_{\bs{t},j} ( \xi_{t_j}^{j} ), \quad \textnormal{for }1\le j\le p.
\end{align}
Writing $a_{\bs{t}}^{j} := \partial_{t_j} \log g_{\bs{t}}'(0) = (\covmap_{\bs{t},j}' (\xi_{t_j}^{j}))^2$, 
one can check that $g_{\bs{t}}$ solves the Loewner equation
\begin{align}\label{eq:LE_multislitg}\tag{$p$-LE}
\ud g_{\bs{t}}(z) 
= \sum_{j=1}^{p} \partial_{t_j} g_{(t_1,\ldots,t_p)}(z) \, \ud t_j
= g_{\bs{t}}(z) \sum_{j=1}^{p} a_{\bs{t}}^{j}  \, \frac{\ee^{\ii \mslitdriv_{\bs{t}}^{j}}+g_{\bs{t}}(z)}{\ee^{\ii \mslitdriv_{\bs{t}}^{j}}-g_{\bs{t}}(z)} \, \ud t_j , \qquad g_{\bs{0}}(z) =z .
\end{align}
Moreover, $\covmap_{\bs{t}} = \covmap_{\bs{t},j} \circ \covmap_{t_j}^{j}$ satisfies 
\begin{align}\label{eq:LE_multislith}
\ud \covmap_{\bs{t}} (\theta)
= \sum_{j=1}^{p} \partial_{t_j} \covmap_{(t_1,\ldots,t_p)}(\theta) \, \ud t_j
= \sum_{j=1}^{p} a_{\bs{t}}^{j} \,  \cot \bigg(\frac{\covmap_{\bs{t}} (\theta) - \mslitdriv_{\bs{t}}^{j}}{2}\bigg) \, \ud t_j , \qquad \covmap_{\bs{0}}(\theta) = \theta.
\end{align}

\subsection{Multiradial SLE with spiral}
\label{subsec::intro_multiradialSLE}

Throughout, we will use the following CFT/SLE parameters indexed by $\kappa > 0$:
\begin{align}\label{eqn::universal_parameters}
\mathfrak{b} := \frac{6-\kappa}{2\kappa} = \Delta(\mathfrak{e}_{1,2}) ,\qquad 
\tilde{\mathfrak{b}} := \frac{(6-\kappa)(\kappa-2)}{8\kappa} = 2 \Delta(\mathfrak{e}_{0,1/2}) , 
\qquad \textnormal{and} \qquad 
\mathfrak{c} := \frac{(6-\kappa)(3\kappa-8)}{2\kappa} ,
\end{align}
where $\Delta(\mathfrak{e}) \in \R$ are Coulomb gas conformal weights associated to electric charges %
$\mathfrak{e} \in \R$ defined as
\begin{align}\label{eqn::general_charge_weight}
\Delta(\mathfrak{e}) := \mathfrak{e}^2 - 2 \mathfrak{e} \mathfrak{e}_0 , \qquad 
\mathfrak{e}_0 : = \frac{\kappa - 4}{4 \sqrt{\kappa}} ,
\qquad \kappa > 0 ,
\end{align}
$\mathfrak{c} = 1 - 24 \mathfrak{e}_0^2$ in~\eqref{eqn::universal_parameters} is the central charge, 
and $\mathfrak{b}$ and $\tilde{\mathfrak{b}}$ are $\SLE_\kappa$ exponents given in terms of the special class of \emph{Kac type conformal 
weights}\footnote{The (extended) Kac table comprises those conformal weights $\{ \Delta(\mathfrak{e}_{r,s}) \colon r,s \in \N_{>0} \}$ where the indices are positive integers, which correspond to degenerate modules for the Virasoro algebra. However, many important observables in lattice models and for $\SLE_\kappa$ do not belong to the Kac table as such, but rather to the (more extended) family $\{ \Delta(\mathfrak{e}_{r,s}) \colon r,s \in \R_{\geq 0} \}$. This is the case, in particular, for many (but not all) bulk observables related to paths and loops in the interior of the domain.} 
$\Delta(\mathfrak{e}_{r,s}) = \tfrac{1}{4}(r^2-1) \tfrac{\kappa}{4} + \tfrac{1}{4}(s^2-1) \tfrac{4}{\kappa} + \tfrac{1}{2}(1- rs)$ 
labeled by 
\begin{align} \label{eqn::Kac_type_charge}
\mathfrak{e}_{r,s} := \frac{1}{2}\Big((1-r)\frac{\sqrt{\kappa}}{2} - (1-s)\frac{2}{\sqrt{\kappa}}\Big) ,
\qquad 
\qquad r,s \in \R_{\geq 0} .
\end{align}

\begin{definition}[Multiradial SLE with spiral]\label{def::multiradialSLEspiral}
For each $j\in \{1,\ldots, p\}$, let $\gamma^{j}$ be radial $\SLE_{\kappa}$ in $(\U; \ee^{\ii\theta^j}; 0)$ 
and let $\mathsf{P}_p$ 
be the probability measure on
$\bs{\gamma}=(\gamma^{1}, \ldots, \gamma^{p})$ 
under which the curves  
are independent. 
We parameterize $\bs{\gamma}$ by $p$-time-parameter $\bs{t}$. 
We define \emph{$p$-radial $\SLE_{\kappa}$ with spiraling rate $\mu \in \R$ in} $(\U; \ee^{\ii\bs{\theta}};0)$, 
dubbed \emph{$p$-radial $\SLE_{\kappa}^\mu$} from now on,
as the probability measure $\PPspiral{p} = \PPspiral{p}(\U; \ee^{\ii\bs{\theta}};0)$ obtained by tilting $\mathsf{P}_p$ by the local martingale 
\begin{align}\label{eqn::multiradialSLE_mart}
M_{\bs{t}}(\nradpartfn{p}{\mu})
:= \one_{\LE_{\emptyset}(\bs{\gamma}_{\bs{t}})} \, \exp\bigg(\frac{\mathfrak{c}}{2}\blm_{\bs{t}} - \tilde{\mathfrak{b}}\sum_{j=1}^{p}t_j\bigg) 
\; (g_{\bs{t}}'(0))^{\tilde{\mathfrak{b}}+\frac{p^2-1}{2\kappa}-\frac{\mu^2}{2\kappa}} 
\Big(\prod_{j=1}^{p} \covmap_{\bs{t},j}'(\xi_{t_j}^{j}) \Big)^{\mathfrak{b}}
\nradpartfn{p}{\mu}(\bs{\mslitdriv}_{\bs{t}}),
\end{align}
where $\LE_{\emptyset}(\bs{\gamma}_{\bs{t}}) = \{ \gamma_{[0,t_j]}^{j} \cap \gamma_{[0,t_i]}^{i}=\emptyset, \, \forall i\neq j\}$ is the event that different curves are disjoint, 
$\blm_{\bs{t}} = \blm(\bs\xi_{\bs{t}})$ is the unique potential solving the exact differential equation (see Lemma~\ref{lem::blm_exact})
\begin{align}\label{eqn::mt_def}
\ud  \blm_{\bs{t}} 
= \sum_{j=1}^{p} \partial_{t_j} \blm_{(t_1,\ldots,t_p)} \ud t_j
= \sum_{j=1}^{p} \Big( \! -\frac{1}{3} (\LS \covmap_{\bs{t},j})(\xi_{t_j}^{j})+\frac{1}{6} \big(1- (\covmap'_{\bs{t},j}(\xi_{t_j}^{j}))^2 \big) \Big) \ud t_j, 
\qquad \blm_{\bs{0}} = 0 ,
\end{align}
with $\LS\covmap=\frac{\covmap'''}{\covmap'}-\frac{3}{2}\big(\frac{\covmap''}{\covmap'}\big)^2$ denoting the Schwarzian derivative of a function $\covmap$, 
and where 
$\nradpartfn{p}{\mu}$ is the multiradial $\SLE_{\kappa}^\mu$ partition function defined in Equations~(\ref{eq:multiradial_partition_function},~\ref{eqn:spiralpartitionfunction}) below,
and $\bs\xi_{\bs{t}} = (\xi_{t_1}^1, \ldots, \xi_{t_p}^p)$ are the driving functions the individual curves, 
$\bs{\mslitdriv}_{\bs{t}} = (\mslitdriv_{\bs{t}}^1, \ldots, \mslitdriv_{\bs{t}}^p)$ is the driving function of the multi-slit $\bs{\gamma}$.
\end{definition}

\noindent 
Before discussing partition functions, we make some comments and extensions to the above definition.
\begin{itemize}[leftmargin=*]
\item 
We define $p$-radial $\SLE_{\kappa}^\mu$ measure $\PPspiral{p}(\Omega; \bs x; z)$ 
in a general $p$-polygon as the pushforward of $\PPspiral{p}$ by the map $\varphi^{-1}$, where 
$\varphi \colon \Omega \to \U$ is any conformal bijection such that $\varphi(z)=0$, 
and $\varphi(x^j) =: \ee^{\ii\theta^j}$, for each $1\le j\le p$.
Note that any two such mappings differ by a rotation, which does not affect the law.
\item 
When $\kappa \in (0,4]$, the local martingale~\eqref{eqn::multiradialSLE_mart} is a true martingale. 
In particular, $p$-radial $\SLE_{\kappa}^{\mu}$ curves almost surely never reach the collision time, so the process can be defined for all time (Lemma~\ref{lem::multiradial_collision}).
\item 
When $\mu=0$, the measure $\PPnospiral{p}(\U; \ee^{\ii\bs{\theta}};0)$ is the same as the $p$-sided radial $\SLE_{\kappa}$ (without spiral) introduced by Healey and Lawler~\cite{Healey-Lawler:N_sided_radial_SLE}. 
In accordance with the well-known conformal restriction properties of SLE, the quantity $\blm_{\bs{t}}$ also has 
an interesting Brownian loop measure interpretation --- see Lemma~\ref{lem::blm},
which reflects the fact that the central charge $\mathfrak{c}$ encodes the ``conformal anomaly'' for the SLE curves,
i.e., the response of the curves to changes of the underlying metric. 
\end{itemize}

\paragraph*{Partition functions.}
The $p$-radial $\SLE_\kappa$ partition function is\footnote{We use a convenient normalization for the angle coordinates, and the formula $\sin(\frac{\theta^j - \theta^i}{2}) = \frac{1}{2}|\ee^{\ii\theta^j} - \ee^{\ii\theta^i}|$.} 
\begin{align}\label{eq:multiradial_partition_function}
\begin{split}
\nradpartfn{p}{0}(\bs{\theta})  =  \nradpartfn{p}{0}(\U; \ee^{\ii\bs{\theta}};0) 
:= \; & 
\bigg( \underset{1\leq i < j \leq p}{\prod} \, |\ee^{\ii\theta^j} - \ee^{\ii\theta^i}| \bigg)^{2/\kappa} 
\\
= \; & 2^{\frac{p(p-1)}{\kappa}} \bigg( \underset{1\leq i < j \leq p}{\prod} \, \sin \Big(\frac{\theta^j - \theta^i}{2}\Big) \bigg)^{2/\kappa} , \qquad \bs{\theta} \in \LX_p .
\end{split}
\end{align} 
Note that~\eqref{eq:multiradial_partition_function} is rotation-invariant: we have $\nradpartfn{p}{0}(\theta^1+\alpha,\ldots,\theta^p+\alpha) = \nradpartfn{p}{0}(\theta^1,\ldots,\theta^p)$ for any $\alpha \in \R$.
Therefore, we can extend its definition to $p$-polygons $(\Omega; \bs x; z)$ via conformal covariance as
\begin{align}\label{eqn::multiradial_partition_function_cov}
\nradpartfn{p}{0}(\Omega; \bs x; z)
:= |\varphi'(z)|^{\tilde{\mathfrak{b}}+\frac{p^2-1}{2\kappa}} \Big( 
\prod_{j=1}^p |\varphi'(x^j)| \Big)^{\mathfrak{b}} \nradpartfn{p}{0}(\bs \theta) , 
\end{align}
where $\varphi \colon \Omega \to \U$ is  
any\footnote{Any two such conformal bijections differ by a rotation, which does not affect the formula~\eqref{eqn::multiradial_partition_function_cov}.} 
conformal bijection such that $\varphi(z)=0$, and $\varphi(x^j) =: \ee^{\ii\theta^j}$, for each $1\le j\le p$.
In terms of the Coulomb gas formalism of CFT (see~\cite[Lemma~11]{Dubedat:Commutation_relations_for_SLE}), we have
\begin{align*}
\tilde{\mathfrak{b}}+\frac{p^2-1}{2\kappa} 
\; = \; 2 \Delta(\mathfrak{e}_{0,p/2}) 
\; = \; \frac{4 p^2-(\kappa-4)^2}{8\kappa } .
\end{align*}
Thus, $\nradpartfn{p}{0}$ defined in~(\ref{eq:multiradial_partition_function},~\ref{eqn::multiradial_partition_function_cov}) 
can be thought of as a correlation function of conformal fields of weights $\Delta(\mathfrak{e}_{1,2})$ at the boundary points $x^1, \ldots, x^p$
and a spinless field of weights $(\Delta(\mathfrak{e}_{0,p/2}),\Delta(\mathfrak{e}_{0,p/2}))$ at the bulk point $z$, 
in accordance with the physics literature~\cite{Duplantier-Saleur:Exact_surface_and_wedge_exponents_for_polymers_in_two_dimensions, Duplantier:Conformal_fractal_geometry_and_boundary_quantum_gravity, Gruzberg:Stochastic_geometry_of_critical_curves_Schramm-Loewner_evolutions_and_conformal_field_theory, RBGW:Critical_curves_in_conformally_invariant_statistical_systems}. 

\smallskip

Next, we include a \emph{spiraling rate} $\mu\in\R$, and define the $p$-radial $\SLE_\kappa^\mu$ partition function as
\begin{align}\label{eqn:spiralpartitionfunction}
\nradpartfn{p}{\mu}(\bs{\theta}) = \nradpartfn{p}{\mu}(\U; \ee^{\ii\bs{\theta}};0) 
:= \nradpartfn{p}{0}(\bs{\theta}) \, \exp \Big( \frac{\mu}{\kappa} \sum_{j=1}^p \theta^j \Big) ,
\qquad \bs{\theta} \in \LX_p .
\end{align}
Observe that~\eqref{eqn:spiralpartitionfunction} is not rotation-invariant: we have 
$\nradpartfn{p}{\mu}(\theta^1+\alpha,\ldots,\theta^p+\alpha) = \ee^{\frac{\mu}{\kappa} p \alpha } \, \nradpartfn{p}{\mu}(\theta^1,\ldots,\theta^p)$, so we cannot extend its definition similarly as for~\eqref{eqn::multiradial_partition_function_cov}. 
Indeed, the appropriate CFT field at the bulk point $z$ should now involve a spin, that is, its holomorphic and antiholomorphic weights should be different. 
In accordance with the physics literature~\cite{Gruzberg:Stochastic_geometry_of_critical_curves_Schramm-Loewner_evolutions_and_conformal_field_theory, RBGW:Critical_curves_in_conformally_invariant_statistical_systems, BGR:Statistics_of_harmonic_measure_and_winding_of_critical_curves_from_conformal_field_theory}, 
we take the weights to be of type $(\Delta, \Delta^*)$, 
namely $\Delta = \Delta(\mathfrak{e},\mathfrak{m}) := (\mathfrak{e} + \mathfrak{m}) (\mathfrak{e} + \mathfrak{m} - 2\mathfrak{e}_0)$ 
and $\Delta^* = \Delta^*(\mathfrak{e},\mathfrak{m}) := (\mathfrak{e} - \mathfrak{m}) (\mathfrak{e} - \mathfrak{m} - 2\mathfrak{e}_0)$ 
involving both an electric charge $\mathfrak{e} \in \R$ and a magnetic charge $\mathfrak{m} \in \ii \R$,  
which in the case of present interest are 
\begin{align*}
\mathfrak{e} = \mathfrak{e}_{0,p/2} 
\qquad \textnormal{and} \qquad 
\mathfrak{m} = \mathfrak{m}_\mu := \ii \, \frac{\mu }{2 \sqrt{\kappa}}  . 
\end{align*}
Hence the partition function $\nradpartfn{p}{\mu}$ defined in~\eqref{eqn:spiralpartitionfunction} 
can be thought of as a CFT correlation function of weights $\Delta(\mathfrak{e}_{1,2})$ at the boundary points $x^1, \ldots, x^p$
and at the bulk point $z$, a field of complex weights 
\begin{align}\label{eqn::spiral_conformal_weight}
\Delta(\mathfrak{e}_{0,p/2},\mathfrak{m}_\mu) \; = \; \Delta(\mathfrak{e}_{0,p/2}) - \frac{\mu^2}{4\kappa} + \ii \, \frac{\mu}{2\kappa} p 
\qquad \textnormal{and} \qquad 
\Delta^*(\mathfrak{e}_{0,p/2},\mathfrak{m}_\mu) \; = \; \Delta(\mathfrak{e}_{0,p/2}) - \frac{\mu^2}{4\kappa} - \ii \, \frac{\mu}{2\kappa} p .
\end{align}
Motivated by this, we extend the definition of $\nradpartfn{p}{\mu}$ to more general polygons
$(\Omega; \bs x; z)$ with $z\in\Omega$ as\footnote{Here and throughout, we use the principal branch of $\arg(\cdot)$, and denote by $w^*$ the complex conjugate of $w \in \C$.}
\begin{align}\label{eqn::Gmu_cov}
\begin{split}
\nradpartfn{p}{\mu}(\Omega; \bs x; z)
:= \; & (\varphi'(z))^{\Delta(\mathfrak{e}_{0,p/2},\mathfrak{m}_\mu)} \, ({\varphi'(z)}^*)^{\Delta^*(\mathfrak{e}_{0,p/2},\mathfrak{m}_\mu)}
\Big( \prod_{j=1}^p |\varphi'(x^j)| \Big)^{\mathfrak{b}} \nradpartfn{p}{\mu}(\bs \theta) \\
:= \; & \exp \Big(\! -\frac{\mu}{\kappa} p \arg \varphi'(z)\Big) \, 
|\varphi'(z)|^{\tilde{\mathfrak{b}}+\frac{p^2-1}{2\kappa}-\frac{\mu^2}{2\kappa}} 
\Big( \prod_{j=1}^p |\varphi'(x^j)| \Big)^{\mathfrak{b}} \nradpartfn{p}{\mu}(\bs \theta) ,
\end{split}
\end{align}
where $\varphi \colon \Omega \to \U$ is any conformal bijection such that $\varphi(z)=0$, and $\varphi(x^j) =: \ee^{\ii\theta^j}$, for each $1\le j\le p$. 

\subsection{Radial resampling property}\label{subsec:resampling_property}

The resampling property of multiple SLE curves is a natural feature inherited from multiple interfaces in discrete lattice models, 
see~\cite{Miller-Sheffield:Imaginary_geometry1, Miller-Sheffield:Imaginary_geometry2, Peltola-Wu:Global_and_local_multiple_SLEs_and_connection_probabilities_for_level_lines_of_GFF, Wu:Convergence_of_the_critical_planar_ising_interfaces_to_hypergeometric_SLE, BPW:On_the_uniqueness_of_global_multiple_SLEs, AHSY:Conformal_welding_of_quantum_disks_and_multiple_SLE_the_non-simple_case, Zhan:Existence_and_uniqueness_of_nonsimple_multiple_SLE, FLPW:Multiple_SLEs_Coulomb_gas_integrals_and_pure_partition_functions}.
It essentially says that the multiple SLE curve models have a symmetry with respect to exploring a subset of the curves --- i.e., conditionally on some of them, 
the rest are described by a similar SLE curve model with fewer curves. As argued in~\cite[Section~4]{Miller-Sheffield:Imaginary_geometry2} (see also~\cite{BPW:On_the_uniqueness_of_global_multiple_SLEs}), 
the resampling property in the chordal setting uniquely characterizes the law of the collection of curves (cf. Definition~\ref{def::halfwatermelonSLE}). 
However, this is not the case in the radial setting. 
Namely, Theorem~\ref{thm::multiradial_resampling} tells that any linear combination of multiradial SLE with different spiraling rates satisfies the radial resampling property 
(see also~\cite{KWW:Commutation_relations_for_two-sided_radial_SLE}).

\begin{definition}[Half-watermelon SLE]\label{def::halfwatermelonSLE}
Fix $\kappa\in (0,4]$. Let $(\Omega; \bs{x}, y) = (\Omega; x^1, \ldots, x^n, y)$ be an $(n+1)$-polygon. 
Consider curves $\eta^{j}$ in $\Omega$ from $x^j$ to $y$ for each $j$; and suppose $\eta^{i}\cap\eta^{j}=\{y\}$ for $i\neq j$. 
We call the law $\QQfusion{n}(\Omega; \bs{x}; y)$ of $\bs{\eta} = (\eta^{1}, \ldots, \eta^{n})$ \emph{half-$n$-watermelon} $\SLE_{\kappa}$ in $(\Omega; \bs{x}, y)$ from $\bs{x}$ to $y$ if 
it satisfies the \emph{chordal resampling property}:  
for each $j\in \{1, \ldots, n\}$, the conditional law of $\eta^{j}$ given $\{\eta^{i} \colon i\neq j\}$ is chordal $\SLE_{\kappa}$ from $x^j$ to $y$ 
in the connected component $\Omega_j$ of $\Omega\setminus\cup_{i\neq j}\eta^{i}$ having $x^j$ on its boundary.
\end{definition}

The existence and uniqueness of (global) half-$n$-watermelon $\SLE_{\kappa}$ was argued in~\cite[Section~4]{Miller-Sheffield:Imaginary_geometry2}. 
The marginal law of $\eta^{1}$ under $\QQfusion{n}(\Omega; \bs x; y)$ is chordal $\SLE_{\kappa}(2, \ldots, 2)$ in $\Omega$ from $x^1$ to $y$ with force points $(x^2, \ldots, x^n)$. 
Similarly, the marginal law of one curve in a $p$-radial $\SLE_{\kappa}^\mu$ is radial $\SLE_{\kappa}^{\mu}(2, \ldots, 2)$ with spiral (defined in Section~\ref{subsec::pre_radialkapparhomu}), 
and in particular, each individual curve is almost surely transient (see Proposition~\ref{prop::radialSLE_transience}). 
Moreover, we will show in Theorem~\ref{thm::multiradial_halfwatermelon} that multiradial $\SLE_{\kappa}$ converges to half-watermelon $\SLE_{\kappa}$ when the interior point of the polygon approaches a boundary point. 
Using this convergence, we then derive the following characterization.

\begin{theorem}\label{thm::multiradial_resampling}
Fix $\kappa\in (0,4], \, \mu\in\R, \, p\ge 2$, and a $p$-polygon $(\Omega; \bs{x}; z)$ with $\bs{x}=( x^1, \ldots, x^p)$ and $z\in\Omega$.
The $p$-radial $\SLE_{\kappa}^\mu$ curves $\bs \gamma = (\gamma^{1}, \ldots, \gamma^{p})$ 
in $(\Omega; \bs x; z)$ satisfy the following properties. 
\begin{enumerate}
\item 
The marginal law of $\gamma^{1}$ is radial $\SLE_{\kappa}^{\mu}(2, \ldots, 2)$ in $(\Omega; \bs x; z)$ with force points $\hat{\bs x} := (x^2, \ldots, x^p)$. 
\item 
Given $\gamma^{1}$, the conditional law of $(\gamma^{2}, \ldots, \gamma^{p})$ is half-$(p-1)$-watermelon $\SLE_{\kappa}$ in $(\Omega\setminus\gamma^{1}; \hat{\bs x}; z)$. 
\end{enumerate}
\noindent 
In particular, $\bs \gamma$ is a.s.~transient 
\textnormal{(}$\bs{\gamma}_{\bs{t}} \to \bs{0}$ as $\bs{t}\to\infty$; see Proposition~\ref{prop::multiradial_transience} for details\textnormal{)} 
and satisfies the \emph{radial resampling property}:  
for each $j\in \{1, \ldots, p\}$, the conditional law of $\gamma^{j}$ given $\{\gamma^{i} \colon i\neq j\}$ is chordal $\SLE_{\kappa}$ from $x^j$ to $z$ 
in the connected component $\Omega_j$ of $\Omega\setminus\cup_{i\neq j}\gamma^{i}$ having $x^j$ on its boundary.
\end{theorem}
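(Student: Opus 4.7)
My plan is to prove Theorem~\ref{thm::multiradial_resampling} by first establishing Parts~(1) and~(2), and then deducing the transience statement and the radial resampling property from these. The two main tools are the local martingale~\eqref{eqn::multiradialSLE_mart} defining $\PPspiral{p}$, evaluated along well-chosen slices of the multi-time parameter $\bs{t}$, together with the convergence result (Theorem~\ref{thm::multiradial_halfwatermelon}) identifying the limit of multiradial $\SLE_\kappa^\mu$ as the interior target degenerates to a boundary point.

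\textbf{Part (1): marginal of $\gamma^{1}$.} I would specialize the multi-time parameter to $\bs{t} = (t_1, 0, \ldots, 0)$, i.e.\ freeze all curves except $\gamma^{1}$. Under this specialization the compositions in~\eqref{eqn::multiradialSLE_mart} simplify dramatically: $g_{\bs{t}} = g^{1}_{t_1}$, $\covmap_{\bs{t}, 1} = \mathrm{id}$, and $\covmap_{\bs{t}, j} = \covmap^{1}_{t_1}$ for $j \geq 2$; consequently $\covmap'_{\bs{t}, 1}(\xi^{1}_{t_1}) = 1$ and $\covmap'_{\bs{t}, j}(\xi^{j}_0) = (\covmap^{1}_{t_1})'(\theta^j)$, while the potential~\eqref{eqn::mt_def} satisfies $\blm_{(t_1, 0, \ldots, 0)} = 0$ since the Schwarzian of the identity vanishes. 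Thus $M_{(t_1, 0, \ldots, 0)}$ reduces to a tilt of a single radial $\SLE_\kappa$ by a multiple of
\begin{align*}
\prod_{j=2}^p \big[(\covmap^{1}_{t_1})'(\theta^j)\big]^{\mathfrak{b}} \; \nradpartfn{p}{\mu}\big(\xi^{1}_{t_1}, \covmap^{1}_{t_1}(\theta^2), \ldots, \covmap^{1}_{t_1}(\theta^p)\big) .
\end{align*}
A Girsanov computation, using $\partial_{\xi^1} \log \nradpartfn{p}{0}(\bs \xi) = \tfrac{1}{\kappa} \sum_{j \geq 2} \cot((\xi^1 - \xi^j)/2)$ and $\partial_{\xi^1} \log \nradpartfn{p}{\mu} - \partial_{\xi^1} \log \nradpartfn{p}{0} = \mu / \kappa$ from~\eqref{eqn:spiralpartitionfunction}, yields the driving function drift $\sum_{j \geq 2} \cot((\xi^1_{t_1} - \covmap^1_{t_1}(\theta^j))/2) + \mu$, identifying the marginal of $\gamma^{1}$ as radial $\SLE_\kappa^\mu(2, \ldots, 2)$ with force points $x^2, \ldots, x^p$.

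\textbf{Part (2): conditional law given $\gamma^{1}$.} I would proceed in two stages. First, I would establish a finite-time Markov statement: for each $t_1 \in [0, \infty)$, conditional on $\gamma^{1}_{[0, t_1]}$, the joint law of $(\gamma^{2}, \ldots, \gamma^{p})$ under $\PPspiral{p}$ is $(p-1)$-radial $\SLE_\kappa^\mu$ in $(\Omega \setminus \gamma^{1}_{[0, t_1]}; \hat{\bs x}; z)$ with the \emph{same} spiraling rate $\mu$, still targeted at the interior point $z$. This follows by factoring $M_{(t_1, t_2, \ldots, t_p)} / M_{(t_1, 0, \ldots, 0)}$ and recognizing the result, after conformal transport by $g^{1}_{t_1}$ and use of the conformal covariance~\eqref{eqn::Gmu_cov} of $\nradpartfn{p}{\mu}$, as precisely the defining martingale for $(p-1)$-radial $\SLE_\kappa^\mu$ in the slit polygon; the boundary perturbation property (Proposition~\ref{prop::multiradial_bp}) absorbs the change of domain. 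Second, I would take $t_1 \to \infty$: transience of $\gamma^{1}$ (Part~(1) and Appendix~\ref{app:transience}) ensures $\gamma^{1}_{t_1} \to z$, so the target $z$ becomes a boundary point of the limit slit domain $\Omega \setminus \gamma^{1}$, and Theorem~\ref{thm::multiradial_halfwatermelon} then yields convergence of the $(p-1)$-radial $\SLE_\kappa^\mu$ on the shrinking polygons to half-$(p-1)$-watermelon $\SLE_\kappa$ in the limit; the spiraling parameter disappears because the half-watermelon law is characterized by chordal resampling alone (Definition~\ref{def::halfwatermelonSLE}).

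\textbf{Transience and radial resampling.} Almost-sure convergence $\bs{\gamma}_{\bs{t}} \to \bs{0}$ as $\bs{t} \to \infty$ follows from applying Part~(1) to each individual curve --- legitimate by cyclic relabeling of the marked points, under which $\PPspiral{p}$ transforms by an explicit rotation factor read off from~\eqref{eqn:spiralpartitionfunction} --- combined with the transience of the radial $\SLE_\kappa^\mu(\rho)$ processes with force points established in Appendix~\ref{app:transience}. For the radial resampling property, fix distinct indices $j, k$, apply Part~(2) with $\gamma^{k}$ playing the role of $\gamma^{1}$: given $\gamma^{k}$, the remaining curves form half-$(p-1)$-watermelon $\SLE_\kappa$ in $\Omega \setminus \gamma^{k}$, and the chordal resampling property inherent to this half-watermelon (Definition~\ref{def::halfwatermelonSLE}) gives that, conditional on $\{\gamma^{i} \colon i \neq j\}$ (a set of conditioning events which already contains $\gamma^{k}$), the curve $\gamma^{j}$ is chordal $\SLE_\kappa$ from $x^j$ to $z$ in the appropriate connected component. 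The main technical obstacle is the passage to the limit $t_1 \to \infty$ in Part~(2) in a conditional sense: one must control the partition function $\nradpartfn{p-1}{\mu}$ on the slit domain $\Omega \setminus \gamma^{1}_{[0, t_1]}$ as the target $z$ approaches its boundary, and this is where the fine SLE partition function estimates developed in Section~\ref{sec::halfwatermelonSLE} are the key input.
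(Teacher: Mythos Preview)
Your overall architecture --- establish Parts~(1)--(2), then deduce transience and the radial resampling property --- matches the paper's, and your arguments for Part~(1), for transience (via cyclic relabeling and Appendix~\ref{app:transience}), and for the radial resampling property (via the chordal resampling in Definition~\ref{def::halfwatermelonSLE}) are essentially correct.

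The genuine gap is in the first stage of Part~(2). Your claimed ``finite-time Markov statement'' --- that conditional on $\gamma^{1}_{[0,t_1]}$, the law of $(\gamma^{2},\ldots,\gamma^{p})$ is \emph{exactly} $(p-1)$-radial $\SLE_\kappa^\mu$ in $\Omega\setminus\gamma^{1}_{[0,t_1]}$ --- is false as stated. The quotient $M_{(t_1,t_2,\ldots,t_p)}/M_{(t_1,0,\ldots,0)}$ still contains $\mslitdriv_{\bs{t}}^{1}$ (the image of the tip $\gamma^{1}_{t_1}$) inside $\nradpartfn{p}{\mu}(\bs{\mslitdriv}_{\bs{t}})$, so the resulting tilt of $(\gamma^{2},\ldots,\gamma^{p})$ is \emph{not} the $(p-1)$-radial martingale but a further tilt of it by a factor depending on the extra boundary point $\gamma^{1}_{t_1}$. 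In addition, invoking Proposition~\ref{prop::multiradial_bp} here is circular: in the paper the boundary perturbation property is a \emph{consequence} of Theorem~\ref{thm::multiradial_resampling}, not an input to it.

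The paper closes this gap through Lemma~\ref{lem::multiradial_spiral_halfwatermelon}. First, the domain Markov property gives that conditional on $\gamma^{1}_{[0,t_1]}$, the full $p$-tuple $(\gamma^{1}|_{[t_1,\infty)},\gamma^{2},\ldots,\gamma^{p})$ is $p$-radial $\SLE_\kappa^\mu$ in the slit domain with first starting point $\gamma^{1}_{t_1}$. The lemma then computes the marginal of the last $p-1$ curves inside a $p$-radial system: using Lemma~\ref{lem::multitime_mart_nradial_spiral_z_RN} it is $(p-1)$-radial $\SLE_\kappa^\mu$ tilted by an explicit factor $M_{\bs{s}}/M_{\bs{0}}$ involving Poisson kernels and conformal radii attached to the extra point $\gamma^{1}_{t_1}$. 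The delicate work is to show that this tilt converges to $1$ and stays uniformly bounded as the extra starting point approaches the target (i.e.\ as $t_1\to\infty$), after which Theorem~\ref{thm::multiradial_halfwatermelon} supplies the half-watermelon limit. This extra tilt and its control are precisely the missing ingredient in your proposal.
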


Theorem~\ref{thm::multiradial_resampling} is proved for $p=2$ in~\cite{KWW:Commutation_relations_for_two-sided_radial_SLE}. 
We prove it for general $p\ge 2$ in Section~\ref{subsec::resampling}. 
The proof relies on the fact that for any spiraling rate $\mu \in \R$, multiradial $\SLE_{\kappa}^\mu$ in $(\Omega; \bs{x}, z)$ converges to half-watermelon $\SLE_{\kappa}$ in $(\Omega; \bs{x}, y)$ as $z\to y$. 
Thus, the resampling property is inherited to the radial case from the chordal case.
In contrast to the case of $p=2$, 
this requires rather delicate analysis on the associated partition functions (see Theorem~\ref{thm::multiradial_halfwatermelon} and Lemma~\ref{lem::multiradial_spiral_halfwatermelon}).

To state the radial resampling property in Theorem~\ref{thm::multiradial_resampling}, it is important to know that the curves are transient.
The transience for radial $\SLE_{\kappa}(2)$ follows from~\cite[Theorem~1.3]{Lawler:Continuity_of_radial_and_two-sided_radial_SLE_at_the_terminal_point}. 
We generalize his proof to derive the transience of radial SLE with multiple force points and spiral in Proposition~\ref{prop::radialSLE_transience} in Appendix~\ref{app:transience}. 
Our proof involves an estimate on radial Bessel processes (Lemma~\ref{lem::Bessel_transience}) and a monotone coupling argument (Section~\ref{subsec::monotone_coupling}). 
These techniques will also be useful to derive detailed return estimates for radial $\SLE_{\kappa}^{\mu}(2, \ldots, 2)$ processes, 
which is a key ingredient in proving large deviation principles for multiradial SLEs (see~\cite{Abuzaid-Peltola:Large_deviations_of_radial_SLE0, HPW:Multiradial_SLE_with_spiral_LDP, AHP:In_prep}).

\subsection{Boundary perturbation property}\label{subsec:bdry_perturbation}

As a consequence of Theorem~\ref{thm::multiradial_resampling}, we derive a boundary perturbation property (Proposition~\ref{prop::multiradial_bp})
for multiradial SLE with spiral, which describes the response of the measure to deformations of the domain. 

\begin{proposition}[Boundary perturbation]\label{prop::multiradial_bp}
Fix $\kappa\in (0,4], \, \mu\in\R, \, p\ge 1$, and a $p$-polygon $(\Omega; \bs{x}; z)$ with $\bs{x}=( x^1, \ldots, x^p)$ and $z\in\Omega$.
For $p$-radial $\SLE_{\kappa}^\mu$ curves $\bs \gamma = (\gamma^{1}, \ldots, \gamma^{p})$ in $(\Omega; \bs x; z)$, 
denote $\cup_{i=1}^p\gamma^{i} = \bs{\gamma}$.  
Suppose $K$ is a compact subset of $\overline{\Omega}$ such that $\Omega\setminus K$ is simply connected and contains $z$, 
and $K$ has a positive distance from $\{x^1, \ldots, x^p\}$. 
Then, the $p$-radial $\SLE_{\kappa}^\mu$ probability measure 
in the smaller polygon $(\Omega\setminus K; \bs{x}; z)$ is absolutely continuous with respect to that in $(\Omega; \bs{x}; z)$, 
with Radon-Nikodym derivative 
\begin{align}
\label{eqn::multiradialSLEbp}
\frac{\nradpartfn{p}{\mu}(\Omega; \bs{x};z)}{\nradpartfn{p}{\mu}(\Omega\setminus K; \bs{x}; z)}
\, \one\{ \bs{\gamma}\cap K=\emptyset \} \, \exp\Big(\frac{\mathfrak{c}}{2} \blm(\Omega;\bs{\gamma},K)\Big),
\end{align}
where $\blm(\Omega; \bs{\gamma}, K)$ is the Brownian loop measure~\eqref{eqn::blm_def} in $\Omega$ of those loops that intersect both $\bs{\gamma}$ and $K$. 
\end{proposition}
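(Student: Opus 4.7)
My plan is to derive~\eqref{eqn::multiradialSLEbp} directly from Definition~\ref{def::multiradialSLEspiral}, decomposing the Radon--Nikodym derivative as a product of (i) the single-curve boundary perturbations of ordinary radial $\SLE_\kappa$ applied to each of the $p$ independent curves underlying the tilting construction, and (ii) the ratio of the defining martingales~\eqref{eqn::multiradialSLE_mart} evaluated in the two domains. By conformal invariance we may assume $(\Omega;\bs x;z) = (\U;\ee^{\ii\bs\theta};0)$. Stop the multi-time process at a stopping time $\bs t$ below the first collision and before $\bs\gamma_{\bs t}$ reaches $K$; one then passes to the limit $\bs t\to\bs\infty$ at the end using Proposition~\ref{prop::multiradial_transience}. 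Write $\mathsf P_p^{\U}$ for the product law of $p$ independent radial $\SLE_\kappa$ curves in $\U$. The classical single-curve radial boundary perturbation (Lawler--Schramm--Werner type) applied curve-by-curve yields
\begin{align*}
\frac{\ud \mathsf P_p^{\U\setminus K}}{\ud \mathsf P_p^{\U}}\bigg|_{\mathcal{F}_{\bs t}}
\;=\; \prod_{j=1}^p \frac{\Lambda^j_{\U}(t_j)}{\Lambda^j_{\U\setminus K}(t_j)}\,\one\{\gamma^j_{[0,t_j]}\cap K = \emptyset\}\,\exp\Big(\tfrac{\mathfrak{c}}{2}\,\blm(\U; \gamma^j_{[0,t_j]}, K)\Big),
\end{align*}
where $\Lambda^j_{\cdot}$ denotes the single radial partition function of $\gamma^j$ encoding conformal covariance at the boundary point $x^j$ and the interior target $0$.

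By Definition~\ref{def::multiradialSLEspiral}, the $p$-radial $\SLE_\kappa^\mu$ laws in $\U$ and in $\U\setminus K$ are the Girsanov tilts of $\mathsf P_p^{\U}$ and $\mathsf P_p^{\U\setminus K}$, respectively, by the martingale $M_{\bs t}(\nradpartfn{p}{\mu})$ computed in the respective domains. Hence
\begin{align*}
\frac{\ud \PPspiral{p}^{\U\setminus K}}{\ud \PPspiral{p}^{\U}}\bigg|_{\mathcal{F}_{\bs t}}
\;=\; \frac{M_{\bs t}^{\U\setminus K}(\nradpartfn{p}{\mu})}{M_{\bs t}^{\U}(\nradpartfn{p}{\mu})}\cdot \frac{\ud \mathsf P_p^{\U\setminus K}}{\ud \mathsf P_p^{\U}}.
\end{align*}
In the martingale ratio, the conformal covariance factors $(g'_{\bs t}(0))^{\cdots}$ and $\prod_j(\covmap'_{\bs t,j}(\xi_{t_j}^j))^{\mathfrak{b}}$, together with the single-curve factors $\Lambda^j$ inherited from the product ratio, telescope via the conformal covariance~\eqref{eqn::Gmu_cov} of $\nradpartfn{p}{\mu}$ into the desired partition function ratio $\nradpartfn{p}{\mu}(\U;\bs x;0)/\nradpartfn{p}{\mu}(\U\setminus K;\bs x;0)$, while the indicator functions combine into $\one\{\bs\gamma\cap K=\emptyset\}$.

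It remains to combine the Brownian loop contributions: the sum $\sum_j \blm(\U;\gamma^j, K)$ coming from (i) and the difference $\tfrac{\mathfrak{c}}{2}(\blm_{\bs t}^{\U}-\blm_{\bs t}^{\U\setminus K})$ coming from the $\blm_{\bs t}$ term in the martingale. Using the Brownian loop interpretation of $\blm_{\bs t}$ supplied by Lemma~\ref{lem::blm} (which records precisely the mass of loops that intersect two or more of the curves) together with the inclusion--exclusion identity
\begin{align*}
\sum_{j=1}^p \blm(\U;\gamma^j, K)\;-\;\bigl(\text{mass of Brownian loops meeting } K \text{ and at least two of the } \gamma^j\text{'s}\bigr)\;=\;\blm(\U;\bs\gamma, K),
\end{align*}
this combination reduces to the factor $\exp\bigl(\tfrac{\mathfrak{c}}{2}\blm(\U;\bs\gamma, K)\bigr)$, yielding~\eqref{eqn::multiradialSLEbp}. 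Passing $\bs t\to\bs\infty$ using the transience estimates of Proposition~\ref{prop::multiradial_transience} extends the identity to the full path space.

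\paragraph*{Main obstacle.} The key technical difficulty lies in the loop-measure bookkeeping of the last step, i.e., verifying that the potential difference $\blm_{\bs t}^{\U}-\blm_{\bs t}^{\U\setminus K}$ produces exactly the inclusion--exclusion correction collapsing $\sum_j\blm(\gamma^j,K)$ into $\blm(\bs\gamma,K)$. This is where the loop-measure interpretation of $\blm_{\bs t}$ via Lemma~\ref{lem::blm} and the standard restriction property of the Brownian loop measure are essential, and the argument has to be supplemented by careful stopping-time control of the multi-slit configuration before the curves meet and before they hit~$K$.
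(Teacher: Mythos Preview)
Your approach is genuinely different from the paper's, and the core idea is sound. The paper does \emph{not} work from Definition~\ref{def::multiradialSLEspiral} plus inclusion--exclusion; instead it uses the resampling structure of Proposition~\ref{prop::multiradial_resampling}: it writes the law of $\bs\gamma$ as the marginal of $\gamma^1$ (radial $\SLE_\kappa^\mu(2,\ldots,2)$, with its own boundary perturbation Lemma~\ref{lem::radialSLEkapparho_bp}) followed by the conditional half-$(p-1)$-watermelon law of $(\gamma^2,\ldots,\gamma^p)$ (with boundary perturbation Proposition~\ref{prop::halfwatermelon_bp}). The two Radon--Nikodym derivatives then combine via the elementary identity $\blm(\Omega;\gamma^1,K)+\blm(\Omega\setminus\gamma^1;\hat{\bs\gamma},K)=\blm(\Omega;\bs\gamma,K)$, and the auxiliary $\LZfusion{p-1}$ factors cancel. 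Your route avoids the half-watermelon machinery entirely and works purely from the tilting definition and single-curve boundary perturbation, which is conceptually more direct; the price is that the loop bookkeeping and the final limit become the substantive steps.

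Two points need correction or more care. First, your inclusion--exclusion identity is wrong as stated for $p\ge 3$: the correction term is not the unweighted mass of loops meeting $K$ and at least two of the $\gamma^j$, but the \emph{weighted} quantity $\blm_{\bs t}^{\U}-\blm_{\bs t}^{\U\setminus K}=\sum_{j=2}^p\blm^{\mathrm{loop}}\bigl[\ell\subset\U,\ \ell\cap K\neq\emptyset,\ \ell\text{ meets }\ge j\text{ of the }\gamma^i\bigr]$ (cf.~\eqref{eqn::blm_def} and Lemma~\ref{lem::blm}); a loop meeting $K$ and exactly $k$ curves must be subtracted with weight $k-1$, not $1$. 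With this correction the identity $\sum_j\blm(\U;\gamma^j_{[0,t_j]},K)-(\blm_{\bs t}^{\U}-\blm_{\bs t}^{\U\setminus K})=\blm(\U;\bs\gamma_{\bs t},K)$ holds. Second, passing to $\bs t\to\bs\infty$ is not a consequence of transience alone: at finite $\bs t$ your Radon--Nikodym derivative carries an extra ``tip'' factor $\nradpartfn{p}{\mu}\bigl((\U\setminus K)\setminus\bs\gamma_{\bs t};\gamma^1_{t_1},\ldots,\gamma^p_{t_p};0\bigr)\big/\nradpartfn{p}{\mu}\bigl(\U\setminus\bs\gamma_{\bs t};\gamma^1_{t_1},\ldots,\gamma^p_{t_p};0\bigr)$, and you need both its convergence to~$1$ (an argument of the type in Corollary~\ref{cor::phitKtoid}) and uniform integrability of the resulting martingale. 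This is exactly the delicate limiting step that the paper isolates and proves separately in Lemma~\ref{lem::radialSLEkapparho_bp}, and your sketch does not supply it.
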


The boundary perturbation property was proved for standard radial SLE in~\cite[Proposition~5]{Jahangoshahi-Lawler:Multiple-paths_SLE_in_multiply_connected_domains}. We prove it for multiradial SLE with spiral in Section~\ref{subsec::bp}. The proof relies on the boundary perturbation property of $\SLE_{\kappa}^{\mu}(2, \ldots, 2)$ processes proved in Lemma~\ref{lem::radialSLEkapparho_bp}, 
the boundary perturbation property of half-watermelon SLEs proved in Proposition~\ref{prop::halfwatermelon_bp}, 
and the resampling property in Theorem~\ref{thm::multiradial_resampling}.

\subsection*{Acknowledgments}
Part of this research was performed while the authors participated in a program hosted by the Hausdorff Research Institute for Mathematics (HIM), which is supported by the Deutsche Forschungsgemeinschaft (DFG, German Research Foundation) under Germany's Excellence Strategy EXC-2047/1-390685813.

This material is part of a project that has received funding from the  European Research Council (ERC) under the European Union's Horizon 2020 research and innovation programme (101042460): 
ERC Starting grant ``Interplay of structures in conformal and universal random geometry'' (ISCoURaGe) 
and from the Academy of Finland grant number 340461 ``Conformal invariance in planar random geometry.''
E.P.~is also supported by 
the Academy of Finland Centre of Excellence Programme grant number 346315 ``Finnish centre of excellence in Randomness and STructures (FiRST)'' 
and by the Deutsche Forschungsgemeinschaft (DFG, German Research Foundation) under Germany's Excellence Strategy EXC-2047/1-390685813, 
as well as the DFG collaborative research centre ``The mathematics of emerging effects'' CRC-1060/211504053.

C. H. and H.W. are supported by Beijing Natural Science Foundation (JQ20001) and New Cornerstone Investigator Program 100001127. H.W. is partly affiliated at Yanqi Lake Beijing Institute of Mathematical Sciences and Applications, Beijing, China. We thank Yilin Wang and Lu Yang for helpful discussions.

%%%%%%%%%%%%%%%
\section{Multi-time-parameter local martingale}
\label{sec::multitime_mart}
The main goal of this section is the derivation of the multi-time-parameter local martingale~\eqref{eqn::multiradialSLE_mart} in Proposition~\ref{prop::multitime_mart_universal}, which validates Definition~\ref{def::multiradialSLEspiral}.
To this end, we first gather preliminaries in Section~\ref{subsec::pre_radialkapparhomu}. 
The multi-time-parameter local martingale and its relation to Brownian loop measure is considered in Sections~\ref{subsec::multitime_mart_proof}--\ref{subsec::BLM_interpretation}. 
Sections~\ref{subsec::multiradial_spiral_basic}--\ref{subsec::coordinate_change} concern basic properties of multiradial $\SLE_\kappa^{\mu}$:
we derive the marginal law of a single curve, and show that when $\kappa \leq 4$, the process does not reach collision in finite time.

\smallskip

Throughout, we use the parameters~\eqref{eqn::universal_parameters}: 
\begin{align*}
\mathfrak{b} := \frac{6-\kappa}{2\kappa} = \Delta(\mathfrak{e}_{1,2}) 
, \qquad 
\tilde{\mathfrak{b}} := \frac{(6-\kappa)(\kappa-2)}{8\kappa} = 2 \Delta(\mathfrak{e}_{0,1/2}) 
, \qquad \textnormal{and} \qquad 
\mathfrak{c} := \frac{(6-\kappa)(3\kappa-8)}{2\kappa} .
\end{align*}

\subsection{Radial SLE with spiral and force points}
\label{subsec::pre_radialkapparhomu}

For $\kappa > 0$ and $\mu\in\R$, \emph{radial} $\SLE_{\kappa}^{\mu}$ in $(\U; \ee^{\ii\theta};0)$ is defined as the curve with Loewner driving function $\xi_t=\theta+\sqrt{\kappa}B_t+\mu t$ (as in~\eqref{eq:single_radial_Loewner_equation}), 
where $B$ is standard one-dimensional Brownian motion. 
When $\kappa\in (0,8)$,
it is almost surely generated by a continuous curve $\gamma$, which is transient: $\underset{t\to\infty}{\lim}\gamma_t=0$~\cite{Lawler:Continuity_of_radial_and_two-sided_radial_SLE_at_the_terminal_point, Field-Lawler:Escape_probability_and_transience_for_SLE}.

Radial $\SLE_{\kappa}^{\mu}$ in a polygon $(\Omega; x; z)$ is defined as the image under $\varphi^{-1}$ of radial $\SLE_{\kappa}^{\mu}$ in $(\U; \ee^{\ii\theta};0)$, 
where $\varphi^{-1} \colon \Omega \to \U$ is the conformal bijection such that $\varphi(x)=\ee^{\ii\theta}$ and $\varphi(z)=0$.
It has partition function 
\begin{align}\label{eqn:LGmu_pequalsone}
\begin{split}
\nradpartfn{1}{\mu}(\Omega; x; z)
:= \; & \exp\Big(\! -\frac{\mu}{\kappa} \arg \varphi'(z) \Big) \,  |\varphi'(z)|^{\tilde{\mathfrak{b}}-\frac{\mu^2}{2\kappa}} \;  |\varphi'(x)|^{\mathfrak{b}} \; 
\nradpartfn{1}{\mu}(\U; \ee^{\ii\theta};0) , 
\\
\qquad \textnormal{where} \qquad  
\nradpartfn{1}{\mu}(\U; u ;0) 
:= \; & \exp \big(\tfrac{\mu}{\kappa} \arg u \big) , \qquad u \in \partial \U .
\end{split}
\end{align} 
For convenience, we also write $\nradpartfn{1}{\mu}(\theta) := \nradpartfn{1}{\mu}(\U; \ee^{\ii\theta};0) = \exp (\tfrac{\mu}{\kappa} \theta )$ in the angular coordinate.

\bigskip

Let us now record the boundary perturbation property for radial $\SLE_{\kappa}^{\mu}$ when $\kappa\in (0,4]$.

\begin{lemma}[Boundary perturbation]\label{lem::radialSLE_bp}
Fix $\kappa\in (0,4], \, \mu\in\R$, and a polygon $(\Omega; x; z)$. 
Suppose $K$ is a compact subset of $\overline{\Omega}$ such that $\Omega\setminus K$ is simply connected and contains $z\in\Omega$ and $K$ has a positive distance from $x\in\partial\Omega$. 
Then, the radial $\SLE_{\kappa}^{\mu}$ probability measure 
in the smaller polygon $(\Omega\setminus K; x; z)$ is absolutely continuous with respect to that in $(\Omega; x; z)$, with Radon-Nikodym derivative 
\begin{align}\label{eqn::radialSLE_bp_RN}
\frac{\nradpartfn{1}{\mu}(\Omega;x;z)}{\nradpartfn{1}{\mu}(\Omega\setminus K; x; z)} 
\, \one\{\gamma\cap K=\emptyset \} \, \exp\Big(\frac{\mathfrak{c}}{2} \blm(\Omega;\gamma,K)\Big) ,
\end{align}
where $\blm(\Omega; \gamma, K)$ is the Brownian loop measure in $\Omega$ of those loops that intersect both $\gamma$ and $K$. 
\end{lemma}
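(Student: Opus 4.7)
The plan is to exhibit a positive local martingale under $\PPspiral{1}(\Omega;x;z)$ whose initial value is the reciprocal of the normalization in \eqref{eqn::radialSLE_bp_RN} and whose terminal value captures the Brownian-loop weight, then to conclude by optional stopping. By conformal covariance \eqref{eqn:LGmu_pequalsone} and conformal invariance of the Brownian loop measure, it suffices to work in $(\U;\ee^{\ii\theta};0)$ with $K\subset\overline{\U}$ bounded away from $\ee^{\ii\theta}$. Let $g_t$ be the radial Loewner maps of $\gamma$ driven by $\xi_t=\theta+\sqrt{\kappa}B_t+\mu t$. On $\{\gamma[0,t]\cap K=\emptyset\}$, set $K_t:=g_t(K)$, let $\psi_t\colon\U\setminus K_t\to\U$ be the conformal bijection with $\psi_t(0)=0$ and $\psi_t'(0)>0$, write $\ee^{\ii\hat{\xi}_t}:=\psi_t(\ee^{\ii\xi_t})$, and let $\chi_t$ be the covering map of $\psi_t$ on $\R$. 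The candidate local martingale is
$$ N_t \;:=\; \psi_t'(0)^{\tilde{\mathfrak{b}}-\frac{\mu^2}{2\kappa}}\, \chi_t'(\xi_t)^{\mathfrak{b}}\, \exp\!\Big(\tfrac{\mu}{\kappa}(\hat{\xi}_t-\xi_t)\Big)\, \exp\!\Big(\tfrac{\mathfrak{c}}{2}\,\blm(\U;\gamma[0,t],K)\Big). $$
Applying \eqref{eqn:LGmu_pequalsone} to $g_t$ and to $\psi_t\circ g_t$, the first three factors of $N_t$ coincide with $\nradpartfn{1}{\mu}(\U\setminus K\setminus\gamma[0,t];\gamma(t);0)/\nradpartfn{1}{\mu}(\U\setminus\gamma[0,t];\gamma(t);0)$, so $N_0$ is the reciprocal of the ratio appearing in \eqref{eqn::radialSLE_bp_RN}.

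The core step is verifying that $N_t$ is a local martingale under $\PPspiral{1}(\U;\ee^{\ii\theta};0)$, carried out by It\^o's formula. The radial Loewner equation for $g_t$ induces evolution equations for $\psi_t'(0)$, $\chi_t'(\xi_t)$, and $\hat{\xi}_t$ that are polynomial in $\chi_t'(\xi_t)$ and $(\LS\chi_t)(\xi_t)$, while Lawler's differentiation formula for the Brownian loop measure along a Loewner chain yields (paralleling \eqref{eqn::mt_def})
$$ \ud\,\blm(\U;\gamma[0,t],K) \;=\; \bigl(-\tfrac{1}{3}(\LS\chi_t)(\xi_t)+\tfrac{1}{6}(1-\chi_t'(\xi_t)^2)\bigr)\,\ud t. $$
For $\mu=0$, the drift cancellation is the content of \cite[Proposition~5]{Jahangoshahi-Lawler:Multiple-paths_SLE_in_multiply_connected_domains}: the $\tfrac{\mathfrak{c}}{2}$-Brownian-loop drift absorbs the Schwarzian, while the exponents $\mathfrak{b}$ and $\tilde{\mathfrak{b}}$ are tuned to cancel the $\chi_t'$ and $\psi_t'(0)$ drifts. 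For general $\mu$, the extra drift $\mu\,\ud t$ in $\xi_t$ produces additional first- and second-order It\^o terms, which are exactly offset by the spin factor $\exp\bigl(\tfrac{\mu}{\kappa}(\hat{\xi}_t-\xi_t)\bigr)$ together with the $-\mu^2/(2\kappa)$ shift in the exponent of $\psi_t'(0)$. These are precisely the cancellations dictated by the null-vector equation satisfied by $\nradpartfn{1}{\mu}$ under the driving $\sqrt{\kappa}B_t+\mu t$.

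To conclude, I would localize $N_t$ at $\tau_n:=\inf\{t\colon\dist(\ee^{\ii\xi_t},K_t)\le 1/n\}$, on which $N_t$ is bounded and hence a true martingale, and pass $n,t\to\infty$. Transience of radial $\SLE_\kappa^\mu$ (Proposition~\ref{prop::radialSLE_transience}) gives $\gamma(t)\to 0$ almost surely; on $\{\gamma\cap K=\emptyset\}$, $K_t$ collapses to $\partial\U$ away from $\ee^{\ii\xi_t}$, so $\psi_t\to\mathrm{id}$ locally uniformly, the prefactor of $N_t$ tends to $1$, and $\blm(\U;\gamma[0,t],K)\nearrow\blm(\U;\gamma,K)$ by monotonicity. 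Standard $\SLE_\kappa$ estimates for $\kappa\in(0,4]$ supply the uniform integrability required to take the limit, yielding
$$ \E^{\PPspiral{1}(\U;\ee^{\ii\theta};0)}\!\Bigl[\one_{\{\gamma\cap K=\emptyset\}}\,\exp\!\bigl(\tfrac{\mathfrak{c}}{2}\,\blm(\U;\gamma,K)\bigr)\Bigr] \;=\; N_0 \;=\; \frac{\nradpartfn{1}{\mu}(\U\setminus K;\ee^{\ii\theta};0)}{\nradpartfn{1}{\mu}(\U;\ee^{\ii\theta};0)}, $$
the claimed normalization; the strong Markov property of $\SLE_\kappa^\mu$ then identifies the tilted measure with $\PPspiral{1}(\U\setminus K;\ee^{\ii\theta};0)$. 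The main obstacle is the explicit It\^o drift cancellation for $\mu\ne 0$: while the $\mu=0$ case is the standard reference computation, tracking the spin and $\mu^2$ corrections requires careful bookkeeping of how the modified exponents and the spin phase interact with the radial Loewner flow.
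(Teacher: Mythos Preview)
Your proposal is correct and follows essentially the same approach as the paper. The paper itself defers the proof of this lemma to standard arguments, citing \cite[Proposition~5]{Jahangoshahi-Lawler:Multiple-paths_SLE_in_multiply_connected_domains} for $\mu=0$ and pointing to the more general Lemma~\ref{lem::radialSLEkapparho_bp} (the $p\ge 2$ case with force points), whose detailed proof follows exactly the scheme you outline: construct the local martingale $N_t$, verify the It\^o drift cancellation (including the $\mu$-corrections via the spin phase and the $-\mu^2/(2\kappa)$ shift), localize at the first time the curve approaches $K$, and pass to the limit using transience and the fact that $\psi_t\to\mathrm{id}$ (Corollary~\ref{cor::phitKtoid}).
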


\begin{proof}  
This can be proven by standard arguments; the case of $\mu=0$ is proved in~\cite[Proposition~5]{Jahangoshahi-Lawler:Multiple-paths_SLE_in_multiply_connected_domains}, and the case $\mu\in\R$ can be proved similarly.
We leave the details to the reader (see also Lemma~\ref{lem::radialSLEkapparho_bp}). 
\end{proof}

\paragraph*{Radial SLE with force points.}
Fix $p\ge 2$ and $\bs{\theta}=(\theta^1, \ldots, \theta^p)\in\LX_p$.  
For $\kappa > 0, \, \mu\in\R$, and $\bs{\rho} := (\rho_2, \ldots, \rho_p) \in\R^{p-1}$, we define 
\emph{radial} $\SLE_{\kappa}^{\mu}(\bs{\rho})$ in the reference polygon $(\U; \ee^{\ii\bs{\theta}}; 0)$ 
as the curve from $\ee^{\ii\theta^{1}}$ to $0$ with force points $(\ee^{\ii\theta^{2}}, \ldots, \ee^{\ii\theta^{p}})$ 
whose Loewner driving function $\xi_t$ (as in~\eqref{eq:single_radial_Loewner_equation}) solves the SDE 
\begin{align}\label{eqn::SLEkapparhomu_SDE}
\begin{cases}
\ud \xi_t = \sqrt{\kappa} \, \ud B_t 
- \displaystyle \sum_{j=2}^p \frac{\rho_j}{2} \cot \bigg(\frac{V_t^{j} - \xi_t}{2}\bigg) \ud t + \mu \, \ud t , 
\qquad \xi_0 = \theta^1; \\
\ud V_t^{j} =  \displaystyle  \cot \bigg(\frac{V_t^{j} - \xi_t}{2}\bigg) \ud t, \quad V_0^{j}=\theta^{j}, 
\qquad j\in \{2, \ldots, p\} .
\end{cases}
\end{align}
We define radial $\SLE_{\kappa}^{\mu}(\bs{\rho})$ in a more general $p$-polygon $(\Omega; \bs x;z)$
as the pushforward measure of $\SLE_{\kappa}^{\mu}(\bs{\rho})$ from $(\U; \ee^{\ii\bs{\theta}}; 0)$ 
by the map $\varphi^{-1}$, where 
$\varphi \colon \Omega \to \U$ is any conformal bijection such that $\varphi(z)=0$, 
and $\varphi(x^j) =: \ee^{\ii\theta^j}$, for each $j$.
Any two such mappings differ by a rotation, which does not affect the law.

\smallskip

The next Lemma~\ref{lem::sleakapprhomu_mart} compares the laws of radial $\SLE_{\kappa}^{\mu}(\bs{\rho})$ and radial $\SLE_{\kappa}$ (without spiral).
The comparison involves a martingale built from the partition function
of radial $\SLE_{\kappa}^{\mu}(\bs{\rho})$, defined as 
\begin{align}
\nradpartfn{1}{\mu; \bs{\rho}}(\bs\theta)
:= \; & \exp \bigg( \frac{\mu}{\kappa} \theta^1 + \frac{\mu}{\kappa} \sum_{j=2}^p \frac{\rho_j}{2} \theta^j \bigg) \, 
\prod_{\ell=2}^p |\ee^{\ii\theta^\ell} - \ee^{\ii\theta^1}|^{\frac{\rho_\ell}{\kappa}} \,  
\underset{2\leq j < \ell \leq p}{\prod} |\ee^{\ii\theta^\ell} - \ee^{\ii\theta^j}|^{\frac{\rho_\ell \rho_j}{2\kappa}} 
\notag
\\
\label{eqn::pf_radialSLEkapparhomu}
:= \; &  2^{\frac{2}{\kappa} ( \bar{\rho} + \bar{\varpi})} \, 
\exp \bigg( \frac{\mu}{\kappa}\sum_{j=1}^p \frac{\rho_j}{2} \theta^j \bigg) \, 
\prod_{\ell=2}^p \, \Big( \sin \Big(\frac{\theta^\ell - \theta^1}{2}\Big) \Big)^{\frac{\rho_\ell}{\kappa}}  \, 
\underset{2\leq j < \ell \leq p}{\prod} \, \Big( \sin \Big(\frac{\theta^\ell - \theta^j}{2}\Big) \Big)^{\frac{\rho_\ell \rho_j}{2\kappa}} 
\\
\nradpartfn{1}{\mu; \bs{\rho}}(\Omega; \bs{x}; z)
:= \; & \exp\Big(\!-\frac{\mu}{2\kappa}(2+\bar{\rho}) \arg \varphi'(z) \Big) \, 
|\varphi'(z)|^{ \tilde{\mathfrak{b}}+\frac{\bar{\rho}(\bar{\rho}+4)}{8\kappa}-\frac{\mu^2}{2\kappa} } \,  
|\varphi'(x^1)|^{\mathfrak{b}} \, \Big( 
\prod_{j=2}^p |\varphi'(x^j)|^{\frac{\rho_j(\rho_j+4-\kappa)}{4 \kappa}} \Big) 
 \, \nradpartfn{1}{\mu; \bs{\rho}}(\bs\theta), \notag
\end{align}
where we write $\bar{\rho} := \sum_{j=2}^{p} \rho_j$, and $\bar{\varpi} := \sum_{2\leq i < j \leq p} \frac{\rho_i\rho_j}{2}$, and set $\rho_1=2$ for convenience.

\begin{remark}
Recall~(\ref{eqn::universal_parameters},~\ref{eqn::general_charge_weight}). 
In terms of the Coulomb gas formalism of CFT, we have $\mathfrak{b} = \Delta(\mathfrak{e}_{1,2})$ and
\begin{align*}
\frac{\rho(\rho+4-\kappa)}{4 \kappa} = \Delta(\mathfrak{e}(\rho)), \qquad \textnormal{where }\; \mathfrak{e}(\rho) := \frac{\rho}{2 \sqrt{\kappa}} 
\end{align*}
is the Coulomb charge associated a boundary force point with strength $\rho$~\cite{Kytola:On_CFT_of_SLE_kappa_rho}. 
In fact, in accordance with the physics literature~\cite{Gruzberg:Stochastic_geometry_of_critical_curves_Schramm-Loewner_evolutions_and_conformal_field_theory, RBGW:Critical_curves_in_conformally_invariant_statistical_systems, BGR:Statistics_of_harmonic_measure_and_winding_of_critical_curves_from_conformal_field_theory}, 
the partition function $\nradpartfn{1}{\mu; \bs{\rho}}$ can be thought of as a correlation function of conformal fields of weight $\Delta(\mathfrak{e}_{1,2})$ at the boundary point $x^1$,
weights $\Delta(\mathfrak{e}(\rho_j))$ at the boundary points $x^j$ for $2 \leq j \leq p$, and 
at the bulk point $z$, a field of complex weights $(\Delta,\Delta^*)$, 
\begin{align*}
\Delta = \Delta(\mathfrak{e}_{\bar{\rho}},\mathfrak{m}_\mu) \; =  \;\; & \frac{8-\kappa}{16} + \frac{(\bar{\rho} -2) (\bar{\rho} +6)}{16\kappa} - \frac{\mu^2}{4 \kappa} + \ii \, \mu \frac{(2+\bar{\rho})}{4\kappa} ,
\\
\qquad \textnormal{where } \;
\mathfrak{e}_{\bar{\rho}} 
:= \; & \frac{\kappa - 2 + \bar{\rho}}{4 \sqrt{\kappa}}
\qquad \textnormal{and} \qquad 
\mathfrak{m}_\mu := \ii \, \frac{\mu }{2 \sqrt{\kappa}} .
\end{align*}
As an important example, let us consider the case where $\rho_j=2$ for all $j$.
Then, Eq.~\eqref{eqn::pf_radialSLEkapparhomu} reads
\begin{align*}
\nradpartfn{1}{\mu; 2,2,\ldots,2}(\Omega; \bs{x}; z)
= \; & \exp\Big(\!- \frac{\mu}{\kappa} p \arg \varphi'(z)\Big) \, 
|\varphi'(z)|^{\tilde{\mathfrak{b}} + \frac{p^2-1}{2\kappa}-\frac{\mu^2}{2\kappa}} \,  
 \Big( |\varphi'(x^1)| \, 
\prod_{j=2}^p |\varphi'(x^j)|\Big)^{\mathfrak{b}} \,   
\nradpartfn{p}{\mu}(\bs{\theta}),
\end{align*}
which coincides with the partition function $\smash{\nradpartfn{p}{\mu}(\Omega; \bs x; z)}$ defined in~\eqref{eqn::Gmu_cov}.
\end{remark}

\begin{lemma}\label{lem::sleakapprhomu_mart}
The law of radial $\SLE_{\kappa}^{\mu}(\bs{\rho})$ in $(\U; \ee^{\ii\bs{\theta}}; 0)$ is the same as the law of radial $\SLE_{\kappa}$ in $(\U; \ee^{\ii\theta^{1}}; 0)$ 
tilted by the following local martingale, up to the first time $\ee^{\ii\theta^{p}}$ or $\ee^{\ii\theta^{2}}$ is disconnected from the origin: 
\begin{align}\label{eqn::slekapparhomu_mart}
M_t(\nradpartfn{1}{\mu; \bs{\rho}}) 
= \; & (g'_t(0))^{\frac{\bar{\rho}(\bar{\rho}+4)}{8\kappa}-\frac{\mu^2}{2\kappa}} \, 
\Big( 
\prod_{j=2}^p (\covmap'_t (\theta^{j}))^{\frac{\rho_j(\rho_j+4-\kappa)}{4 \kappa}} \Big) \, 
\nradpartfn{1}{\mu; \bs{\rho}}(\xi_t, \covmap_t(\theta^{2}), \ldots, \covmap_t(\theta^{p}) ) ,
\end{align}
where $\bs{\rho}=(\rho_2, \ldots, \rho_p)$ and $\bar{\rho}:=\sum_{j=2}^{p} \rho_j$. 
\end{lemma}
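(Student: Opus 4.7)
The plan is: (i) verify that $M_t := M_t(\nradpartfn{1}{\mu;\bs{\rho}})$ is a local martingale under the law $\PP$ of radial $\SLE_\kappa$ in $(\U; \ee^{\ii\theta^1}; 0)$, for which $\ud\xi_t = \sqrt{\kappa}\,\ud B_t$; then (ii) apply Girsanov to read off the SDE of $\xi_t$ under the tilted measure $\ud\QQ := (M_t/M_0)\,\ud\PP$ and match it with~\eqref{eqn::SLEkapparhomu_SDE}.

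For (i), I would first record the deterministic evolutions along the radial Loewner flow. Writing $V_t^j := \covmap_t(\theta^j)$ and $h_t^j := V_t^j - \xi_t$, we have $\ud V_t^j = \cot(h_t^j/2)\,\ud t$, $\partial_t \log g'_t(0) = 1$, and $\partial_t \log \covmap'_t(\theta^j) = -\tfrac{1}{2\sin^2(h_t^j/2)}$ (the last from differentiating~\eqref{eq:single_radial_Loewner_equation_cov} in $\theta$). Writing
\begin{align*}
M_t \,=\, (g'_t(0))^a \prod_{j=2}^p (\covmap'_t(\theta^j))^{b_j}\, F\big(\xi_t, V_t^2, \ldots, V_t^p\big) ,
\end{align*}
with $a := \tfrac{\bar\rho(\bar\rho+4)}{8\kappa} - \tfrac{\mu^2}{2\kappa}$, $b_j := \tfrac{\rho_j(\rho_j+4-\kappa)}{4\kappa}$, and $F := \nradpartfn{1}{\mu;\bs{\rho}}$, Itô's formula shows that $M_t$ is a local martingale if and only if $F$ satisfies the null-state PDE
\begin{align*}
\Big[ a + \sum_{j=2}^p b_j \Big(\!\! -\tfrac{1}{2\sin^2(h^j/2)} \Big) \Big] F \,+\, \sum_{j=2}^p \cot(h^j/2)\,\partial_{\theta^j} F \,+\, \tfrac{\kappa}{2}\,\partial_{\theta^1}^2 F \;=\; 0
\end{align*}
(evaluated at $(\theta^1,\theta^2,\ldots,\theta^p) = (\xi_t, V_t^2,\ldots, V_t^p)$, with $h^j := \theta^j - \theta^1$). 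I would verify this PDE directly from the explicit expression~\eqref{eqn::pf_radialSLEkapparhomu}, using $\partial_{\theta^1}\log F = \tfrac{\mu}{\kappa} - \sum_{\ell=2}^p \tfrac{\rho_\ell}{2\kappa}\cot(h^\ell/2)$ and its analogues for $\partial_{\theta^j}\log F$.

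Given (i), Girsanov's theorem produces a Brownian motion $\tilde B$ under $\QQ$ such that $\ud\xi_t = \sqrt{\kappa}\,\ud\tilde B_t + \kappa\,\partial_{\theta^1}\log F(\xi_t,V_t^2,\ldots,V_t^p)\,\ud t$; substituting the formula for $\partial_{\theta^1}\log F$ gives precisely the drift $\mu - \sum_{\ell=2}^p \tfrac{\rho_\ell}{2}\cot(h_t^\ell/2)$ of~\eqref{eqn::SLEkapparhomu_SDE}, while the Loewner equations for $V_t^j$ are unaffected by the change of measure. The local-martingale property persists up to the first time some $h_t^\ell \equiv 0 \pmod{2\pi}$, i.e., until $\ee^{\ii\theta^\ell}$ is disconnected from the origin, matching the stated stopping time.

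The main obstacle is the algebraic verification of the null-state PDE. The operator $\tfrac{\kappa}{2}\partial_{\theta^1}^2$ acting on $F$ produces, in addition to the $\sin^{-2}(h^j/2)$ singularities that pair with $b_j$ via $1 + \cot^2 = \sin^{-2}$, mixed products $\cot(h^i/2)\cot(h^\ell/2)$ with $i \neq \ell$ coming from $(\partial_{\theta^1}\log F)^2$. These must cancel against the three-point contributions of $\cot(h^j/2)\,\partial_{\theta^j} F$ via a standard cotangent identity of the form $\cot(\tfrac{a-b}{2})\big[\cot(\tfrac{b}{2}) - \cot(\tfrac{a}{2})\big] = 1 + \cot(\tfrac{a}{2})\cot(\tfrac{b}{2})$; the linear-in-$\mu$ cross terms vanish by direct rearrangement, and the residual constant matches $a$ (which absorbs the $-\mu^2/(2\kappa)$) together with the on-diagonal pieces of $b_j$. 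This bookkeeping, though routine, constitutes the bulk of the work.
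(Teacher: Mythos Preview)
Your proposal is correct. The difference from the paper's argument is one of decomposition: you verify directly that the full $M_t(\nradpartfn{1}{\mu;\bs{\rho}})$ is a local martingale via the null-state PDE for $\nradpartfn{1}{\mu;\bs{\rho}}$ and then apply Girsanov once, whereas the paper proceeds in two layers. It first invokes the $\mu=0$ case (the Schramm--Wilson computation that $M_t(\nradpartfn{1}{0;\bs{\rho}})$ tilts radial $\SLE_\kappa$ to $\SLE_\kappa(\bs\rho)$), and then observes that the ratio $M_t(\nradpartfn{1}{\mu;\bs{\rho}})/M_t(\nradpartfn{1}{0;\bs{\rho}})$ equals $\exp(L_t - \tfrac{1}{2}\langle L,L\rangle_t)$ for the linear process $L_t = \tfrac{\mu}{\kappa}\big(\xi_t + \sum_{j\ge 2}\tfrac{\rho_j}{2}\covmap_t(\theta^j)\big)$, which satisfies $\ud L_t = \tfrac{\mu}{\sqrt{\kappa}}\,\ud B_t$ under the $\mu=0$ tilted law; a second Girsanov then adds the drift $\mu\,\ud t$. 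The paper's route isolates the spiral as a trivial exponential-martingale tilt and offloads the cotangent bookkeeping to a cited result, while your single-step route is self-contained but carries the $\mu$-terms through the PDE verification (where, as you note, they only contribute a constant $-\mu^2/(2\kappa)$ and a linear cross term that cancels).
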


\begin{proof}
When $\mu=0$, a direct calculation shows that, up to the first time when $\ee^{\ii\theta^{p}}$ or $\ee^{\ii\theta^{2}}$ is disconnected from the origin, 
the law $\PP$ of radial $\SLE_{\kappa}(\bs{\rho})$ in $(\U; \ee^{\ii\bs{\theta}}; 0)$ 
is the same as the law of 
radial $\SLE_{\kappa}$ in $(\U; \ee^{\ii\theta^{1}}; 0)$ tilted by the local martingale
$M(\nradpartfn{1}{0; \bs{\rho}})$~\cite{Schramm-Wilson:SLE_coordinate_changes}, 
which we denote as $\PPnospiralrho$.
The addition of the spiraling parameter $\mu \in \R$ follows easily via an application of Girsanov's theorem as follows. 
Under $\PPnospiralrho$, we have 
\begin{align*}
\ud \xi_t 
= \; & \sqrt{\kappa} \, \ud B_t + \kappa \, \partial_1 \log \nradpartfn{1}{0; \bs{\rho}}(\xi_t, \covmap_t(\theta^{2}), \ldots, \covmap_t(\theta^{p}) ) \, \ud t 
\\
= \; &  \sqrt{\kappa} \, \ud B_t - \sum_{j=2}^p \frac{\rho_j}{2}\cot \bigg(\frac{\covmap_t(\theta^{j}) - \xi_t}{2}\bigg) \ud t ,
&& [\textnormal{by~\eqref{eqn::pf_radialSLEkapparhomu}}]
\end{align*}
and by the radial Loewner equation~\eqref{eq:single_radial_Loewner_equation_cov} the process
\begin{align*}
L_t := \frac{\mu}{\kappa} \Big(\xi_t+\sum_{j=2}^p \frac{\rho_j}{2} \covmap_t(\theta^{j}) \Big)
\end{align*}
satisfies $\ud L_t=\frac{\mu}{\sqrt{\kappa}}\ud B_t$, 
so $\langle L, L\rangle_t = \frac{\mu^2}{\kappa} t = \frac{\mu^2}{\kappa} \log g'_t(0)$.
Denote by $\PPspiralrho$ the measure $\PPnospiralrho$ tilted by 
\begin{align*}
R_t:=\frac{M_t(\nradpartfn{1}{\mu; \bs{\rho}})}{M_t(\nradpartfn{1}{0; \bs{\rho}})} 
= \exp \Big(L_t-\frac{1}{2}\langle L, L\rangle_t\Big). 
\end{align*}
Girsanov's theorem implies that $\tilde{B}_t=B_t-\frac{\mu}{\sqrt{\kappa}}t$ is standard Brownian motion under $\PPspiralrho$,
and we have 
\begin{align*}
\ud \xi_t=\sqrt{\kappa} \, \ud \tilde{B}_t - \sum_{j=2}^p \frac{\rho_j}{2}\cot\bigg(\frac{\covmap_t(\theta^{j}) - \xi_t}{2}\bigg)\ud t+\mu \, \ud t .
\end{align*}
Therefore, $\PPspiralrho$ is indeed the same as the law of radial $\SLE_{\kappa}^{\mu}(\bs{\rho})$ process (cf.~Eq.~\eqref{eqn::SLEkapparhomu_SDE}). 
\end{proof}

We see from Lemma~\ref{lem::sleakapprhomu_mart} that 
radial $\SLE_{\kappa}^{\mu}(\bs\rho)$ is absolutely continuous with respect to radial $\SLE_{\kappa}$, 
so, as the latter is almost surely generated by a continuous curve, radial $\SLE_{\kappa}^{\mu}(\bs\rho)$ is also almost surely generated by continuous curve --- 
up to the first time when $\ee^{\ii\theta^{p}}$ or $\ee^{\ii\theta^{2}}$is disconnected from the origin. 
However, this does \emph{not} imply transience for radial $\SLE_{\kappa}^{\mu}(\bs\rho)$, 
because the absolute continuity only holds up to a possibly finite stopping time.
Generalizing~\cite{Lawler:Continuity_of_radial_and_two-sided_radial_SLE_at_the_terminal_point}, 
we prove in Appendix~\ref{app:transience} that, when $\kappa\in (0,4]$ and $\rho_j\ge 0$ for all $j$, 
radial $\SLE_{\kappa}^{\mu}(\bs\rho)$ in $(\U; \ee^{\ii\bs{\theta}}; 0)$ is almost surely transient: that is,  $\gamma_t \to 0$ as $t\to\infty$.

\subsection{Local martingales for multiradial SLE}
\label{subsec::multitime_mart_proof}

We use for multiradial Loewner evolutions the same notation as in Sections~\ref{subsec::intro_preli}--\ref{subsec::intro_multiradialSLE} --- see also Figure~\ref{fig::multitime}. 

\begin{proposition}\label{prop::multitime_mart_universal}
Fix $\kappa > 0, \, \aleph\in\R, \, p\ge 2$, and $\bs{\theta}=(\theta^1, \ldots, \theta^p)\in\LX_p$.
For each $j\in \{1,\ldots, p\}$, let $\gamma^{j}$ be radial $\SLE_{\kappa}$ in $(\U; \ee^{\ii\theta^j}; 0)$ 
and let $\mathsf{P}_p$ 
be the probability measure on
$\bs{\gamma}=(\gamma^{1}, \ldots, \gamma^{p})$ 
under which the curves  
are independent. 
We parameterize $\bs{\gamma}$ by $p$-time-parameter $\bs{t}$, and let $\bs{\mslitdriv}_{\bs{t}} = (\mslitdriv_{\bs{t}}^{1},\ldots,\mslitdriv_{\bs{t}}^{p})$ be the multi-slit driving function as in~\eqref{eqn::Ntuple_driving}.
For a function $\LG \colon \LX_p\to \R_{>0}$, define 
\begin{align}\label{eqn::multitime_mart_universal}
\begin{split}
M_{\bs{t}}(\LG)
:= \; & \one_{\LE_{\emptyset}(\bs{\gamma}_{\bs{t}})} \, \exp\bigg(\frac{\mathfrak{c}}{2}\blm_{\bs{t}} -\tilde{\mathfrak{b}}\sum_{j=1}^{p}t_j\bigg) 
\; (g_{\bs{t}}'(0))^{\tilde{\mathfrak{b}}-\aleph} 
\Big(\prod_{j=1}^{p} \covmap_{\bs{t},j}'(\xi_{t_j}^{j}) \Big)^{\mathfrak{b}}
\LG(\bs{\mslitdriv}_{\bs{t}})
\\
= \; & \one_{\LE_{\emptyset}(\bs{\gamma}_{\bs{t}})} \, \exp\Big(\frac{\mathfrak{c}}{2}\blm_{\bs{t}}\Big) 
\; (g_{\bs{t}}'(0))^{-\aleph-(p-1)\tilde{\mathfrak{b}}} 
\Big(\prod_{j=1}^{p} \covmap_{\bs{t},j}'(\xi_{t_j}^{j}) \Big)^{\mathfrak{b}}
\Big(\prod_{j=1}^{p} g_{\bs{t},j}'(0) \Big)^{\tilde{\mathfrak{b}}}
\LG(\bs{\mslitdriv}_{\bs{t}}) ,
\end{split}
\end{align}
where $\LE_{\emptyset}(\bs{\gamma}_{\bs{t}}) = \{ \gamma_{[0,t_j]}^{j} \cap \gamma_{[0,t_i]}^{i}=\emptyset, \, \forall i\neq j\}$ is the event that different curves are disjoint, and 
$\blm_{\bs{t}}$ is the unique potential solving the exact differential equation~\eqref{eqn::mt_def} \textnormal{(}see Lemma~\ref{lem::blm_exact}\textnormal{):} 
\begin{align*}
\ud  \blm_{\bs{t}} 
= \sum_{j=1}^{p} \partial_{t_j} \blm_{(t_1,\ldots,t_p)} \ud t_j
= \sum_{j=1}^{p} \Big( \! -\frac{1}{3} (\LS \covmap_{\bs{t},j})(\xi_{t_j}^{j})+\frac{1}{6} \big(1- (\covmap'_{\bs{t},j}(\xi_{t_j}^{j}))^2 \big) \Big) \ud t_j, 
\qquad \blm_{\bs{0}} = 0 .
\end{align*}
Then, the process $M(\LG)$ is $p$-time-parameter local martingale under $\mathsf{P}_p$ if and only if $\LG$ is smooth and satisfies the following system of {\bf\emph{radial BPZ equations}}\textnormal{:}
for each $\bs{\vartheta} := (\vartheta^1, \ldots, \vartheta^p)\in\LX_p$, 
\begin{align}\label{eqn::radialBPZ}
\frac{\kappa}{2} \frac{\partial_j^2 \LG(\bs{\vartheta})}{\LG(\bs{\vartheta})} 
+ \underset{1\leq i\neq j \leq p}{\sum} \, \bigg( \cot \bigg(\frac{\vartheta^{i}-\vartheta^{j}}{2}\bigg)
\frac{\partial_{i} \LG(\bs{\vartheta})}{\LG(\bs{\vartheta})} \, - \, \frac{\mathfrak{b}/2}{ \sin^2 \big(\frac{\vartheta^{i}-\vartheta^{j}}{2}\big)}\bigg)=\aleph, \qquad \textnormal{for all }j\in \{1,\ldots,p\} .
\end{align}
\textnormal{(}The function $\sin^2 \big(\frac{\vartheta^{i}-\vartheta^{j}}{2}\big)$ is a constant multiple of the boundary Poisson kernel in the disc, Eq.~\textnormal{\eqref{eqn::Poissonkernel_disc}}.\textnormal{)}
\end{proposition}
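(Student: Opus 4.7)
By the obvious $j \leftrightarrow j'$ symmetry of the formula~\eqref{eqn::multitime_mart_universal} and the independence of the driving Brownian motions of $\xi^1, \ldots, \xi^p$ under $\mathsf{P}_p$, it suffices to verify that the drift of $M_{\bs{t}}(\LG)$ with respect to $t_1$ (holding $t_2, \ldots, t_p$ fixed) vanishes if and only if the $j = 1$ case of~\eqref{eqn::radialBPZ} holds. Throughout I abbreviate $\lambda_1 := \covmap'_{\bs{t}, 1}(\xi^1_{t_1})$ and $\mu_2 := \covmap''_{\bs{t}, 1}(\xi^1_{t_1})$, so that $a^1_{\bs{t}} = \lambda_1^2$.

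The key technical input is obtained by applying $\partial_{t_1}$ to the composition $\covmap_{\bs{t}} = \covmap_{\bs{t}, j} \circ \covmap^j_{t_j}$ and comparing with~\eqref{eq:LE_multislith} and~\eqref{eq:single_radial_Loewner_equation_cov}. For $j \neq 1$ (so that $\covmap^j_{t_j}$ is $t_1$-independent), this yields
\begin{align*}
(\partial_{t_1} \covmap_{\bs{t}, j})(w) \; = \; a^1_{\bs{t}} \cot \bigg(\frac{\covmap_{\bs{t}, j}(w) - \mslitdriv^1_{\bs{t}}}{2}\bigg),
\end{align*}
while for $j = 1$ the right-hand side has an additional subtraction $-\covmap'_{\bs{t}, 1}(w) \cot((w - \xi^1_{t_1})/2)$. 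Evaluation at $w = \xi^j_{t_j}$ gives $d_{t_1} \mslitdriv^j_{\bs{t}}$, while a $w$-derivative gives the $t_1$-drift of $\covmap'_{\bs{t}, j}(\xi^j_{t_j})$. Taylor-expanding the $j = 1$ identity at $w = \xi^1_{t_1}$ and substituting the algebraic identity $\covmap'''_{\bs{t}, 1}(\xi^1_{t_1}) / \lambda_1 = (\LS \covmap_{\bs{t}, 1})(\xi^1_{t_1}) + \tfrac{3\mu_2^2}{2\lambda_1^2}$ produces the Schwarzian combination.

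I then apply Itô's formula in the $t_1$-direction to $\log M_{\bs{t}}(\LG)$, noting that the only $dB^1$-input comes from $d\xi^1_{t_1} = \sqrt{\kappa}\, dB^1_{t_1}$. The drift contributions of the factors in~\eqref{eqn::multitime_mart_universal} are: (a) $\exp(\tfrac{\mathfrak{c}}{2}\blm_{\bs{t}} - \tilde{\mathfrak{b}} \sum_j t_j)$ produces, via~\eqref{eqn::mt_def}, the Schwarzian $-\tfrac{\mathfrak{c}}{6}(\LS \covmap_{\bs{t}, 1})(\xi^1_{t_1})$, the piece $\tfrac{\mathfrak{c}}{12}(1 - \lambda_1^2)$, and the constant $-\tilde{\mathfrak{b}}$; (b) $(g_{\bs{t}}'(0))^{\tilde{\mathfrak{b}} - \aleph}$ produces $(\tilde{\mathfrak{b}} - \aleph) a^1_{\bs{t}}$; (c) $\covmap'_{\bs{t}, 1}(\xi^1_{t_1})^{\mathfrak{b}}$ produces a $dB^1$ part $\mathfrak{b}\sqrt{\kappa}\mu_2/\lambda_1$ together with drifts in the Schwarzian, $\mu_2^2/\lambda_1^2$, and $(1 - \lambda_1^2)$; (d) each $\covmap'_{\bs{t}, j}(\xi^j_{t_j})^{\mathfrak{b}}$ with $j \neq 1$ produces $-\tfrac{\mathfrak{b}\, a^1_{\bs{t}}/2}{\sin^2((\mslitdriv^j_{\bs{t}} - \mslitdriv^1_{\bs{t}})/2)}$; (e) $\LG(\bs{\mslitdriv}_{\bs{t}})$ produces $\tfrac{\kappa a^1_{\bs{t}}}{2} \partial_1^2 \LG/\LG$, cotangent drifts $a^1_{\bs{t}} \sum_{j \neq 1} \cot((\mslitdriv^j_{\bs t} - \mslitdriv^1_{\bs t})/2) \partial_j \LG/\LG$, a $dB^1$ part $\sqrt{\kappa}\lambda_1 \partial_1 \LG/\LG$, and a $-\tfrac{\kappa\lambda_1^2}{2}(\partial_1\LG/\LG)^2$ Itô correction. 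Closing via $dM/M = d\log M + \tfrac{1}{2} d\langle \log M\rangle$, I then collect and match terms: the Schwarzian coefficient is $-\mathfrak{c}/6 + \mathfrak{b}(3\kappa - 8)/6 = 0$; the $\mu_2^2/\lambda_1^2$ coefficient is $\mathfrak{b}(\kappa - 6)/4 + \kappa\mathfrak{b}^2/2 = 0$; the $\mu_2 \partial_1 \LG/\LG$ cross-term coefficient is $\kappa\mathfrak{b} + \kappa/2 - 3 = 0$; and the $(\partial_1 \LG/\LG)^2$ pieces from (e) and from the Itô correction cancel---all by~\eqref{eqn::universal_parameters}. The $(1 - \lambda_1^2)$ contributions from (a) and (c) sum to $\tilde{\mathfrak{b}}(1 - \lambda_1^2)$, which combines with (b) and the constant in (a) to leave $-\aleph\, a^1_{\bs{t}}$. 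The surviving drift thus equals $a^1_{\bs{t}}\, \LG$ times the left-hand side of~\eqref{eqn::radialBPZ} minus $\aleph$; since $a^1_{\bs{t}} > 0$ and $\bs{\theta} \in \LX_p$ is arbitrary, this vanishes identically iff~\eqref{eqn::radialBPZ} holds.

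\textbf{Main obstacle:} The computation is conceptually straightforward but technically demanding, owing to the multitude of Itô-drift pieces arising from the factor $\covmap'_{\bs{t}, 1}(\xi^1_{t_1})^{\mathfrak{b}}$ and their interplay with $\blm_{\bs{t}}$. The cancellations are tight and depend crucially on the definitions~\eqref{eqn::universal_parameters} of $\mathfrak{b}, \tilde{\mathfrak{b}}, \mathfrak{c}$, together with the specific form of $\blm_{\bs{t}}$ in~\eqref{eqn::mt_def}: the role of $\blm_{\bs{t}}$ is precisely to absorb the Schwarzian and $(1 - \lambda_1^2)$ anomalies that otherwise obstruct the martingale property. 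A preliminary check, to be established in Lemma~\ref{lem::blm_exact}, is that the right-hand side of~\eqref{eqn::mt_def} is an exact differential in $\bs{t}$, which is itself nontrivial.
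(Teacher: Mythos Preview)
Your approach is essentially the same as the paper's: both apply It\^o's formula and reduce the drift to the radial BPZ expression times $a^j_{\bs t}=(\covmap'_{\bs t,j}(\xi^j_{t_j}))^2$. The paper works with $M$ directly using the second line of~\eqref{eqn::multitime_mart_universal}, while you work with $\log M$ and the first line, but the resulting cancellations (Schwarzian, $(\mu_2/\lambda_1)^2$, cross-term) are identical and rely on the same algebraic identities in $\mathfrak{b},\tilde{\mathfrak{b}},\mathfrak{c}$.

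One omission: your argument implicitly assumes $\LG\in C^2$ in order to apply It\^o's formula, so it only establishes the ``if'' direction and the ``only if'' direction under an a~priori smoothness hypothesis. The statement, however, asserts that the local martingale property \emph{forces} $\LG$ to be smooth. The paper handles this by observing that the local martingale property characterizes $\LG$ as a weak solution of~\eqref{eqn::radialBPZ}, and then invokes hypoellipticity of the BPZ operators (as in~\cite[Appendix~A]{FLPW:Multiple_SLEs_Coulomb_gas_integrals_and_pure_partition_functions}) to upgrade weak solutions to smooth ones. You should add a sentence to this effect to close the ``only if'' direction.
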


\smallskip

Proposition~\ref{prop::multitime_mart_universal} is a generalization of various special cases appearing in earlier literature:
the case where $\smash{\LG=\nradpartfn{p}{0}}$ as defined in~\eqref{eq:multiradial_partition_function} appeared in~\cite{Healey-Lawler:N_sided_radial_SLE}; 
the case where $\LG$ equals a pure partition function appeared in~\cite{FWW:Multiple_Ising_interfaces_in_annulus_and_2N-sided_radial_SLE}; 
and the case where $\smash{\LG=\nradpartfn{2}{\mu}}$ as defined in~\eqref{eqn:spiralpartitionfunction} with $p=2$ appeared in~\cite{KWW:Commutation_relations_for_two-sided_radial_SLE}. 
For later use, we derive the general version here.  
Moreover, in Section~\ref{subsec::coordinate_change} we address the case of a general endpoint for radial $\SLE_\kappa^\mu$ curves. 
Radial SLEs in $\U$ are not invariant under conformal self-maps of the disc $\U$ moving the origin. 
Instead, one has to take into account the appropriate coordinate change~\cite{Schramm-Wilson:SLE_coordinate_changes}. 
The coordinate change gives a non-trivial contribution to the spiraling rate, as in Equations~(\ref{eqn::pf_radialSLEkapparhomu},~\ref{eqn::multitime_mart_nradial_spiral_z}).

\begin{remark}
Note that $\blm_{\bs{t}} \equiv 0$ when $p=1$ or $\bs{t} = (t,0,\ldots,0)$ (meaning that we only grow one curve): 
in both cases, Eq.~\eqref{eqn::multitime_mart_universal} only contains the $j=1$ term, which equals zero:
namely $g_{\bs{t}} = g_{t}^{1} =: g_{t}$ is just the mapping-out function of the first curve normalized as $\log g_{t}'(0) = t$, and $\covmap_{\bs{t},1}$ is just the identity map. 
\end{remark}

\begin{proof}[Proof of Proposition~\ref{prop::multitime_mart_universal}]
The equality of the first line and the second line in~\eqref{eqn::multitime_mart_universal} follows by definitions of the various conformal maps. To prove the assertion for the second line in~\eqref{eqn::multitime_mart_universal}, let us first compute the variations of the terms appearing in it in the capacity parameterization.
By differentiating~\eqref{eq:LE_multislitg} and using the identity $g_{\bs{t},j} = g_{\bs{t}} \circ (g_{t_j}^{j})^{-1}$ and the radial Loewner equation~\eqref{eq:single_radial_Loewner_equation} for $g_{t_j}^{j}$, we find that
\begin{align}
\label{eqn::multitimemart_aux1}
\frac{\ud g'_{\bs{t}}(0)}{g'_{\bs{t}}(0)} = \sum_{j=1}^p (\covmap_{\bs{t},j}' (\xi_{t_j}^{j}))^2 \, \ud t_j
\qquad \textnormal{and} \qquad 
\frac{\ud g_{\bs{t},j}'(0)}{g_{\bs{t},j}'(0)} = \big( (\covmap_{\bs{t},j}' (\xi_{t_j}^{j}))^2 -1\big) \, \ud t_j 
+ \sum_{i \neq j} (\covmap_{\bs{t},i}' (\xi_{t_i}^{i}))^2 \ud t_i .
\end{align}
Using It\^{o}'s formula, we thus find
\begin{align}
\label{eqn::multitimemart_aux2}
\ud \mslitdriv_{\bs{t}}^{j} = \; & \covmap_{\bs{t},j}' (\xi_{t_j}^{j}) \, \ud \xi_{t_j}^{j} 
- \kappa \mathfrak{b} \, \covmap_{\bs{t},j}'' (\xi_{t_j}^{j}) \, \ud t_j  
+ \sum_{i \neq j} \cot \bigg(\frac{\mslitdriv_{\bs{t}}^{j}-\mslitdriv_{\bs{t}}^i}{2}\bigg) (\covmap_{\bs{t},i}' (\xi_{t_i}^{i}))^2 \, \ud t_i ,
\\
\label{eqn::multitimemart_aux3}
\begin{split}
\textnormal{and} \qquad 
\frac{\ud \covmap_{\bs{t},j}'(\xi_{t_j}^{j})}{\covmap_{\bs{t},j}'(\xi_{t_j}^{j})} 
= \; & \frac{ \covmap_{\bs{t},j}''(\xi_{t_j}^{j})}{\covmap_{\bs{t},j}'(\xi_{t_j}^{j})} \, \ud \xi_{t_j}^{j} 
- \frac{1}{2} \sum_{i \neq j} \frac{(\covmap_{\bs{t},i}' (\xi_{t_i}^{i}))^2}{\sin^2 \big(\frac{\mslitdriv_{\bs{t}}^{j}-\mslitdriv_{\bs{t}}^i}{2}\big)}  \, \ud t_i 
\\
\; & + \bigg( \bigg( \frac{3\kappa-8}{6} \bigg) \frac{ \covmap_{\bs{t},j}'''(\xi_{t_j}^{j})}{\covmap_{\bs{t},j}'(\xi_{t_j}^{j})} + \frac{1}{2} \bigg( \frac{ \covmap_{\bs{t},j}''(\xi_{t_j}^{j})}{\covmap_{\bs{t},j}'(\xi_{t_j}^{j})}\bigg)^2 - \frac{1}{6}\big( (\covmap_{\bs{t},j}' (\xi_{t_j}^{j}))^2 - 1\big)\bigg) \ud t_j.
\end{split}
\end{align}

We first assume that $\LG\in C^2(\LX_p)$ and apply It\^{o}'s formula to $M_{\bs{t}}(\LG)$ to obtain
\begin{align*}
\frac{\ud M_{\bs{t}}(\LG)}{M_{\bs{t}}(\LG)}
= \; & \big(\! -\aleph-(p-1)\tilde{\mathfrak{b}} \big) \frac{\ud g'_{\bs{t}}(0)}{g'_{\bs{t}}(0)} 
+ \sum_{j=1}^p \bigg( \mathfrak{b} \frac{\ud \covmap_{\bs{t},j}'(\xi_{t_j}^{j})}{\covmap_{\bs{t},j}'(\xi_{t_j}^{j})} + \frac{\kappa \mathfrak{b}(\mathfrak{b}-1)}{2} \bigg( \frac{ \covmap_{\bs{t},j}''(\xi_{t_j}^{j})}{\covmap_{\bs{t},j}'(\xi_{t_j}^{j})}\bigg)^2 \ud t_j \bigg)
\\
 \; & + \tilde{\mathfrak{b}} \sum_{j=1}^p \frac{\ud g_{\bs{t},j}'(0)}{g_{\bs{t},j}'(0)} + \frac{\mathfrak{c}}{2} \ud \blm_{\bs{t}}+ \sum_{j=1}^p \bigg( \frac{\partial_j \LG}{\LG} \, \ud \mslitdriv_{\bs{t}}^{j} 
 + \frac{\kappa}{2} \frac{\partial_j^2 \LG}{\LG} (\covmap_{\bs{t},j}' (\xi_{t_j}^{j}))^2 \, \ud t_j \bigg) \\
 \; & + \kappa \mathfrak{b} \sum_{j=1}^p \frac{\partial_j \LG}{\LG} \covmap_{\bs{t},j}''(\xi_{t_j}^{j}) \, \ud t_j. 
\end{align*}
Using~\eqref{eqn::mt_def} and~\eqref{eqn::multitimemart_aux1}--\eqref{eqn::multitimemart_aux3}, we obtain
\begin{align*}
\frac{\ud M_{\bs{t}}(\LG)}{M_{\bs{t}}(\LG)}
= \; & \sum_{j=1}^p \bigg( \frac{\partial_j \LG(\bs{\mslitdriv}_{\bs{t}})}{\LG(\bs{\mslitdriv}_{\bs{t}})} \covmap_{\bs{t},j}'(\xi_{t_j}^{j}) + \mathfrak{b} \frac{\covmap_{\bs{t},j}''(\xi_{t_j}^{j})}{\covmap_{\bs{t},j}'(\xi_{t_j}^{j})} \bigg) \ud \xi_{t_j}^{j}  \\
\; & +\sum_{j=1}^p (\covmap_{\bs{t},j}' (\xi_{t_j}^{j}))^2 \bigg( \frac{\kappa}{2} \frac{\partial_j^2 \LG(\bs{\mslitdriv}_{\bs{t}})}{\LG(\bs{\mslitdriv}_{\bs{t}})} 
+ \sum_{i\neq j} \bigg( \cot \bigg(\frac{\mslitdriv_{\bs{t}}^i-\mslitdriv_{\bs{t}}^j}{2}\bigg) 
\frac{\partial_{i} \LG(\bs{\mslitdriv}_{\bs{t}})}{\LG(\bs{\mslitdriv}_{\bs{t}})}- \frac{ \mathfrak{b}/2}{\sin^2 \big(\frac{\mslitdriv_{\bs{t}}^i-\mslitdriv_{\bs{t}}^j}{2}\big)} \bigg) - \aleph\bigg) \ud t_j. 
\end{align*}
From this expression, we see that $M(\LG)$ is $p$-time-parameter local martingale if and only if all the drift terms vanish, which happens precisely when $\LG$ satisfies the system~\eqref{eqn::radialBPZ} of radial BPZ equations. 

To lift the $C^2$-assumption on $\LG$, 
one can prove (like~\cite[Proposition~A.5]{FLPW:Multiple_SLEs_Coulomb_gas_integrals_and_pure_partition_functions}) that
$M(\LG)$ is $p$-time-parameter local martingale if and only if $\LG$ satisfies the system~\eqref{eqn::radialBPZ} of radial BPZ equations as a weak solution; 
and since the differential operators in~\eqref{eqn::radialBPZ} are hypoelliptic, 
weak solutions are strong solutions; in particular smooth (see~\cite[Appendix~A]{FLPW:Multiple_SLEs_Coulomb_gas_integrals_and_pure_partition_functions} for the chordal case). 
This finishes the proof. 
\end{proof}

Next, we address the quantity $\blm_{\bs{t}}$ in Eq.~\eqref{eqn::mt_def}. 
Lemma~\ref{lem::blm} gives a formula for it in terms of the Brownian loop measure. 
First, the fact that it is well-defined hinges on the following integrability result.

\begin{lemma} \label{lem::blm_exact}
The differential equation~\eqref{eqn::mt_def} is exact and has a unique solution $\blm_{\bs{t}}$ such that $\blm_{\bs{0}}=0$.
\end{lemma}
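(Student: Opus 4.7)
The plan is to verify directly the integrability (cross-derivative) condition. Write the right-hand side of~\eqref{eqn::mt_def} as $\sum_{j=1}^{p} F_j(\bs{t}) \, \ud t_j$, where
\begin{align*}
F_j(\bs{t}) := -\tfrac{1}{3}(\LS \covmap_{\bs{t},j})(\xi^{j}_{t_j}) + \tfrac{1}{6}\big(1 - (\covmap'_{\bs{t},j}(\xi^{j}_{t_j}))^2\big).
\end{align*}
The connected component of $\{\bs{t}\in [0,\infty)^p : \LE_{\emptyset}(\bs{\gamma}_{\bs{t}}) \text{ holds}\}$ containing $\bs{0}$ is star-shaped about the origin (shortening each slit preserves disjointness), hence simply connected. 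By Poincar\'e's lemma, once the closedness condition $\partial_{t_i} F_j = \partial_{t_j} F_i$ is established for all $i\neq j$, there exists a unique potential $\blm_{\bs{t}}$ satisfying $\blm_{\bs{0}}=0$.

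The key input is a one-parameter radial Loewner equation for the map $g_{\bs{t},j}$ in each of the ``other'' time parameters $t_i$, $i\neq j$. Differentiating the factorization $g_{\bs{t}} = g_{\bs{t},j} \circ g_{t_j}^{j}$ in $t_i$ and invoking~\eqref{eq:LE_multislitg}, while noting that both $g_{t_j}^{j}$ and $\xi^{j}_{t_j}$ are independent of $t_i$, produces
\begin{align*}
\partial_{t_i} g_{\bs{t},j}(w) = a_{\bs{t}}^{i} \, g_{\bs{t},j}(w) \, \frac{\ee^{\ii\mslitdriv^{i}_{\bs{t}}} + g_{\bs{t},j}(w)}{\ee^{\ii\mslitdriv^{i}_{\bs{t}}} - g_{\bs{t},j}(w)},
\qquad
\partial_{t_i} \covmap_{\bs{t},j}(\theta) = a_{\bs{t}}^{i} \cot \bigg(\frac{\covmap_{\bs{t},j}(\theta) - \mslitdriv^{i}_{\bs{t}}}{2}\bigg).
\end{align*}
Evaluating at $\theta = \xi^{j}_{t_j}$ and differentiating successively in $\theta$ yields explicit closed-form expressions for $\partial_{t_i}\covmap^{(k)}_{\bs{t},j}(\xi^{j}_{t_j})$, for $k=0,1,2,3$, in terms of $a_{\bs{t}}^{i}$, the derivatives $\covmap^{(\ell)}_{\bs{t},j}(\xi^{j}_{t_j})$, and trigonometric functions of $(\mslitdriv^{j}_{\bs{t}} - \mslitdriv^{i}_{\bs{t}})/2$. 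Substituting these into the definition of $\LS\covmap = \covmap'''/\covmap' - \tfrac{3}{2}(\covmap''/\covmap')^2$ and into $F_j$ gives an explicit formula for $\partial_{t_i}F_j$; the symmetric calculation produces $\partial_{t_j}F_i$, and one verifies that the two expressions coincide.

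I expect the main obstacle to be purely algebraic: carefully tracking the third-order Schwarzian contribution and matching the resulting expression under the exchange $i\leftrightarrow j$. The final identity should reduce to trigonometric manipulations in the single variable $(\mslitdriv^{j}_{\bs{t}} - \mslitdriv^{i}_{\bs{t}})/2$, whose appearance is already anticipated by the $\sin^{-2}$ cross-terms in Proposition~\ref{prop::multitime_mart_universal}. One also needs that differentiation in $\theta$ at $\xi^{j}_{t_j}$ is permissible; this is legitimate because on the event $\LE_{\emptyset}(\bs{\gamma}_{\bs{t}})$ the point $\xi^{j}_{t_j}$ lies in a neighborhood where $\covmap_{\bs{t},j}$ is real-analytic (its only singularity on $\R$ being at the preimages of $\mslitdriv^{i}_{\bs{t}}$, which correspond to the disjoint slit $g_{t_j}^{j}(\gamma^{i}_{[0,t_i]})$). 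Once closedness is established, the star-shaped argument produces the unique primitive $\blm_{\bs{t}}$ with $\blm_{\bs{0}}=0$, completing the proof.
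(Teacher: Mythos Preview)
Your approach is correct and essentially identical to the paper's: both verify the closedness condition $\partial_{t_i}F_j=\partial_{t_j}F_i$ by deriving the radial Loewner equation $\partial_{t_i}\covmap_{\bs{t},j}(\theta)=a_{\bs{t}}^{i}\cot\big((\covmap_{\bs{t},j}(\theta)-\mslitdriv_{\bs{t}}^{i})/2\big)$ from the factorization $\covmap_{\bs{t}}=\covmap_{\bs{t},j}\circ\covmap_{t_j}^{j}$ and~\eqref{eq:LE_multislith}, then differentiating repeatedly in $\theta$ and substituting into the Schwarzian. The paper carries the computation through to the explicit symmetric expression $\partial_{t_i}F_j=\tfrac{1}{4}\,a_{\bs{t}}^{i}a_{\bs{t}}^{j}\sin^{-4}\big((\mslitdriv_{\bs{t}}^{j}-\mslitdriv_{\bs{t}}^{i})/2\big)$; your outline stops just short of this but would arrive at the same formula.
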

\begin{proof}
We need to show that
\begin{align}\label{eq::commutation}
\partial_{t_i} N_{(t_1,\ldots,t_p)}^{j}
= \partial_{t_j} N_{(t_1,\ldots,t_p)}^{i} , \qquad  \textnormal{for all } 1 \leq i\neq j \leq p ,
\end{align}
where we denote the terms in~\eqref{eqn::mt_def} by $N_{\bs{t}}^{j} = N_{(t_1,\ldots,t_p)}^{j}$,
\begin{align*}
\ud  \blm_{\bs{t}} 
= \sum_{j=1}^{p} \underbrace{\Big( \! -\frac{1}{3} (\LS \covmap_{\bs{t},j})(\xi_{t_j}^{j})+\frac{1}{6} \big(1- (\covmap'_{\bs{t},j}(\xi_{t_j}^{j}))^2 \big) \Big)}_{=: \; N_{\bs{t}}^{j}} \ud t_j, 
\qquad \blm_{\bs{0}} = 0 .
\end{align*}
If the balance equations~\eqref{eq::commutation} are satisfied, then there exists a potential $\blm_{\bs{t}} = \blm_{(t_1,\ldots,t_p)}$ such that 
\begin{align*}
\partial_{t_j} \blm_{(t_1,\ldots,t_p)} = N_{(t_1,\ldots,t_p)}^{j} , \qquad  \textnormal{for all } 1 \leq j \leq p .
\end{align*}
$\blm_{\bs{t}}$ is only determined up to an additive constant, which we fix by choosing $\blm_{\bs{0}} = 0$.
A straightforward but tedious computation using~\eqref{eq:LE_multislith} and $\covmap_{\bs{t}} = \covmap_{\bs{t},j} \circ \covmap_{t_j}^{j}$ (involving many cancellations) shows that 
\begin{align*}
\partial_{t_i} N_{(t_1,\ldots,t_p)}^{j} 
= \frac{a_{\bs{t}}^{i} a_{\bs{t}}^{j}}{4 \, \sin^4 \big(\frac{\mslitdriv_{\bs{t}}^{j} - \mslitdriv_{\bs{t}}^{i}}{2}\big)} ,
\qquad \textnormal{where} \qquad a_{\bs{t}}^{j} := \partial_{t_j} \log g_{\bs{t}}'(0) = (\covmap_{\bs{t},j}' (\xi_{t_j}^{j}))^2 , 
\end{align*}
which is symmetric with respect to the exchange $i \leftrightarrow j$, confirming~\eqref{eq::commutation}. 
\end{proof}

\subsection{Brownian loop measure interpretation}
\label{subsec::BLM_interpretation}

\emph{Brownian loop measure} $\blm^\mathrm{loop}$ is a $\sigma$-finite measure on planar unrooted Brownian loops
--- see~\cite{Lawler-Werner:The_Brownian_loop_soup} for its definition and properties
(and~\cite{Lawler:ICM_Conformally_invariant_loop_measures} for a survey). 
While the total mass of $\blm^\mathrm{loop}$ is infinite, the mass on macroscopic loops is finite: 
in particular, if $\Omega$ is a domain and $K_1, K_2 \subset \overline{\Omega}$ are two disjoint compact subsets, 
then the total mass $\blm(\Omega; K_1, K_2)$ of Brownian loops that stay in $\Omega$ and intersect both $K_1$ and $K_2$ is finite.
In general, for $n\ge 2$ non-empty disjoint compact subsets $K_1, \ldots, K_n$ of $\overline{\Omega}$, we denote
\begin{align}\label{eqn::blm_def}
\blm(\Omega; K_1, \ldots, K_n) := \sum_{j=2}^n \blm^{\mathrm{loop}} \big[ \ell\subset\Omega \colon \ell\cap K_i\neq \emptyset\textnormal{ for at least }j \textnormal{ of the }i\in\{1, \ldots, n\} \big]
\; \in \; (0,\infty) .
\end{align}
See~\cite{Lawler:Partition_functions_loop_measure_and_versions_of_SLE} and~\cite{Peltola-Wang:LDP_of_multichordal_SLE_real_rational_functions_and_determinants_of_Laplacians} for more properties and~\cite{Dubedat:Euler_integrals_for_commuting_SLEs, Dubedat:Commutation_relations_for_SLE, Kozdron-Lawler:Configurational_measure_on_mutually_avoiding_SLEs, Peltola-Wu:Global_and_local_multiple_SLEs_and_connection_probabilities_for_level_lines_of_GFF} 
for alternative forms.

\begin{lemma}[cf.~\cite{Healey-Lawler:N_sided_radial_SLE}]\label{lem::blm}
The solution to~\eqref{eqn::mt_def} 
can be described in terms of Brownian loop measure as
\begin{align}\label{eqn::mt_blm}
\blm_{\bs{t}} = \blm \big(\U;\gamma_{[0,t_1]}^{1},\ldots,\gamma_{[0,t_p]}^{p} \big).
\end{align}
Consequently, $\blm_{\bs{t}}$ is finite as long as $\gamma_{[0,t_1]}^{1},\ldots,\gamma_{[0,t_p]}^{p}$ are disjoint. 
\end{lemma}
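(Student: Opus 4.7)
My plan is to reduce the identity~\eqref{eqn::mt_blm} to verifying that the right-hand side, viewed as a function
$F(\bs t) := \blm\big(\U; \gamma^1_{[0,t_1]}, \ldots, \gamma^p_{[0,t_p]}\big)$
of the $p$-time parameter $\bs t$, solves the same exact differential equation~\eqref{eqn::mt_def} with the same initial condition $F(\bs 0) = 0$. By Lemma~\ref{lem::blm_exact} the solution of~\eqref{eqn::mt_def} is unique, so the claim follows once this verification is in place. The initial condition is immediate from~\eqref{eqn::blm_def}, since with empty curves the integrand vanishes. The finiteness of $F(\bs t)$ whenever the $\gamma^i_{[0,t_i]}$ are pairwise disjoint compact subsets of $\overline{\U}$ is a standard consequence of the Brownian loop measure estimates of~\cite{Lawler-Werner:The_Brownian_loop_soup}, applied term by term in~\eqref{eqn::blm_def}.

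Next I would fix $j$ and the other times $\{t_i : i \neq j\}$ and compute $\partial_{t_j} F(\bs t)$. Writing $I(\ell) := \{i : \ell \cap \gamma^i_{[0,t_i]} \neq \emptyset\}$ and using the identity $(|I(\ell)| - 1)_+ = \sum_{r \geq 2} \mathbf{1}[|I(\ell)| \geq r]$ implicit in~\eqref{eqn::blm_def}, one sees that an infinitesimal growth of $\gamma^j$ at its tip affects $F$ only via loops which previously avoided $\gamma^j$, already touched at least one of the other $\gamma^i$, and now reach the new piece. All higher-order terms from loops hitting several of the $\gamma^i$ simultaneously cancel in this inclusion-exclusion bookkeeping, so $\partial_{t_j} F$ reduces to the infinitesimal Brownian loop mass for precisely the event just described.

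To evaluate this rate, I would invoke conformal invariance of the Brownian loop measure under the uniformizing map $g_{\bs t,j} \circ g^j_{t_j}$, which carries $\U \setminus \bigcup_i \gamma^i_{[0,t_i]}$ onto $\U$; the tip of $\gamma^j$ is mapped to the boundary point $\ee^{\ii \mslitdriv^j_{\bs t}}$ and the other curves to disjoint boundary arcs. The rate at which loop mass accumulates from loops approaching the tip under the radial capacity parameterization of the new infinitesimal piece is then precisely the Schwarzian/squared-derivative combination appearing in~\eqref{eqn::mt_def}: the Schwarzian term $-\tfrac{1}{3}(\LS \covmap_{\bs t,j})(\xi^j_{t_j})$ arises as the conformal-anomaly correction coming from $\covmap_{\bs t,j}$, while the term $\tfrac{1}{6}(1 - (\covmap'_{\bs t,j}(\xi^j_{t_j}))^2)$ measures the distortion between the radial capacity of $\gamma^j$ inside $\U$ and inside the uniformized slit domain. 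This is exactly the calculation carried out in~\cite{Healey-Lawler:N_sided_radial_SLE} for $N$-sided radial SLE without spiral, and since $\blm_{\bs t}$ does not depend on the spiraling rate $\mu$, the same derivation applies verbatim to yield $\partial_{t_j} F(\bs t) = \partial_{t_j} \blm_{\bs t}$.

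The main obstacle will be the combinatorial bookkeeping for $p \geq 3$ when loops may touch several of the $\gamma^i$ simultaneously: one must verify that the weights from the inclusion-exclusion in~\eqref{eqn::blm_def} collapse to the single-pair contribution captured by the Lawler-type formula above. Beyond this, the argument is routine given the conformal invariance of Brownian loop measure and the uniqueness from Lemma~\ref{lem::blm_exact}.
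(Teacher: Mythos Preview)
Your approach matches the paper's: verify that the Brownian loop quantity satisfies the same exact differential equation~\eqref{eqn::mt_def} with the same initial value, then invoke the uniqueness from Lemma~\ref{lem::blm_exact}. The ``combinatorial bookkeeping'' you flag as the main obstacle is dispatched in the paper by the one-line decomposition
\[
\blm\big(\U;\gamma^1_{[0,t_1]},\ldots,\gamma^p_{[0,t_p]}\big)
= \blm\big(\U;\gamma^1_{[0,t_1]},\textstyle\bigcup_{i\geq 2}\gamma^i_{[0,t_i]}\big)
+ \blm\big(\U;\gamma^2_{[0,t_2]},\ldots,\gamma^p_{[0,t_p]}\big),
\]
which follows directly from the weighting $(|I(\ell)|-1)_+$ in~\eqref{eqn::blm_def}; since the second term is independent of $t_1$, the $t_1$-derivative reduces to the two-set case, where the Schwarzian formula (cited from~\cite{Jahangoshahi-Lawler:Multiple-paths_SLE_in_multiply_connected_domains}) applies directly. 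Your inclusion-exclusion reasoning would reach the same reduction, but this identity makes it immediate.
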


\begin{proof}
Denote by $\tilde{\blm}_{\bs{t}}$ the right-hand side of~\eqref{eqn::mt_blm}. 
Note that both sides satisfy the same initial value $\tilde{\blm}_{\bs{t}} = \blm_{\bs{t}} = 0$.
Hence, it suffices to calculate $\partial_{t_j} \tilde{\blm}_{\bs{t}}$ for each $j$. 
From~\eqref{eqn::blm_def}, we obtain
\begin{align*}
\; &\blm \big( \U;\gamma_{[0,t_1]}^{1},\ldots,\gamma_{[0,t_p]}^{p} \big) \\
= \; & \sum_{j=1}^{p-1} \blm^{\mathrm{loop}}\big[ \ell\subset\U \colon \ell\cap \gamma_{[0,t_1]}^{1}\neq \emptyset , \textnormal{ and } \ell\cap \gamma_{[0,t_i]}^{i}\neq \emptyset\textnormal{ for at least }j \textnormal{ of the }i\in\{2, \ldots, p\}\big] \\
\; &+ \sum_{j=2}^{p-1} \blm^{\mathrm{loop}}\big[\ell \subset \U \setminus \gamma_{[0,t_1]}^{1} \colon \ell\cap \gamma_{[0,t_i]}^{i} \neq \emptyset\textnormal{ for at least }j \textnormal{ of the }i\in\{2, \ldots, p\}\big] \\
=\; &\blm \big( \U;\gamma_{[0,t_1]}^{1},\bs{\gamma}_{\bs{t}}\setminus \gamma_{[0,t_1]}^{1}\big)
\, + \, \blm \big( \U;\gamma_{[0,t_2]}^{2},\ldots,\gamma_{[0,t_p]}^{p} \big) .
\end{align*}
Using the relation between Brownian loop measure and the Schwarzian derivative $\LS$
(see, e.g., the last two equations at the bottom of~\cite[Page~34]{Jahangoshahi-Lawler:Multiple-paths_SLE_in_multiply_connected_domains}), we obtain
\begin{align*}
\partial_{t_1} \tilde{\blm}_{\bs{t}} 
= \partial_{t_1} \blm \big(\U;\gamma_{[0,t_1]}^{1},\bs{\gamma}_{\bs{t}}\setminus \gamma_{[0,t_1]}^{1}\big) 
= -\frac{1}{3}\LS \covmap_{\bs{t},1}(\xi_{t_1}^{1}) + \frac{1}{6}\big(1 - (\covmap'_{\bs{t},1}(\xi_{t_1}^{1}))^2 \big).
\end{align*}
The variations $\partial_{t_j} \tilde{\blm}_{\bs{t}}$ for $2\le j\le p$ can be calculated in the same way. 
Lemma~\ref{lem::blm_exact} now yields~\eqref{eqn::mt_blm}.
\end{proof}

\subsection{Multiradial SLE with spiral: basic properties}
\label{subsec::multiradial_spiral_basic}

Now, we are ready to consider the process from Definition~\ref{def::multiradialSLEspiral}.

\begin{corollary}\label{cor::SLEspiral_mart}
Assume the same notation as in Proposition~\ref{prop::multitime_mart_universal}. Fix $\mu\in\R$ and recall that $\nradpartfn{p}{\mu}$ is defined in~\eqref{eqn:spiralpartitionfunction}. 
The process $M(\nradpartfn{p}{\mu})$ in~\eqref{eqn::multiradialSLE_mart} is $p$-time-parameter local martingale under $\mathsf{P}_p$. 
\end{corollary}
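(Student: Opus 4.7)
The plan is to derive the corollary as a direct application of Proposition~\ref{prop::multitime_mart_universal}. Comparing the exponent of $g_{\bs{t}}'(0)$ in~\eqref{eqn::multiradialSLE_mart} with that in~\eqref{eqn::multitime_mart_universal}, we must take
\begin{align*}
\aleph \; = \; \frac{\mu^2 - (p^2-1)}{2\kappa} .
\end{align*}
With this choice and $\LG = \nradpartfn{p}{\mu}$, it then suffices to verify that $\nradpartfn{p}{\mu}$ solves the system~\eqref{eqn::radialBPZ} of radial BPZ equations with constant $\aleph$.

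The verification proceeds by reducing to the case $\mu=0$, which is already known from the earlier work of Healey and Lawler~\cite{Healey-Lawler:N_sided_radial_SLE} cited just after Proposition~\ref{prop::multitime_mart_universal}: namely, $\nradpartfn{p}{0}$ satisfies~\eqref{eqn::radialBPZ} with the constant $\aleph_0 = -\frac{p^2-1}{2\kappa}$ (which corresponds to the no-spiral case of our formula). Since
\begin{align*}
\nradpartfn{p}{\mu}(\bs\theta) \; = \; \nradpartfn{p}{0}(\bs\theta) \, \exp\Big(\tfrac{\mu}{\kappa}\textstyle\sum_{j=1}^p \theta^j\Big)
\end{align*}
differs from $\nradpartfn{p}{0}$ by an exponential of a linear function of $\bs\theta$, we have
\begin{align*}
\partial_j \log \nradpartfn{p}{\mu} \; = \; \partial_j \log \nradpartfn{p}{0} + \tfrac{\mu}{\kappa}
\qquad\text{and}\qquad
\partial_j^2 \log \nradpartfn{p}{\mu} \; = \; \partial_j^2 \log \nradpartfn{p}{0} ,
\end{align*}
which yields the identity
\begin{align*}
\frac{\partial_j^2 \nradpartfn{p}{\mu}}{\nradpartfn{p}{\mu}} \; = \; \frac{\partial_j^2 \nradpartfn{p}{0}}{\nradpartfn{p}{0}} + \frac{2\mu}{\kappa} \, \partial_j \log \nradpartfn{p}{0} + \frac{\mu^2}{\kappa^2} .
\end{align*}

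Substituting these into the left-hand side of~\eqref{eqn::radialBPZ} for $\nradpartfn{p}{\mu}$ and peeling off the BPZ contribution of $\nradpartfn{p}{0}$ (which equals $\aleph_0$), the remaining $\mu$-dependent contribution is
\begin{align*}
\mu \, \partial_j \log \nradpartfn{p}{0} + \frac{\mu^2}{2\kappa} + \frac{\mu}{\kappa}\sum_{i \neq j}\cot\Big(\frac{\theta^{i}-\theta^{j}}{2}\Big) .
\end{align*}
The explicit logarithmic derivative $\partial_j \log \nradpartfn{p}{0} = \tfrac{1}{\kappa} \sum_{i\neq j} \cot((\theta^j - \theta^i)/2)$ and the oddness of $\cot$ cause the first and third terms above to cancel, leaving exactly $\mu^2/(2\kappa)$. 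Adding this to $\aleph_0 = -(p^2-1)/(2\kappa)$ gives $\aleph = (\mu^2 - (p^2-1))/(2\kappa)$, as required, uniformly in $j$ and $\bs\theta$. Hence Proposition~\ref{prop::multitime_mart_universal} applies and $M(\nradpartfn{p}{\mu})$ is a $p$-time-parameter local martingale under $\mathsf{P}_p$.

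The computation is essentially routine; the only step meriting care is the cancellation of the single-sum cotangent terms. An alternative to invoking~\cite{Healey-Lawler:N_sided_radial_SLE} for the $\mu = 0$ case is to verify the BPZ equations for $\nradpartfn{p}{0}$ directly using the classical three-term cotangent identity $\cot\alpha\cot\beta + \cot\beta\cot\gamma + \cot\gamma\cot\alpha = 1$ whenever $\alpha + \beta + \gamma \equiv 0 \pmod \pi$, applied to triples $((\theta^j-\theta^i)/2,(\theta^i-\theta^k)/2,(\theta^k-\theta^j)/2)$ to handle the double sum $\sum_{i,k \neq j,\, i \neq k}\cot((\theta^j-\theta^i)/2)\cot((\theta^j-\theta^k)/2)$ arising from $(\partial_j \log \nradpartfn{p}{0})^2$.
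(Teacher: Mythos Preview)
Your proposal is correct and follows essentially the same approach as the paper: both verify that $\nradpartfn{p}{\mu}$ satisfies the radial BPZ equations~\eqref{eqn::radialBPZ} with the constant $\aleph = \frac{\mu^2-(p^2-1)}{2\kappa}$ and then invoke Proposition~\ref{prop::multitime_mart_universal}. The paper simply asserts the BPZ identity (displaying it as~\eqref{eqn::radialBPZ_spiral}) without giving the computation, whereas you supply the details by reducing to the $\mu=0$ case and tracking the additional terms coming from the exponential factor --- your cancellation of the cotangent sums via the oddness of $\cot$ is correct, and your value of $\partial_j\log\nradpartfn{p}{0}$ matches the paper's conventions.
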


\begin{proof}
This is a consequence of Proposition~\ref{prop::multitime_mart_universal}: 
$\nradpartfn{p}{\mu}$ in~\eqref{eqn:spiralpartitionfunction} satisfies the radial BPZ equations~\eqref{eqn::radialBPZ}:
\begin{align}\label{eqn::radialBPZ_spiral}
\frac{\kappa}{2} \frac{\partial_j^2 \nradpartfn{p}{\mu}(\bs{\vartheta})}{\nradpartfn{p}{\mu}(\bs{\vartheta})} 
+ \underset{1\leq i\neq j \leq p}{\sum} \, \bigg( \cot \bigg(\frac{\vartheta^{i}-\vartheta^{j}}{2}\bigg)
\frac{\partial_{i} \nradpartfn{p}{\mu}(\bs{\vartheta})}{\nradpartfn{p}{\mu}(\bs{\vartheta})} \, - \, \frac{\mathfrak{b}/2}{ \sin^2 \big(\frac{\vartheta^{i}-\vartheta^{j}}{2}\big)}\bigg)=\frac{\mu^2-(p^2-1)}{2\kappa} ,
\end{align}
for all $j$ (here, $\aleph = \frac{\mu^2}{2\kappa} - \frac{p^2-1}{2\kappa} = 2 \Delta(\mathfrak{e}_{0,1/2}) - \Delta(\mathfrak{e}_{0,p/2},\mathfrak{m}_\mu) - \Delta^*(\mathfrak{e}_{0,p/2},\mathfrak{m}_\mu)$ in terms of~\eqref{eqn::spiral_conformal_weight}).
\end{proof}

As in Definition~\ref{def::multiradialSLEspiral}, we denote by $\PPspiral{p}$ the measure obtained by tilting $\mathsf{P}_p$ by $\smash{M_{\bs{t}}(\nradpartfn{p}{\mu})}$, and call it $p$-radial $\SLE_{\kappa}^{\mu}$ in $(\U; \ee^{\ii\bs{\theta}}; 0)$. 
Recall that the collision time of the $p$-tuple of curves $\bs{\gamma}_{\bs{t}}$ is the first time when any two curves in the tuple meet. 
It is clear from the definition that $p$-radial $\SLE_{\kappa}^{\mu}$ is well-defined up to the collision time. 
We now derive the marginal law of a single curve under $\PPspiral{p}$ (see Lemma~\ref{lem::multiradial_marginal}), 
and show that the process almost surely never reaches the collision time when $\kappa\le 4$ (see Lemma~\ref{lem::multiradial_collision}). 

\begin{lemma}\label{lem::multiradial_marginal}
Fix $\kappa > 0, \, \mu\in\R, \, p\ge 2$, and $\bs{\theta}=(\theta^1, \ldots, \theta^p)\in\LX_p$.
Suppose $(\gamma^{1}, \ldots, \gamma^{p})$ is $p$-radial $\SLE_{\kappa}^{\mu}$ in $(\U; \ee^{\ii\bs{\theta}}; 0)$, 
and let $g_{t}$ be the mapping-out function of $\gamma^1$ solving~\eqref{eq:single_radial_Loewner_equation}  with driving function $\xi_t$ and $\covmap_{t}$ the associated covering map solving~\eqref{eq:single_radial_Loewner_equation_cov}.
Then, the law of $\gamma^{1}$ is the same as radial $\SLE_{\kappa}$ in $(\U; \ee^{\ii\theta^1}; 0)$ tilted by the following local martingale, 
up to the first time $\ee^{\ii\theta^p}$ or $\ee^{\ii\theta^2}$ is disconnected from the origin: 
\begin{align}\label{eqn::multiradialSLE_mart_cor}
M_t(\nradpartfn{p}{\mu}) = (g_t'(0))^{\frac{p^2-1}{2\kappa}-\frac{\mu^2}{2\kappa}}
\Big(\prod_{j=2}^{p} \covmap_t'(\theta^{j}) \Big)^{\mathfrak{b}}\nradpartfn{p}{\mu}(\xi_t, \covmap_t(\theta^2), \ldots, \covmap_t(\theta^p)). 
\end{align}
Consequently, $\gamma^{1}$ has the same law as radial $\SLE_{\kappa}^{\mu}(2,\ldots, 2)$ in $(\U; \ee^{\ii\bs{\theta}}; 0)$. 
\end{lemma}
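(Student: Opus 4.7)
The strategy is to read off the marginal of $\gamma^{1}$ from the multi-time construction by restricting the multi-time to the slice $\bs{t} = (t, 0, \ldots, 0)$, and then identifying the resulting tilt with the martingale from Lemma~\ref{lem::sleakapprhomu_mart} in the case $\bs{\rho} = (2, \ldots, 2)$.

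\smallskip

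\textbf{Step 1: Reduction to a single time coordinate.} Under $\mathsf{P}_p$ the curves $\gamma^{1}, \ldots, \gamma^{p}$ are independent and the process $M_{\bs{t}}(\nradpartfn{p}{\mu})$ is a $p$-time-parameter local martingale (Corollary~\ref{cor::SLEspiral_mart}). Consequently, for any fixed $t \geq 0$ the restriction of $\PPspiral{p}$ to the $\sigma$-algebra $\sigma(\gamma^{1}_{[0,t]})$ is absolutely continuous with respect to the marginal $\mathsf{P}_p|_{\sigma(\gamma^{1}_{[0,t]})}$ (which is just radial $\SLE_{\kappa}$) with Radon--Nikodym derivative $M_{(t,0,\ldots,0)}(\nradpartfn{p}{\mu})$, valid up to the first time $\ee^{\ii\theta^{2}}$ or $\ee^{\ii\theta^{p}}$ is disconnected from the origin (so that the event $\LE_\emptyset$ holds trivially for $\bs{t} = (t,0,\ldots,0)$).

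\smallskip

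\textbf{Step 2: Evaluate the multi-time martingale on the slice.} At $\bs{t} = (t,0,\ldots,0)$, the mapping-out maps collapse: $g_{\bs{t}} = g_t^{1} =: g_t$, the map $\covmap_{\bs{t},1}$ is the identity (so $\covmap_{\bs{t},1}'(\xi_t^{1}) = 1$), and $\covmap_{\bs{t},j} = \covmap_t$ with $\xi_{t_j}^{j} = \theta^j$ for $j \geq 2$. Moreover $\blm_{\bs{t}} = 0$ by Lemma~\ref{lem::blm} (only one curve is being grown) and $\sum_j t_j = t$. Plugging these into~\eqref{eqn::multiradialSLE_mart}, the prefactor $\exp(\tfrac{\mathfrak{c}}{2}\blm_{\bs{t}} - \tilde{\mathfrak{b}}\sum_j t_j) \, (g_{\bs{t}}'(0))^{\tilde{\mathfrak{b}}}$ reduces to $1$, and the driving function of the multi-slit becomes $\bs{\mslitdriv}_{\bs{t}} = (\xi_t, \covmap_t(\theta^2), \ldots, \covmap_t(\theta^p))$, yielding precisely the expression~\eqref{eqn::multiradialSLE_mart_cor}.

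\smallskip

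\textbf{Step 3: Identify with radial $\SLE_{\kappa}^{\mu}(2,\ldots,2)$.} Comparing~\eqref{eqn::pf_radialSLEkapparhomu} with $\bs{\rho} = (2,\ldots,2)$ (so that $\bar{\rho} = 2(p-1)$) and~\eqref{eqn:spiralpartitionfunction}, a direct computation (already recorded in the remark following~\eqref{eqn::pf_radialSLEkapparhomu}) gives $\nradpartfn{1}{\mu; 2,\ldots,2}(\bs{\vartheta}) = \nradpartfn{p}{\mu}(\bs{\vartheta})$, and the exponents match:
\begin{align*}
\frac{\bar{\rho}(\bar{\rho}+4)}{8\kappa} - \frac{\mu^2}{2\kappa} = \frac{p^2-1}{2\kappa} - \frac{\mu^2}{2\kappa} , \qquad \frac{\rho_j(\rho_j+4-\kappa)}{4\kappa}\bigg|_{\rho_j=2} = \mathfrak{b} .
\end{align*}
Therefore the local martingale~\eqref{eqn::multiradialSLE_mart_cor} coincides with $M_t(\nradpartfn{1}{\mu; 2,\ldots,2})$ from Lemma~\ref{lem::sleakapprhomu_mart}, and that lemma identifies the tilted law as radial $\SLE_{\kappa}^{\mu}(2,\ldots,2)$ in $(\U; \ee^{\ii \bs{\theta}}; 0)$, valid up to the disconnection time.

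\smallskip

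\textbf{Main obstacle.} There is no deep obstruction: the argument is essentially bookkeeping once one realizes that the multi-time construction is consistent with fixing all but one coordinate to zero. The only point requiring mild care is to justify that evaluating a $p$-time-parameter local martingale on the one-dimensional slice $\bs{t} = (t,0,\ldots,0)$ produces a genuine one-parameter local martingale in $t$ giving the correct marginal density; this follows from the independence of the curves under $\mathsf{P}_p$ together with the fact that the slice-evaluated process is adapted to the filtration of $\gamma^{1}$ alone.
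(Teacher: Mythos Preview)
Your proposal is correct and follows essentially the same approach as the paper: restrict the multi-time martingale to the slice $\bs{t}=(t,0,\ldots,0)$, simplify using $g_{\bs{t}}=g_t$, $\covmap_{\bs{t},1}=\mathrm{id}$, $\blm_{\bs{t}}=0$, and then identify the resulting tilt with the $\SLE_\kappa^\mu(2,\ldots,2)$ martingale via the partition function~\eqref{eqn::pf_radialSLEkapparhomu}. The paper phrases the final identification by writing the Girsanov drift as $\kappa\,\partial_1\log\nradpartfn{p}{\mu}$ and matching it to~\eqref{eqn::SLEkapparhomu_SDE}, whereas you match the martingales directly via Lemma~\ref{lem::sleakapprhomu_mart}; these are equivalent.
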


When $p=1$, we have $M_{t}(\nradpartfn{1}{\mu}) = \exp\big(\frac{\mu}{\kappa} (\xi_{t} - \frac{\mu}{2} t)\big)$, which gives a single radial SLE with spiral. 

\begin{proof}
With $\bs{t}=(t, 0, \ldots, 0)$, the local martingale~\eqref{eqn::multiradialSLE_mart} becomes~\eqref{eqn::multiradialSLE_mart_cor} and $g_{\bs{t}} = g_{t}$. Hence, 
the law of $\gamma^{1}$ is the same as radial $\SLE_{\kappa}$ in $(\U; \ee^{\ii\theta^1}; 0)$ tilted by $M_t(\nradpartfn{p}{\mu})$,
and its driving function $\xi_t$ solves 
\begin{align*}
\ud \xi_t = \sqrt{\kappa} \, \ud B_t+\kappa \, \partial_1 \log\nradpartfn{p}{\mu}(\xi_t, \covmap_t(\theta^2), \ldots, \covmap_t(\theta^p)) \, \ud t.
\end{align*}
Comparing with the partition function~\eqref{eqn::pf_radialSLEkapparhomu}, this shows that the law of $\gamma^{1}$ is radial $\SLE_{\kappa}^{\mu}(2,\ldots, 2)$. 
\end{proof}

\begin{lemma}\label{lem::multiradial_collision}
Fix $\kappa\in (0,4], \, \mu\in\R, \, p\ge 2$, and $\bs{\theta}=(\theta^1, \ldots, \theta^p)\in\LX_p$. 
Then, $p$-radial $\SLE_{\kappa}^{\mu}$ curves $\bs{\gamma} = (\gamma^{1}, \ldots, \gamma^{p})$ in $(\U; \ee^{\ii\bs{\theta}}; 0)$ 
almost surely do not reach the collision time in finite time.
\end{lemma}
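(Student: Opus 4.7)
The plan is to reduce the statement to the fact that the local martingale $M_{\bs{t}}(\nradpartfn{p}{\mu})$ defined in~\eqref{eqn::multiradialSLE_mart} is a genuine martingale under $\mathsf{P}_p$ when $\kappa \in (0,4]$. Because $M$ carries the indicator $\one_{\LE_{\emptyset}(\bs{\gamma}_{\bs{t}})}$, which vanishes the very moment any two curves first meet, once the true-martingale property is established the tilted law $\PPspiral{p}$ must assign zero mass to the event that a collision occurs before any fixed $\bs{t} \in [0,\infty)^p$; this is exactly the desired non-collision conclusion.

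To verify the martingale property I first restrict to the one-curve slice $\bs{t} = (t, 0, \ldots, 0)$. By Lemma~\ref{lem::multiradial_marginal} the process $M_{\bs{t}}(\nradpartfn{p}{\mu})$ collapses to the single-curve martingale~\eqref{eqn::multiradialSLE_mart_cor}, whose tilting effects the change of measure from radial $\SLE_\kappa$ to radial $\SLE_\kappa^\mu(2,\ldots,2)$. Since each weight $\rho_j = 2$ satisfies $\rho_j \ge \kappa/2 - 2$ when $\kappa \in (0,4]$, the radial-Bessel-type process governing $\mslitdriv_t^{j} - \xi_t$ never reaches $0$ in finite time (cf.\ Lemma~\ref{lem::Bessel_transience} and Proposition~\ref{prop::radialSLE_transience} in Appendix~\ref{app:transience}). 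Consequently $\gamma^{1}$ a.s.\ never swallows the starting points $\ee^{\ii\theta^j}$ for $j \neq 1$, and the one-curve martingale is uniformly integrable up to any finite $t$.

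To handle arbitrary $\bs{t}$ I iterate using the commutation structure of Proposition~\ref{prop::multitime_mart_universal}: growing the $p$ curves one at a time in any chosen order yields the same joint law. Conditionally on $\gamma^{1}, \ldots, \gamma^{k-1}$ already grown, the marginal of the next curve $\gamma^{k}$ in the slit domain is again a radial $\SLE_\kappa^\mu(\bs{\rho}')$-type process, with new force points placed at the two sides of each previously-grown tip and at the remaining starting points $\ee^{\ii\theta^i}$ for $i > k$. Using the factorization of $\nradpartfn{p}{\mu}$ in~\eqref{eqn:spiralpartitionfunction} together with the Girsanov computation from Lemma~\ref{lem::sleakapprhomu_mart}, one checks that every $\rho'$-weight equals $2$ (with the two sides of a previous-curve tip contributing additively), so none falls below $\kappa/2 - 2$ and the same Bessel comparison applies at each step. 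Induction on $k$ then delivers both the genuine martingale property of $M$ on all of $[0,\infty)^p$ and the non-collision statement.

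The main obstacle is precisely this inductive identification of the conditional driving SDE for $\gamma^{k}$ given $\gamma^{1},\ldots,\gamma^{k-1}$ together with the attendant bookkeeping of the $\rho'$-weights at tips of previously-grown curves. This is purely computational — it relies on how $\nradpartfn{p}{\mu}$ transforms under the intermediate mapping-out maps $g^{j}_{t_j}$ and on the commutation identity~\eqref{eq::commutation} already built into the definition of $M$ — but it must be carried out carefully to confirm the Bessel threshold is met at every force point that appears.
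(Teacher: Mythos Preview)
Your overall strategy---deduce non-collision from the true-martingale property of $M_{\bs{t}}(\nradpartfn{p}{\mu})$---is sound, and the one-curve slice is handled correctly. But the inductive step has a genuine gap. After you have grown $\gamma^{1},\ldots,\gamma^{k-1}$ and mapped out, the previously grown curves become \emph{arcs} on $\partial\U$, while each tip $\gamma^{j}_{t_j}$ maps to a \emph{single} boundary point $\ee^{\ii\mslitdriv^{j}}$ (not ``two sides''). The partition-function structure therefore gives exactly one force point of weight $\rho'=2$ per previous tip, plus the remaining starting points. Your Bessel comparison then shows only that these finitely many force points are not swallowed by the driving function of $\gamma^{k}$. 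That is enough for the $\SLE_\kappa^\mu(2,\ldots,2)$ SDE in the mapped-out disc to have a global solution, but it does \emph{not} by itself show that the resulting curve avoids the boundary arcs which are the images of the bodies of $\gamma^{1},\ldots,\gamma^{k-1}$. If it did hit those arcs, then after pulling back $\gamma^{k}$ would collide with a previous curve; the indicator $\one_{\LE_\emptyset}$ would kill the ratio $M_{\bs{t}^{(k)}}/M_{\bs{t}^{(k-1)}}$ there, and you would not be able to conclude that this ratio has expectation one under the $\SLE_\kappa$-in-$\U$ law of $\gamma^{k}$. The missing ingredient is the full boundary-avoidance of radial $\SLE_\kappa^\mu(2,\ldots,2)$ for $\kappa\in(0,4]$ (Proposition~\ref{prop::radialSLE_avoidboundary}), which is strictly stronger than the force-point Bessel threshold.

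The paper's proof sidesteps the martingale detour entirely and uses boundary avoidance directly: at any $\bs{t}$ before collision, the domain Markov property together with Lemma~\ref{lem::multiradial_marginal} says that the continuation $g_{\bs{t}}(\gamma^{j}_{[t_j,\infty)})$ is radial $\SLE_\kappa^\mu(2,\ldots,2)$ in $(\U;\ee^{\ii\bs{\mslitdriv}_{\bs{t}}};0)$, which for $\kappa\le 4$ is a simple curve touching $\partial\U$ only at its start. Hence the future of each $\gamma^{j}$ avoids the pasts of all the others, and no collision can occur. Once you invoke boundary avoidance, this direct geometric argument is shorter and makes the role of the hypothesis $\kappa\le 4$ transparent.
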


\begin{proof}
For $\bs{t}=(t_1, \ldots, t_p)$ before the collision time, 
by Lemma~\ref{lem::multiradial_marginal} and the domain Markov property of multiradial SLE, 
the law of $g_{\bs{t}}(\gamma^{1})$ is a radial $\SLE_{\kappa}^{\mu}(2, \ldots, 2)$ in $(\U; \ee^{\ii\bs{\mslitdriv}_{\bs{t}}};0)$. 
When $\kappa\le 4$, we know that $g_{\bs{t}}(\gamma^{1})$ is a continuous simple curve 
in $\U$ from $\ee^{\ii\mslitdriv_{\bs{t}}^{1}}$ to $0$ 
and it almost surely does not hit any point on the boundary except the starting point $\ee^{\ii\mslitdriv_{\bs{t}}^{1}}$. 
This shows that, given $\bs{\gamma}_{\bs{t}}$ and with $\bs{t}$ before the collision time, 
$(\gamma_s^{1})_{s\ge t_1}$ is a continuous simple curve that does not hit $\bs{\gamma}_{\bs{t}}$. This holds for each $(\gamma^{j}_s)_{s\ge t_j}$ with $1\le j\le p$ and for all $\bs{t}$ before the collision time. 
Therefore, $\bs{\gamma}$ cannot reach the collision time at finite time. 
\end{proof}

\subsection{Multiradial SLE with spiral: a coordinate change}\label{subsec::coordinate_change}

In Definition~\ref{def::multiradialSLEspiral}, we used the multi-time-parameter (local) martingale~\eqref{eqn::multiradialSLE_mart} to define $p$-radial $\SLE_{\kappa}^{\mu}$ in $(\U; \ee^{\ii\bs{\theta}}; 0)$. 
In this section, we derive the corresponding martingale for $p$-radial $\SLE_{\kappa}^{\mu}$ in $(\U; \ee^{\ii\bs{\theta}}; z)$, where the target point $z\in\U$ is any interior point in $\U$. 
This process will be useful in Section~\ref{sec::halfwatermelonSLE}. 

\smallskip

The martingale of interest involves the partition function $\nradpartfn{p}{\mu}$ for $(\U; \ee^{\ii\bs{\theta}}; z)$. 
To find an expression for it, let $\varphi(w) = \frac{w-z}{1-z^*w}$ be the conformal self-map of the unit disc such that $\varphi(z)=0$ and $\varphi'(z)= \frac{1}{1-|z|^2}>0$,
which gives the \emph{conformal radius} of $\U$ seen from $z$, denoted $\CR(\U;z)=1-|z|^2$ (see Eq.~\eqref{eqn::CR_H_and_U}).
A slightly tedious computation using the definitions in~\eqref{eq:multiradial_partition_function}--\eqref{eqn::Gmu_cov}  and the identity 
\begin{align}\label{eq:Mob_identity}
|\varphi(u) - \varphi(v)| = \sqrt{|\varphi'(u)| |\varphi'(v)|} \, |u-v|
\end{align}
(which holds for any M\"obius map) yields
\begin{align}\label{eqn::nradpartfn_mu_z}
\begin{split}
\nradpartfn{p}{\mu}(\U; \ee^{\ii\bs{\theta}}; z) 
= & \; \frac{\LA^{\mu}_{p\mathrm{\textnormal{-}rad}}(\U; \ee^{\ii\bs{\theta}}; z) \,\nradpartfn{p}{\mu}(\U; \ee^{\ii\bs{\theta}}; 0)}{\big( \CR(\U;z) \big)^{\tilde{\mathfrak{b}}+\frac{p^2-1}{2\kappa} - \frac{\mu^2}{2\kappa} - \mathfrak{h}_p} \,\big(\prod_{j=1}^p |z - \ee^{\ii\theta^j}| \big)^{\frac{2}{p} \mathfrak{h}_p}} ,  
\end{split}
\end{align}
where the exponents are 
$\tilde{\mathfrak{b}}+\frac{p^2-1}{2\kappa} - \frac{\mu^2}{2\kappa} - \mathfrak{h}_p 
= \Delta(\mathfrak{e}_{0,p/2},\mathfrak{m}_\mu)+\Delta^*(\mathfrak{e}_{0,p/2},\mathfrak{m}_\mu) - \Delta(\mathfrak{e}_{1,p+1}) 
= -\frac{(\kappa-4-2p)^2}{8\kappa}-\frac{\mu^2}{2\kappa}$
and 
$\frac{2}{p} \mathfrak{h}_p
= \Delta(\mathfrak{e}_{1,p+1}) + \Delta(\mathfrak{e}_{1,2}) -\Delta(\mathfrak{e}_{1,p})
= \frac{2}{\kappa}(p+2)-1$  
in terms of the conformal weights~\eqref{eqn::spiral_conformal_weight} and 
\begin{align}\label{eqn::Kac_conformal_weight}
\mathfrak{h}_p := \Delta(\mathfrak{e}_{1,p+1}) =  \frac{p(p+2)}{\kappa} - \frac{p}{2} ,
\end{align}
and where $\LA^{\mu}_{p\mathrm{\textnormal{-}rad}}(\U; \ee^{\ii\bs{\theta}}; z) := \exp \big( \frac{2\mu}{\kappa} \sum_{j=1}^{p} \arg ( 1-z\ee^{-\ii\theta^j} ) \big)$. 

\begin{lemma}\label{lem::multitime_mart_nradial_spiral_z_RN}
Fix $\kappa>0, \, p\ge 1$, and $\bs{\theta}=(\theta^1, \ldots, \theta^p)\in\LX_p$. 
\begin{itemize}
\item Let $\gamma^{j}$ be radial $\SLE_{\kappa}$ in $(\U; \ee^{\ii\theta^j}; 0)$ for each $1\le j\le p$, 
and let $\mathsf{P}_p$ be the probability measure on $\bs{\gamma}=(\gamma^{1}, \ldots, \gamma^{p})$ under which the curves are independent. 
\item For $z\in\U$, let $\bs{\gamma}=(\gamma^1, \ldots, \gamma^p)\sim\PPspiral{p}(\U; \ee^{\ii\bs{\theta}}; z)$ be $p$-radial $\SLE_{\kappa}^{\mu}$ in $(\U; \ee^{\ii\bs{\theta}}; z)$. 
\end{itemize} 
Viewing it as a $p$-tuple of curves, we parameterize $\bs{\gamma}$ using $p$-time-parameter $\bs{t}=(t_1, \ldots, t_p)$, and let $\bs{\mslitdriv}_{\bs{t}} = (\mslitdriv^{1}_{\bs{t}},\ldots,\mslitdriv^{p}_{\bs{t}})$ denote its multi-slit driving function~\eqref{eqn::Ntuple_driving}.
Write also
\begin{align*}
\mathfrak{A}^{\mu}_{\bs{t}}(z) := \exp\Big(\frac{\mathfrak{c}}{2}\blm_{\bs{t}}-\tilde{\mathfrak{b}}\sum_{j=1}^p t_j -\frac{\mu p }{\kappa} \arg g_{\bs{t}}'(z) \Big) .
\end{align*}
Then, the law of $\bs{\gamma}$ under $\PPspiral{p}(\U; \ee^{\ii\bs{\theta}}; z)$ is the same as $\mathsf{P}_p$ tilted by the following $p$-time-parameter local martingale, up to the collision time: 
\begin{align}\label{eqn::multitime_mart_nradial_spiral_z}
\; & M_{\bs{t}}(\nradpartfn{p}{\mu}; z)
:= \one_{\LE_{\emptyset}(\bs{\gamma}_{\bs{t}})} \, 
\mathfrak{A}^{\mu}_{\bs{t}}(z) \, |g_{\bs{t}}'(z)|^{\tilde{\mathfrak{b}}+\frac{p^2-1}{2\kappa}-\frac{\mu^2}{2\kappa}} \,\Big(\prod_{j=1}^{p} \covmap_{\bs{t},j}'(\xi_{t_j}^{j}) \Big)^{\mathfrak{b}} 
\, \nradpartfn{p}{\mu}(\U; \ee^{\ii\bs{\mslitdriv}_{\bs{t}}}; g_{\bs{t}}(z)),
\end{align}
where $\LE_{\emptyset}(\bs{\gamma}_{\bs{t}}) = \{ \gamma_{[0,t_j]}^{j} \cap \gamma_{[0,t_i]}^{i}=\emptyset, \, \forall i\neq j\}$ is the event that different curves are disjoint, 
and $\blm_{\bs{t}}$ is the unique potential solving the exact differential equation~\eqref{eqn::mt_def} \textnormal{(}see Lemma~\ref{lem::blm_exact}\textnormal{)}.
\end{lemma}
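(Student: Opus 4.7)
The strategy is to pull back via a M\"obius self-map of $\U$ sending $z$ to $0$, using the pushforward definition of $\PPspiral{p}(\U; \ee^{\ii\bs{\theta}}; z)$ together with a single-curve Schramm--Wilson coordinate change, and to identify the resulting Radon--Nikodym derivative with~\eqref{eqn::multitime_mart_nradial_spiral_z} using the covariance relation~\eqref{eqn::nradpartfn_mu_z} and the conformal invariance of the Brownian loop potential $\blm_{\bs{t}}$ (Lemma~\ref{lem::blm}).

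Let $\varphi \colon \U \to \U$ be a M\"obius self-map with $\varphi(z) = 0$ and $\varphi(\ee^{\ii\theta^j}) =: \ee^{\ii\theta'^j}$. By the extension of Definition~\ref{def::multiradialSLEspiral} to a general interior target (first bullet following the definition), $\PPspiral{p}(\U; \ee^{\ii\bs{\theta}}; z)$ is the pushforward of $\PPspiral{p}(\U; \ee^{\ii\bs{\theta}'}; 0)$ under $\varphi^{-1}$. By Corollary~\ref{cor::SLEspiral_mart}, this latter law equals $\mathsf{P}_p'$ (independent radial SLEs in $(\U; \ee^{\ii\theta'^j}; 0)$) tilted by $M_{\tilde{\bs{t}}}(\nradpartfn{p}{\mu})$ from~\eqref{eqn::multiradialSLE_mart}, where $\tilde{\bs{t}}$ denotes the capacity $p$-time-parameter of the transformed tuple $\varphi(\bs{\gamma})$. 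On the other hand, the pushforward $(\varphi^{-1})_* \mathsf{P}_p'$ is mutually absolutely continuous with $\mathsf{P}_p$: by independence and a single-curve Schramm--Wilson coordinate change switching the target of each $\gamma^j$ from $0$ to $z$, the Radon--Nikodym derivative factors as a product $\prod_{j=1}^p R_{t_j}^{(j)}$ of $p$ single-curve coordinate-change martingales, each expressible through $(g_{t_j}^{j})'(z)$ and the M\"obius derivatives of $\varphi^{-1}$. Combining these steps gives, up to the collision time,
\begin{align*}
\frac{\ud \PPspiral{p}(\U; \ee^{\ii\bs{\theta}}; z)}{\ud \mathsf{P}_p}
= \Big(\prod_{j=1}^p R_{t_j}^{(j)}\Big) \, \big( M_{\tilde{\bs{t}}}(\nradpartfn{p}{\mu}) \circ \varphi \big) .
\end{align*}

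To identify this product with~\eqref{eqn::multitime_mart_nradial_spiral_z}, I apply the rotation rule $\nradpartfn{p}{\mu}(\bs{\vartheta} + \alpha) = \ee^{\frac{\mu p \alpha}{\kappa}} \, \nradpartfn{p}{\mu}(\bs{\vartheta})$ (noted below~\eqref{eqn:spiralpartitionfunction}), the covariance identity~\eqref{eqn::nradpartfn_mu_z}, and the M\"obius identity~\eqref{eq:Mob_identity}, to rewrite $\nradpartfn{p}{\mu}(\tilde{\bs{\mslitdriv}}_{\tilde{\bs{t}}})$ as $\nradpartfn{p}{\mu}(\U; \ee^{\ii\bs{\mslitdriv}_{\bs{t}}}; g_{\bs{t}}(z))$ times boundary M\"obius factors. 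These boundary factors cancel against the Schramm--Wilson contributions in $\prod_j R_{t_j}^{(j)}$, and what remains is precisely~\eqref{eqn::multitime_mart_nradial_spiral_z}. In particular, the rotation factor $\exp(-\tfrac{\mu p}{\kappa} \arg g_{\bs{t}}'(z))$ originates from the rotation rule above with $\alpha = -\arg g_{\bs{t}}'(z)$, the exponent of $|g_{\bs{t}}'(z)|$ matches the spinless bulk conformal weight $\tilde{\mathfrak{b}} + \tfrac{p^2-1}{2\kappa} - \tfrac{\mu^2}{2\kappa}$ from~\eqref{eqn::spiral_conformal_weight}, and $\blm_{\bs{t}}$ transfers directly by the conformal invariance of Brownian loop measure.

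The main obstacle is notational bookkeeping: the mapping-out functions $g_{t_j}^j$, $\covmap_{\bs{t}, j}$, and $g_{\bs{t}}$ transform differently under $\varphi$, and the capacity parameters $t_j$ (for $\gamma^j$, seen from $0$) and $\tilde t_j$ (for $\varphi(\gamma^j)$, also seen from $0$) are not equal, so one must carefully handle the implicit time reparameterization between $\bs t$ and $\tilde{\bs t}$, using the standard relation $\tilde g_{\tilde{\bs{t}}} \circ \varphi = \tilde\varphi_{\bs{t}} \circ g_{\bs{t}}$ where $\tilde\varphi_{\bs{t}}$ is the M\"obius self-map of $\U$ sending $g_{\bs{t}}(z)$ to $0$. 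An alternative is a direct It\^o verification extending Proposition~\ref{prop::multitime_mart_universal} to track the additional $\bs{t}$-dependence through $g_{\bs{t}}(z)$ via the multi-slit Loewner equation~\eqref{eq:LE_multislitg}; this avoids M\"obius bookkeeping but is computationally longer than the coordinate-change argument sketched here.
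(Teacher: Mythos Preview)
Your approach is correct and essentially the same as the paper's: both combine the pushforward definition of $\PPspiral{p}(\U;\ee^{\ii\bs\theta};z)$ with the single-curve Schramm--Wilson coordinate change (target $z$ versus target $0$) for each independent factor of $\mathsf{P}_p$, then identify the result with~\eqref{eqn::multitime_mart_nradial_spiral_z} via the covariance~\eqref{eqn::Gmu_cov}.

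The one organizational difference worth noting is how the paper sidesteps the bookkeeping obstacle you flag. Rather than tracking the M\"obius map $\varphi$ and the reparameterization $\bs t \leftrightarrow \tilde{\bs t}$ explicitly, the paper writes both intermediate Radon--Nikodym derivatives directly as ratios of partition functions evaluated in the slit domains: $\nradpartfn{p}{\mu}(\U\setminus\bs\gamma_{[\bs 0,\bs t]};\gamma_{t_1}^1,\ldots,\gamma_{t_p}^p;z)$ over $\prod_j \nradpartfn{1}{0}(\U\setminus\gamma^j_{[0,t_j]};\gamma_{t_j}^j;\,\cdot\,)$. These ratios are parametrization-independent and manifestly covariant, so the time change and the M\"obius factors are absorbed automatically; one then unpacks them via~\eqref{eqn::Gmu_cov} to recover the explicit expression~\eqref{eqn::multitime_mart_nradial_spiral_z}. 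This is exactly the cancellation you describe in your last paragraph, just packaged so that the ``main obstacle'' never appears.
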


When $z=0$, the process $M_{\bs{t}}(\nradpartfn{p}{\mu}; 0)$ is exactly the same as~\eqref{eqn::multiradialSLE_mart}.

\begin{proof}
First, for a general $p$-polygon $(\U;\bs{x};z)$, combining the conformal covariance rule~\eqref{eqn::Gmu_cov} of the partition function with Equation~\eqref{eqn::multiradialSLE_mart}, we obtain
\begin{align*} 
\frac{\ud \PPspiral{p}(\U; \bs{x}; z)}{\ud \mathsf{P}_p(\U; \bs{x}; z)} (\bs{\gamma_t}) = \; &  \one_{\LE_{\emptyset}(\bs{\gamma}_{\bs{t}})} \, \exp\Big(\frac{\mathfrak{c}}{2} \blm_{\bs{t}} \Big) \, \frac{\nradpartfn{p}{\mu}(\U \setminus \bs{\gamma}_{[\bs{0},\bs{t}]} ;\gamma_{t_1}^1,\ldots,\gamma_{t_p}^p;z)}{\prod_{j=1}^p \nradpartfn{1}{0}(\U\setminus \gamma_{[0,t_j]} ;\gamma_{t_j}^j;z) } \; \frac{\prod_{j=1}^p \nradpartfn{1}{0}(\U ;x^j;z) }{\nradpartfn{p}{\mu}(\U ;\bs{x} ; z)}.
\end{align*}
Note that, although the boundary of $\U\setminus \bs{\gamma}_{[\bs{0},\bs{t}]}$ is rough at the tips $\gamma_{t_j}^j$, the above ratio 
is well-defined as 
\begin{align*}
 \; & \frac{\nradpartfn{p}{\mu}(\U \setminus \bs{\gamma}_{[\bs{0},\bs{t}]} ;\gamma_{t_1}^1,\ldots,\gamma_{t_p}^p;z)}{\prod_{j=1}^p \nradpartfn{1}{0}(\U\setminus \gamma_{[0,t_j]} ;\gamma_{t_j}^j;z) } 
\\
= \; &  \frac{ \nradpartfn{p}{\mu}(\U;\ee^{\ii \bs{\mslitdriv_t}};g_{\bs{t}}(z)) }{ \prod_{j=1}^p \nradpartfn{1}{0}(\U ;\exp(\ii\xi_{t_j}^j);g_{t_j}^j(z)) } \, \exp\Big(\! -\frac{\mu p}{\kappa} \arg g_{\bs{t}}'(z) \Big) 
\, | g_{\bs{t}}'(z) |^{\tilde{\mathfrak{b}}+\frac{p^2-1}{2\kappa}-\frac{\mu^2}{2\kappa}} \,  \Big( \prod_{j=1}^p \covmap_{\bs{t},j}'(\xi_{t_j}^j) \Big)^{\mathfrak{b}} \, \Big( \prod_{j=1}^p |( g_{t_j}^j )'(z)| \Big)^{-\tilde{\mathfrak{b}}} .
\end{align*}

Second, for two $p$-polygons $(\U;\bs{x};z)$ and $(\U;\bs{x};w)$, combining the conformal covariance rule~\eqref{eqn::Gmu_cov} of the partition function with the well-known coordinate change~\cite[Eq.~(9)]{Schramm-Wilson:SLE_coordinate_changes}, we obtain
\begin{align*} 
\frac{\ud \mathsf{P}_p(\U; \bs{x}; z)}{\ud \mathsf{P}_p(\U; \bs{x}; w)} (\bs{\gamma_t}) =
& \; \frac{\prod_{j=1}^p \nradpartfn{1}{0}(\U\setminus \gamma^j_{[0,t_j]} ;\gamma_{t_j}^j;z)}{\prod_{j=1}^p \nradpartfn{1}{0}(\U\setminus \gamma^j_{[0,t_j]} ;\gamma_{t_j}^j;w) } \; \frac{\prod_{j=1}^p \nradpartfn{1}{0}(\U ;x^j;w) }{\prod_{j=1}^p \nradpartfn{1}{0}(\U ;x^j;z)} ,
\end{align*}
where again, although the boundary of $\U\setminus\gamma^j_{[0,t_j]}$ is rough at the tip $\gamma^j_{t_j}$, the above ratio 
is well-defined as
\begin{align*}
\frac{\nradpartfn{1}{0}(\U\setminus \gamma^j_{[0,t_j]} ;\gamma_{t_j}^j;z)}{\nradpartfn{1}{0}(\U\setminus \gamma^j_{[0,t_j]} ;\gamma_{t_j}^j;w) } 
\; = \,  \bigg| \frac{(g_{t_j}^j)'(z)}{(g_{t_j}^j)'(w)} \bigg|^{\tilde{\mathfrak{b}}}  \, \frac{ \nradpartfn{1}{0}(\U ;\exp(\ii\xi_{t_j}^j);g_{t_j}^j(z))}{ \nradpartfn{1}{0}(\U ;\exp(\ii\xi_{t_j}^j);g_{t_j}^j(w))} .
\end{align*}

In summary, we obtain the desired formulas by taking $w=0$ in
\begin{align} \label{eqn::multitime_mart_nradial_spiral_z_aux3}
\begin{split}
\frac{\ud \PPspiral{p}(\U; \bs{x}; z)}{\ud \mathsf{P}_p(\U; \bs{x}; w)} (\bs{\gamma_t}) 
=   \one_{\LE_{\emptyset}(\bs{\gamma}_{\bs{t}})} \, \exp\Big(\frac{\mathfrak{c}}{2} \blm_{\bs{t}}  \Big) \, \frac{\nradpartfn{p}{\mu}(\U \setminus \bs{\gamma}_{[\bs{0},\bs{t}]} ;\gamma_{t_1}^1,\ldots,\gamma_{t_p}^p;z)}{\prod_{j=1}^p \nradpartfn{1}{0}(\U\setminus \gamma_{[0,t_j]} ;\gamma_{t_j}^j;w) } \; \frac{\prod_{j=1}^p \nradpartfn{1}{0}(\U ;x^j;w) }{\nradpartfn{p}{\mu}(\U ;\bs{x} ; z)}.
\end{split}
\end{align}
Indeed, if we set $w=0$, then the right-hand side of~\eqref{eqn::multitime_mart_nradial_spiral_z_aux3} equals $M_{\bs{t}}(\nradpartfn{p}{\mu}; z)/M_{\bs{t}}(\nradpartfn{p}{\mu}; 0)$. 
\end{proof}

%%%%%%%%%%%%%%%
\section{Half-watermelon SLEs}
\label{sec::halfwatermelonSLE}
In this section, we consider half-watermelon SLE processes (recall Definition~\ref{def::halfwatermelonSLE}), 
which will also play a key role in Section~\ref{sec::Multiradial_final} 
in the proofs of the key results, Theorem~\ref{thm::multiradial_resampling} and Proposition~\ref{prop::multiradial_bp}.
We prove that multiradial SLEs (Definition~\ref{def::multiradialSLEspiral}) converge to half-watermelon SLEs when the common endpoint of the curves tends to the boundary (see Theorem~\ref{thm::multiradial_halfwatermelon})
and show that they satisfy a natural boundary perturbation property (see Proposition~\ref{prop::halfwatermelon_bp}).
At an intermediate step, we derive half-watermelon SLEs from a fusion of multiple chordal SLEs (with the so-called rainbow connectivity; see Theorem~\ref{thm::multichordal_halfwatermelon}).

\paragraph*{Partition functions.}  

Throughout, we will write $\bs{x}=(x^1, \ldots, x^n)$ and $\bs{y}=(y^n, \ldots, y^1)$ for convenience. 
For a polygon $(\Omega; \bs u)$, we always assume that the marked points $\bs{u}=(u^1, \ldots, u^m)$ appear in counterclockwise order.
For $u,v \in \partial \Omega$, we denote by $(u \, v)$ the \emph{counterclockwise boundary arc} from $u$ to $v$.

\smallskip

The half-$n$-watermelon $\SLE_\kappa$ partition function\footnote{Here, $\LZ_{\defpatt_n}(\HH;\bs x)$ is the partition function for half-$n$-watermelon $\SLE_{\kappa}$ curves all growing to infinity,
and $\LZfusion{n}(\HH;\bs x, y)$ is the partition function for half-$n$-watermelon $\SLE_{\kappa}$ in $(\HH; \bs{x}, y)$ with target point $y$ on the real line.} is
\begin{align} \label{eqn::LZfusion_H}
\begin{split}
\LZ_{\defpatt_n}(\HH;\bs x) := \; & \Big( \prod_{1\leq i<j\leq n}(x^{j}-x^{i}) \Big)^{2/\kappa} \qquad \textnormal{(with } y = \infty),
\\
\LZfusion{n}(\HH;\bs x, y) := \; & \LZ_{\defpatt_n}(\HH;\bs x) \; \Big( \prod_{j=1}^n(y-x^j) \Big)^{1-\frac{2}{\kappa}(n+2)}  \qquad \textnormal{(with } y > x^n),
\\
\LZfusionInv{n}(\HH;x, \bs y) := \; & \LZ_{\defpatt_n}(\HH;\bs y) \; \Big( \prod_{j=1}^n (y^j-x) \Big)^{1-\frac{2}{\kappa}(n+2)}  \qquad \textnormal{(with } x < y^1),
\end{split}
\end{align} 
where $\frac{2}{\kappa} = \mathfrak{h}_2 - 2\mathfrak{h}_1 = \mathfrak{h}_2 - 2\mathfrak{b}$ 
and $1-\frac{2}{\kappa}(n+2) = \Delta(\mathfrak{e}_{1,n}) - \Delta(\mathfrak{e}_{1,n+1}) - \Delta(\mathfrak{e}_{1,2}) = -\frac{2}{n}\mathfrak{h}_n$ in terms of the Kac conformal weights 
$\mathfrak{h}_n := \Delta(\mathfrak{e}_{1,n+1})$ in~\eqref{eqn::Kac_conformal_weight};  
recall that $\mathfrak{b} = \mathfrak{h}_1= \Delta(\mathfrak{e}_{1,2})$ is the $\SLE_\kappa$ boundary exponent~\eqref{eqn::universal_parameters}). 
As usual, we extend this definition to more general polygons $(\Omega; \bs x, y)$ and $(\Omega; x, \bs{y})$ by
\begin{align} \label{eqn::LZfusion_cov}
\begin{split}
\LZfusion{n}(\Omega;\bs x, y) := \; &  |\varphi'(y)|^{\mathfrak{h}_n} \Big(
\prod_{j=1}^n |\varphi'(x^j)| \Big)^{\mathfrak{b}} \; \LZfusion{n}(\HH;\varphi(\bs x), \varphi(y)) ,
\\
\LZfusionInv{n}(\Omega;x, \bs y) := \; &  |\varphi'(x)|^{\mathfrak{h}_n} \Big( 
\prod_{j=1}^n |\varphi'(y^j)| \Big)^{\mathfrak{b}} \; \LZfusionInv{n}(\HH;\varphi(x), \varphi(\bs y)) ,
\end{split}
\end{align} 
where we write $\varphi(\bs u) := (\varphi(u^1),\ldots,\varphi(u^n))$ and 
$\varphi$ is any conformal map from $\Omega$ onto $\HH$ preserving the order of the marked points: 
$\varphi(x^1)<\cdots<\varphi(x^n)<\varphi(y)$ or $\varphi(x)<\varphi(y^n)<\cdots<\varphi(y^1)$. 

Note that, for half-$n$-watermelon $\SLE_{\kappa}$ $\bs{\eta} = (\eta^{1}, \ldots, \eta^{n}) \sim \QQfusion{n}(\Omega; \bs{x}, y)$, 
the marginal law of each $\eta^{j}$ is chordal $\SLE_{\kappa}(2, \ldots, 2)$ in $\Omega$ from $x^j$ to $y$ with force points $(x^1, \ldots, x^{j-1}; x^{j+1}, \ldots, x^n)$
--- in particular, the curves in half-$n$-watermelon $\SLE_{\kappa}$ are almost surely transient, see~\cite[Section~4]{Miller-Sheffield:Imaginary_geometry1}. 

\bigskip

It was detailed in~\cite[Section~8]{Lawler-Lind:Two-sided_SLE8_over_3_and_infinite_self-avoiding_polygon} how the law of radial $\SLE_\kappa$ 
converges to the law of chordal $\SLE_\kappa$ as the endpoint of the curve approaches the boundary.  
The first goal of this section is to generalize this: 
we show that multiradial $\SLE_{\kappa}^\mu$ converges to half-watermelon $\SLE_{\kappa}$ when the common endpoint of the curves tends to the boundary (Theorem~\ref{thm::multiradial_halfwatermelon}). 
This is a key ingredient in the proof of the resampling property for Theorem~\ref{thm::multiradial_resampling}. 
Recall that $\nradpartfn{n}{0}$ denotes the partition function defined in~(\ref{eq:multiradial_partition_function},~\ref{eqn::multiradial_partition_function_cov}) 
and $\PPnospiral{n}$ the law of $n$-radial $\SLE_{\kappa}$ (without spiral),
while $\PPspiral{n}$ denotes the law of $n$-radial $\SLE_{\kappa}^\mu$ (with spiral).

\begin{theorem}\label{thm::multiradial_halfwatermelon}
Fix $\kappa\in (0,4], \, n\ge 1$, and an $(n+1)$-polygon $(\Omega; \bs{x}, y)$.
\begin{enumerate}
\item \label{item::multiradial_halfwatermelonPF}
The partition function $\nradpartfn{n}{0}$ converges to $\LZfusion{n}$ after proper normalization:
\begin{align}\label{eqn::LGtoLZfusion}
\lim_{\Omega \, \ni \, z\to y} \frac{\nradpartfn{n}{0}(\Omega; \bs{x}; z)}{( \CR(\Omega; z) )^{\mathfrak{h}_n - \tilde{\mathfrak{b}} - \frac{n^2-1}{2\kappa}}}
= \LZfusion{n}(\Omega; \bs{x}, y) ,
\end{align}
where $\CR(\Omega; z) := 1/\varphi'(z)$ is the conformal radius of $\Omega$ seen from $z\in \Omega$, defined in terms of 
the conformal bijection $\varphi \colon \Omega \to \U$ such that $\varphi(z)=0$ and $\varphi'(z)>0$,
and the exponent is 
\begin{align*}
\mathfrak{h}_n - \tilde{\mathfrak{b}} - \frac{n^2-1}{2\kappa}
= \Delta(\mathfrak{e}_{1,n+1})  - 2 \Delta(\mathfrak{e}_{0,n/2})
=\frac{(\kappa-4-2n)^2}{8\kappa}
\end{align*}
in terms of the conformal weights~\textnormal{(\ref{eqn::universal_parameters},~\ref{eqn::spiral_conformal_weight},~\ref{eqn::Kac_conformal_weight})}. 

\item \label{item::multiradial_halfwatermelonMEAS}
Fix $\mu \in \R$. Suppose $U\subset\Omega$ is simply connected, has the points $x^1, \ldots, x^n$ on its boundary, and has a positive distance from $y$. 
Then, as $z\to y$, the law $\PPspiral{n}(\Omega; \bs{x}; z)$ of $n$-radial $\SLE_{\kappa}^\mu$ 
converges weakly to the law $\QQfusion{n}(\Omega; \bs{x}, y)$ of half-$n$-watermelon $\SLE_{\kappa}$, up to the exit time from $U$. 
\end{enumerate}
\end{theorem}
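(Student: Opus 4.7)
The plan is to establish Part~\ref{item::multiradial_halfwatermelonPF} by an explicit computation in the upper half-plane, and Part~\ref{item::multiradial_halfwatermelonMEAS} by writing both laws as Radon--Nikodym derivatives against a common reference measure built from independent SLEs, then passing to the limit via the Lawler--Lind theorem~\cite[Section~8]{Lawler-Lind:Two-sided_SLE8_over_3_and_infinite_self-avoiding_polygon}.

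For Part~\ref{item::multiradial_halfwatermelonPF}, by the conformal covariance rules~\eqref{eqn::multiradial_partition_function_cov} and~\eqref{eqn::LZfusion_cov}, I may reduce to $(\Omega;\bs{x},y) = (\HH;\bs x, y)$ with $y\in\R$, $y>x^n$. I will use the M\"obius map $\varphi_z(w) := \ii (w-z)/(w-z^*)\colon\HH\to\U$ satisfying $\varphi_z(z)=0$ and $\varphi_z'(z)=1/(2\Im z)>0$, so $\CR(\HH;z)=2\Im z$. A direct calculation yields
\begin{equation*}
|\varphi_z(x^j)-\varphi_z(x^i)|=\frac{2\Im z \, |x^j-x^i|}{|x^j-z|\,|x^i-z|}, \qquad |\varphi_z'(x^j)|=\frac{2\Im z}{|x^j-z|^2}.
\end{equation*}
Substituting into~\eqref{eqn::multiradial_partition_function_cov} and collecting exponents using $\mathfrak{b}=(6-\kappa)/(2\kappa)$ and $\mathfrak{h}_n=n(n+2)/\kappa-n/2$, all powers of $\Im z$ cancel perfectly, yielding
\begin{equation*}
\nradpartfn{n}{0}(\HH;\bs x;z) \, \CR(\HH;z)^{-(\mathfrak{h}_n-\tilde{\mathfrak{b}}-(n^2-1)/(2\kappa))}
\; = \; \Big(\prod_{j} |x^j-z|^{1-2(n+2)/\kappa}\Big)\Big(\prod_{i<j} |x^j-x^i|^{2/\kappa}\Big),
\end{equation*}
whose limit as $z\to y$ is $\LZfusion{n}(\HH;\bs x, y)$ by~\eqref{eqn::LZfusion_H}.

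For Part~\ref{item::multiradial_halfwatermelonMEAS}, I will use the Radon--Nikodym description~\eqref{eqn::multitime_mart_nradial_spiral_z_aux3}, choosing the auxiliary point $w$ as an interior point near $y$. This expresses $\PPspiral{n}(\Omega;\bs{x};z)$ relative to the product measure $\mathsf{P}_n(\Omega;\bs{x};w)$ of independent radial $\SLE_\kappa$'s targeted at $w$ as an explicit product of partition function ratios (single-radial and multiradial) together with the Brownian loop correction $\exp(\tfrac{\mathfrak{c}}{2}\blm_{\bs{t}})$. Up to the exit time $\bs\tau$ from $U$, the Loewner maps $g_{\bs{t}}$, $\covmap_{\bs{t},j}$ depend only on the realized curves $\bs\gamma_{\bs\tau}$ and not on the target point, so as $z\to y$ with $w$ fixed, Part~\ref{item::multiradial_halfwatermelonPF} applied both in $\Omega$ and in the slit domain $\Omega\setminus\bs\gamma_{\bs\tau}$ (which still has positive distance from $y$) identifies the limit of the multiradial partition function ratio with the corresponding chordal ratio $\LZfusion{n}(\Omega\setminus\bs\gamma_{\bs\tau};\bs\gamma_{\bs\tau},y)/\LZfusion{n}(\Omega;\bs x, y)$. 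Subsequently sending $w\to y$, Lawler--Lind applied coordinate-wise yields convergence of $\mathsf{P}_n(\Omega;\bs x;w)$ to the product of independent chordal $\SLE_\kappa$'s from $x^j$ to $y$ up to exit from $U$. The resulting limiting RN description matches the analogous description of half-$n$-watermelon $\SLE_\kappa$ as independent chordal SLEs tilted by $\LZfusion{n}$ together with the Brownian loop correction (see~\cite[Section~4]{Miller-Sheffield:Imaginary_geometry2}), identifying the weak limit as $\QQfusion{n}(\Omega;\bs x, y)$.

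The main obstacle is the spiral factor $\exp(-\tfrac{\mu}{\kappa} n \arg \varphi'(z))\,|\varphi'(z)|^{-\mu^2/(2\kappa)}$ in $\nradpartfn{n}{\mu}$, which individually diverges as $z\to y$, while the explicit renormalization in Part~\ref{item::multiradial_halfwatermelonPF} is carried out only for $\mu = 0$. The resolution is that in the RN ratio $\nradpartfn{n}{\mu}(\Omega\setminus\bs\gamma_{\bs\tau}; \bs\gamma_{\bs\tau}; z)/\nradpartfn{n}{\mu}(\Omega;\bs x; z)$ the two spiral contributions combine into a quotient depending only on $g_{\bs\tau}'(z)$ and on the angle increments $\arg(g_{\bs\tau}(z)-\exp(\ii\omega^j_{\bs\tau}))-\arg(z-x^j)$; by the Koebe distortion theorem and the positive distance of $\bs\gamma_{\bs\tau}\subset U$ from $y$, these quantities remain uniformly bounded and converge as $z\to y$ to their chordal counterparts (the angle differences collapsing so that the spiral contribution becomes trivial in the limit). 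A dominated convergence argument on the event of curves staying in $U$ then passes the limit through the expectation, yielding the desired weak convergence.
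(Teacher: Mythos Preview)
For Part~\ref{item::multiradial_halfwatermelonPF} your argument is the paper's proof (Lemma~\ref{lem::LGtoLZfusion}): both reduce by conformal covariance to a direct computation in $\HH$. The only difference is packaging: the paper rewrites the answer as a product of Poisson-kernel ratios $\big(\prod_j H(\Omega;x^j;z)/(\CR(\Omega;z)\,H(\Omega;x^j,y))\big)^{\mathfrak{h}_n/n}$, which is exactly your formula in coordinate-free form. That identity is reused in Part~\ref{item::multiradial_halfwatermelonMEAS} for the uniform bound, so it is worth isolating.

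For Part~\ref{item::multiradial_halfwatermelonMEAS} the approaches diverge. The paper first establishes (Lemma~\ref{lem::rainbowSLE_mart}) an explicit local martingale $M_{\bs t}(\LZfusion{n})$ whose tilting of $\mathsf{P}_n$ (independent radial SLEs targeted at the \emph{origin}) realizes $\QQfusion{n}$. It then forms the \emph{direct} ratio $N^\mu_{\bs t}(z;y)=M_{\bs t}(\nradpartfn{n}{\mu};z)/M_{\bs t}(\LZfusion{n})$ and shows it converges to $1$, uniformly bounded, as $z\to y$. There is no iterated limit, no auxiliary target $w$, and no appeal to Lawler--Lind. Your route---$z\to y$ with $w$ fixed, then $w\to y$ via~\cite{Lawler-Lind:Two-sided_SLE8_over_3_and_infinite_self-avoiding_polygon}, then identification with half-watermelon---is more circuitous and does not actually save work: the final identification step still requires the content of Lemma~\ref{lem::rainbowSLE_mart} (in chordal form), and the citation~\cite[Section~4]{Miller-Sheffield:Imaginary_geometry2} supplies uniqueness via resampling, not the Radon--Nikodym description against independent chordal SLEs that you need.

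The uniform-bound step is where your sketch has a genuine gap. You are right that the spiral contributions in the ratio combine into a factor governed by $g_{\bs\tau}'(z)$ and angle increments, and that distortion bounds are the tool. But the paper's execution shows this is not a one-liner: beyond Koebe~$1/4$, growth, and distortion (\eqref{eqn::muliradial_halfwatermelon_RN_aux1}--\eqref{eqn::muliradial_halfwatermelon_RN_aux3}) applied after Schwarz reflection, controlling $\arg g'_{\bs\tau}(z)$ requires the \emph{rotation theorem} for univalent functions, and bounding the full spiral exponent $|I^j_{\bs t}(z)|$ uses a chain of six separate estimates (\eqref{eqn::muliradial_halfwatermelon_RN_aux4}--\eqref{eqn::muliradial_halfwatermelon_RN_control3}). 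For the $\mu=0$ part, the Poisson-kernel identity from Lemma~\ref{lem::LGtoLZfusion} is what makes the bound tractable, since each factor $Z^j_{\bs t}(z;y)$ can then be controlled separately. As written, your ``dominated convergence'' paragraph is a plan rather than a proof.
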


We prove Item~\ref{item::multiradial_halfwatermelonPF} in Section~\ref{subsec::Poissonkernel} (see Lemma~\ref{lem::LGtoLZfusion}) and 
Item~\ref{item::multiradial_halfwatermelonMEAS} in Section~\ref{subsec::multiradial_halfwatermelon} (after Lemma~\ref{lem::rainbowSLE_mart}).

\bigskip

The second goal of this section is to show that multichordal $\SLE_{\kappa}$ converges to half-watermelon $\SLE_{\kappa}$ when the endpoints of the $n$ distinct curves tend to a common point on the boundary (Theorem~\ref{thm::multichordal_halfwatermelon}).

\paragraph*{Multichordal SLE.} 
For a $2$-polygon $(\Omega; x,y)$, we denote by $\chamber(\Omega; x,y)$ the space of continuous simple curves (chords) $\gamma\subset\overline{\Omega}$ from $x$ to $y$ such that $\gamma\cap\partial\Omega = \{x,y\}$. 
In a $2n$-polygon $(\Omega; x^1, \ldots, x^{2n})$, we denote by $\chamberrainbow{n}(\Omega; x^1, \ldots, x^{2n})$ the set of collections $\bs{\eta}=(\eta^{1}, \ldots, \eta^{n})$ of pairwise disjoint curves where $\eta^{j}\in\chambertwo(\Omega; x^j, x^{2n+1-j})$ for each $j$. 
The connectivity of the curves forms the ``rainbow link pattern''
\begin{align}\label{eqn::rainbow}
\rainbowBig_n \; := \; \{\{1,2n\}, \{2, 2n-1\}, \ldots, \{n, n+1\}\}. 
\end{align}

\begin{definition}[Multichordal SLE]\label{def::globalnSLE}
Fix $\kappa\in (0,4]$. Let $(\Omega; x^1, \ldots, x^{2n})$ be a $2n$-polygon, $n\ge 2$. 
The \emph{$n$-chordal} $\SLE_{\kappa}$ (associated to the rainbow link pattern $\rainbowBig_n$) 
is the unique probability measure $\QQrainbow{n}(\Omega; x^1, \ldots, x^{2n})$ on the configuration space $\chamberrainbow{n}(\Omega; x^1, \ldots, x^{2n})$ satisfying the \emph{chordal resampling property}:  
for each $j\in \{1, \ldots, n\}$, the conditional law of $\eta^{j}$ given $\{\eta^{i} \colon i\neq j\}$ is chordal $\SLE_{\kappa}$ from $x^j$ to $x^{2n+1-j}$ 
in the connected component $\Omega_j$ of $\Omega\setminus\cup_{i\neq j}\eta^{i}$ having $x^j$ and $x^{2n+1-j}$ on its boundary.
\end{definition}

The existence and uniqueness of multichordal $\SLE_{\kappa}$ for different ranges of $\kappa$ has been widely studied, see~\cite{Miller-Sheffield:Imaginary_geometry1, Miller-Sheffield:Imaginary_geometry2, Peltola-Wu:Global_and_local_multiple_SLEs_and_connection_probabilities_for_level_lines_of_GFF, Wu:Convergence_of_the_critical_planar_ising_interfaces_to_hypergeometric_SLE, BPW:On_the_uniqueness_of_global_multiple_SLEs, AHSY:Conformal_welding_of_quantum_disks_and_multiple_SLE_the_non-simple_case, Zhan:Existence_and_uniqueness_of_nonsimple_multiple_SLE, FLPW:Multiple_SLEs_Coulomb_gas_integrals_and_pure_partition_functions}.  
In particular, the existence and uniqueness holds for all $\kappa\in (0,8)$. 
The chordal Loewner evolution of curves under $\QQrainbow{n}$ can be characterized in terms of the ``rainbow partition functions'' 
$\LZrainbow{n}$ discussed in more detail in Section~\ref{subsec: Proof of limit}. 

\bigskip

We prove the following result in Section~\ref{subsec: Proof of perturbation}. 

\begin{theorem}\label{thm::multichordal_halfwatermelon}
Fix $\kappa\in (0,4], \, n\ge 1$, and a $2n$-polygon $(\Omega; \bs{x}, \bs{y})$ with $\bs{x}=(x^1, \ldots, x^n), \, \bs{y}=(y^n, \ldots, y^1)$, 
and $y \in (x^n \, x^1)$ and $x\in (y^1\, y^n)$. 
Let $H(\Omega; x, y)$ denote the boundary Poisson kernel in $\Omega$ \textnormal{(}see~\textnormal{(\ref{eqn::Poisson_def_H},~\ref{eqn::bPoisson_cov}))}.
\begin{enumerate}
\item The partition function $\LZrainbow{n}$ converges to $\LZfusion{n}$ after proper normalization:
\begin{align}\label{eqn::LZrainbowtoLZfusion}
\lim_{\substack{y^j\to y\\\forall 1\le j\le n}}\frac{\LZrainbow{n}(\Omega; \bs{x}, \bs{y})}{\LZfusionInv{n}(\Omega;x, \bs y)} 
= \frac{A_n}{( H(\Omega; x, y) )^{\mathfrak{h}_n}} \; \LZfusion{n}(\Omega; \bs{x}, y),
\end{align}
where the multiplicative constant $A_n$ is given
in terms of the $q$-integers
\begin{align*}
\qnum{m} = \frac{q^{m}-q^{-m}}{q-q^{-1}} 
\qquad \textnormal{and} \qquad 
\qfact n = \prod_{m=1}^{n}\qnum m ,
\qquad q = \ee^{4 \pi \ii / \kappa} ,
\end{align*}
and ratios of Euler Gamma functions $\Gamma(\cdot)$ as\footnote{It follows from~\eqref{eqn::LZrainbowtoLZfusion} that the constant $A_n$ is always finite when $\kappa \in (0,8)$.}
\begin{align} 
\label{eq: fusion cst}
A_n = A_n(\kappa) := \; & 
\frac{\qnum{2}^{n} \qfact{n}}{\qnum{n+1}}  \, 
\Big( \frac{\Gamma(2-8/\kappa)}{\Gamma(1-4/\kappa)^2} \Big)^n \,
\Selberg_{n} , 
\\
\label{eq: Selberg}
\Selberg_{n} = \Selberg_{n}(\kappa) := \; & \frac{1}{n!} \; \prod_{u=1}^{n}
\frac{\Gamma( 1 - \frac{4}{\kappa}(n+1-u)) \; \Gamma( 1 - \frac{4}{\kappa}(n+1-u)) \; \Gamma( 1 + \frac{4}{\kappa}u )}
{\Gamma( 1 + \frac{4}{\kappa}) \; \Gamma( 2 - \frac{4}{\kappa}(n+2-u))} .
\end{align}

\item Suppose $U\subset\Omega$ is simply connected, has the points $x^1, \ldots, x^n$ on its boundary, and has a positive distance from $y$. 
Then, as $y^j\to y$ for all $1\le j\le n$, the law $\QQrainbow{n}(\Omega; \bs{x}, \bs{y})$ of $n$-chordal $\SLE_{\kappa}$ 
converges weakly to the law $\QQfusion{n}(\Omega; \bs{x}, y)$ of half-$n$-watermelon $\SLE_{\kappa}$, up to the exit time from $U$. 
\end{enumerate}
\end{theorem}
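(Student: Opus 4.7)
The plan is to establish the partition function asymptotic~\eqref{eqn::LZrainbowtoLZfusion} first via conformal invariance and an explicit integral representation, and then deduce the weak convergence of measures by comparing the underlying Loewner driving functions up to the stopping time $\tau_U$. A conformal map $\varphi \colon \Omega \to \HH$ together with the conformal covariance rule~\eqref{eqn::LZfusion_cov} (and an analogous rule for $\LZrainbow{n}$) reduces~\eqref{eqn::LZrainbowtoLZfusion} to the case $\Omega = \HH$. One checks directly that the factors $|\varphi'(x^j)|^{\mathfrak{b}}$ at the fixed points cancel on both sides, while the prefactor $H(\Omega;x,y)^{-\mathfrak{h}_n}$ precisely absorbs the conformal weight $\mathfrak{h}_n = \Delta(\mathfrak{e}_{1,n+1})$ of the fused boundary operator at $y$ formed from $n$ operators of weight $\mathfrak{b} = \Delta(\mathfrak{e}_{1,2})$.

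For the half-plane computation, I would use an explicit Coulomb gas / Dotsenko-Fateev representation of $\LZrainbow{n}$ (as in the FLPW construction of rainbow pure partition functions). Writing $y^j = y + \epsilon \, w_j$ with fixed $w_j$ and analyzing the integrand as $\epsilon \to 0$, the $n$-fold multiple integral collapses at leading order to a Selberg-type integral that evaluates exactly to $\Selberg_{n}$ in~\eqref{eq: Selberg}. The Gamma-function prefactor $\bigl(\Gamma(2 - 8/\kappa)/\Gamma(1-4/\kappa)^2\bigr)^n$ then emerges as the $n$-fold iteration of the elementary chordal-to-chordal boundary fusion constant (cf.~the Lawler--Lind radial-to-chordal fusion~\cite{Lawler-Lind:Two-sided_SLE8_over_3_and_infinite_self-avoiding_polygon}), while the $q$-combinatorial factor $\frac{\qnum{2}^{n} \qfact{n}}{\qnum{n+1}}$ tracks the projection of the full correlator onto the rainbow channel, equivalently the meander matrix entry / pure partition function normalization for the rainbow link pattern~\eqref{eqn::rainbow}.

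For part (2), I would express both $\QQrainbow{n}(\Omega; \bs x, \bs y)$ and $\QQfusion{n}(\Omega; \bs x, y)$ as $n$ independent chordal $\SLE_\kappa$'s (from $x^j$ to $y^j$, respectively from $x^j$ to $y$) tilted by multi-time-parameter local martingales built from $\LZrainbow{n}$, respectively $\LZfusion{n}$ --- the chordal analogue of Lemma~\ref{lem::multitime_mart_nradial_spiral_z_RN}. Since $\overline U$ is separated from $y$ by a positive distance, the Loewner covering-map factors $\covmap'_{\bs t, j}(\xi^j_{t_j})$ and loop-measure terms are bounded on $\{\bs t \le \tau_U\}$, so the Radon-Nikodym ratio between the two tilting martingales reduces (up to bounded factors) to the ratio of partition functions, which converges uniformly on $\{\bs t \le \tau_U\}$ by part (1). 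A standard Girsanov argument then upgrades this to weak convergence of $\QQrainbow{n}$ to $\QQfusion{n}$ on $\LF_{\tau_U}$.

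The hard part will be establishing the \emph{sharp} partition function asymptotic with the exact multiplicative constant $A_n$. This requires an explicit Selberg integral evaluation combined with careful bookkeeping of contour deformations, branch choices, and pure-partition-function normalizations in the Coulomb gas representation of $\LZrainbow{n}$; equivalently, one must pin down the OPE coefficient for the fusion of $n$ boundary operators $\phi_{1,2}$ joined through the rainbow connectivity into the single field $\phi_{1,n+1}$ at $y$. Splitting off the $q$-combinatorial prefactor $\frac{\qnum{2}^{n} \qfact{n}}{\qnum{n+1}}$ cleanly from the Gamma/Selberg part is the principal technical point.
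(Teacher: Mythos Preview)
Your overall architecture is right, but two points deserve attention.

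\textbf{Part~(1).} Your description of the half-plane computation is not quite what happens, and your proposed route would be hard to execute. Taking $y^j \to y$ in the Coulomb gas formula for $\LZrainbow{n}$ does \emph{not} collapse the $n$-fold integral to a Selberg integral; it produces another $n$-fold integral $\Ffusion{n}(\bs x, y)$ with the same number of screening variables, only with the $y^j$'s merged into a single point of multiplicity $n$ (the contours $\contour_\ell$ become contours $\hat\contour_\ell$ around $y$). The paper then proves that the ratio $\Ffusion{n}(\bs x, y)/\LZfusion{n}(\HH;\bs x, y)$ is independent of $y$ by a Liouville argument: one checks via known asymptotics of these Coulomb gas integrals that the ratio is entire and bounded in $y \in \C$, hence constant. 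The constant $A_n$ (and in particular the Selberg factor $S_n$) is read off from the $y \to +\infty$ asymptotic, citing results of Kyt\"ol\"a--Peltola. So the Selberg integral enters through the normalization at infinity, not through the $y^j \to y$ limit itself. Your direct-evaluation scheme might be made to work, but it would require a separate dominant-balance analysis and is not how the paper proceeds.

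\textbf{Part~(2).} There is a genuine gap. You assert that the Radon--Nikodym ratio ``converges uniformly on $\{\bs t \le \tau_U\}$ by part~(1)'', but part~(1) only gives \emph{pointwise} convergence of the partition functions at fixed configurations. What you need in addition is a \emph{uniform} two-sided bound on $\LZrainbow{n}/(\LZ_{\defpatt_n}\cdot\LZfusion{n})$ over all configurations that can arise from the Loewner flow while the curves stay in $U$. The paper isolates this as a separate result (their Corollary on the function $\LR$): they show that the conformally invariant ratio $\LR(\HH;0,\bs x,\bs y,\infty)$ extends continuously to the closure of the configuration space $\mathfrak{R}(R)$ --- this uses the explicit integral representation and the fact that the contours separate $\{x^i\}$ from $\{y^j\}$ --- and then compactness (after mapping to a rectangle) yields finite nonzero bounds $C_{\inf},C_{\sup}$ independent of $R$. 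Combined with distortion estimates for the Loewner maps, this gives the required uniform bound on $R_{\bs t}$. Without this step your Girsanov/dominated-convergence argument is incomplete.
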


It is important to observe that both constants~(\ref{eq: fusion cst},~\ref{eq: Selberg}) have poles only at finitely many rational values (and the number of the poles depends on $n$).
In fact, the constant~\eqref{eq: Selberg} is nothing but the well-known Selberg integral 
(see~\cite[Remark~3.9]{Kytola-Peltola:Conformally_covariant_boundary_correlation_functions_with_quantum_group} and references therein),
which frequently appears in expressions in conformal field theory, Coulomb gas, and random matrix models. 

\bigskip

As a consequence of Theorem~\ref{thm::multichordal_halfwatermelon}, we derive the boundary perturbation property of half-watermelon $\SLE_{\kappa}$ in Section~\ref{subsec: Proof of perturbation}. 
It will be an ingredient in the proof of the boundary perturbation in Proposition~\ref{prop::multiradial_bp}. 

\begin{proposition}[Boundary perturbation]\label{prop::halfwatermelon_bp}
Fix $\kappa\in (0,4], \, n\ge 1$, and an $(n+1)$-polygon $(\Omega; \bs{x}, y)$. 
For half-$n$-watermelon $\SLE_{\kappa}$ curves $\bs{\eta}=(\eta^{1}, \ldots, \eta^{n})$ in $(\Omega; \bs{x}, y)$, denote $\bs{\eta}=\cup_{i=1}^n\eta^{i}$. 
Suppose $K$ is a compact subset of $\overline{\Omega}$ such that $\Omega\setminus K$ is simply connected and $K$ has a positive distance from $\{x^1, \ldots, x^n, y\}$.
Then, the half-$n$-watermelon $\SLE_{\kappa}$ probability measure 
in the smaller polygon $(\Omega\setminus K; \bs{x}; y)$ is absolutely continuous with respect to that in $(\Omega; \bs{x}, y)$, 
with Radon-Nikodym derivative 
\begin{align}\label{eqn::multichordal_bp}
\frac{\LZfusion{n}(\Omega; \bs{x}, y)}{\LZfusion{n}(\Omega\setminus K; \bs{x}, y)} \, \one\{\bs{\eta}\cap K=\emptyset\}\, \exp\Big(\frac{\mathfrak{c}}{2} \blm(\Omega; \bs{\eta}, K)\Big) ,
\end{align} 
where $\blm(\Omega; \bs{\eta}, K)$ is the Brownian loop measure in $\Omega$ of those loops that intersect both $\bs{\eta}$ and $K$. 
\end{proposition}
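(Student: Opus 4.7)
The plan is to transfer the boundary perturbation property from multichordal $\SLE_\kappa$ (with rainbow connectivity; Definition~\ref{def::globalnSLE}) to half-watermelon $\SLE_\kappa$ via the coalescence limit in Theorem~\ref{thm::multichordal_halfwatermelon}. First, I would invoke the boundary perturbation property for $n$-chordal $\SLE_\kappa$: for $\bs y = (y^n,\ldots,y^1)$ with distinct $y^j$'s separated from $y$, the law $\QQrainbow{n}(\Omega; \bs{x}, \bs{y})$ satisfies
\begin{align*}
\frac{\ud \QQrainbow{n}(\Omega\setminus K; \bs{x}, \bs{y})}{\ud \QQrainbow{n}(\Omega; \bs{x}, \bs{y})} (\bs{\eta})
= \frac{\LZrainbow{n}(\Omega; \bs{x}, \bs{y})}{\LZrainbow{n}(\Omega\setminus K; \bs{x}, \bs{y})} \, \one\{\bs{\eta}\cap K=\emptyset\}\, \exp\Big(\frac{\mathfrak{c}}{2} \blm(\Omega; \bs{\eta}, K)\Big).
\end{align*}
This is standard: it follows from combining the chordal version of Lemma~\ref{lem::radialSLE_bp} with the resampling property of Definition~\ref{def::globalnSLE} (see e.g.~\cite{Peltola-Wu:Global_and_local_multiple_SLEs_and_connection_probabilities_for_level_lines_of_GFF, Jahangoshahi-Lawler:Multiple-paths_SLE_in_multiply_connected_domains}).

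Next, I would let $y^j \to y$ for all $j=1,\ldots,n$. By Theorem~\ref{thm::multichordal_halfwatermelon}\textnormal{(2)} applied in both $\Omega$ and $\Omega\setminus K$, the two sides above converge weakly, up to exit from any simply connected $U \subset \Omega \setminus K$ containing $\bs x$ on its boundary and bounded away from $y$ and $K$, to the corresponding half-watermelon laws $\QQfusion{n}(\Omega; \bs{x}, y)$ and $\QQfusion{n}(\Omega\setminus K; \bs{x}, y)$. Since $K$ is separated from $\{x^1,\ldots,x^n,y\}$, the half-watermelon curves almost surely do not touch $\partial K$, so both $\one\{\bs{\eta}\cap K=\emptyset\}$ and $\blm(\Omega;\bs{\eta}, K)$ are continuous at configurations charged by the limit measure, and the right-hand side passes to the limit provided we can identify the limit of the partition function ratio.

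To compute this ratio, I would fix an auxiliary boundary point $x \in (y^1\, y^n)$ separated from $K$ and from $y$, and apply Theorem~\ref{thm::multichordal_halfwatermelon}\textnormal{(1)} to both $\Omega$ and $\Omega\setminus K$ simultaneously. The universal constant $A_n(\kappa)$ cancels, leaving
\begin{align*}
\lim_{y^j \to y \, \forall j} \frac{\LZrainbow{n}(\Omega; \bs{x}, \bs{y})}{\LZrainbow{n}(\Omega\setminus K; \bs{x}, \bs{y})}
= \frac{\LZfusion{n}(\Omega; \bs{x}, y)}{\LZfusion{n}(\Omega\setminus K; \bs{x}, y)} \cdot \frac{H(\Omega\setminus K; x, y)^{\mathfrak{h}_n}}{H(\Omega; x, y)^{\mathfrak{h}_n}} \cdot \lim_{y^j \to y \, \forall j} \frac{\LZfusionInv{n}(\Omega; x, \bs y)}{\LZfusionInv{n}(\Omega\setminus K; x, \bs y)}.
\end{align*}
Using the explicit formulas~(\ref{eqn::LZfusion_H},~\ref{eqn::LZfusion_cov}) for $\LZfusionInv{n}$ and a first-order Taylor expansion of the conformal maps $\Omega \to \HH$ and $\Omega\setminus K \to \HH$ near $y$ (a direct calculation that relies on the algebraic identity $n\mathfrak{b} + n(n-1)/\kappa = \mathfrak{h}_n$), the last limit equals $\bigl(H(\Omega; x, y) / H(\Omega\setminus K; x, y)\bigr)^{\mathfrak{h}_n}$. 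This exactly cancels the Poisson kernel ratio, leaving $\LZfusion{n}(\Omega;\bs x, y) / \LZfusion{n}(\Omega\setminus K; \bs x, y)$, which matches~\eqref{eqn::multichordal_bp}; that the answer is independent of the auxiliary point $x$ confirms consistency.

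The principal technical obstacle will be the rigorous passage to the limit in the Radon-Nikodym derivative, since weak convergence by itself is not enough. I would first restrict both measures to the exit time $\tau_m$ from a nested sequence $U_m \nearrow \Omega \setminus K$ of simply connected subdomains bounded away from $y$ and $K$. On the event $\{\bs \eta \text{ exits } U_m\}$, the indicator and Brownian loop measure functional are bounded and continuous, so weak convergence yields the identity there. The full-time identity then follows by monotone convergence as $m \to \infty$, using transience of half-watermelon $\SLE_\kappa$ (\cite[Section~4]{Miller-Sheffield:Imaginary_geometry1}) so that $\tau_m \to \infty$ almost surely, together with the uniform integrability supplied by the finiteness of the Brownian loop measure~\eqref{eqn::blm_def} between disjoint compact sets.
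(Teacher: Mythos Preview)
Your proposal is correct and follows essentially the same approach as the paper: reduce to the known boundary perturbation for multichordal $\SLE_\kappa$ (the paper cites this as Lemma~\ref{lem::globalnSLE_bp}), then pass to the fusion limit $y^j\to y$ using Theorem~\ref{thm::multichordal_halfwatermelon}, and extend from a localized statement (up to exit from $U$) to the full curves via transience. The only cosmetic difference is in the partition-function ratio: the paper invokes Proposition~\ref{prop: block limit} directly and lets the normalizing factors cancel implicitly through conformal covariance, whereas you route through Theorem~\ref{thm::multichordal_halfwatermelon}(1) with an auxiliary boundary point $x$ and then explicitly verify the cancellation $\lim \LZfusionInv{n}(\Omega;x,\bs y)/\LZfusionInv{n}(\Omega\setminus K;x,\bs y) = \bigl(H(\Omega;x,y)/H(\Omega\setminus K;x,y)\bigr)^{\mathfrak{h}_n}$ via the identity $n\mathfrak{b}+n(n-1)/\kappa=\mathfrak{h}_n$---this is the same computation spelled out more fully.
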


\subsection{Preliminaries on conformal radius and Poisson kernel}\label{subsec::Poissonkernel}

\paragraph*{Conformal radius.}
Recall that, for a simply connected domain $\Omega\subsetneq\C$ and $z\in \Omega$, the \emph{conformal radius} $\CR(\Omega; z) := 1/\varphi'(z)$ of $\Omega$ seen from $z$ is defined in terms of 
the conformal bijection $\varphi \colon \Omega \to \U$ such that $\varphi(z)=0$ and $\varphi'(z)>0$. It is conformally covariant: for any conformal map $\phi$ on $\Omega$, we have
\begin{align} \label{eq:CR_COV}
\CR(\phi(\Omega); \phi(z))=|\phi'(z)| \, \CR(\Omega; z).
\end{align}
For example, in the upper half-plane $\HH \ni w$ and disc $\U \ni z$, we respectively have
\begin{align}\label{eqn::CR_H_and_U}
\CR(\HH;w)= 2\Im(w)  
\qquad \textnormal{and} \qquad 
\CR(\U;z)=1-|z|^2 .
\end{align}

\begin{lemma} \label{lem::CR_cvg}
Consider a $1$-polygon $(\Omega;x)$ and a subset $K \subset \overline{\Omega}$ such that $\Omega\setminus K$ is simply connected and $K$ has a positive distance from $x$. Then, we have
\begin{align}\label{eqn::CR_cvg}
\lim_{\Omega \, \ni \, z\to x} \frac{\CR(\Omega \setminus K;z)}{\CR(\Omega;z)}=1. 
\end{align}
\end{lemma}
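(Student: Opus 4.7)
The plan is to reduce the problem to the upper half-plane, where the conformal radius has a simple explicit form and a Schwarz reflection argument applies. By the assumed $C^{1+\eps}$ regularity of $\partial\Omega$ at $x$ (together with local connectedness), fix a conformal bijection $\phi\colon\Omega\to\HH$ with $\phi(x)=0$; then $\phi$ extends smoothly to a neighborhood of $x$ in $\overline{\Omega}$, so $w:=\phi(z)\to 0$ in $\HH$ as $z\to x$. Writing $K':=\phi(K)\subset\overline{\HH}$, which is compact with $\dist(0,K')>0$, the conformal covariance~\eqref{eq:CR_COV} gives
\begin{align*}
\frac{\CR(\Omega\setminus K;z)}{\CR(\Omega;z)}
\; = \; \frac{\CR(\HH\setminus K';w)}{\CR(\HH;w)}
\; = \; \frac{\CR(\HH\setminus K';w)}{2\Im(w)} ,
\end{align*}
so it suffices to show the right-hand side tends to $1$ as $\HH\ni w\to 0$.

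Next, I would introduce any conformal bijection $\psi\colon\HH\setminus K'\to\HH$ (for concreteness, the hydrodynamically normalized one, $\psi(z)=z+O(1/z)$ at $\infty$). Since $K'$ is compact and $\dist(0,K')>0$, the boundary of $\HH\setminus K'$ agrees with $\R$ on some interval $(-\delta,\delta)$, and $\psi$ maps this interval into $\R=\partial\HH$. By Schwarz reflection, $\psi$ extends to an analytic function on a disc around $0$; moreover, $\psi(0)\in\R$ and, by the orientation of the boundary, $\psi'(0)\in\R_{>0}$. Another application of conformal covariance then gives
\begin{align*}
\CR(\HH\setminus K';w) \; = \; \frac{\CR(\HH;\psi(w))}{|\psi'(w)|} \; = \; \frac{2\Im(\psi(w))}{|\psi'(w)|} .
\end{align*}

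Finally, the Taylor expansion $\psi(w)=\psi(0)+\psi'(0)\,w+O(|w|^2)$ together with $\psi(0),\psi'(0)\in\R$ yields $\Im(\psi(w)) = \psi'(0)\,\Im(w)+O(|w|^2)$ and $|\psi'(w)|\to\psi'(0)$. Hence
\begin{align*}
\frac{\CR(\HH\setminus K';w)}{2\Im(w)}
\; = \; \frac{\psi'(0)+O(|w|^2/\Im(w))}{|\psi'(w)|}
\; \longrightarrow \; 1 \qquad \text{as } w\to 0 ,
\end{align*}
provided that $\Im(w)$ does not go to $0$ too fast relative to $|w|^2$. This is automatic if one restricts $w$ to non-tangential cones, but in our setting $z\to x$ is unrestricted inside $\Omega$, so the small care needed is to observe that for the Möbius-type asymptotics $\psi(w)-\psi(0)=\psi'(0)w+O(|w|^2)$ the \emph{ratio} $\Im(\psi(w))/\Im(w)$ converges to $\psi'(0)$ along any sequence $w\to 0$ with $\Im(w)>0$ (the error term is $O(|w|^2)$ in absolute value, but taking imaginary parts yields an $O(|w|^2)$ error in $\Im(\psi(w))$, while $\Im(w)$ only needs to be positive, not bounded below by $|w|$ --- one uses that $\Im\big(O(|w|^2)\big)$ can actually be estimated by $|w|\cdot\Im(w)$ once one writes $\psi$ in a Schwarz-reflected form around $\R$, so that $\psi(\bar w)=\overline{\psi(w)}$ and hence $\Im(\psi(w))=\Im(w)\,\int_0^1\psi'(sw+(1-s)\bar w)\,\ud s$, which $\to\psi'(0)\,\Im(w)$). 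No serious obstacle is expected; the proof is essentially the observation that a conformal map analytic at a boundary point behaves like its derivative there.
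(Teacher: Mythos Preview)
Your proof is correct, and structurally close to the paper's: both reduce to $(\HH;0)$ via conformal covariance, extend the uniformizing map $\psi$ (the paper's $\varphi_K$) analytically through the real axis by Schwarz reflection, and reduce the claim to showing $\Im(\psi(w))/\Im(w)\to\psi'(0)$ as $w\to 0$ with no restriction on the angle of approach. The difference lies in how this last limit is established. The paper expands $\psi$ in a Taylor series, writes $w=r\ee^{\ii\theta}$, and handles the tangential approach via the elementary bound $|\sin(n\theta)/\sin\theta|\le n$ together with de~Branges's coefficient bound to justify dominated convergence. Your integral identity
\[
\Im(\psi(w))=\Im(w)\int_0^1\psi'\big((1-s)\bar w+sw\big)\,\ud s,
\]
which follows immediately from $\psi(\bar w)=\overline{\psi(w)}$ and the fundamental theorem of calculus along the vertical segment from $\bar w$ to $w$, is a cleaner route: the integrand is continuous at $0$ (and the integral is real by the reflection symmetry), so the limit is $\psi'(0)$ with no appeal to de~Branges or dominated convergence. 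Your recognition that the naive $O(|w|^2)$ Taylor remainder is insufficient for tangential approach, and your fix via this integral formula, are both on point.
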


\begin{proof}
By~\eqref{eq:CR_COV}, it suffices to consider $(\Omega;x)=(\HH;0)$ and assume that $\infty\notin K$ and $\dist(0,K)>1$. 
Let $\varphi_K$ be a conformal map from $\HH\setminus K$ onto $\HH$ satisfying $\varphi_K(\infty)=\infty$ and $\varphi_K(0)=0$. 
Then, we have
\begin{align*} 
\frac{\CR(\HH \setminus K;z)}{\CR(\HH;z)} = \; & \frac{\Im(\varphi_K(z))}{|\varphi'_K(z)|\Im(z)},
\\
\varphi_K(z) = \; & \varphi'_K(0) \, z+ \sum_{n=2}^{\infty} \frac{\varphi_K^{(n)}(0)}{n!} \, z^n ,
\qquad z\in \HH .
\end{align*}
Writing $z=r\ee^{\ii\theta}$ in polar coordinates, we find that
\begin{align} \label{eqn::CR_cvg_aux3}
\frac{\CR(\HH \setminus K;z)}{\CR(\HH;z)} 
= \bigg( 1+\sum_{n=2}^{\infty}  \frac{r^{n-1}}{n!} \,\frac{\varphi_K^{(n)}(0)}{\varphi'_K(0)} \, \frac{\sin(n\theta)}{\sin(\theta)} \bigg) \frac{\varphi'_K(0)}{|\varphi'_K(z)|}.
\end{align}
On the one hand, we have $|\sin(n\theta)/\sin(\theta)|\le n$. On the other hand, since $\dist(0,K)>1$, Schwarz reflection guarantees that $\varphi_K$ is a univalent function on the unit disc, 
so de Branges's theorem~\cite{DeBranges:Proof_of_Bieberbach_conjecture} 
gives $|\varphi_K^{(n)}(0)|\le n \, |\varphi'_K(0)|$. 
Combining with~\eqref{eqn::CR_cvg_aux3}, dominated convergence theorem implies~\eqref{eqn::CR_cvg}. 
\end{proof}

\paragraph*{Poisson kernel.} 
For the $1$-polygon $(\HH; x; z)$ with one interior point, the \emph{Poisson kernel} is defined as
\begin{align}\label{eq:PK_HH}
H(\HH; x; z):= \frac{2\Im(z)}{|z-x|^2} , \qquad x\in\R , \; z\in\HH ,
\end{align}
and for a general polygon $(\Omega; x; z)$, we extend its definition via conformal covariance: 
\begin{align}\label{eq:PK_COV}
H(\Omega; x; z) := |\varphi'(x)| \, H(\HH; \varphi(x); \varphi(z)),
\end{align}
where $\varphi$ is any conformal map from $\Omega$ onto $\HH$. 
In particular, for the disc $(\U; \ee^{\ii\theta}, z)$, we have
\begin{align}\label{eqn::Poissonkernel_disc_int}
H(\U; \ee^{\ii\theta}, z) = \frac{1-|z|^2}{|z - \ee^{\ii\theta}|^2} = \frac{\CR(\U;z)}{|z - \ee^{\ii\theta}|^2} .
\end{align}
Conventionally\footnote{It is usually denoted as $H(\Omega; z,x)$, but we adopt the notation $H(\Omega; x; z)$ to match with the notation for $1$-polygons with an interior marked point.}, 
the Poisson kernel is thought of as the Brownian excursion kernel from $z$ to $x$ in $\Omega$.

\begin{lemma} \label{lem::PoissonKerCR_convergence}
Consider a $1$-polygon $(\Omega;x)$ and a subset $K \subset \overline{\Omega}$ such that $\Omega\setminus K$ is simply connected and $K$ has a positive distance from $x$. Then, we have
\begin{align} \label{eqn::PoissonKer_convergence}
\lim_{\Omega \, \ni \, z\to x} \frac{H(\Omega\setminus K;x;z)}{H(\Omega;x;z)}=1.
\end{align}
\end{lemma}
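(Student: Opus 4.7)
The plan is to adapt the strategy used in Lemma~\ref{lem::CR_cvg}. First, by the conformal covariance~(\ref{eq:PK_COV}) of the Poisson kernel, I would reduce to the reference configuration $(\Omega; x) = (\HH; 0)$, and (after rescaling) assume without loss of generality that $\infty \notin K$ and $\dist(0, K) > 1$. Let $\varphi_K \colon \HH\setminus K \to \HH$ be the conformal bijection with $\varphi_K(0) = 0$ and $\varphi_K(\infty) = \infty$. Since $\HH\setminus K$ is simply connected and $K$ has positive distance from $0$, Schwarz reflection extends $\varphi_K$ holomorphically across a neighborhood of $0$ on the real line; it thus admits a real-coefficient Taylor expansion $\varphi_K(z) = \varphi_K'(0)\, z + \sum_{n\ge 2} a_n z^n$ with $\varphi_K'(0) > 0$.

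Applying the covariance~(\ref{eq:PK_COV}) and using~\eqref{eq:PK_HH}, the ratio of Poisson kernels reads
\begin{align*}
\frac{H(\HH\setminus K; 0; z)}{H(\HH; 0; z)}
\; = \; \varphi_K'(0) \cdot \frac{\Im(\varphi_K(z))}{\Im(z)} \cdot \frac{|z|^2}{|\varphi_K(z)|^2} ,
\end{align*}
so it suffices to evaluate each of the last two factors as $z \to 0$ in $\HH$. Since $\varphi_K(z)/z \to \varphi_K'(0)$, we have $|z|^2/|\varphi_K(z)|^2 \to 1/\varphi_K'(0)^2$. For the imaginary-part ratio, the key tool is the telescoping identity $\Im(z^n) = \Im(z)\, \sum_{k=0}^{n-1} z^k \bar{z}^{n-1-k}$, which rewrites
\begin{align*}
\frac{\Im(\varphi_K(z))}{\Im(z)}
\; = \; \varphi_K'(0) + \sum_{n\ge 2} a_n \sum_{k=0}^{n-1} z^k \bar{z}^{n-1-k} .
\end{align*}
The inner sum is bounded by $n|z|^{n-1}$, and de Branges's theorem (applicable here since $\varphi_K$ is univalent on the unit disc, by the assumption $\dist(0,K)>1$, exactly as in Lemma~\ref{lem::CR_cvg}) yields $|a_n| \le n|\varphi_K'(0)|$. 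Dominated convergence then gives $\Im(\varphi_K(z))/\Im(z) \to \varphi_K'(0)$, and assembling the three factors produces $\varphi_K'(0) \cdot \varphi_K'(0) \cdot \varphi_K'(0)^{-2} = 1$, which is~(\ref{eqn::PoissonKer_convergence}).

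The main subtlety is the possibly tangential approach of $z$ to $0$ within $\HH$: a crude Taylor estimate $\Im(\varphi_K(z)) = \varphi_K'(0)\Im(z) + O(|z|^2)$ is insufficient when $\Im(z)/|z| \to 0$. The telescoping identity for $\Im(z^n)/\Im(z)$ is precisely the gadget that removes the $\Im(z)$ factor from the numerator in a direction-uniform manner, reducing the problem to the uniform control of the Taylor tail already exploited in Lemma~\ref{lem::CR_cvg}. Everything else is routine.
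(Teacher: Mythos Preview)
Your proof is correct and uses the same underlying idea as the paper. The only difference is organizational: the paper factors the Poisson kernel ratio as
\[
\frac{H(\HH\setminus K;0;z)}{H(\HH;0;z)}
=\frac{\CR(\HH \setminus K;z)}{\CR(\HH;z)} \cdot \Big| z^2  \, \frac{\varphi'_K(z) \, \varphi'_K(0) }{\varphi_K(z)^2} \Big|,
\]
and then invokes Lemma~\ref{lem::CR_cvg} for the first factor (the second factor tends to $1$ trivially). You instead work directly with the three-factor decomposition $\varphi_K'(0)\cdot \Im(\varphi_K(z))/\Im(z)\cdot |z|^2/|\varphi_K(z)|^2$ and re-prove the key estimate for the imaginary-part ratio via the telescoping identity $\Im(z^n)=\Im(z)\sum_{k=0}^{n-1}z^k\bar z^{\,n-1-k}$; this is exactly the complex form of the polar-coordinate identity $\Im(z^n)/\Im(z)=r^{n-1}\sin(n\theta)/\sin\theta$ used inside the proof of Lemma~\ref{lem::CR_cvg}. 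So you are effectively redoing the heart of Lemma~\ref{lem::CR_cvg} inline rather than citing it. The paper's route is more modular; yours is more self-contained. The mathematical content is the same.
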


\begin{proof}
By~\eqref{eq:PK_COV}, it suffices to consider $(\Omega;x;z)=(\HH;0;z)$ and assume that $\infty\notin K$. 
Let $\varphi_K$ be a conformal map from $\HH\setminus K$ onto $\HH$ satisfying $\varphi_K(\infty)=\infty$ and $\varphi_K(0)=0$. 
Then, we have
\begin{align} \label{eqn::PoissonKerergence_aux1}
\frac{H(\HH\setminus K;0;z)}{H(\HH;0;z)}
=\frac{\CR(\HH \setminus K;z)}{\CR(\HH;z)} \, \Big| z^2  \, \frac{\varphi'_K(z) \, \varphi'_K(0) }{\varphi_K(z)^2} \Big|.
\end{align}
Plugging~\eqref{eqn::CR_cvg} from Lemma~\ref{lem::CR_cvg} into~\eqref{eqn::PoissonKerergence_aux1} we obtain~\eqref{eqn::PoissonKer_convergence}.
\end{proof}

\paragraph*{Boundary Poisson kernel.} 
For the $2$-polygon $(\HH; x,y)$, the \emph{boundary Poisson kernel} is defined as 
\begin{align}\label{eqn::Poisson_def_H}
H(\HH; x,y) := \frac{1}{(y-x)^2},\quad x,y \in\R ,
\end{align}
and for a general polygon $(\Omega; x,y)$, we extend its definition via conformal covariance: 
\begin{align}\label{eqn::bPoisson_cov}
H(\Omega; x,y) := |\varphi'(y)| \, |\varphi'(x)| \, H(\HH; \varphi(x),\varphi(y)), 
\end{align}
where $\varphi$ is any conformal map from $\Omega$ onto $\HH$. 
In particular, for the disc $(\U; \ee^{\ii\theta}, \ee^{\ii\vartheta})$, we have
\begin{align}\label{eqn::Poissonkernel_disc}
H(\U; \ee^{\ii\theta}, \ee^{\ii\vartheta}) = \frac{1}{4\sin^2\big(\frac{\vartheta-\theta}{2}\big)}
= \frac{1}{|\ee^{\ii\vartheta} - \ee^{\ii\theta}|^2} .
\end{align}

The relation between Poisson kernel and boundary Poisson kernel is given by the following lemma. 
\begin{lemma}\label{lem::Poissonkernel_cvg}
For a $2$-polygon $(\Omega; x,y)$, we have
\begin{align}\label{eqn::Poissonkernel_cvg}
\lim_{\Omega \, \ni \, z\to y}  \frac{H(\Omega;x;z)}{\CR(\Omega; z) \, H(\Omega; x,y)}=1. 
\end{align}
\end{lemma}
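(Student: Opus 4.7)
The plan is to reduce the claim to a direct computation in the upper half-plane $\HH$ via conformal covariance. I would pick any conformal bijection $\varphi\colon\Omega\to\HH$ with $\varphi(x)=0$ and $\varphi(y)=1$. By the $C^{1+\varepsilon}$-regularity assumption on the boundary at marked points (stated at the beginning of Section~\ref{subsec::intro_preli}), $\varphi$ extends as a $C^1$-map across a one-sided neighborhood of $y$; in particular $\varphi(z)\to 1$ and $|\varphi'(z)|\to|\varphi'(y)|\in(0,\infty)$ as $\Omega\ni z\to y$.

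The three building blocks of the ratio then read, via Equations~(\ref{eq:PK_HH},~\ref{eq:PK_COV},~\ref{eqn::CR_H_and_U},~\ref{eq:CR_COV},~\ref{eqn::Poisson_def_H},~\ref{eqn::bPoisson_cov}),
\begin{align*}
H(\Omega;x;z) \; = \; |\varphi'(x)| \, \frac{2 \Im \varphi(z)}{|\varphi(z)|^2} ,
\qquad
\CR(\Omega;z) \; = \; \frac{2 \Im \varphi(z)}{|\varphi'(z)|} ,
\qquad
H(\Omega;x,y) \; = \; |\varphi'(x)| \, |\varphi'(y)| .
\end{align*}
Substituting these, the factors $|\varphi'(x)|$ and $2 \Im \varphi(z)$ cancel and the ratio simplifies to $|\varphi'(z)|/(|\varphi(z)|^2 \, |\varphi'(y)|)$, which tends to $|\varphi'(y)|/(1\cdot|\varphi'(y)|)=1$ as $z\to y$, establishing~\eqref{eqn::Poissonkernel_cvg}.

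The only technical ingredient is the continuity of $\varphi$ and $\varphi'$ up to $y$ together with the non-vanishing of $|\varphi'(y)|$, which follows from the $C^{1+\varepsilon}$-hypothesis on the boundary at $y$ (by, e.g., Kellogg's theorem). Since the left-hand side of~\eqref{eqn::Poissonkernel_cvg} does not depend on the choice of $\varphi$, the remaining one-parameter freedom in choosing $\varphi$ (Möbius self-maps of $\HH$ fixing both $0$ and $1$) is immaterial; no separate verification of independence is needed. I do not foresee any genuine obstacle here: the argument is a short bookkeeping exercise, and the same template (with the same cancellation pattern) will be reused in Lemma~\ref{lem::CR_cvg} and Lemma~\ref{lem::PoissonKerCR_convergence} for the analogous ratios.
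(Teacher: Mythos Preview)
Your proof is correct and takes essentially the same approach as the paper's: reduce to $\HH$ via the conformal covariance rules~(\ref{eq:CR_COV},~\ref{eq:PK_COV},~\ref{eqn::bPoisson_cov}), cancel the common factor $2\Im\varphi(z)$, and use continuity of $\varphi$ and $\varphi'$ up to the boundary point $y$. The only cosmetic difference is that you normalize $\varphi(x)=0$, $\varphi(y)=1$, whereas the paper leaves $\varphi$ unnormalized and carries the factor $|\varphi(y)-\varphi(x)|^2$ through; your explicit appeal to Kellogg's theorem for the boundary regularity is a welcome clarification of a step the paper leaves implicit.

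One tangential remark: your closing comment that ``the same template \ldots\ will be reused in Lemma~\ref{lem::CR_cvg} and Lemma~\ref{lem::PoissonKerCR_convergence}'' is not quite right --- those lemmas concern ratios with a compact set $K$ removed from the domain, and their proofs use different tools (a power-series expansion combined with de~Branges's coefficient bound). This does not affect the present argument.
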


\begin{proof}
Let $\varphi$ be any conformal map from $\Omega$ onto $\HH$. Then, we have
\begin{align*}
\frac{H(\Omega; x;z)}{\CR(\Omega; z) \, H(\Omega; x,y)}
= \; & \frac{H(\HH; \varphi(x);\varphi(z))}{\CR(\Omega; z) \, |\varphi'(y)| \, H(\HH; \varphi(x),\varphi(y))}
=\frac{2\Im(\varphi(z)) \, |\varphi(y)-\varphi(x)|^2}{\CR(\Omega; z) \, |\varphi'(y)| \, |\varphi(z)-\varphi(x)|^2}. 
\end{align*}
Now, because $|\varphi'(z)| \, \CR(\Omega; z)=\CR(\HH; \varphi(z)) = 2\Im(\varphi(z))$, we obtain
\begin{align*}
\frac{H(\Omega; x;z)}{\CR(\Omega; z) \, H(\Omega; x,y)}
=\frac{|\varphi'(z)| \, |\varphi(y)-\varphi(x)|^2}{|\varphi'(y)| \, |\varphi(z)-\varphi(x)|^2} \longrightarrow 1, \qquad \textnormal{as }z\to y \textnormal{ in }\Omega.
\end{align*}
\end{proof}

The ratio of the partition functions $\nradpartfn{n}{0}$ and $\LZfusion{n}$ can be written in terms of Poisson kernels and conformal radius. 
This yields Item~\ref{item::multiradial_halfwatermelonPF} of Theorem~\ref{thm::multiradial_halfwatermelon}. 

\begin{lemma}\label{lem::LGtoLZfusion}
For an $(n+1)$-polygon $(\Omega; \bs x, y)$, $n\ge 1$, and $z \in \Omega$, we have 
\begin{align}\label{eqn::LGandLZfusion}
\frac{1}{( \CR(\Omega; z) )^{\mathfrak{h}_n - \tilde{\mathfrak{b}} - \frac{n^2-1}{2\kappa}}}
 \, \frac{\nradpartfn{n}{0}(\Omega; \bs x; z)}{\LZfusion{n}(\Omega; \bs x, y)} 
= \bigg(\prod_{j=1}^n \frac{H(\Omega; x^j; z)}{\CR(\Omega; z) \, H(\Omega; x^j, y)}\bigg)^{\frac{1}{n}\mathfrak{h}_n} .
\end{align}
Consequently, the convergence~\eqref{eqn::LGtoLZfusion} holds as $z \to y$. 
\end{lemma}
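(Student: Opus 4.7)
The plan is to prove the identity \eqref{eqn::LGandLZfusion} by exploiting conformal covariance to reduce to a concrete reference polygon and then computing both sides explicitly. The convergence \eqref{eqn::LGtoLZfusion} will then follow as a trivial consequence of Lemma~\ref{lem::Poissonkernel_cvg}.

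\textbf{Step 1: Conformal invariance of the asserted identity.} Both sides of \eqref{eqn::LGandLZfusion} are built from objects with prescribed covariance: $\nradpartfn{n}{0}$ via \eqref{eqn::multiradial_partition_function_cov}, $\LZfusion{n}$ via \eqref{eqn::LZfusion_cov}, $\CR$ via \eqref{eq:CR_COV}, and the Poisson kernels via \eqref{eq:PK_COV} and \eqref{eqn::bPoisson_cov}. A short bookkeeping shows that under a conformal bijection $\varphi$ both sides scale by the same factor $(|\varphi'(z)|/|\varphi'(y)|)^{\mathfrak{h}_n}$: on the left, the boundary Jacobians $|\varphi'(x^j)|^{\mathfrak{b}}$ cancel between numerator and denominator, leaving $|\varphi'(z)|^{\tilde{\mathfrak{b}}+(n^2-1)/(2\kappa)}/|\varphi'(y)|^{\mathfrak{h}_n}$ together with the factor from $\CR(\Omega;z)^{-E}$, where $E := \mathfrak{h}_n - \tilde{\mathfrak{b}} - (n^2-1)/(2\kappa)$; on the right, each of the $n$ ratios $H(\Omega;x^j;z)/(\CR(\Omega;z)H(\Omega;x^j,y))$ scales by $|\varphi'(z)|/|\varphi'(y)|$. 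Thus it suffices to verify \eqref{eqn::LGandLZfusion} in one reference polygon.

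\textbf{Step 2: Explicit computation in $\HH$.} I would work in $(\HH;\bs x, y;w)$ with $\bs{x}\in\R^n$, $x^n<y$, and $w\in\HH$. Using the M\"obius map $\varphi\colon\HH\to\U$ with $\varphi(w)=0$ and setting $\ee^{\ii\theta^j}:=\varphi(x^j)$, the identity~\eqref{eq:Mob_identity} transports $\nradpartfn{n}{0}(\U;\ee^{\ii\bs{\theta}};0)$ back to $\HH$; the elementary computations $|\varphi'(w)|=1/(2\Im w)=1/\CR(\HH;w)$ and $|\varphi'(x^j)|=2\Im(w)/|w-x^j|^2= H(\HH;x^j;w)$ give all the necessary Jacobians. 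The arithmetic collapses thanks to the two elementary identities
\begin{align*}
\mathfrak{b}+\frac{n-1}{\kappa} \; = \; \frac{\mathfrak{h}_n}{n}
\qquad \textnormal{and} \qquad
1-\frac{2}{\kappa}(n+2) \; = \; -\frac{2\mathfrak{h}_n}{n} ,
\end{align*}
both immediate from \eqref{eqn::universal_parameters} and \eqref{eqn::Kac_conformal_weight}. These reduce the expression for $\nradpartfn{n}{0}(\HH;\bs x;w)$ to
\begin{align*}
\nradpartfn{n}{0}(\HH;\bs x; w) \; = \; \CR(\HH;w)^{E} \; \LZ_{\defpatt_n}(\HH;\bs x) \prod_{j=1}^n |w-x^j|^{-2\mathfrak{h}_n/n},
\end{align*}
while the definition \eqref{eqn::LZfusion_H} directly gives $\LZfusion{n}(\HH;\bs x,y)=\LZ_{\defpatt_n}(\HH;\bs x)\prod_j(y-x^j)^{-2\mathfrak{h}_n/n}$. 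Dividing and comparing with the explicit evaluation $H(\HH;x^j;w)/(\CR(\HH;w)H(\HH;x^j,y)) = (y-x^j)^2/|w-x^j|^2$ yields~\eqref{eqn::LGandLZfusion} in $\HH$, hence in general.

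\textbf{Step 3: Convergence as $z\to y$.} For the corollary \eqref{eqn::LGtoLZfusion}, Lemma~\ref{lem::Poissonkernel_cvg} applied in each of the $2$-polygons $(\Omega;x^j,y)$ (permissible because the $x^j$ stay at positive distance from $y$) shows that each of the $n$ factors on the right of \eqref{eqn::LGandLZfusion} tends to $1$ as $z\to y$. The only conceptual care needed is to confirm that conformal invariance holds pointwise rather than only up to a constant (it does, because the combinations used are genuine ratios of covariant quantities with matching weights), and to track carefully that the exponent of $\CR(\Omega;z)$ indeed reads $E = \mathfrak{h}_n - \tilde{\mathfrak{b}} - (n^2-1)/(2\kappa)$; no genuine obstacle arises, and the result is ultimately a bookkeeping exercise.
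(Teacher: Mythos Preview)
Your proposal is correct and follows essentially the same route as the paper: reduce by conformal covariance to $\HH$, compute $\nradpartfn{n}{0}(\HH;\bs x;w)$ via the M\"obius map to $\U$ and the identity~\eqref{eq:Mob_identity}, then compare with the explicit formula~\eqref{eqn::LZfusion_H} for $\LZfusion{n}$ using the Poisson kernel expressions; the convergence part is identical. The only cosmetic difference is that the paper fixes $z=\ii r$ rather than a general $w\in\HH$, and states the conformal-covariance reduction in one line rather than spelling out the matching weights as you do in Step~1.
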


\begin{proof}
By conformal covariance, it suffices to consider the case where $\Omega = \HH$ and $x^1<\cdots<x^n<y$ and $z = \ii r$ with $r > 0$, so $\CR(\HH; \ii r) = 2r$ as in~\eqref{eqn::CR_H_and_U}.
Let $\varphi(z) = \frac{z-\ii r}{z+\ii r}$ be the conformal map from $\HH$ onto $\U$ such that $\varphi(\ii r)=0$ and $\varphi'(z)= \frac{2\ii r}{(z+\ii r)^2}$. 
A slightly tedious computation using the definitions in~(\ref{eq:multiradial_partition_function},~\ref{eqn::multiradial_partition_function_cov}) with $\mu=0$ 
and the identity~\eqref{eq:Mob_identity} yields
\begin{align*}
\nradpartfn{n}{0}(\HH; \bs x; \ii r) 
= \; & ( \CR(\HH; \ii r) )^{\mathfrak{h}_n - \tilde{\mathfrak{b}} - \frac{n^2-1}{2\kappa}} 
\;\Big( \underset{1\leq i < j \leq n}{\prod} \, |x^j - x^i| \Big)^{2/\kappa} 
\Big(\prod_{j=1}^n |\ii r - x^j| \Big)^{1-\frac{2}{\kappa}(n+2)}
\\
= \; & ( \CR(\HH; \ii r) )^{\mathfrak{h}_n - \tilde{\mathfrak{b}} - \frac{n^2-1}{2\kappa}} \, 
\LZfusion{n}(\HH; \bs x, y) \; 
 \bigg(\prod_{j=1}^n \frac{|\ii r - x^j|}{ |y-x^j|}  \bigg)^{1-\frac{2}{\kappa}(n+2)} .
&& \textnormal{[by~\eqref{eqn::LZfusion_H}]}
\end{align*}
where $1-\frac{2}{\kappa}(n+2) = -\frac{2}{n}\mathfrak{h}_n$. 
The asserted identity follows by noting that $H(\HH; x; \ii r) = \frac{\CR(\HH; \ii r)}{|\ii r-x|^2}$
and $H(\HH; x,y) = \frac{1}{|y-x|^2}$  by~(\ref{eq:PK_HH},~\ref{eqn::Poisson_def_H}).
The convergence~\eqref{eqn::LGtoLZfusion} follows from~(\ref{eqn::Poissonkernel_cvg},~\ref{eqn::LGandLZfusion}).
\end{proof}

\subsection{From multiradial SLE to half-watermelon SLE --- proof of Theorem~\ref{thm::multiradial_halfwatermelon}}
\label{subsec::multiradial_halfwatermelon}

Let us first investigate $\LZfusion{n}$ in the unit disc $(\U; \ee^{\ii\bs{\theta}}, \ee^{\ii\vartheta})$ with $\ee^{\ii\bs{\theta}} = (\ee^{\ii\theta^1},\ldots, \ee^{\ii\theta^n})$.
Using the definitions~(\ref{eqn::LZfusion_H},~\ref{eqn::LZfusion_cov}), a computation using a conformal map from $\U$ onto $\HH$ shows that  
\begin{align} \label{LZfusion_U}
\LZfusion{n}(\bs{\theta}, \vartheta)
\; = \;& \LZfusion{n}(\U;\ee^{\ii\bs{\theta}}, \ee^{\ii\vartheta}) 
\;= \; \bigg( \prod_{1\leq i<j\leq n} |\ee^{\ii\theta^j} - \ee^{\ii\theta^i}| \bigg)^{2/\kappa}  \; \bigg( \prod_{\ell=1}^n |\ee^{\ii\vartheta} - \ee^{\ii\theta^\ell}| \bigg)^{1-\frac{2}{\kappa}(n+2)} 
\\
\;= \; & 2^{\frac{n(\kappa-n-5)}{\kappa}} \bigg( \underset{1\leq i < j \leq n}{\prod} \, \sin \Big(\frac{\theta^j - \theta^i}{2}\Big) \bigg)^{2/\kappa} 
\bigg( \prod_{\ell=1}^n \sin\Big(\frac{\vartheta - \theta^\ell}{2}\Big) \bigg)^{1-\frac{2}{\kappa}(n+2)} 
, 
\qquad (\bs{\theta}, \vartheta) \in \LX_{n+1} . 
\nonumber
\end{align}
The partition function $\LZfusion{n} \colon \LX_{n+1}\to \R$ satisfies the following system of radial BPZ equations: 
\begin{align}\label{eqn::radialBPZ_fusion}
\begin{split}
\frac{\kappa}{2} \frac{\partial_j^2 \LZfusion{n}(\bs{\theta}, \vartheta)}{\LZfusion{n}(\bs{\theta}, \vartheta)} 
+ \underset{1\leq \ell\neq j \leq n}{\sum} \,\; &  \bigg( \cot \bigg(\frac{\theta^{\ell}-\theta^{j}}{2}\bigg)
\frac{\partial_{\ell} \LZfusion{n}(\bs{\theta}, \vartheta)}{\LZfusion{n}(\bs{\theta}, \vartheta)} \, - \, \frac{\mathfrak{b}/2}{ \sin^2 \big(\frac{\theta^{\ell}-\theta^{j}}{2}\big)}\bigg)
\\
\; & + \cot \bigg(\frac{\vartheta-\theta^{j}}{2}\bigg)
\frac{\partial_{n+1} \LZfusion{n}(\bs{\theta}, \vartheta)}{\LZfusion{n}(\bs{\theta}, \vartheta)} \, - \, \frac{\mathfrak{h}_n/2}{ \sin^2 \big(\frac{\vartheta-\theta^{j}}{2}\big)} 
\; = \; \tilde{\mathfrak{b}} ,
\end{split}
\end{align}
for all $j \in \{1,\ldots,n\}$, 
associated to the evolution of each starting angle $\theta^j$.  
Though we will not need it, let us point out that at the angle $\vartheta$ of the target point, the partition function $\LZfusion{n} \colon \LX_{n+1}\to \R$ satisfies a PDE of order $n+1$ 
(this follows from~\cite[Theorem~5.3, Part~(1)]{Peltola:Basis_for_solutions_of_BSA_PDEs_with_particular_asymptotic_properties} and Eq.~\eqref{eq:integral formula for fused ppf}).

\begin{lemma}\label{lem::rainbowSLE_mart}
Fix $\kappa\in (0,4], \, n\ge 1$, and $(\bs{\theta}, \vartheta) = (\theta^1, \ldots, \theta^{n}, \vartheta)\in\LX_{n+1}$. 
\begin{itemize}
\item Let $\eta^{j}$ be radial $\SLE_{\kappa}$ in $(\U; \ee^{\ii\theta^j}; 0)$ for each $1\le j\le n$, 
and let $\mathsf{P}_n$ be the probability measure on $\bs{\eta}=(\eta^{1}, \ldots, \eta^{n})$ under which the curves are independent. 

\item Let $\bs{\eta}=(\eta^{1}, \ldots, \eta^{n})\sim\QQfusion{n}$ be half-$n$-watermelon $\SLE_{\kappa}$ in $(\U; \ee^{\ii\bs{\theta}}, \ee^{\ii\vartheta})$. 
\end{itemize} 
Viewing it as an $n$-tuple of curves, we parameterize $\bs{\eta}$ using $n$-time-parameter $\bs{t}=(t_1, \ldots, t_n)$, 
and let $\bs{\mslitdriv}_{\bs{t}} = (\mslitdriv^{1}_{\bs{t}},\ldots,\mslitdriv^{n}_{\bs{t}})$ denote its multi-slit driving function~\eqref{eqn::Ntuple_driving}.
Then, the law of $\bs{\eta}$ under $\QQfusion{n}$ is the same as $\mathsf{P}_n$ tilted by the following $n$-time-parameter local martingale, up to the collision time: 
\begin{align*}
M_{\bs{t}}(\LZfusion{n}) 
= \; & \one_{\LE_{\emptyset}(\bs{\eta}_{\bs{t}})} \, \exp\Big(\frac{\mathfrak{c}}{2}\blm_{\bs{t}}\Big) \, 
(g_{\bs{t}}'(0))^{-n\tilde{\mathfrak{b}}} \, ( \covmap_{\bs{t}}'(\vartheta) )^{\mathfrak{h}_n}
\Big(\prod_{j=1}^{n} \covmap_{\bs{t},j}'(\xi_{t_j}^{j}) \Big)^{\mathfrak{b}}
\Big(\prod_{j=1}^{n} g_{\bs{t},j}'(0) \Big)^{\tilde{\mathfrak{b}}}
\,\LZfusion{n}(\bs{\mslitdriv}_{\bs{t}}, \covmap_{\bs{t}}(\vartheta)) ,
\end{align*}
where $\LE_{\emptyset}(\bs{\eta}_{\bs{t}}) = \{ \eta_{[0,t_j]}^{j} \cap \eta_{[0,t_i]}^{i}=\emptyset, \, \forall i\neq j\}$ is the event that different curves are disjoint, 
and $\blm_{\bs{t}}$ is the unique potential solving the exact differential equation~\eqref{eqn::mt_def} with $p=n$ \textnormal{(}see Lemma~\ref{lem::blm_exact}\textnormal{)}.
\end{lemma}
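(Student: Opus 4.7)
The plan is to apply It\^o's formula directly to $M_{\bs{t}}(\LZfusion{n})$, exploiting the radial BPZ system~\eqref{eqn::radialBPZ_fusion} for $\LZfusion{n}$, and then to identify the tilted measure with $\QQfusion{n}$ by computing marginal and conditional laws and invoking the uniqueness of half-$n$-watermelon $\SLE_\kappa$ from~\cite[Section~4]{Miller-Sheffield:Imaginary_geometry2}.

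For the martingale property, I would repeat the It\^o calculation from the proof of Proposition~\ref{prop::multitime_mart_universal}, now with extra contributions from $(\covmap_{\bs{t}}'(\vartheta))^{\mathfrak{h}_n}$ and the additional argument $\covmap_{\bs{t}}(\vartheta)$ of $\LZfusion{n}$. From~\eqref{eq:LE_multislith} evaluated at $\vartheta$, one has $\ud \covmap_{\bs{t}}(\vartheta) = \sum_{j=1}^n a_{\bs{t}}^j \cot \bigl(\tfrac{\covmap_{\bs{t}}(\vartheta) - \mslitdriv_{\bs{t}}^j}{2}\bigr) \ud t_j$ with $a_{\bs{t}}^j = (\covmap_{\bs{t},j}'(\xi_{t_j}^j))^2$, and differentiating in $\vartheta$ yields $\ud \covmap_{\bs{t}}'(\vartheta)/\covmap_{\bs{t}}'(\vartheta) = -\sum_{j=1}^n \tfrac{a_{\bs{t}}^j/2}{\sin^2 ((\covmap_{\bs{t}}(\vartheta) - \mslitdriv_{\bs{t}}^j)/2)} \ud t_j$; there are no new stochastic contributions, since $\vartheta$ does not drive any curve. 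The remaining prefactors of $M_{\bs{t}}(\LZfusion{n})$ match those in the alternative form of~\eqref{eqn::multitime_mart_universal} with $p = n$ and $\aleph = \tilde{\mathfrak{b}}$; combining these with the drifts from the extra factors, the $\ud t_j$-drift coefficient of $\ud M_{\bs{t}}(\LZfusion{n})/M_{\bs{t}}(\LZfusion{n})$ equals $a_{\bs{t}}^j$ times the left-hand side of~\eqref{eqn::radialBPZ_fusion} (evaluated at $(\bs{\mslitdriv}_{\bs{t}}, \covmap_{\bs{t}}(\vartheta))$) minus $a_{\bs{t}}^j\, \tilde{\mathfrak{b}}$. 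This vanishes by~\eqref{eqn::radialBPZ_fusion}, so $M_{\bs{t}}(\LZfusion{n})$ is an $n$-time-parameter local martingale up to the collision time.

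For the identification, let $\tilde{\mathsf{P}}$ denote the tilted measure. Freezing $t_i = 0$ for $i \geq 2$, the process reduces to a single-time local martingale, and Girsanov together with the product form~\eqref{LZfusion_U} yields the driving SDE of $\eta^1$:
\begin{align*}
\ud \xi_t = \sqrt{\kappa}\, \ud \tilde{B}_t - \sum_{j=2}^n \cot\bigl(\tfrac{\covmap_t(\theta^j) - \xi_t}{2}\bigr) \ud t - \tfrac{\kappa - 2n - 4}{2}\, \cot\bigl(\tfrac{\covmap_t(\vartheta) - \xi_t}{2}\bigr) \ud t .
\end{align*}
By~\eqref{eqn::SLEkapparhomu_SDE} this is radial $\SLE_\kappa(2,\ldots,2, \kappa - 2n - 4)$ with force points $\ee^{\ii\theta^2},\ldots,\ee^{\ii\theta^n}, \ee^{\ii\vartheta}$; by the radial-to-chordal coordinate change of~\cite{Schramm-Wilson:SLE_coordinate_changes} (moving the target from the origin to $\ee^{\ii\vartheta}$ and thereby producing a force point of strength $\kappa - 6 - 2(n-1) = \kappa - 2n - 4$ at $\ee^{\ii\vartheta}$), this coincides with chordal $\SLE_\kappa(2,\ldots,2)$ from $\ee^{\ii\theta^1}$ to $\ee^{\ii\vartheta}$ with force points $\ee^{\ii\theta^2},\ldots,\ee^{\ii\theta^n}$. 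By the symmetry of $\LZfusion{n}$ in its first $n$ arguments, each $\eta^j$ has the analogous marginal. Applying the same argument to the ratio $M_{(t_1, \bs{t}')}(\LZfusion{n}) / M_{(0, \bs{t}')}(\LZfusion{n})$ via $g_{\bs{t}} = g_{\bs{t},1} \circ g_{t_1}^1$ shows that, conditionally on $(\eta^j_{[0,t_j]})_{j \geq 2}$, the law of $\eta^1$ is chordal $\SLE_\kappa(2,\ldots,2)$ in the slit domain with force points at the tips of the other curves and target at $\ee^{\ii\vartheta}$. Letting $t_i \to \infty$ for $i \geq 2$ and using the transience of chordal $\SLE_\kappa(2,\ldots,2)$~\cite{Miller-Sheffield:Imaginary_geometry1}, the conditional law of $\eta^1$ given $\{\eta^i : i \neq 1\}$ is chordal $\SLE_\kappa$ from $\ee^{\ii\theta^1}$ to $\ee^{\ii\vartheta}$ in the connected component of $\U \setminus \cup_{i \neq 1} \eta^i$ having $\ee^{\ii\theta^1}$ on its boundary; the analogous statement for every $\eta^j$ is the chordal resampling property from Definition~\ref{def::halfwatermelonSLE}, so by the uniqueness from~\cite[Section~4]{Miller-Sheffield:Imaginary_geometry2} we conclude $\tilde{\mathsf{P}} = \QQfusion{n}(\U; \ee^{\ii\bs{\theta}}, \ee^{\ii\vartheta})$.

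The hard part will be the resampling step in the limit $t_i \to \infty$: although the conditional law of $\eta^1$ given initial segments of the other curves is a chordal $\SLE_\kappa(2,\ldots,2)$ in the slit domain at every finite time, extending to the fully grown configuration requires the transience of chordal $\SLE_\kappa(2,\ldots,2)$ and Carath\'eodory convergence of the slit domains to the complement of the union of the curves. The It\^o computation for the martingale property is routine once the radial BPZ system~\eqref{eqn::radialBPZ_fusion} for $\LZfusion{n}$ is at hand.
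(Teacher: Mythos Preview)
Your martingale computation is exactly the paper's: apply It\^o with the extra ingredients $\ud \covmap_{\bs{t}}(\vartheta)$ and $\ud \covmap_{\bs{t}}'(\vartheta)/\covmap_{\bs{t}}'(\vartheta)$ from~\eqref{eq:LE_multislith}, and the drift collapses to~\eqref{eqn::radialBPZ_fusion} minus $\tilde{\mathfrak b}$, which vanishes. Your marginal computation for $\eta^1$ (radial $\SLE_\kappa(2,\ldots,2,\kappa-2n-4)$, hence chordal $\SLE_\kappa(2,\ldots,2)$ targeted at $\ee^{\ii\vartheta}$) is also the same as the paper's.

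The difference is in the identification step. The paper does \emph{not} pass to the limit $t_i\to\infty$ to verify the resampling property of Definition~\ref{def::halfwatermelonSLE}. Instead it argues \emph{sequentially}: under the tilted measure, the marginal of $\eta^1_{[0,t_1]}$ is chordal $\SLE_\kappa(2,\ldots,2)$, and for each $j$, conditionally on $(\eta^1_{[0,t_1]},\ldots,\eta^j_{[0,t_j]})$, the law of $\eta^{j+1}_{[0,t_{j+1}]}$ is again chordal $\SLE_\kappa(2,\ldots,2)$ in the slit domain with force points at the previous tips and the remaining starting points. These iterated conditional marginals at all finite multi-times pin down the joint law, and they match the known iterative construction of half-$n$-watermelon $\SLE_\kappa$ from~\cite[Section~4]{Miller-Sheffield:Imaginary_geometry2}; uniqueness then gives $\tilde{\mathsf P}=\QQfusion{n}$ with no limit needed.

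Your route via the resampling property is legitimate, but the step you flag as hard is genuinely extra work here: before letting $t_i\to\infty$ you must first argue that under $\tilde{\mathsf P}$ the curves $\eta^2,\ldots,\eta^n$ never collide (so they are defined for all time), that they are transient to $\ee^{\ii\vartheta}$, and that the conditional law of $\eta^1$ given the initial segments converges as the segments exhaust the full curves (a Carath\'eodory-type argument for chordal $\SLE_\kappa(2,\ldots,2)$ with force points at moving tips). All of this is doable with the tools in the paper, but the sequential-marginals argument dispenses with it entirely and is the cleaner choice for this lemma.
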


\begin{proof}
The fact that $M(\LZfusion{n})$ is an $n$-time-parameter local martingale can be checked using similar calculations as in the proof of Proposition~\ref{prop::multitime_mart_universal}.
Indeed, using the identities~\eqref{eqn::multitimemart_aux1}--\eqref{eqn::multitimemart_aux3} and
the radial Loewner equation~\eqref{eq:LE_multislith} for the multi-slit covering map $\covmap_{\bs{t}} = \covmap_{\bs{t},j} \circ \covmap_{t_j}^{j}$, which yields
\begin{align}\label{eqn::multitimemart_watermelon_aux1}
\frac{ \ud \covmap'_{\bs{t}} ( \vartheta ) }{\covmap'_{\bs{t}} ( \vartheta )} 
= -\frac{1}{2} \sum_{j=1}^{n}  \csc^2 \bigg(\frac{\covmap_{\bs{t}}(\vartheta)-\mslitdriv_{\bs{t}}^j}{2}\bigg) (\covmap_{\bs{t},j}' (\xi_{t_j}^{j}))^2 \, \ud t_j ,
\end{align}
after applying It\^{o}'s formula to $M_{\bs{t}}(\LZfusion{n})$ and observing that 
$\frac{n(2n+4-\kappa)}{2\kappa} = \mathfrak{h}_n$, 
we obtain
\begin{align*}
\frac{\ud M_{\bs{t}}(\LZfusion{n})}{M_{\bs{t}}(\LZfusion{n})}
= \; & -n \mathfrak{b} \, \frac{\ud g'_{\bs{t}}(0)}{g'_{\bs{t}}(0)} 
+ \sum_{j=1}^n \bigg( \mathfrak{b} \, \frac{\ud \covmap_{\bs{t},j}'(\xi_{t_j}^{j})}{\covmap_{\bs{t},j}'(\xi_{t_j}^{j})} + \frac{\kappa \mathfrak{b}(\mathfrak{b}-1)}{2} \, \Big( \frac{ \covmap_{\bs{t},j}''(\xi_{t_j}^{j})}{\covmap_{\bs{t},j}'(\xi_{t_j}^{j})} \Big)^2 \ud t_j \bigg) 
+ \tilde{\mathfrak{b}} \sum_{j=1}^n \frac{\ud g_{\bs{t},j}'(0)}{g_{\bs{t},j}'(0)} \\
\; & + 
\mathfrak{h}_n \frac{\ud \covmap'_{\bs{t}}(\vartheta)}{\covmap'_{\bs{t}}(\vartheta)}+ \frac{\mathfrak{c}}{2} \ud \blm_{\bs{t}} \\
\; & + \sum_{j=1}^n \bigg( \frac{\partial_j \LZfusion{n}(\bs{\mslitdriv}_{\bs{t}}, \covmap_{\bs{t}}(\vartheta))}{\LZfusion{n}(\bs{\mslitdriv}_{\bs{t}}, \covmap_{\bs{t}}(\vartheta))} \, \ud \bs{\mslitdriv}_{\bs{t}}^{j} 
+ \frac{\kappa}{2} \frac{\partial_j^2 \LZfusion{n}(\bs{\mslitdriv}_{\bs{t}}, \covmap_{\bs{t}}(\vartheta))}{\LZfusion{n}(\bs{\mslitdriv}_{\bs{t}}, \covmap_{\bs{t}}(\vartheta))} \, (\covmap_{\bs{t},j}'(\xi_{t_j}^{j}))^2 \ud t_j \bigg) \\
\; & + \frac{\partial_{n+1} \LZfusion{n}(\bs{\mslitdriv}_{\bs{t}}, \covmap_{\bs{t}}(\vartheta))}{\LZfusion{n}(\bs{\mslitdriv}_{\bs{t}}, \covmap_{\bs{t}}(\vartheta))} \, \ud \covmap_{\bs{t}}(\vartheta) 
+ \kappa \mathfrak{b} \sum_{j=1}^n \frac{\partial_j \LZfusion{n}(\bs{\mslitdriv}_{\bs{t}}, \covmap_{\bs{t}}(\vartheta))}{\LZfusion{n}(\bs{\mslitdriv}_{\bs{t}}, \covmap_{\bs{t}}(\vartheta))} \, \covmap_{\bs{t},j}''(\xi_{t_j}^{j}) \, \ud t_j .
\end{align*}
Using~\eqref{eqn::mt_def}, \eqref{eqn::multitimemart_aux1}--\eqref{eqn::multitimemart_aux3} and~\eqref{eqn::multitimemart_watermelon_aux1} and writing $a_{\bs{t}}^{j} = (\covmap_{\bs{t},j}' (\xi_{t_j}^{j}))^2$, we thus find that
\begin{align*}
\frac{\ud M_{\bs{t}}(\LZfusion{n})}{M_{\bs{t}}(\LZfusion{n})}
= \; & \sum_{j=1}^n \bigg( \frac{\partial_j \LZfusion{n}(\bs{\mslitdriv}_{\bs{t}}, \covmap_{\bs{t}}(\vartheta))}{\LZfusion{n}(\bs{\mslitdriv}_{\bs{t}}, \covmap_{\bs{t}}(\vartheta))} \covmap_{\bs{t},j}'(\xi_{t_j}^{j}) 
+ \mathfrak{b} \, \frac{\covmap_{\bs{t},j}''(\xi_{t_j}^{j})}{\covmap_{\bs{t},j}'(\xi_{t_j}^{j})} \bigg) \, \ud \xi_{t_j}^{j}  
\\
\; & + \frac{\kappa}{2} \sum_{j=1}^n 
\frac{\partial_j^2 \LZfusion{n}(\bs{\mslitdriv}_{\bs{t}}, \covmap_{\bs{t}}(\vartheta))}{\LZfusion{n}(\bs{\mslitdriv}_{\bs{t}}, \covmap_{\bs{t}}(\vartheta))} \, a_{\bs{t}}^{j} \, \ud t_j
\\
\; & + \sum_{j=1}^n 
\underset{1\leq i\neq j \leq n}{\sum} \bigg( \cot \bigg(\frac{\mslitdriv_{\bs{t}}^{i}-\mslitdriv_{\bs{t}}^{j}}{2}\bigg)
\frac{\partial_{i} \LZfusion{n}(\bs{\mslitdriv}_{\bs{t}}, \covmap_{\bs{t}}(\vartheta))}{\LZfusion{n}(\bs{\mslitdriv}_{\bs{t}}, \covmap_{\bs{t}}(\vartheta))} \, - \, \frac{\mathfrak{b}/2}{ \sin^2 \big(\frac{\theta^{i}-\theta^{j}}{2}\big)}\bigg)\, a_{\bs{t}}^{j} \, \ud t_j
\\
\; & + \sum_{j=1}^n \bigg( \cot \bigg(\frac{\covmap_{\bs{t}}(\vartheta)-\mslitdriv_{\bs{t}}^{j}}{2}\bigg)
\frac{\partial_{n+1} \LZfusion{n}(\bs{\mslitdriv}_{\bs{t}}, \covmap_{\bs{t}}(\vartheta))}{\LZfusion{n}(\bs{\mslitdriv}_{\bs{t}}, \covmap_{\bs{t}}(\vartheta))} \, - \, \frac{\mathfrak{h}_n/2}{ \sin^2 \big(\frac{\covmap_{\bs{t}}(\vartheta)-\mslitdriv_{\bs{t}}^{j}}{2}\big)} - \tilde{\mathfrak{b}}
\bigg)\,  a_{\bs{t}}^{j} \, \ud t_j
\\
= \; & \sum_{j=1}^n \bigg( \frac{\partial_j \LZfusion{n}(\bs{\mslitdriv}_{\bs{t}}, \covmap_{\bs{t}}(\vartheta))}{\LZfusion{n}(\bs{\mslitdriv}_{\bs{t}}, \covmap_{\bs{t}}(\vartheta))} \covmap_{\bs{t},j}'(\xi_{t_j}^{j}) 
+ \mathfrak{b} \, \frac{\covmap_{\bs{t},j}''(\xi_{t_j}^{j})}{\covmap_{\bs{t},j}'(\xi_{t_j}^{j})} \bigg) \, \ud \xi_{t_j}^{j}  .
&& \textnormal{[by~\eqref{eqn::radialBPZ_fusion}]}
\end{align*}
Thus, $M(\LZfusion{n})$ is $n$-time-parameter local martingale. 
It remains to show that  the probability measure $\PP(\LZfusion{n})$ obtained by tilting $\mathsf{P}_n$ by $M_{\bs{t}}(\LZfusion{n})$ is the same as $\QQfusion{n}$. 
Girsanov's theorem yields
\begin{align*}
\ud \bs{\mslitdriv}_{\bs{t}}^{j} 
= \; & (\kappa a_{\bs{t}}^{j})^{1/2} \, \ud \tilde{B}^{j}_{t_j} + \kappa \partial_j \log\LZfusion{n} (\bs{\mslitdriv}_{\bs{t}}, \covmap_{\bs{t}}(\vartheta))\, a_{\bs{t}}^{j} \, \ud t_j
- \underset{1\leq i\neq j \leq n}{\sum} \cot \bigg(\frac{\mslitdriv_{\bs{t}}^{i}-\mslitdriv_{\bs{t}}^{j}}{2}\bigg)\, a_{\bs{t}}^{i} \, \ud t_i
\\
= \; & (\kappa a_{\bs{t}}^{j})^{1/2} \, \ud \tilde{B}^{j}_{t_j} - \Big( \big( \tfrac{\kappa}{2} - n - 2 \big) \cot \bigg(\frac{\covmap_{\bs{t}}(\vartheta)-\mslitdriv_{\bs{t}}^{j}}{2}\bigg)
+ \underset{1\leq i\neq j \leq n}{\sum} \cot \bigg(\frac{\mslitdriv_{\bs{t}}^{i}-\mslitdriv_{\bs{t}}^{j}}{2}\bigg) \Big)\, a_{\bs{t}}^{j} \, \ud t_j
\\
\; & - \underset{1\leq i\neq j \leq n}{\sum} \cot \bigg(\frac{\mslitdriv_{\bs{t}}^{i}-\mslitdriv_{\bs{t}}^{j}}{2}\bigg)\, a_{\bs{t}}^{i} \, \ud t_i ,
\end{align*}
where $\tilde{B}^{1}_{t_1},\ldots,\tilde{B}^{n}_{t_n}$ are independent $\PP(\LZfusion{n})$-Brownian motions. 
Using the coordinate changes from~\cite{Schramm-Wilson:SLE_coordinate_changes} and accounting for the variation of the capacity parameterization, we see that under the measure $\PP(\LZfusion{n})$,
\begin{itemize}[leftmargin=*]
\item the marginal law of $\eta^{1}_{[0,t_1]}$ is radial $\SLE_{\kappa}(2,\ldots,2,\kappa-4-2n)$ in 
$(\U; \ee^{\ii\bs{\theta}}, \ee^{\ii\vartheta};0)$, which is the same as the law of chordal $\SLE_{\kappa}(2,\ldots,2)$ in $\U$ from $\ee^{\ii\theta^1}$ to $\ee^{\ii\vartheta}$ with force points $(\ee^{\ii\theta^2}, \ldots, \ee^{\ii\theta^n})$;

\item for each $j\in \{1,2,\ldots, n-1\}$, conditionally on the curves $(\eta^{1}_{[0,t_1]},\ldots,\eta^{j}_{[0,t_j]})$, the law of the $(j+1)$:st curve $\eta^{j+1}_{[0,t_{j+1}]}$ is radial 
$\SLE_{\kappa}(2,\ldots,2,\kappa-4-2n,2,\ldots,2)$ in the polygon 
\begin{align*}
\big( \U\setminus \bigcup_{i=1}^j \eta^{i}_{[0,t_{i}]}; \ee^{\ii\theta^{j+1}},\ee^{\ii\theta^{j+2}}, \ldots, \ee^{\ii\theta^n}, \ee^{\ii\vartheta}, \eta_{t_1}^{1}, \ldots, \eta_{t_{j}}^{j};0 \big) ,
\end{align*}
which is the same as the law of chordal $\SLE_{\kappa}(2,\ldots,2)$ in $\U\setminus \cup_{i=1}^j \eta^{i}_{[0,t_{i}]}$ from $\ee^{\ii\theta^{j+1}}$ to 
$\ee^{\ii\vartheta}$ with force points $(\ee^{\ii\theta^{j+2}}, \ldots, \ee^{\ii\theta^n}, \eta_{t_1}^{1}, \ldots, \eta_{t_{j}}^{j})$. 
\end{itemize}
Since on the one hand, the law $\QQfusion{n}$ of half-$n$-watermelon $\SLE_{\kappa}$ in  $(\U; \ee^{\ii\bs{\theta}}, \ee^{\ii\vartheta})$ is unique, 
and on the other hand it coincides with the law of $\bs{\eta}$ under $\PP(\LZfusion{n})$, 
we conclude that $\PP(\LZfusion{n}) = \QQfusion{n}$. 
\end{proof}

We are now ready to finish the proof of Theorem~\ref{thm::multiradial_halfwatermelon}.
Item~\ref{item::multiradial_halfwatermelonPF} was already proved in Lemma~\ref{lem::LGtoLZfusion}, 
so it remains to prove Item~\ref{item::multiradial_halfwatermelonMEAS}: the convergence of multiradial $\SLE_\kappa^\mu$ to half-watermelon $\SLE_\kappa$. 

\begin{proof}[Proof of Theorem~\ref{thm::multiradial_halfwatermelon}]
By conformal invariance, it suffices to consider the polygon $(\Omega; \bs{x}, y) = (\U; \ee^{\ii\bs{\theta}},1)$ with angles $(\theta^1, \ldots, \theta^n)\in\LX_n$ such that $-2\pi<\theta^1<\theta^n < 0$. We keep writing $y=1=\ee^0$ for clarity. 

Combining Lemma~\ref{lem::rainbowSLE_mart} with Lemma~\ref{lem::multitime_mart_nradial_spiral_z_RN} for $p=n$,
we see that the law $\PPspiral{n}(\U; \bs{x}; z)$ of $n$-radial $\SLE_{\kappa}^\mu$ is the same as 
the law $\QQfusion{n}(\U; \bs{x}, y)$ of half-$n$-watermelon $\SLE_{\kappa}$ tilted by the local martingale
\begin{align*}
\; & N_{\bs{t}}^{\mu}(z;y) 
:= \frac{M_{\bs{t}}(\nradpartfn{n}{\mu}; z)}{M_{\bs{t}}(\LZfusion{n})} 
= \; N_{\bs{t}}(z;y) \, \frac{M_{\bs{t}}(\nradpartfn{n}{\mu}; z)}{M_{\bs{t}}(\nradpartfn{n}{0}; z)} ,
\\
\textnormal{where} \qquad
\; & N_{\bs{t}}(z;y) 
:= \frac{M_{\bs{t}}(\nradpartfn{n}{0}; z)}{M_{\bs{t}}(\LZfusion{n})} 
\, = \, \frac{|g_{\bs{t}}'(z)|^{\tilde{\mathfrak{b}}+\frac{n^2-1}{2\kappa}} \; \nradpartfn{n}{0}(\U; \ee^{\ii\bs{\mslitdriv}_{\bs{t}}}; g_{\bs{t}}(z))}{|g_{\bs{t}}'(y)|^{\mathfrak{h}_n} \; \LZfusion{n}(\U; \ee^{\ii\bs{\mslitdriv}_{\bs{t}}}; g_{\bs{t}}(y))}
\\
= \; & \frac{|g_{\bs{t}}'(z)|^{\tilde{\mathfrak{b}}+\frac{n^2-1}{2\kappa}}}{|g_{\bs{t}}'(y)|^{\mathfrak{h}_n}}
\; ( \CR(\U; g_{\bs{t}}(z)) )^{\mathfrak{h}_n - \tilde{\mathfrak{b}} - \frac{n^2-1}{2\kappa}} \; 
\bigg(\prod_{j=1}^n \frac{H(\U; \ee^{\ii\mslitdriv^j_{\bs{t}}}; g_{\bs{t}}(z))}{\CR(\U; g_{\bs{t}}(z)) \, H(\U; \ee^{\ii\mslitdriv^j_{\bs{t}}}, g_{\bs{t}}(y))}\bigg)^{\frac{1}{n} \mathfrak{h}_n} 
&& \textnormal{[by~\eqref{eqn::LGandLZfusion}]}
\\
= \; & \; ( \CR(\U\setminus\bs{\eta}_{\bs{t}}; z) )^{\mathfrak{h}_n - \tilde{\mathfrak{b}} - \frac{n^2-1}{2\kappa}} \; 
\bigg(\prod_{j=1}^n \underbrace{\frac{|g_{\bs{t}}'(z)|}{|g_{\bs{t}}'(y)|} \, \frac{H(\U; \ee^{\ii\mslitdriv^j_{\bs{t}}}; g_{\bs{t}}(z))}{\CR(\U; g_{\bs{t}}(z))\, H(\U; \ee^{\ii\mslitdriv^j_{\bs{t}}}, g_{\bs{t}}(y))}}_{=: \, Z_{\bs{t}}^{j}(z;y)} \bigg)^{\frac{1}{n} \mathfrak{h}_n} ,
&& \textnormal{[by~\eqref{eq:CR_COV}]}
\end{align*} 
and writing 
\begin{align*}
I_{\bs{t}}^{j}(z) := \mslitdriv_{\bs{t}}^{j} -\arg g_{\bs{t}}'(z) + 2 \arg (1-g_{\bs{t}}(z) \ee^{-\ii \mslitdriv_{\bs{t}}^j} \big) , \quad j \in \{1,\ldots,n\} ,
\end{align*} 
and using~(\ref{eqn::nradpartfn_mu_z},~\ref{eqn::multitime_mart_nradial_spiral_z}) for $p=n$ we have
\begin{align*}
\frac{M_{\bs{t}}(\nradpartfn{n}{\mu}; z)}{M_{\bs{t}}(\nradpartfn{n}{0}; z)} 
= \; & \exp \Big( \frac{\mu}{\kappa}\sum_{j=1}^p I_{\bs{t}}^{j}(z) \Big)\,
\bigg( \frac{1-|g_{\bs{t}}(z)|^2}{|g_{\bs{t}}'(z)|} \bigg)^{\frac{\mu^2}{2\kappa}}  
\\
= \; & \exp \Big( \frac{\mu}{\kappa}\sum_{j=1}^p I_{\bs{t}}^{j}(z) \Big)\,
(\CR(\U\setminus\bs{\eta}_{\bs{t}}; z))^{\frac{\mu^2}{2\kappa}} .
&& \textnormal{[by~\eqref{eq:CR_COV}]}
\end{align*}

{\bf Step~1.}
We will first show that almost surely (a.s.), the associated Radon-Nikodym derivative 
\begin{align}\label{eqn::multiradial_spiral_halfwatermelon_aux0}
\frac{\ud \PPspiral{n}(\U; \bs{x}; z)}{\ud \QQfusion{n}(\U; \bs{x}, y)} (\bs{\eta}_{\bs{t}})
\; = \; \frac{N_{\bs{t}}^{\mu}(z;y) }{N_{\bs{0}}^{\mu}(z;y) } 
\; = \; \frac{N_{\bs{t}}(z;y)}{N_{\bs{0}}(z;y)}
 \exp\Big( \frac{\mu}{\kappa} \sum_{j=1}^n \big( I_{\bs{t}}^{j}(z)-I_{\bs{0}}^{j}(z) \big) \Big)\bigg(\frac{\CR(\U\setminus\bs{\eta}_{\bs{t}}; z)}{\CR(\U; z)}\bigg)^{\frac{\mu^2}{2\kappa}} 
\end{align}
converges to $1$ as $z \to y$, for any $\bs{t}$ before the exit time from $U$.
Lemmas~\ref{lem::CR_cvg}~\&~\ref{lem::Poissonkernel_cvg} yield the a.s. limits 
\begin{align*}
\lim_{z\to y}\frac{\CR(\U\setminus\bs{\eta}_{\bs{t}}; z)}{\CR(\U; z)}=1,
\qquad  \lim_{z\to y}Z_{\bs{0}}^j(z;y)=1, 
\qquad \textnormal{and} \qquad 
\lim_{z\to y} Z_{\bs{t}}^j(z;y)=1 , \quad j \in \{1,\ldots,n\} ,
\end{align*}
which readily implies that the part without spiral converges:
\begin{align}\label{eqn::multiradial_halfwatermelon_RN}
\frac{N_{\bs{t}}(z;y) }{N_{\bs{0}}(z;y) } 
= \Big(\frac{\CR(\U\setminus\bs{\eta}_{\bs{t}}; z)}{\CR(\U; z)}\Big)^{\mathfrak{h}_n - \tilde{\mathfrak{b}} - \frac{n^2-1}{2\kappa}} \; 
\Big(\prod_{j=1}^n \frac{Z_{\bs{t}}^{j}(z;y)}{Z_{\bs{0}}^{j}(z;y)} \Big)^{\frac{1}{n} \mathfrak{h}_n} 
\quad \overset{z\to y}{\longrightarrow} \quad 1 ,
\end{align}
for any $\bs{t}$ before the exit time from $U$. 
The last factor in the product in~\eqref{eqn::multiradial_spiral_halfwatermelon_aux0} converges similarly. 
In the exponential in~\eqref{eqn::multiradial_spiral_halfwatermelon_aux0}, 
for any $\bs{t}$ before the exit time from $U$, we have for each $j$ the a.s. limit
\begin{align}\label{eq:limit_of_arg_I}
I_{\bs{t}}^{j}(z) 
\quad \overset{z \to y = 1}{\longrightarrow} \quad 
I_{\bs{t}}^{j}(1) 
= \; & \mslitdriv_{\bs{t}}^{j} - \arg g_{\bs{t}}'(1) + 2 \arg (1-g_{\bs{t}}(1) \ee^{-\ii \mslitdriv_{\bs{t}}^j} \big)
\; = \; -\pi ,
\end{align}
writing $g_{\bs{t}}(1) = \exp( \ii \covmap_{\bs{t}}(0) )$ and 
$g_{\bs{t}}'(1) = g_{\bs{t}}(1) \, \covmap_{\bs{t}}'(0) = \exp( \ii \covmap_{\bs{t}}(0) ) \, \covmap_{\bs{t}}'(0)$, so
\begin{align}\label{eq:arg_g}
\arg g_{\bs{t}}(1) = \covmap_{\bs{t}}(0)
\qquad \textnormal{and} \qquad 
\arg g_{\bs{t}}'(1) = \covmap_{\bs{t}}(0) .
\end{align}
Hence, the exponential in~\eqref{eqn::multiradial_spiral_halfwatermelon_aux0} also converges a.s. to $1$ as $z \to y=1$. 

\medskip

{\bf Step~2.}
We show that in fact, for any $\bs{t}$ before the exit time from $U$, 
the Radon-Nikodym derivative $N_{\bs{t}}^{\mu}(z;y)/N_{\bs{0}}^{\mu}(z;y)$ stays uniformly bounded as $z\to y$. 
To this end, since $\CR(\U\setminus\bs{\eta}_{\bs{t}}; z)\le \CR(\U; z)$, 
\begin{align}\label{eqn::muliradial_halfwatermelon_RN_control1}
\bigg(\frac{\CR(\U\setminus\bs{\eta}_{\bs{t}}; z)}{\CR(\U; z)}\bigg)^{\frac{\mu^2}{2\kappa}} \leq 1
\qquad \textnormal{and} \qquad
0 \le \frac{N_{\bs{t}}(z;y) }{N_{\bs{0}}(z;y) } \le \Big(\prod_{j=1}^n \frac{Z_{\bs{t}}^{j}(z;y)}{Z_{\bs{0}}^{j}(z;y)}\Big)^{\frac{1}{n} \mathfrak{h}_n} .
\end{align}
Explicitly, using Poisson kernels and conformal radius in the unit disc, $\smash{\frac{H(\U; z, x)}{\CR(\U; z)H(\U; y, x)}=\frac{|y-x|^2}{|z-x|^2}}$, so
\begin{align*}
Z_{\bs{t}}^{j}(z;y)
= \frac{|g_{\bs{t}}'(z)|}{|g_{\bs{t}}'(y)|} \; \frac{|g_{\bs{t}}(y) - \ee^{\ii\mslitdriv^j_{\bs{t}}}|^2}{|g_{\bs{t}}(z) - \ee^{\ii\mslitdriv^j_{\bs{t}}}|^2} , \qquad j \in \{1,\ldots,n\} .
\end{align*}

Recall now that $U$ has a positive distance from $y$. 
Pick $\delta>0$ so that $B(y,16\delta)\cap U=\emptyset$. 
We will use the following estimates, which involve some universal constants indicated in the notation ``$\lesssim$'': 
\begin{enumerate}
\item By Schwarz reflection, $g_{\bs{t}}$ is a conformal map in a neighborhood of $y$ (which depends on $U$), 
for any $\bs{t}$ before the exit time from $U$. 
Thus, Koebe's one-quarter theorem (e.g.~\cite[Theorem~2.3]{Duren:Univalent_functions}) 
guarantees that $B(g_{\bs{t}}(y), 4\delta|g_{\bs{t}}'(y)|) \subset g_{\bs{t}}(B(y,16\delta))$, and in particular, 
\begin{align}\label{eqn::muliradial_halfwatermelon_RN_aux1}
4\delta|g_{\bs{t}}'(y)|\le |g_{\bs{t}}(y) - \ee^{\ii\mslitdriv^j_{\bs{t}}}|^2 , \qquad j \in \{1,\ldots,n\} .
\end{align}

\item The growth theorem (e.g.~\cite[Theorem~2.6]{Duren:Univalent_functions}) 
implies that
\begin{align}\label{eqn::muliradial_halfwatermelon_RN_aux2}
|g_{\bs{t}}(z)-g_{\bs{t}}(y)|\le \frac{256}{225} \, |g_{\bs{t}}'(y)| \, |z-y|, \quad\textnormal{for any }z\in B(y,\delta).
\end{align}

\item The distortion theorem (e.g.~\cite[Theorem~2.5]{Duren:Univalent_functions})
implies that
\begin{align}\label{eqn::muliradial_halfwatermelon_RN_aux3}
|g_{\bs{t}}'(z)| \lesssim |g_{\bs{t}}'(y)|, \quad\textnormal{for any }z\in B(y,\delta).
\end{align}
\end{enumerate}
We can thus estimate $\smash{Z_{\bs{t}}^{j}(z;y)}$ for each $j \in \{1,\ldots,n\}$ and 
for any $z\in B(y,\delta)$ as 
\begin{align} 
Z_{\bs{t}}^{j}(z;y)
\lesssim \; &  \frac{|g_{\bs{t}}(y) - \ee^{\ii\mslitdriv^j_{\bs{t}}}|^2}{|g_{\bs{t}}(z) - \ee^{\ii\mslitdriv^j_{\bs{t}}}|^2} 
&& \textnormal{[by~\eqref{eqn::muliradial_halfwatermelon_RN_aux3}]} \notag
\\
\lesssim \; & 
\bigg( 1 - \frac{256}{225} \frac{|g_{\bs{t}}'(y)| \, |z-y|}{|g_{\bs{t}}(y) - \ee^{\ii\mslitdriv^j_{\bs{t}}}|} \bigg)^{-2}
&& \textnormal{[by~\eqref{eqn::muliradial_halfwatermelon_RN_aux2} and geometric series]} \notag
\\
\lesssim \; & 1 .
&& \textnormal{[by~\eqref{eqn::muliradial_halfwatermelon_RN_aux1}, since $|z-y| < \delta$]} \label{eqn::muliradial_halfwatermelon_RN_control2}
\end{align}
This implies that $N_{\bs{t}}(z;y) /N_{\bs{0}}(z;y)$ stays uniformly bounded as $z \to y$ for any $\bs{t}$ before the exit time from~$U$. 
It remains to control the exponential in~\eqref{eqn::multiradial_spiral_halfwatermelon_aux0}. 
To this end, consider the univalent map $f_{\bs{t}} \colon \U \to \C$ with $f(0)=0$ and $f'(0)=1$ 
defined as 
\begin{align*}
f_{\bs{t}}(w) := \frac{g_{\bs{t}}(\delta w+1)-g_{\bs{t}}(1)}{\delta g_{\bs{t}}'(1)}, \qquad w\in \U.
\end{align*}
By~\cite[Theorem~3.5]{Duren:Univalent_functions} and the rotation theorem~\cite[Theorem~3.7]{Duren:Univalent_functions}, we have
\begin{align*}
\Big| \arg \Big(\frac{f_{\bs{t}}(w)}{w}\Big) \Big| \le \; & \log \bigg( \frac{1+\tfrac{1}{100}}{1-\tfrac{1}{100}} \bigg) < 1 , 
\\
| \arg f_{\bs{t}}'(w) | \le \; & 4\arcsin(\tfrac{1}{100}) <1 , \qquad  \textnormal{for any } w\in B(0,\tfrac{1}{100}),
\end{align*}
which implies that for any point $z=1+\delta w\in B(1,\delta/100)$, we have
\begin{align} \label{eqn::muliradial_halfwatermelon_RN_aux4}
|\arg(g_{\bs{t}}(z)-g_{\bs{t}}(1))-\arg g_{\bs{t}}'(1)-\arg (z-1)| <1 
\qquad \textnormal{and} \qquad 
|\arg g_{\bs{t}}'(z)-\arg g_{\bs{t}}'(1)|<1.
\end{align}
Using~\eqref{eq:arg_g}, we see that
\begin{align} 
\label{eqn::muliradial_halfwatermelon_RN_aux5}
|\mslitdriv_{\bs{t}}^j- \arg g_{\bs{t}}'(1)| < \; &  2\pi , \\
\label{eqn::muliradial_halfwatermelon_RN_aux6}
|\arg (\ee^{\ii \mslitdriv_{\bs{t}}^j} - g_{\bs{t}}(z)) - \arg (g_{\bs{t}}(1) - g_{\bs{t}}(z))| < \; & 2\pi.
\end{align}
Combining these estimates, we obtain
\begin{align} 
\nonumber 
|I_{\bs{t}}^j(z)| = \; & \big| \mslitdriv_{\bs{t}}^j + \arg g_{\bs{t}}'(z) - 2\arg(\ee^{\ii\mslitdriv_{\bs{t}}^j}-g_{\bs{t}}(z)) \big| \\
\nonumber
\le \; & \underbrace{|\mslitdriv_{\bs{t}}^j-\arg g_{\bs{t}}'(1)|}_{\le \; 2\pi {\textnormal{ [by~\eqref{eqn::muliradial_halfwatermelon_RN_aux5}]}}} 
\; + \; \underbrace{|\arg g_{\bs{t}}'(z)-\arg g_{\bs{t}}'(1)|}_{\le \; 1 {\textnormal{ [by~\eqref{eqn::muliradial_halfwatermelon_RN_aux4}]}}} 
\; + \; 2\underbrace{ |\arg g_{\bs{t}}'(1)- \arg (g_{\bs{t}}(z)-g_{\bs{t}}(1))|}_{\le \; 1+2\pi \textnormal{ [by~\eqref{eqn::muliradial_halfwatermelon_RN_aux4}]}} \\
\nonumber 
& \; + 2 \underbrace{|\arg (g_{\bs{t}}(z)-g_{\bs{t}}(1))-\arg (g_{\bs{t}}(1)-g_{\bs{t}}(z))|}_{= \; \pi} 
\; + \; 2 \underbrace{|\arg (g_{\bs{t}}(1)-g_{\bs{t}}(z))-\arg(\ee^{\ii\mslitdriv_{\bs{t}}^j}-g_{\bs{t}}(z))|}_{< \; 2\pi \textnormal{ [by~\eqref{eqn::muliradial_halfwatermelon_RN_aux6}]}} \\
\label{eqn::muliradial_halfwatermelon_RN_control3}
\le \; & 12\pi + 3, \quad \textnormal{for any } z\in B(1,\delta/100).
\end{align}
Plugging~(\ref{eqn::muliradial_halfwatermelon_RN_control1},~\ref{eqn::muliradial_halfwatermelon_RN_control2},~\ref{eqn::muliradial_halfwatermelon_RN_control3}) into~\eqref{eqn::multiradial_spiral_halfwatermelon_aux0}, 
we conclude that the Radon-nikodym derivative 
$N^{\mu}_{\bs{t}}(z;y) /N_{\bs{0}}^{\mu}(z;y)$ stays uniformly bounded as $z \to y=1$ for any $\bs{t}$ before the exit time from~$U$.
This concludes the proof, since the above two steps readily imply that $\PPspiral{n}(\U; \bs{x}; z)$ converges to $\QQfusion{n}(\U; \bs{x}, y)$ for any such $\bs{t}$. 
\end{proof}

\subsection{Fusion of rainbow partition functions}
\label{subsec: Proof of limit}

Let us now return to investigating multichordal SLEs with rainbow connectivity (Definition~\ref{def::globalnSLE}). 
The chordal Loewner evolution of curves under $\QQrainbow{n}$ can be characterized in terms of the rainbow partition functions $\{\LZrainbow{n}\}_{n\ge 0}$,
\begin{align*}
\LZrainbow{n}(\HH; \cdot) \colon \{ \bs u = (u^1,\ldots,u^{2n})\in\R^{2n} \,|\, u^1<\cdots<u^{2n} \} \to (0,\infty) ,
\end{align*}
which are defined recursively 
via the following four properties, motivated by CFT (and which uniquely determine them)~\cite{Flores-Kleban:Solution_space_for_system_of_null-state_PDE2, Flores-Kleban:Solution_space_for_system_of_null-state_PDE3, Kytola-Peltola:Pure_partition_functions_of_multiple_SLEs, Peltola-Wu:Global_and_local_multiple_SLEs_and_connection_probabilities_for_level_lines_of_GFF, Wu:Convergence_of_the_critical_planar_ising_interfaces_to_hypergeometric_SLE, AHSY:Conformal_welding_of_quantum_disks_and_multiple_SLE_the_non-simple_case, FLPW:Multiple_SLEs_Coulomb_gas_integrals_and_pure_partition_functions}:
\begin{itemize}
\item[(PDE)] {\bf\emph{Chordal BPZ equations}}\textnormal{:} 
\begin{align}\label{eqn::chordalBPZ}
\bigg(
\frac{\kappa}{2} \partial_j^2
+  \underset{1\leq i\neq j \leq 2n}{\sum} \,  \bigg( \frac{2\partial_{i}}{u^{i}-u^{j}}
- \frac{2\mathfrak{b}}{(u^{i}-u^{j})^{2}} \bigg) \bigg)
\LZrainbow{n}(\HH; \bs u) =  0 , \qquad \textnormal{for all }j\in \{1,\ldots,2n\} .
\end{align}

\item[(COV)] {\bf\emph{M\"{o}bius covariance}}\textnormal{:} 
for all M\"obius maps $\varphi \colon \HH \to \HH$ such that $\varphi(u^{1}) < \cdots < \varphi(u^{2n})$, we have
\begin{align}\label{eqn::chordalCOV}
\LZrainbow{n}(\HH; u^{1},\ldots,u^{2n}) = 
\Big( \prod_{j=1}^{2n} \varphi'(u^{j}) \Big)^{\mathfrak{b}} 
\, \LZrainbow{n}(\HH; \varphi(u^{1}),\ldots,\varphi(u^{2n})).
\end{align}

\item[(ASY)] {\bf\emph{Asymptotics}}\textnormal{:} 
with $\LZ_{\emptyset} \equiv 1$ for the empty link pattern $\emptyset \in \LP_0$, the collection $\{\LZrainbow{n}\}_{n\ge 0} $ satisfies the following recursive asymptotics property. Fix $n \ge 1$ and $j \in \{1,2, \ldots, 2n-1 \}$. 
Then, we have
\begin{align}\label{eqn::chordalrainbowASY}
\lim_{u^j,u^{j+1}\to\xi} \frac{\LZrainbow{n}(\HH; u^1,\ldots, u^{2n})}{ (u^{j+1}-u^j)^{-2 \mathfrak{b} } }
= 
\begin{cases}
\LZrainbow{n-1}(\HH; u^1, \ldots, u^{j-1}, u^{j+2}, \ldots, u^{2n}), 
& \quad j = n, \\
0 ,
& \quad j \neq n ,
\end{cases}
\end{align}
where $\xi \in (u^{j-1}, u^{j+2})$ (with the convention that $u^0 = -\infty$ and $u^{2n+1} = +\infty$). 

\item[(PLB)] {\bf\emph{Power-law bound}}\textnormal{:} 
there exist constants $C>0$ and $r>0$ such that for all $n \geq 1$, we have
\begin{align}\label{eqn::PPF_PLB_weak}
\LZrainbow{n}(\HH; u^1,\ldots,u^{2n}) \le  \; & C\prod_{1\le i<j\le 2n}(u^j-u^i)^{\nu^{ij}(r)}, 
\qquad \textnormal{for all } u^1<\cdots<u^{2n} ,
\\
\nonumber
\textnormal{where } \quad
\nu^{ij}(r) := \; &
\begin{cases}
r , & \textnormal{if }|u^j-u^i|>1,\\
-r , & \textnormal{if }|u^j-u^i|\le 1.
\end{cases}
\end{align}
\end{itemize}

The goal of this section is to prove that $\LZrainbow{n}$ converges to $\LZfusion{n}$ 
after proper normalization. 
To this end, we utilize the axiomatic definition of $\LZrainbow{n}$
via the properties PDE~\eqref{eqn::chordalBPZ}, COV~\eqref{eqn::chordalCOV}, ASY~\eqref{eqn::chordalrainbowASY}, and PLB~\eqref{eqn::PPF_PLB_weak} (crucially relying on the uniqueness result~\cite[Lemma~1]{Flores-Kleban:Solution_space_for_system_of_null-state_PDE2}). 
We will also need the Coulomb gas integral description of the partition functions developed in~\cite{Kytola-Peltola:Pure_partition_functions_of_multiple_SLEs, Kytola-Peltola:Conformally_covariant_boundary_correlation_functions_with_quantum_group, Peltola:Basis_for_solutions_of_BSA_PDEs_with_particular_asymptotic_properties}.

\begin{proposition} \label{prop: block limit}
Fix $\kappa \in (0,8)$. 
Write $\bs x = (x^1,\ldots,x^{n})$ and $\bs y = (y^{n},\ldots,y^1)$. For any $y > x^n$, we~have
\begin{align}\label{eq: block limit}
\lim_{\substack{y^j\to y\\\forall 1\le j\le n}}
\frac{\LZrainbow{n}(\HH;\bs x, \bs y)}{\LZ_{\defpatt_n}(\HH;\bs y)} 
= A_n \, \LZfusion{n}(\HH;\bs x, y) ,
\end{align}
where $\LZ_{\defpatt_n}$ and $\LZfusion{n}$ are defined in~\eqref{eqn::LZfusion_H} and the multiplicative constant $A_n$ is defined in~\eqref{eq: fusion cst}.
\end{proposition}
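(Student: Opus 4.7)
The plan is to carry out the fusion limit directly on the Coulomb gas integral representation of $\LZrainbow{n}$ developed in~\cite{Kytola-Peltola:Pure_partition_functions_of_multiple_SLEs, Kytola-Peltola:Conformally_covariant_boundary_correlation_functions_with_quantum_group, Peltola:Basis_for_solutions_of_BSA_PDEs_with_particular_asymptotic_properties}: there, $\LZrainbow{n}(\HH;\bs u)$ is expressed as an explicit product of powers of differences $(u^j-u^i)$ times an $n$-fold contour integral over screening charges $w_1,\ldots,w_n$ with Dotsenko--Fateev integrand $\prod_{i,k}(w_k-u^i)^{-4/\kappa}\prod_{k<l}(w_k-w_l)^{8/\kappa}$, integrated over a rainbow Pochhammer double-loop contour $\Gamma_{\mathrm{rbw}}$ designed so that the axioms PDE, COV, ASY, PLB all hold.

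First I would set $y^j = y + \eps s^j$ with fixed $s^1 > \cdots > s^n$ and $\eps\to 0^+$, and expand the power-function prefactor of $\LZrainbow{n}(\HH;\bs x,\bs y)$. The $\binom{n}{2}$ internal differences $(y^j-y^i)^{2/\kappa}$ factor out as $\LZ_{\defpatt_n}(\HH;\bs y)$, while the cross factors $(y^j-x^i)^{2/\kappa}$ converge to $(y-x^i)^{2/\kappa}$; combined with the leading-order scaling of the screening integral, this is precisely the explicit prefactor of $\LZfusion{n}(\HH;\bs x,y)$ in~\eqref{eqn::LZfusion_H}.

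Next I would split $\Gamma_{\mathrm{rbw}}$ into sectors indexed by how many screening charges cluster around $y$, and rescale those charges as $w_k = y + \eps t_k$. The quantum-group interpretation of Pochhammer contours at $q=\ee^{4\pi\ii/\kappa}$, together with the decomposition of $V_1^{\otimes n}$ in $U_q(\mathfrak{sl}_2)\text{-mod}$, implies that only the sector where \emph{all} $n$ screenings cluster around $y$ contributes to leading order in $\eps$, and it is selected with the $q$-combinatorial weight $\qnum{2}^n\qfact{n}/\qnum{n+1}$ corresponding to the highest-weight component $V_{n+1}$ (equivalently, the fused Kac charge $\mathfrak{e}_{1,n+1}$ of weight $\mathfrak{h}_n$). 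In this dominant sector, the $t_k$-integral reduces, via a linear change of variables, to a Selberg-type integral with parameters $x=y=1-4n/\kappa$ and $z=4/\kappa$, evaluating to $n!\,S_n$ (as in~\eqref{eq: Selberg}) times $n$ single Pochhammer factors each equal to $\Gamma(2-8/\kappa)/\Gamma(1-4/\kappa)^2$; what remains of the Coulomb gas integral outside the $y$-cluster is exactly the fused integral formula~\eqref{eq:integral formula for fused ppf} for $\LZfusion{n}(\HH;\bs x,y)$. Collecting these three constant factors produces the constant $A_n$ in~\eqref{eq: fusion cst}.

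The main obstacle is identifying the correct $q$-combinatorial weight selected by the rainbow Pochhammer contour under the fusion collapse: proving that only the highest-weight $V_{n+1}$ sector survives to leading order, and extracting its coefficient as $\qnum{2}^n\qfact{n}/\qnum{n+1}$, requires the quantum-group analysis of Pochhammer linking numbers from~\cite{Kytola-Peltola:Conformally_covariant_boundary_correlation_functions_with_quantum_group}, together with careful $\eps$-power counting to rule out subleading contributions from the lower-weight sectors $V_j$ with $j<n+1$. An alternative route, avoiding the combinatorial bookkeeping but not the Coulomb-gas input, is to apply the inductive asymptotics relation~\eqref{eqn::chordalrainbowASY} for $\LZrainbow{n}$ pair-by-pair and match the leading $\eps$-behavior against the explicit formula for $\LZfusion{n}$ at each intermediate stage.
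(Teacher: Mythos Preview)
Your main approach has a structural gap. In the Coulomb gas representation of Lemma~\ref{lem: rainbow ppf}, the rainbow partition function carries exactly $n$ screening variables $w^1,\ldots,w^n$, integrated over contours $\contour_\nnofloops$ (Figure~\ref{fig:Gammal}) that encircle the marked points from an auxiliary basepoint. As $y^j\to y$ these contours do not shrink to $y$: they deform continuously to the fused contours $\hat{\contour}_\nnofloops$ of Figure~\ref{fig:Gammalhat}, and the integrand converges pointwise to $\hat f$. Your rescaling $w_k=y+\eps t_k$ thus describes only one piece of a contour decomposition; a direct power count shows that after dividing by $\LZ_{\defpatt_n}(\HH;\bs y)$ this piece carries the factor $\eps^{\,n(\kappa-4)/\kappa}$, so it either vanishes ($\kappa>4$) or formally diverges ($\kappa<4$) rather than giving the finite limit. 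More fundamentally, if all $n$ screenings are absorbed into the Selberg integral in the $t$-variables, nothing is left to produce the $n$-fold fused integral~\eqref{eq:integral formula for fused ppf} you invoke, and the surviving $\bs x$-dependent prefactor is $\LZ_{\defpatt_n}(\HH;\bs x)\prod_j(y-x^j)^{-2n/\kappa}$, which does not match $\LZfusion{n}(\HH;\bs x,y)$ except at $\kappa=4$.

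The paper proceeds differently. It takes the limit $y^j\to y$ directly on~\eqref{eq: rainbow ppf} with the screening contours kept macroscopic, obtaining the fused integral $\Ffusion{n}$ in~\eqref{eq:integral formula for fused ppf} with no rescaling. It then identifies $\Ffusion{n}$ with a constant multiple of $\LZfusion{n}$ by a Liouville argument: the ratio $G(y)$ is shown to be entire in $y$ (matching exponents $1-\tfrac{2}{\kappa}(n+2)$ as $y\to x^j$, via~\cite{Peltola:Basis_for_solutions_of_BSA_PDEs_with_particular_asymptotic_properties}) and bounded at infinity (matching $|y|^{-2\mathfrak{h}_n}$), hence constant; the constant $A_n$ is then read off from the $y\to\infty$ behavior using~\cite[Proposition~5.4]{Kytola-Peltola:Conformally_covariant_boundary_correlation_functions_with_quantum_group}. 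The Selberg integral $\Selberg_n$ and the $q$-combinatorial factor $\qnum{2}^n\qfact{n}/\qnum{n+1}$ enter only at this last step, not in the limit $y^j\to y$ itself. Your alternative route via iterated ASY~\eqref{eqn::chordalrainbowASY} also fails: that asymptotic has normalizing exponent $-2\mathfrak{b}$, not $2/\kappa$, and for any colliding pair among the $y$-variables (index $j\neq n$) it gives limit zero, so it carries no information about the block limit of Proposition~\ref{prop: block limit}.
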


\begin{figure}[h!]
\begin{center}
\includegraphics[width=.75\textwidth]{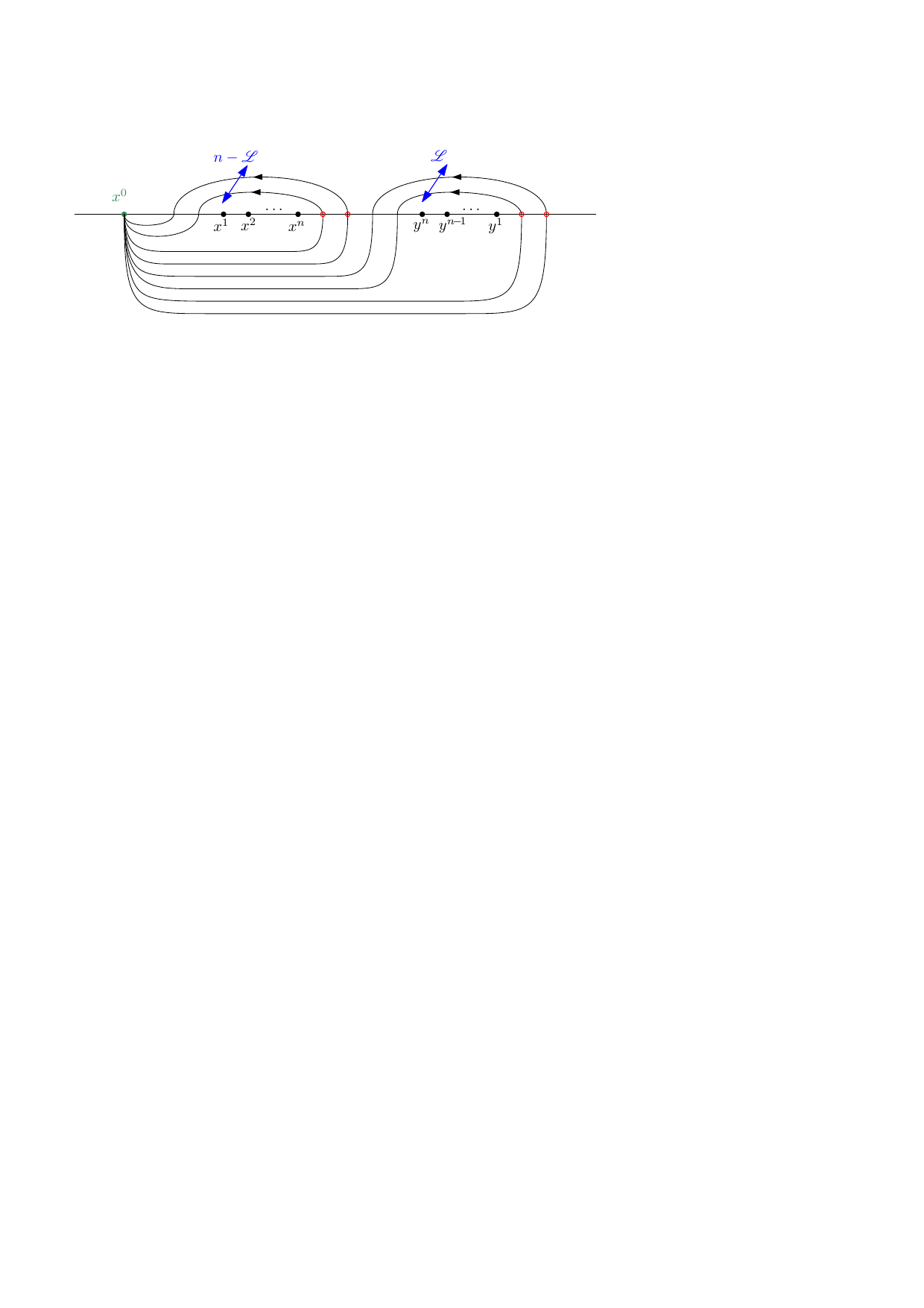}
\end{center}
\caption{\label{fig:Gammal}
Illustration of the integration contours $\contour_\nnofloops$ in Lemma~\ref{lem: rainbow ppf}. 
The red circles indicate the branch choice of the integrand function $\bs w \mapsto f^{(\nnofloops)}(\bs x, \bs y;\bs w)$
defined in~\eqref{eq: integrand} for fixed $x^1 < \cdots < x^{n} < y^{n} < \cdots < y^1$, 
so that it is real and positive when 
$x^{n} < \Re(w^1) < \cdots < \Re(w^{n-\nnofloops}) < y^{n}$ and
$y^1 < \Re(w^{n-\nnofloops+1}) < \cdots < \Re(w^n)$. 
By~\cite[Theorem~4.17]{Kytola-Peltola:Conformally_covariant_boundary_correlation_functions_with_quantum_group}, 
while the integration $\contour_\nnofloops$ depends on an auxiliary variable $x^0 < x^1$, 
the function defined in Eq.~\eqref{eq: rainbow ppf} 
as a linear combination of integrals involving $\contour_\nnofloops$, for $\nnofloops = 0,1,\ldots,n$, 
is independent of the choice of $x^0$.}
\end{figure}

In the upper half-plane $\HH$, we have the following explicit formula for the rainbow partition function.

\begin{lemma}\label{lem: rainbow ppf}
For $\kappa \in (0,8)$, we have
\begin{align} \label{eq: rainbow ppf}
\LZrainbow{n}(\HH;\bs x, \bs y)
= C_n \sum_{\nnofloops=0}^{n}(-1)^{\nnofloops} q^{\nnofloops(n-\nnofloops-1)}
\int_{\contour_\nnofloops} f^{(\nnofloops)}(\bs x, \bs y;\bs w) \, \ud \bs{w} 
, \qquad \bs w = (w^1,\ldots,w^{\nnofloops}) ,
\end{align}
where both sides are continuous in $\kappa$, the multiplicative constant is\footnote{Observe that the constant $C_n$ has divergences, which cancel out in a nontrivial manner with zeroes of the integral.} 
\begin{align*}
C_n = C_n(\kappa) := \frac{1}{(q^{-2}-1)^{n}}\frac{\qnum{2}^{n}}{\qfact{n+1}}  \, 
\Big( \frac{\Gamma(2-8/\kappa)}{\Gamma(1-4/\kappa)^2} \Big)^n , 
\qquad q = \ee^{4 \pi \ii / \kappa} ,
\end{align*}
and where each $\contour_\nnofloops \ni \bs w$ is an integration contour depicted in Figure~\ref{fig:Gammal}, and 
the integrand $f^{(\nnofloops)}(\bs x, \bs y;\bs w)$ associated to it is a branch of the multivalued function
\begin{align} \label{eq: integrand}
\begin{split}
f(\bs x, \bs y;\bs w)
:=\; & 
\Big( \prod_{1\leq i<j\leq n}(x^{j}-x^{i}) \Big)^{2/\kappa}
\Big( \prod_{1\leq i<j\leq n}(y^{j}-y^{i}) \Big)^{2/\kappa}
\Big( \prod_{1\leq i,j\leq n}(y^{j}-x^{i}) \Big)^{2/\kappa}
\\
\; & \times
\Big( \prod_{\substack{1\leq i\leq n\\1\leq r\leq n}}(w^{r}-x^{i}) \Big)^{-4/\kappa}
\Big( \prod_{\substack{1\leq i\leq n\\1\leq r\leq n}} (w^{r}-y^{i}) \Big)^{-4/\kappa}
\Big( \prod_{1\leq r<s\leq n}(w^{s}-w^{r}) \Big)^{8/\kappa} 
\end{split}
\end{align}
chosen to be real and positive at the point indicated by the red circles in Figure~\ref{fig:Gammal}. 
\end{lemma}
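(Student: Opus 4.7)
The plan is to verify that the right-hand side of~\eqref{eq: rainbow ppf}, denoted $F_n(\bs x, \bs y)$, satisfies the axiomatic characterization (PDE)+(COV)+(ASY)+(PLB) of $\LZrainbow{n}$, whereupon the uniqueness theorem~\cite[Lemma~1]{Flores-Kleban:Solution_space_for_system_of_null-state_PDE2} forces $F_n = \LZrainbow{n}$. This is the standard Dotsenko--Fateev/Coulomb gas approach; I will adapt the contour construction and analysis of~\cite{Kytola-Peltola:Conformally_covariant_boundary_correlation_functions_with_quantum_group, Peltola:Basis_for_solutions_of_BSA_PDEs_with_particular_asymptotic_properties} to the rainbow connectivity $\rainbowBig_n$, the only genuinely new ingredient being the asymptotic analysis specific to this link pattern.

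The properties (PDE), (COV), and (PLB) are rather routine. For (COV), a change of variables $w^r \mapsto \varphi(w^r)$ in each integral along $\contour_\nnofloops$ combined with the specific exponents in~\eqref{eq: integrand} shows that each screening variable contributes net zero scaling (the screening charge $\alpha_w = -2/\sqrt{\kappa}$ has conformal weight $1$, matching $\ud w^r$), while each boundary point picks up a factor $|\varphi'(\cdot)|^{\mathfrak{b}}$; the contour $\contour_\nnofloops$ is preserved up to a homotopy that does not alter the integral. For (PDE), the standard Coulomb gas calculation shows that the $\SLE_\kappa$ BPZ operator in $x^j$ or $y^j$ acting on the integrand $f$ is a total $w$-derivative, and the Pochhammer closedness of $\contour_\nnofloops$ (which is precisely why such contours are used) makes the integral vanish. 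The bound (PLB) follows from direct estimation of the integrand along the contours.

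The main obstacle is verifying (ASY), which is where the specific linear combination with coefficients $(-1)^\nnofloops q^{\nnofloops(n-\nnofloops-1)}$ and the normalization $C_n$ are essential. I would treat the two regimes separately. When $j = n$, the merging pair is $(x^n, y^n) = (u^n, u^{n+1})$, the innermost rainbow arc: deforming each contour $\contour_\nnofloops$, some number $\ell$ of the $\nnofloops$ screenings get pinched in the shrinking interval between $x^n$ and $y^n$, and rescaling $w^r = x^n + (y^n - x^n) s^r$ produces a classical Selberg integral whose value is exactly $\Selberg_n$ from~\eqref{eq: Selberg}, while the remaining $\nnofloops - \ell$ screenings reproduce a term in the analogous formula for $F_{n-1}$ on $(x^1, \ldots, x^{n-1}, y^{n-1}, \ldots, y^1)$. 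Summing over $\ell$ and $\nnofloops$ using a $q$-binomial identity that collapses the double sum yields precisely the $q$-integer prefactor $\qnum{2}^n \qfact{n}/\qnum{n+1}$ in $A_n$ together with the recursive structure required by (ASY). When $j \neq n$, the merging pair consists of two $x$'s or two $y$'s: by construction of the contours, no screening charge is trapped between them, the integrand only grows as $(u^{j+1}-u^j)^{2/\kappa}$, which is weaker than the $(u^{j+1}-u^j)^{-2\mathfrak{b}}$ threshold, so the normalized limit vanishes. The $q$-powers $(-1)^\nnofloops q^{\nnofloops(n-\nnofloops-1)}$ are dictated precisely by the simultaneous requirement of nontrivial recursion at $j=n$ and vanishing at all other $j$; engineering this cancellation is the main technical point.

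Finally, the asserted continuity in $\kappa$ across $(0,8)$ is obtained by observing that the apparent poles of $C_n$ (Gamma functions and $q$-factors at exceptional rational $\kappa$) are matched by zeros of the contour integrals, so both sides of~\eqref{eq: rainbow ppf} extend continuously across these loci by analytic continuation in $\kappa$. Once (PDE)+(COV)+(ASY)+(PLB) are verified for $F_n$, uniqueness yields $F_n = \LZrainbow{n}$ and completes the proof.
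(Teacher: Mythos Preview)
Your overall strategy---verify (PDE), (COV), (ASY), (PLB) for the Coulomb gas expression and then invoke the uniqueness result of~\cite{Flores-Kleban:Solution_space_for_system_of_null-state_PDE2}---is exactly what the paper does. The difference is in how (ASY) is handled. The paper does not carry out the asymptotic analysis of the contour integrals directly; instead it cites the package~\cite[Proposition~3.3 and Theorem~4.1]{Kytola-Peltola:Pure_partition_functions_of_multiple_SLEs} together with the spin chain~--~Coulomb gas correspondence of~\cite[Theorem~4.17 and Section~4.1]{Kytola-Peltola:Conformally_covariant_boundary_correlation_functions_with_quantum_group}, where the specific coefficients $(-1)^{\nnofloops} q^{\nnofloops(n-\nnofloops-1)}$ and the verification of (ASY) are already built in via quantum group representation theory. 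This is considerably shorter than what you propose and avoids any integral asymptotics.

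Your direct route is in principle workable, but your sketch of the $j=n$ case is not accurate as written. For each fixed $\nnofloops$, the contour $\contour_\nnofloops$ carries $n$ screening variables, of which exactly $n-\nnofloops$ lie between $x^n$ and $y^n$ and $\nnofloops$ lie beyond $y^1$ (this is stated in the figure caption); there is no further freedom ``$\ell$ of the $\nnofloops$ screenings get pinched.'' All $n-\nnofloops$ pinched variables rescale to a Selberg-type integral, but its value is not $\Selberg_n$; that constant only arises in the paper's later Proposition~\ref{prop: block limit}, where the \emph{other} endpoints $y^1,\ldots,y^n$ are fused. The single-pair fusion $(x^n,y^n)\to\xi$ must instead reproduce the $(n-1)$-screening formula for $\LZrainbow{n-1}$, and checking that the coefficients and pinched-integral values recombine correctly is precisely the nontrivial content hidden in the quantum group machinery the paper cites. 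So your proposal is a valid alternative, but the (ASY) step would require substantially more care than your outline suggests.
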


\begin{proof}
Let us first observe that the constant $C_n$ and each product factor in it has poles only at finitely many rational values (and the number of the poles depends on $n$);
we denote by $Q_n \subset \QQ$ the set of all of these singularities. 
For fixed $\bs x, \bs y$, the left-hand side of the asserted identity~\eqref{eq: rainbow ppf} is continuous in $\kappa \in (0,8)$ by~\cite[Theorem~1.10]{FLPW:Multiple_SLEs_Coulomb_gas_integrals_and_pure_partition_functions},
while the right-hand side of~\eqref{eq: rainbow ppf} is analytic in $\kappa \in \C \setminus Q_n$.  
It~therefore suffices to prove that 
\begin{align} \label{eq: rainbow ppf nonrat}
\LZrainbow{n}(\HH;\bs x, \bs y)
= \Big( \frac{\Gamma(2-8/\kappa)}{\Gamma(1-4/\kappa)^2} \Big)^n \, \Frainbow{n}(\bs x, \bs y) , \qquad  \kappa\in(0,8)\setminus Q_n, 
\end{align}
where\footnote{Note that the constant $\frac{\Gamma(2-8/\kappa)}{\Gamma(1-4/\kappa)^2} = \frac{1}{\Selberg_{1}}$ is a reciprocal of a Selberg integral~\eqref{eq: Selberg}.}
\begin{align*} 
\Frainbow{n}(\bs x, \bs y)
:= \frac{1}{(q^{-2}-1)^{n}}\frac{\qnum{2}^{n}}{\qfact{n+1}}\,
\sum_{\nnofloops=0}^{n}(-1)^{\nnofloops} q^{\nnofloops(n-\nnofloops-1)}
\int_{\contour_\nnofloops} f^{(\nnofloops)}(\bs x, \bs y;\bs w) \, \ud \bs{w} .
\end{align*}
We can use the well-known uniqueness property,~\cite[Lemma~1]{Flores-Kleban:Solution_space_for_system_of_null-state_PDE2}: 
in order to verify the equality~\eqref{eq: rainbow ppf nonrat}, it suffices to check that the left/right hand sides of~\eqref{eq: rainbow ppf nonrat} both 
satisfy PDE~\eqref{eqn::chordalBPZ}, COV~\eqref{eqn::chordalCOV}, ASY~\eqref{eqn::chordalrainbowASY}, and PLB~\eqref{eqn::PPF_PLB_weak}. 
On the one hand, $\{\LZrainbow{n}\}_{n\ge 0}$ satisfy these properties by definition.
On the other hand, combining~\cite[Proposition~3.3]{Kytola-Peltola:Pure_partition_functions_of_multiple_SLEs}
and~\cite[Theorem~4.1]{Kytola-Peltola:Pure_partition_functions_of_multiple_SLEs} 
with the spin chain~--~Coulomb gas correspondence map given in~\cite[Theorem~4.17~and~Section~4.1]{Kytola-Peltola:Conformally_covariant_boundary_correlation_functions_with_quantum_group},
we see that the functions appearing on the right-hand side of~\eqref{eq: rainbow ppf nonrat} satisfy the same properties. 
The asserted formula~\eqref{eq: rainbow ppf} now follows.
\end{proof}

\begin{remark}
It is not trivial to see that the right-hand side of~\eqref{eq: rainbow ppf} is real-valued. 
While the $q$-integers are real-valued (and can be written in terms of trigonometric functions),
the factor $(q^{-2}-1)^{n}$ is obviously not real-valued in general, neither are the coefficients in the sum over $\nnofloops$.
However, as the integrand function~\eqref{eq: integrand} is complex-valued (and multivalued), 
the phases from these factors cancel out, rendering the right-hand side of~\eqref{eq: rainbow ppf} real. 
(In fact, it is moreover positive, as the left-hand side is.)
\end{remark}

\begin{example}
When $n=1$, we have $\qnum{2} = q+q^{-1} = 2 \cos(4 \pi/\kappa)$ for $q = \ee^{4 \pi \ii / \kappa}$, so we find
\begin{align*}
\LZrainbow{n}(\HH;x,y)
= \; & \frac{q}{(q^{-1}-q)}\,
\frac{\Gamma(2-8/\kappa)}{\Gamma(1-4/\kappa)^2} \, 
\bigg( 
\int_{\contour_0} f^{(0)}(x,y;w) \, \ud w
\; - \; q^{-1} \int_{\contour_1} f^{(1)}(x,y;w) \, \ud w
\bigg) 
\\
= \; & \frac{1}{(q-q^{-1})}\,
\frac{\Gamma(2-8/\kappa)}{\Gamma(1-4/\kappa)^2} \, 
(q-q^{-1}) \, 
\int_{x}^{y} |f(x,y;w)| \, \ud w
\\
= \; & \frac{\Gamma(2-8/\kappa)}{\Gamma(1-4/\kappa)^2} \, 
\int_{x}^{y} |f(x,y;w)| \, \ud w
\; = \; (y-x)^{-2\mathfrak{b}} ,
\qquad \mathfrak{b} = \frac{6-\kappa}{2\kappa} ,
\end{align*}
by evaluating the integral via contour deformation (keeping careful track of phase choices) as in~\cite{Kytola-Peltola:Conformally_covariant_boundary_correlation_functions_with_quantum_group}.
The integral on the last line can be evaluated in terms of Euler's Beta function, that also equals a ratio of Gamma functions. 
It should be understood in the sense of Pochhammer contours for $\kappa \leq 4$, 
when it is not convergent as a line integral (see~\cite{FLPW:Multiple_SLEs_Coulomb_gas_integrals_and_pure_partition_functions}). 
Note that both sides of the end result are continuous in $\kappa \in (0,8)$, but the constant $\frac{\Gamma(2-8/\kappa)}{\Gamma(1-4/\kappa)^2}$ has divergences in the interval $\kappa \in (0,4)$, which cancel out with the effect of changing the line integral to a Pochhammer integral.
\end{example}

\begin{proof}[Proof of Proposition~\ref{prop: block limit}]
Using the formula~\eqref{eq: rainbow ppf} from Lemma~\ref{lem: rainbow ppf}, we find that for each $y > x^n$, 
\begin{align} \label{eq:integral formula for fused ppf}
\begin{split}
\lim_{\substack{y^j\to y\\\forall 1\le j\le n}} 
\frac{\LZrainbow{n}(\bs x, \bs y)}{\LZ_{\defpatt_n}(\HH;\bs y)} 
= \; & C_n \sum_{\nnofloops=0}^{n}(-1)^{\nnofloops} q^{\nnofloops(n-\nnofloops-1)}\int_{\hat{\contour}_\nnofloops} \hat{f}^{(\nnofloops)}(\bs x, y;\bs w) \, \ud \bs{w} 
\\
= \; & 
\Big( \frac{\Gamma(2-8/\kappa)}{\Gamma(1-4/\kappa)^2} \Big)^n \, \Ffusion{n}(\bs x, y) , 
\end{split}
\end{align}
where 
each $\hat{\contour}_\nnofloops \ni \bs w$ is an integration contour depicted in Figure~\ref{fig:Gammalhat}, 
and the integrand $\hat{f}^{(\nnofloops)}(\bs x, y;\bs w)$ associated to it is a branch of the multivalued function
\begin{align*}
\hat{f}(\bs x, y;\bs w)
:=\; & \prod_{1\leq i<j\leq n}(x^{j}-x^{i})^{\frac{2}{\kappa}}
\prod_{1\leq \ell \leq n}(y-x^\ell)^{\frac{2n}{\kappa}}
\prod_{\substack{1\leq i\leq n\\1\leq r\leq n}}
(w^{r}-x^{i})^{-\frac{4}{\kappa}}
\prod_{\substack{1\leq r\leq n}}
(w^{r}-y)^{-\frac{4n}{\kappa}}
\prod_{1\leq r<s\leq n}(w^{s}-w^{r})^{\frac{8}{\kappa}} 
\end{align*}
chosen to be real and positive at the point indicated by the red circles in Figure~\ref{fig:Gammalhat}; 
and for later use, we also include in~\eqref{eq:integral formula for fused ppf} an expression involving the function $\smash{\Ffusion{n}}$ analogous to that in Lemma~\ref{lem: rainbow ppf},
\begin{align}
\label{eqn::Ffusion}
\Ffusion{n}(\bs x, y) :=\; & 
\frac{1}{(q^{-2}-1)^{n}}\frac{\qnum{2}^{n}}{\qfact{n+1}}\,
\sum_{\nnofloops=0}^{n}(-1)^{\nnofloops} q^{\nnofloops(n-\nnofloops-1)}\int_{\hat{\contour}_\nnofloops} \hat{f}^{(\nnofloops)}(\bs x, y;\bs w) \, \ud \bs{w} ,
\qquad q = \ee^{4 \pi \ii / \kappa} .
\end{align}
It remains to show that the formula~\eqref{eq:integral formula for fused ppf} coincides with the right-hand side of~\eqref{eq: block limit}. 
To this end, we will verify that the ratio of the right-hand sides of~\eqref{eq:integral formula for fused ppf} and~\eqref{eq: block limit} is identically $1$. 
We consider this ratio $G(y)$ 
as a function of $y \in \C \setminus \{x^1,\ldots,x^{n}\}$ with fixed $x^1 < \cdots < x^{n}$.
Because we already know that the limit on the left-hand side of~\eqref{eq:integral formula for fused ppf} 
exists for all $\kappa \in (0,8)$, by continuity it suffices to verify that
\begin{align*}
G(y) = G_{\bs x}(y) := 
\Big( \frac{\Gamma(2-8/\kappa)}{\Gamma(1-4/\kappa)^2} \Big)^n 
\frac{1}{A_n} \, \frac{\Ffusion{n}(\bs x, y)}{\LZfusion{n}(\HH;\bs x, y)} \equiv 1 , \qquad 
\kappa \in (0,8) \setminus Q_n ,
\end{align*}
where $Q_n \subset \QQ$ denotes the finite set of singularities of the multiplicative constant $\kappa \mapsto \big( \frac{\Gamma(2-8/\kappa)}{\Gamma(1-4/\kappa)^2} \big)^n / A_n(\kappa)$.

On the one hand, using~\cite[Theorem~5.3, Part~(4)]{Peltola:Basis_for_solutions_of_BSA_PDEs_with_particular_asymptotic_properties} 
and analytic continuation, we see that 
\begin{align*}
\big| \Ffusion{n}(\bs x, y) \big| 
\sim |y - x^j|^{\mathfrak{h}_{n-1} - \mathfrak{h}_{n} - \mathfrak{h}_1} , \qquad y \to x^j ,
\textnormal{ for any $1 \leq j \leq n$,}
\end{align*}
where the exponent is $\mathfrak{h}_{n-1} - \mathfrak{h}_{n} - \mathfrak{h}_1 = 1-\frac{2}{\kappa}(n+2)$ 
in terms of the Kac conformal weight~\eqref{eqn::Kac_conformal_weight} (and $\mathfrak{h}_1 = \mathfrak{b}$). 
Because $\LZfusion{n}$ defined in~\eqref{eqn::LZfusion_H} has the same asymptotics, 
\begin{align*}
\big| \LZfusion{n}(\HH;\bs x, y) \big|
\sim |y - x^j|^{1-\frac{2}{\kappa}(n+2)}  , \qquad y \to x^j ,
\end{align*}
we see that $y \mapsto G(y)$ is holomorphic on $\C$. 
On the other hand, \cite[Proposition~5.8]{Peltola:Basis_for_solutions_of_BSA_PDEs_with_particular_asymptotic_properties} (or~\cite[Proposition~5.4]{Kytola-Peltola:Conformally_covariant_boundary_correlation_functions_with_quantum_group}) gives
\begin{align*}
\big| \Ffusion{n}(\bs x, y) \big| 
\sim |y|^{-2 \mathfrak{h}_{n}}  , \qquad y \to \infty ,
\end{align*}
and similarly, using the explicit formula~\eqref{eqn::LZfusion_H} involving $1-\frac{2}{\kappa}(n+2) = -\frac{2}{n}\mathfrak{h}_{n}$, we have
\begin{align*}
\big| \LZfusion{n}(\HH;\bs x, y) \big|
\sim |y|^{-2 \mathfrak{h}_{n}}  , \qquad y \to \infty .
\end{align*}
Hence, for any fixed $\bs x = (x^1,\ldots,x^{n})$ with $x^1 < \cdots < x^{n}$, 
we conclude that $y \mapsto G(y)$ is a bounded entire function, thus constant by Liouville's theorem.
Using~\cite[Proposition~5.4]{Kytola-Peltola:Conformally_covariant_boundary_correlation_functions_with_quantum_group},
we readily find that
\begin{align*}
\lim_{y \to + \infty}  y^{2 \mathfrak{h}_{n}} \, 
\lim_{\substack{y^j\to y\\\forall 1\le j\le n}}
\frac{\LZrainbow{n}(\HH;\bs x, \bs y)}{\LZ_{\defpatt_n}(\HH;\bs y)} 
= \; & A_n \, \LZ_{\defpatt_n}(\HH;\bs x) 
, \qquad \kappa \in (0,8) \setminus \QQ ,
\end{align*}
where $A_n$ is the multiplicative constant~\eqref{eq: fusion cst}.  
Using the explicit formula~\eqref{eqn::LZfusion_H} for $\LZfusion{n}(\HH;\bs x, y)$, we have
\begin{align*}
\lim_{y \to + \infty}  y^{2 \mathfrak{h}_{n}} \, \LZfusion{n}(\HH;\bs x, y)
= \; & \LZ_{\defpatt_n}(\HH;\bs x) .
\end{align*}
These together imply that $G(y) \to 1$ as $y \to \infty$, 
so we conclude that $G_{\bs x}(y) \equiv 1$ for all $y$ and $\bs x$.
\end{proof}

\begin{figure}[h!]
\begin{center}
\includegraphics[width=.75\textwidth]{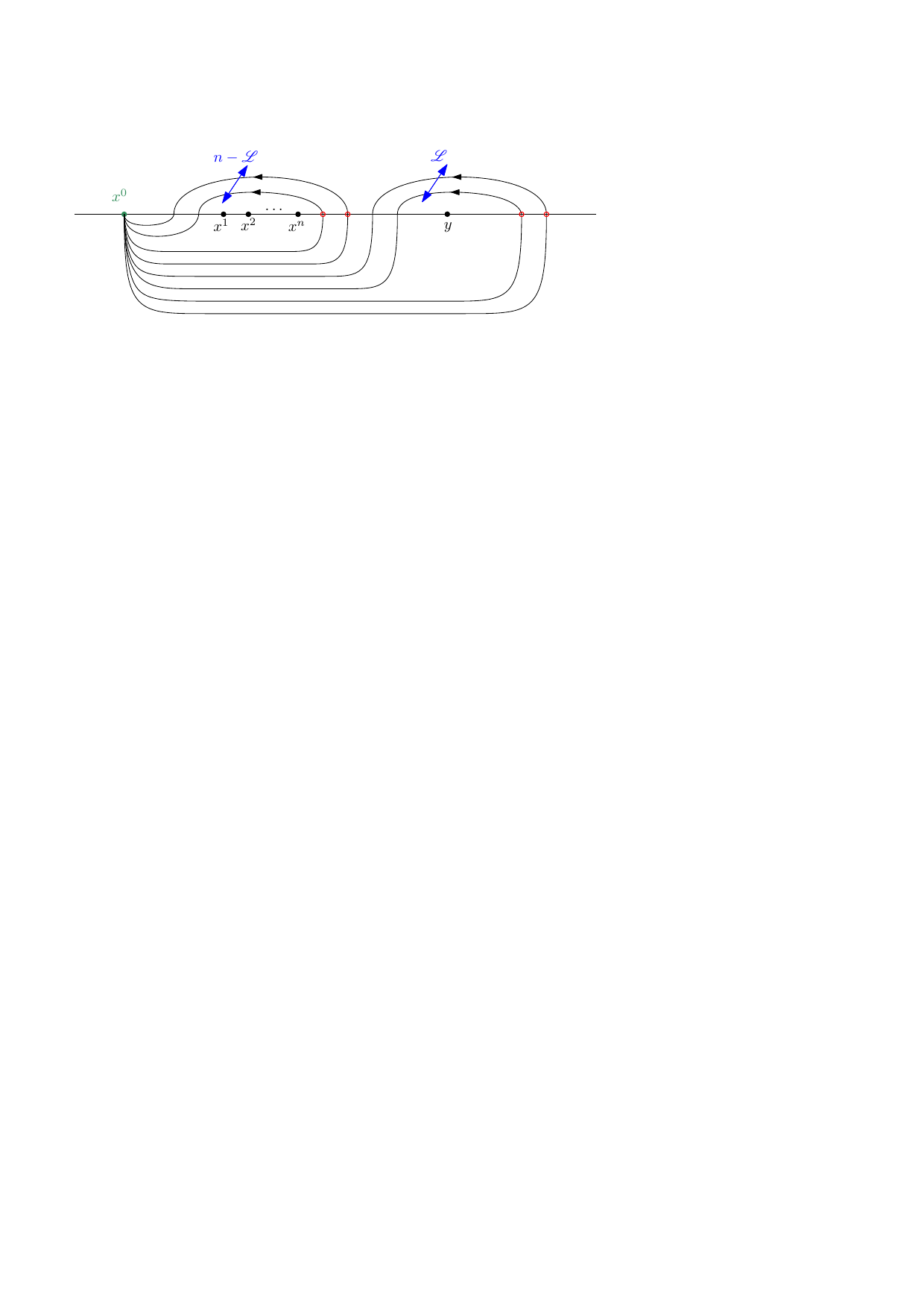}
\end{center}
\caption{\label{fig:Gammalhat}
Illustration of the integration contours $\hat{\contour}_\nnofloops$ in the proof of Proposition~\ref{prop: block limit}. 
The red circles indicate the branch choice of the integrand function $\bs w \mapsto \hat{f}^{(\nnofloops)}(\bs x, y;\bs w)$
for fixed $x^1 < \cdots < x^{n} < y$, 
so that it is real and positive when 
$x^{n} < \Re(w^1) < \cdots < \Re(w^{n-\nnofloops}) < y$ and
$y < \Re(w^{n-\nnofloops+1}) < \cdots < \Re(w^n)$. 
By~\cite[Theorem~4.17]{Kytola-Peltola:Conformally_covariant_boundary_correlation_functions_with_quantum_group}, 
while the integration $\hat{\contour}_\nnofloops$ depends on an auxiliary variable $x^0 < x^1$, 
the function~\eqref{eqn::Ffusion} defined as a linear combination of integrals involving $\hat{\contour}_\nnofloops$, for $\nnofloops = 0,1,\ldots,n$, 
is independent of the choice of $x^0$.
}
\end{figure}

\begin{example}
When $n=1$, the Gamma functions in $A_n$ cancel out and $\qnum{1} \equiv 1$, so we find
\begin{align*}
\lim_{y \to + \infty}  y^{2\mathfrak{b}} \, 
\frac{\LZrainbow{1}(\HH;x, y)}{\LZ_{\defpatt_1}(\HH;y)} 
= \; & \LZ_{\defpatt_1}(\HH;x) ,
\end{align*}
where $\LZ_{\defpatt_1}(\HH;x) \equiv 1$ and $\LZrainbow{1}(\HH;x, y) = (y-x)^{-2\mathfrak{b}}$
(which is rather trivial and well-known). 
\end{example}

\begin{example}
When $n=2$, we have $\qnum{2} = q+q^{-1} = 2 \cos(4 \pi/\kappa)$ for $q = \ee^{4 \pi \ii / \kappa}$, so we find
\begin{align*}
\lim_{y \to + \infty}  y^{2\mathfrak{h}_{2}} \, 
\lim_{\substack{y^1\to y\\y^2\to y}}
\frac{\LZrainbow{2}(\HH;x^1, x^2, y^2, y^1)}{\LZ_{\defpatt_2}(\HH;y^2,y^1)} 
= \; & \frac{\qnum{2}^{3}}{\qnum{3}}  \, 
\Big( \frac{\Gamma(2-8/\kappa)}{\Gamma(1-4/\kappa)^2} \Big)^2 \,
\Selberg_{2} \, 
\, \LZ_{\defpatt_2}(\HH;x^1, x^2)
\\
= \; & \frac{\LZ_{\defpatt_2}(\HH;x^1, x^2)}{\hF\big(\tfrac{4}{\kappa}, 1-\tfrac{4}{\kappa}, \tfrac{8}{\kappa}; 1\big) } ,
\end{align*}
where $\LZ_{\defpatt_2}(\HH;x, y) = (y-x)^{2/\kappa}$. 
The multiplicative constant can be evaluated by using the formula~\eqref{eq: Selberg} for the Selberg integral:
\begin{align*}
\Selberg_{2} = \; & \frac{1}{2} \; 
\frac{\Gamma( 1 - 8/\kappa)^2}
{\Gamma( 2 - 12/\kappa)} 
\frac{\Gamma( 1 - 4/\kappa)^2 \; \Gamma( 1 + 8/\kappa )}
{\Gamma( 1 + 4/\kappa) \; \Gamma( 2 - 8/\kappa) } ,
\end{align*}
the identity $\Gamma(a+1) = a \Gamma(a)$, which yields
\begin{align*}
\frac{\Gamma( 1 + 8/\kappa )}{\Gamma( 1 + 4/\kappa)}
= 2 \, \frac{\Gamma(8/\kappa )}{\Gamma(4/\kappa)} ,
\end{align*}
the formulas 
(which follow from the identity $\Gamma(1-a) = \frac{\pi}{\Gamma(a) \sin(\pi a)}$ and simple trigonometry)
\begin{align*}
\qnum{2} = \; &
2 \cos(4\pi/\kappa) = \frac{\sin(8\pi/\kappa)}{\sin(4\pi/\kappa)}
= \frac{\Gamma(1-4/\kappa) \, \Gamma(4/\kappa)}{\Gamma(8/\kappa) \, \Gamma(1-8/\kappa)} , \\
\frac{\Gamma(2-8/\kappa)}{\Gamma( 2 - 12/\kappa)} = \; & 
\frac{\Gamma(12/\kappa - 1) }{\Gamma(8/\kappa - 1)}
\frac{\sin(12 \pi /\kappa)}{\sin(8 \pi /\kappa)} 
=\frac{\Gamma(12/\kappa - 1) }{\Gamma(8/\kappa - 1)}
\frac{\qnum{3}}{\qnum{2}} ,
\end{align*}
and using the hypergeometric function~\cite[Eq.~(15.3.1)]{Abramowitz-Stegun:Handbook} 
evaluated at $z=1$: 
\begin{align*}
\hF\big(\tfrac{4}{\kappa}, 1-\tfrac{4}{\kappa}, \tfrac{8}{\kappa}; 1\big) 
= \; & \frac{\Gamma(8/\kappa) \, \Gamma(8/\kappa - 1)}{\Gamma(4/\kappa) \, \Gamma(12/\kappa - 1) } .
&& \textnormal{[by~\cite[Eq.~15.1.20]{Abramowitz-Stegun:Handbook}]}
\end{align*}
\end{example}

We end this section with a technical result controlling the rainbow partition functions (Corollary~\ref{cor: control}). 
It will be used in the proof of Theorem~\ref{thm::multichordal_halfwatermelon}. We first extend
the definition of $\LZrainbow{n}$ to more general polygons $(\Omega; x^1, \ldots, x^{2n})$ via a covariance similar to~\eqref{eqn::chordalCOV}:
\begin{align}\label{eqn::PartF_def_polygon}
\LZrainbow{n}(\Omega; u^{1},\ldots,u^{2n}) = 
\Big( \prod_{j=1}^{2n} |\varphi'(u^{j})| \Big)^{\mathfrak{b}} 
\, \LZrainbow{n}(\HH; \varphi(u^{1}),\ldots,\varphi(u^{2n})).
\end{align}
where $\varphi$ is any conformal map from $\Omega$ onto $\HH$ with $\varphi(u^1)<\cdots<\varphi(u^{2n})$. In particular, for $n=1$, 
the partition function is given in terms of the boundary Poisson kernel in $\Omega$ \textnormal{(}see~\textnormal{(\ref{eqn::Poisson_def_H},~\ref{eqn::bPoisson_cov}))},
\begin{align*}
\LZtwo(\Omega; x, y) = (H(\Omega; x, y) )^{\mathfrak{b}}. 
\end{align*}

\begin{corollary} \label{cor: control}
Fix $\kappa\in (0,8)$ and $n\ge 2$. 
For a $2n$-polygon $(\Omega; \bs{x}, \bs{y})$ 
and two points $x\in (y^1\, y^n)$ and $y\in (x^n\, x^1)$, define 
\begin{align*} 
\LR(\Omega; x, \bs{x}, \bs{y}, y) 
:= \frac{\LZrainbow{n}(\Omega; \bs{x}, \bs{y}) \, ( H(\Omega; x, y) )^{\mathfrak{h}_n}}{\LZfusionInv{n}(\Omega; \bs{x}, y) \, \LZfusion{n}(\Omega; x, \bs{y})}. 
\end{align*}
The function $\LR$ satisfies the following properties. 
\begin{enumerate}
\item \label{item_R-func-1-inv}
It is conformally invariant: for any conformal map $\varphi$ on $\Omega$, we have
\begin{align*} 
\LR(\Omega; x, \bs{x}, \bs{y}, y) = \LR(\varphi(\Omega); \varphi(x), \varphi(\bs{x}), \varphi(\bs{y}), \varphi(y)) .
\end{align*}

\item \label{item_R-func-2-asy}
It satisfies the asymptotics
\begin{align*} 
\lim_{\substack{y^j\to y\\\forall 1\le j\le n}}\LR(\Omega; x, \bs{x}, \bs{y}, y) = A_n 
\qquad \textnormal{and} \qquad 
\lim_{\substack{x^j\to x\\\forall 1\le j\le n}}\LR(\Omega; x, \bs{x}, \bs{y}, y) = A_n,
\end{align*}
where the multiplicative constant $A_n$ is given by~\eqref{eq: fusion cst}. 

\item \label{item_R-func-3-sup}
Consider $\Omega=\HH$ with $x=0$ and $y = \infty$. For $R \in (0,\infty)$, consider
\begin{align*}
\mathfrak{R}(R) := \Big\{  (\bs x, \bs y) \in \R^{2n} \, \bigcond \, 
-\tfrac{R}{2} \leq x^1 \leq \cdots \leq x^n \le \tfrac{R}{2} \, \textnormal{ and } \, 
|y^j| \geq R \textnormal{ for all } j \in \{1,\ldots,n\} \Big\} ,
\end{align*}
where $\bs x = (x^1,\ldots,x^{n})$ and $\bs y = (y^{n},\ldots,y^1)$ appear in counterclockwise order along $\R \cup \{\infty\}$.
Then, 
\begin{align*}
C_{\sup} := \sup_{(\bs x, \bs y) \in \mathfrak{R}(R)} \; \LR(\HH; 0, \bs x, \bs y, \infty)  
\qquad \textnormal{and} \qquad
C_{\inf} := \inf_{(\bs x, \bs y) \in \mathfrak{R}(R)} \; \LR(\HH; 0, \bs x, \bs y, \infty) 
\end{align*}
are finite, nonzero, and independent of $R$. 
\end{enumerate}
\end{corollary}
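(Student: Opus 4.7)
For Part~\ref{item_R-func-1-inv} (conformal invariance), the plan is to directly verify that all conformal factors arising under a conformal map $\varphi$ cancel in the ratio defining $\LR$. Using~\eqref{eqn::PartF_def_polygon}, the numerator $\LZrainbow{n}(\Omega; \bs{x}, \bs{y}) \, H(\Omega; x, y)^{\mathfrak{h}_n}$ transforms with a factor $\prod_j |\varphi'(x^j)|^{\mathfrak{b}} \, |\varphi'(y^j)|^{\mathfrak{b}} \cdot (|\varphi'(x)||\varphi'(y)|)^{\mathfrak{h}_n}$ (using~\eqref{eqn::bPoisson_cov}), while the denominator $\LZfusionInv{n}(\Omega; x, \bs y) \cdot \LZfusion{n}(\Omega; \bs x, y)$ picks up exactly the same factor by~\eqref{eqn::LZfusion_cov}, so the ratio is invariant.

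For Part~\ref{item_R-func-2-asy}, after reducing to $\Omega = \HH$ via Part~\ref{item_R-func-1-inv}, I would use the definition of $\LZfusionInv{n}$ from~\eqref{eqn::LZfusion_H} to decompose
\begin{align*}
\LR(\HH; x, \bs x, \bs y, y)
= \frac{\LZrainbow{n}(\HH; \bs x, \bs y)}{\LZ_{\defpatt_n}(\HH; \bs y)} \cdot \frac{H(\HH; x, y)^{\mathfrak{h}_n}}{\prod_{j=1}^n (y^j - x)^{1 - \frac{2}{\kappa}(n+2)}} \cdot \frac{1}{\LZfusion{n}(\HH; \bs x, y)} .
\end{align*}
As $y^j \to y$ for all $j$, the first factor tends to $A_n \, \LZfusion{n}(\HH; \bs x, y)$ by Proposition~\ref{prop: block limit}; the second factor tends to $1$, since $H(\HH; x, y)^{\mathfrak{h}_n} = (y-x)^{-2\mathfrak{h}_n}$ and the product in its denominator tends to $(y-x)^{-2\mathfrak{h}_n}$ (because $1 - \tfrac{2}{\kappa}(n+2) = -\tfrac{2}{n} \mathfrak{h}_n$); and the third factor is the $(y^j$-independent) constant $\LZfusion{n}(\HH; \bs x, y)^{-1}$. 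Multiplying these gives the limit $A_n$. The other asymptotic (as $x^j \to x$) follows from the symmetry exchanging $(x, \bs x) \leftrightarrow (y, \bs y)$, which swaps the roles of $\LZfusion{n}$ and $\LZfusionInv{n}$.

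For Part~\ref{item_R-func-3-sup}, the strategy is a compactness argument modulo conformal symmetry. First, the dilation $z \mapsto \lambda z$ is a M\"{o}bius self-map of $\HH$ fixing $0$ and $\infty$; it maps $\mathfrak{R}(R)$ bijectively onto $\mathfrak{R}(\lambda R)$ while preserving $\LR$ by Part~\ref{item_R-func-1-inv}, so $C_{\sup}$ and $C_{\inf}$ do not depend on $R$, and one may fix $R = 1$. On the open configuration space, $\LR$ is continuous and strictly positive. Viewing $(0, x^1, \ldots, x^n, \infty, y^n, \ldots, y^1)$ as $2n+2$ points on $\R \cup \{\infty\}$ with the prescribed counterclockwise order, I would then consider the compactification of $\mathfrak{R}(1)$ modulo the residual dilation symmetry; its boundary consists of collisions among the $\bs x$, collisions among the $\bs y$ (possibly at infinity), and partial escapes of $y^j$'s to infinity. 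On each boundary stratum, the leading divergences or zeros of $\LZrainbow{n}$ should cancel exactly against those of $\LZfusionInv{n}$ and $\LZfusion{n}$ thanks to their factorized structure; in particular, for a total collision of the $\bs y$'s at a single point $y^*$, a conformally transported version of Proposition~\ref{prop: block limit} together with the computation of Part~\ref{item_R-func-2-asy} yields a finite nonzero limit (and similarly for collisions among $\bs x$). Combining continuity on the interior with boundedness at every boundary stratum gives $0 < C_{\inf} \le C_{\sup} < \infty$.

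The main obstacle will lie in Part~\ref{item_R-func-3-sup}, in handling the partial collision strata where only some $y^j$'s (or $x^j$'s) come together. The axiomatic ASY~\eqref{eqn::chordalrainbowASY} only yields the weak bound $\LZrainbow{n} = o((u^{j+1}-u^j)^{-2\mathfrak{b}})$ at non-matched collisions, whereas the denominator $\LZfusionInv{n}$ vanishes at a definite rate $(y^j - y^i)^{2/\kappa}$. Obtaining exact boundedness rather than merely finiteness of a pointwise limit will likely require sharper subleading asymptotics of $\LZrainbow{n}$, extractable either from the Coulomb gas integral representation in Lemma~\ref{lem: rainbow ppf} or by an inductive reduction of each partial collision to a configuration with fewer marked points, where the claim has already been established.
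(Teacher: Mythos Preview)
Your treatment of Parts~\ref{item_R-func-1-inv} and~\ref{item_R-func-2-asy} matches the paper's: both follow directly from the covariance rules~\eqref{eqn::LZfusion_cov},~\eqref{eqn::bPoisson_cov},~\eqref{eqn::PartF_def_polygon} and Proposition~\ref{prop: block limit}.

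For Part~\ref{item_R-func-3-sup}, you and the paper agree on the scale-invariance reduction to $R=1$ and on the need for a compactness argument, and you correctly point to the Coulomb gas formula of Lemma~\ref{lem: rainbow ppf} as the relevant tool. But the paper's use of that formula is much more direct than the stratum-by-stratum asymptotic analysis you outline. Specializing to $\Omega=\HH$, $x=0$, $y=\infty$, one finds
\[
\LR(\HH;0,\bs x,\bs y,\infty)
= \Big(\prod_j y^j\Big)^{\frac{2}{\kappa}(n+2)-1}\,\frac{\LZrainbow{n}(\HH;\bs x,\bs y)}{\LZ_{\defpatt_n}(\HH;\bs x)\,\LZ_{\defpatt_n}(\HH;\bs y)} ,
\]
and the division by the two $\LZ_{\defpatt_n}$ factors removes precisely the $\prod_{i<j}(x^j-x^i)^{2/\kappa}$ and $\prod_{i<j}(y^j-y^i)^{2/\kappa}$ prefactors from the integrand~\eqref{eq: integrand}. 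What remains is $\prod_{i,j}(y^j-x^i)^{2/\kappa}$ times a Coulomb integral whose integrand involves only $(w^r-x^i)^{-4/\kappa}$, $(w^r-y^i)^{-4/\kappa}$, and $(w^s-w^r)^{8/\kappa}$. The key observation is that the contours $\contour_\nnofloops$ in Figure~\ref{fig:Gammal} separate the block $\{x^1,\ldots,x^n\}$ from the block $\{y^n,\ldots,y^1\}$ without passing between points of the same block; hence the integrand is jointly smooth in $(\bs x,\bs y)$ as $x^i$'s collide among themselves or $y^j$'s collide among themselves (or escape to $\infty$). This gives a \emph{continuous extension} of $\LR$ to the closed set $\mathfrak{R}(1)$ in one stroke, with no need to isolate partial-collision strata or extract subleading terms. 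The paper then transports $\mathfrak{R}(1)$ to a compact rectangle by a conformal map (Figure~\ref{fig::control}), and continuity on a compact set yields $C_{\sup}<\infty$; applying the same reasoning to $1/\LR$ gives $C_{\inf}>0$.

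Your proposed route via boundary strata and refined ASY asymptotics is not wrong in spirit, but it is where you would get stuck: as you note, ASY~\eqref{eqn::chordalrainbowASY} alone gives only $o((u^{j+1}-u^j)^{-2\mathfrak{b}})$ at non-matched collisions, which does not control the ratio against the $(y^j-y^i)^{2/\kappa}$ vanishing of the denominator. The contour-geometry observation above bypasses this entirely.
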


\begin{proof}
The conformal invariance (Item~\ref{item_R-func-1-inv}) follows from the conformal covariances~(\ref{eqn::LZfusion_cov},~\ref{eqn::bPoisson_cov},~\ref{eqn::PartF_def_polygon}).  
The asymptotics (Item~\ref{item_R-func-2-asy}) follows from Proposition~\ref{prop: block limit} 
combined with the conformal covariances~(\ref{eqn::LZfusion_cov},~\ref{eqn::PartF_def_polygon}). 
It remains to show Item~\ref{item_R-func-3-sup}. 
To this end, fix $\Omega=\HH$. If $x^1 < \cdots < x^{n} < y^{n} < \cdots < y^1$, using the integral formula~\eqref{eq: rainbow ppf} for $\LZrainbow{n}$ (Lemma~\ref{lem: rainbow ppf}) and the formulas~\eqref{eqn::LZfusion_H} for $\LZfusionInv{n}$ and $\LZfusion{n}$, we see that
\begin{align*}
\LR(\HH; 0; \bs x; \bs y; \infty) 
= \; & \Big( \prod_{1\leq j \leq n} y^j \Big)^{\frac{2}{\kappa}(n+2)-1} \,  \frac{\LZrainbow{n}(\HH; \bs{x}, \bs{y})}{\LZ_{\defpatt_n}(\HH;\bs x) \, \LZ_{\defpatt_n}(\HH;\bs y) }
\\
= \; & \Big( \prod_{1\leq j \leq n} y^j \Big)^{\frac{2}{\kappa}(n+2)-1} \, \Big( \prod_{1\leq i,j\leq n} (y^{j}-x^i) \Big)^{2/\kappa} \, 
\sum_{\nnofloops=0}^{n}(-1)^{\nnofloops} q^{\nnofloops(n-\nnofloops-1)} \int_{\contour_\nnofloops} g^{(\nnofloops)}(\bs x, \bs y;\bs w) \, \ud \bs{w} ,
\end{align*}
writing $\ud \bs{w} = \ud w^1 \cdots \ud w^{\nnofloops}$, 
where $g^{(\nnofloops)}$ is a branch of the multivalued function
\begin{align*} 
g(\bs x, \bs y;\bs w)
:=\; & \prod_{\substack{1\leq i\leq n\\1\leq r\leq n}}
(w^{r}-x^{i})^{-\frac{4}{\kappa}}
\prod_{\substack{1\leq i\leq n\\1\leq r\leq n}}
(w^{r}-y^{i})^{-\frac{4}{\kappa}}
\prod_{1\leq r<s\leq n}(w^{s}-w^{r})^{\frac{8}{\kappa}}
\end{align*}
chosen to be real and positive at the point indicated by the red circles in Figure~\ref{fig:Gammal}.
In the integral, we may assume without loss of generality that 
$w^1, \ldots, w^{\nnofloops} \in \C \setminus \big( ( [x^1 - \delta, x^n + \delta] \cup [y^n - \delta, y^1 + \delta] ) \times [- \delta , + \delta]\big)$ for a chosen small $\delta > 0$, so 
we see that the contours in $\contour_\nnofloops$ avoid the marked points 
and no contour separates $x^i$ from $x^j$, nor $y^i$ from $y^j$
(indeed, the contours only separate $\{x^1, \ldots, x^n\}$ from $\{y^1, \ldots, y^n\}$).
It follows that the function $(\bs x, \bs y) \mapsto \LR(\HH; 0, \bs x, \bs y, \infty)$ has a continuous extension to $\mathfrak{R}(R)$.

By scale-invariance of $\LR$, the quantities $C_{\sup}$ and $C_{\inf}$ are independent of $R$  (so we may take $R=1$). 
Moreover, because $\LR$ is conformally invariant, it is sufficient to evaluate the quantities $C_{\sup}$ and $C_{\inf}$ in a rectangle $\Omega$ obtained by sending the upper half-plane $\HH$ onto $\Omega$ by a conformal map $\psi$ 
such that the points $-1, \, -1/2, \, 1/2, \, 1$ map to the corners of the rectangle $\Omega$
--- see Figure~\ref{fig::control} for an illustration. 
Now, because $(\bs x, \bs y) \mapsto \LR(\HH; 0; \bs x; \bs y; \infty)$ is continuous on $\mathfrak{R}(1)$,
its pre-composition by the conformal map $\psi^{-1}$ is continuous on the compact set $\psi(\mathfrak{R}(1))$.
In particular, we have 
\begin{align*}
C_{\sup} = \underset{(\bs x, \bs y) \in \psi(\mathfrak{R}(1))}{\sup} \, \LR(\Omega; \psi(0), \psi(\bs x), \psi(\bs y), \psi(\infty)) < \infty.
\end{align*}

Applying the same argument to $1/\LR(\HH; 0, \bs x, \bs y, \infty)$ shows that $C_{\sup} > 0$ and $C_{\inf} \in (0,\infty)$.
\end{proof}

\begin{figure}[h!]
\begin{center}
\includegraphics[width=\textwidth]{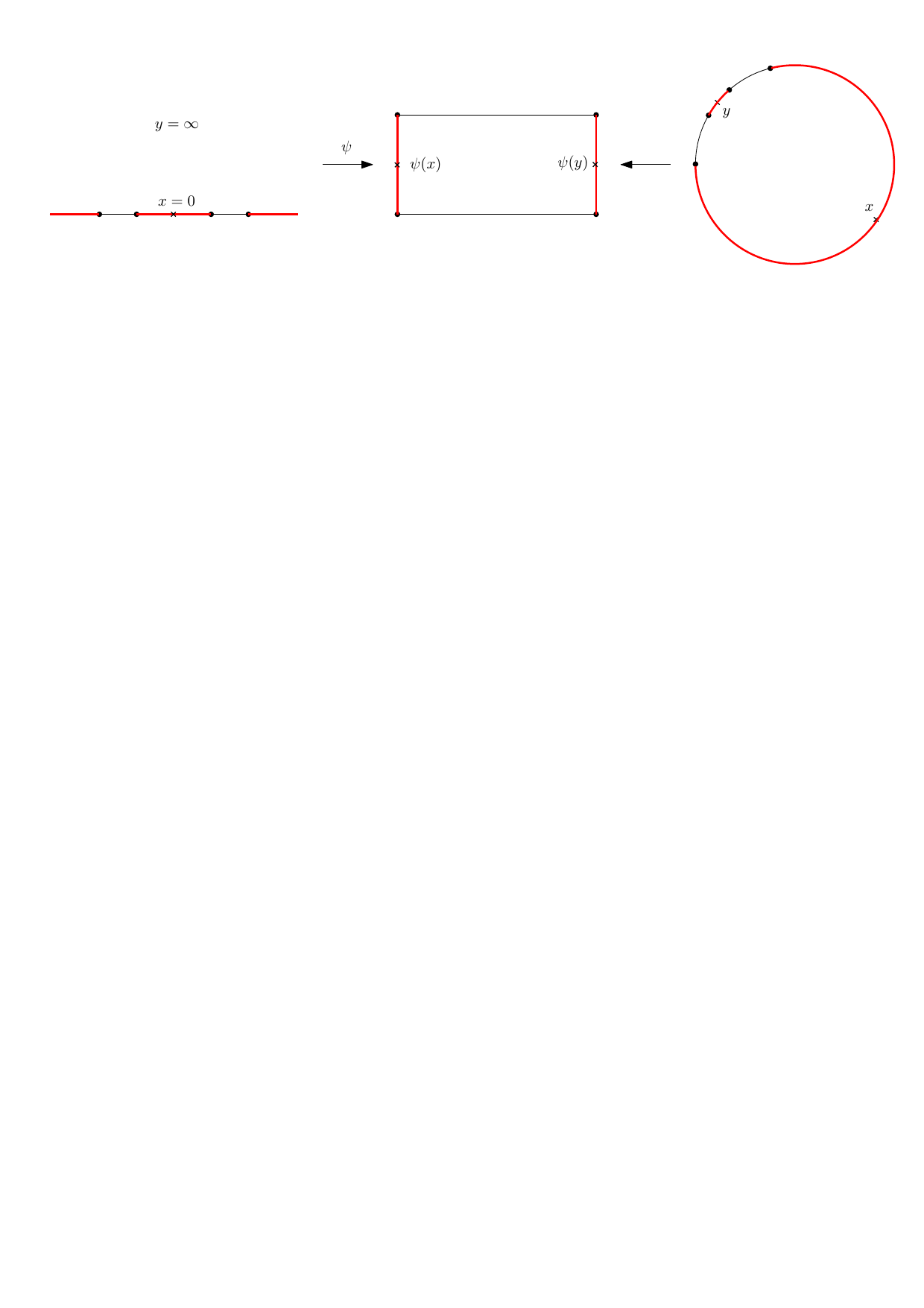}
\end{center}
\caption{\label{fig::control} Illustration of the symmetry argument in the proof of Corollary~\ref{cor: control} and in the proof of Theorem~\ref{thm::multichordal_halfwatermelon}.}
\end{figure}

\subsection{From multichordal SLE to half-watermelon SLE --- proof of Theorem~\ref{thm::multichordal_halfwatermelon}~\& Proposition~\ref{prop::halfwatermelon_bp}}
\label{subsec: Proof of perturbation}

The goal of this section is to prove that multichordal $\SLE_{\kappa}$ converges to half-watermelon $\SLE_{\kappa}$ when the endpoints of the $n$ distinct curves tend to a common point on the boundary (Theorem~\ref{thm::multichordal_halfwatermelon}) and establish the boundary perturbation property of half-watermelon $\SLE_{\kappa}$ (Proposition~\ref{prop::halfwatermelon_bp}). 

\smallskip

We first consider rainbow partition functions in the unit disc: the $2n$-polygon $(\Omega;\bs x, \bs y) = (\U; \ee^{\ii\bs{\theta}})$ with $\ee^{\ii\bs{\theta}} = (\ee^{\ii\theta^1},\ldots, \ee^{\ii\theta^{2n}})$
for $(\theta^1, \ldots,\theta^{2n}) \in\LX_{2n}$,
writing $(\ee^{\ii\theta^1},\ldots, \ee^{\ii\theta^{n}})=\bs x$ and $(\ee^{\ii\theta^{n+1}},\ldots, \ee^{\ii\theta^{2n}})=\bs y$, and 
\begin{align*}
\LZrainbow{n}(\bs\theta) = \LZrainbow{n}(\theta^1, \ldots, \theta^{2n}) = \LZrainbow{n}(\U; \bs{x}, \bs{y}).  
\end{align*}
Note that the chordal BPZ equations~\eqref{eqn::chordalBPZ} become radial BPZ equations
\begin{align}\label{eqn::radialBPZ_ppf}
\frac{\kappa}{2} \frac{\partial_j^2 \LZrainbow{n}(\bs\theta)}{\LZrainbow{n}(\bs\theta)} 
+ \underset{1\leq i\neq j \leq 2n}{\sum} \, \bigg( \cot \bigg(\frac{\theta^{i}-\theta^{j}}{2}\bigg)
\frac{\partial_{i} \LZrainbow{n}(\bs\theta)}{\LZrainbow{n}(\bs\theta)} \, - \, \frac{\mathfrak{b}/2}{ \sin^2 \big(\frac{\theta^{i}-\theta^{j}}{2}\big)}\bigg) = \tilde{\mathfrak{b}}  
, \qquad \textnormal{for all }j .
\end{align}

It will be convenient to view $n$-chordal $\SLE_{\kappa}$ as a (local) commuting family of $2n$ growing curves \`a la Dub{\'e}dat~\cite{Dubedat:Commutation_relations_for_SLE}, 
instead of its global version from Definition~\ref{def::globalnSLE}. 
From the results in~\cite{Peltola-Wu:Global_and_local_multiple_SLEs_and_connection_probabilities_for_level_lines_of_GFF}, we know that these two descriptions agree. 
In particular, the Loewner evolution for $n$-chordal $\SLE_{\kappa}$ can be described by rainbow partition functions, which enables us to establish the following comparison.

\begin{lemma}\label{lem::globalnSLE_mart}
Fix $\kappa\in (0,4], \, n\ge 1$, and $\bs{\theta} = (\theta^1, \ldots, \theta^{2n})\in\LX_{2n}$. 
\begin{itemize}
\item Let $\eta^{j}$ be radial $\SLE_{\kappa}$ in $(\U; \ee^{\ii\theta^j}; 0)$ for each $1\le j\le 2n$, 
and let $\mathsf{P}_{2n}$ be the probability measure on $\bs{\eta}=(\eta^{1}, \ldots, \eta^{2n})$ under which the curves are independent. 

\item Let $\bs{\bar{\eta}}=(\bar{\eta}^{1}, \ldots, \bar{\eta}^{n})\sim\QQrainbow{n}(\U; \ee^{\ii\bs{\theta}})$ be $n$-chordal $\SLE_{\kappa}$ in $(\U; \ee^{\ii\bs{\theta}})$. 
\end{itemize}
Viewing both processes as $2n$-tuples of curves, $\bs{\eta}$ identified with initial segments of $\bs{\bar{\eta}}$ growing from both endpoints of each curve, 
we parameterize $\bs{\eta}$ using $2n$-time-parameter $\bs{t}=(t_1, \ldots, t_{2n})$.
Also, we let $\bs{\mslitdriv}_{\bs{t}} = (\mslitdriv^{1}_{\bs{t}},\ldots,\mslitdriv^{2n}_{\bs{t}})$ denote its multi-slit driving function~\eqref{eqn::Ntuple_driving}.
Then, the law of $\bs{\eta}$ under $\QQrainbow{n}$ is the same as $\mathsf{P}_{2n}$ tilted by the following $2n$-time-parameter local martingale, up to the collision time: 
\begin{align*}
M_{\bs{t}}(\LZrainbow{n})
= \; & \one_{\LE_{\emptyset}(\bs{\eta}_{\bs{t}})}  \,\exp\bigg(\frac{\mathfrak{c}}{2}\blm_{\bs{t}} -\tilde{\mathfrak{b}}\sum_{j=1}^{2n}t_j\bigg) \, 
\Big(\prod_{j=1}^{2n} \covmap_{\bs{t},j}'(\xi_{t_j}^{j}) \Big)^{\mathfrak{b}}
\,\LZrainbow{n}(\bs{\mslitdriv}_{\bs{t}}) ,
\end{align*}
where $\LE_{\emptyset}(\bs{\eta}_{\bs{t}}) = \{ \eta_{[0,t_j]}^{j} \cap \eta_{[0,t_i]}^{i}=\emptyset, \, \forall i\neq j\}$ is the event that different curves are disjoint, 
and $\blm_{\bs{t}}$ is the unique potential solving the exact differential equation~\eqref{eqn::mt_def} with $p=2n$ \textnormal{(}see Lemma~\ref{lem::blm_exact}\textnormal{)}.
\end{lemma}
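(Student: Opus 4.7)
The plan is to mirror the two-step structure used in Lemma~\ref{lem::rainbowSLE_mart} and Lemma~\ref{lem::multiradial_marginal}: first establish the local martingale property by reducing to Proposition~\ref{prop::multitime_mart_universal}, then identify the tilted measure as $\QQrainbow{n}$ via Girsanov and a uniqueness argument for global multichordal SLE.

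\textbf{Step 1 (martingale property).} I first verify that the function $\bs{\theta}\mapsto \LZrainbow{n}(\U;\ee^{\ii\bs\theta})$ satisfies the system of radial BPZ equations~\eqref{eqn::radialBPZ_ppf} (i.e., the system~\eqref{eqn::radialBPZ} with $\aleph = \tilde{\mathfrak{b}}$). Since $\LZrainbow{n}$ is defined in the disc by the conformal covariance~\eqref{eqn::PartF_def_polygon} from its definition in $\HH$ via the chordal BPZ equations~\eqref{eqn::chordalBPZ} and covariance~\eqref{eqn::chordalCOV}, this reduces to a direct coordinate-change computation: pulling back a M\"obius map $\varphi\colon\U\to\HH$ (which fixes no particular interior point, so $\varphi'(0)$ contributes the $\tilde{\mathfrak{b}}$ constant via the Schwarzian terms characteristic of radial-to-chordal passage, cf.~\cite{Schramm-Wilson:SLE_coordinate_changes}) converts~\eqref{eqn::chordalBPZ} into~\eqref{eqn::radialBPZ} with $\aleph = \tilde{\mathfrak{b}}$. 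Proposition~\ref{prop::multitime_mart_universal} applied with $p=2n$ and $\aleph = \tilde{\mathfrak{b}}$ then gives the local martingale property; note that the prefactor $(g'_{\bs{t}}(0))^{\tilde{\mathfrak{b}}-\aleph}$ in~\eqref{eqn::multitime_mart_universal} collapses to $1$, matching the form displayed in the statement.

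\textbf{Step 2 (identification of the tilted law).} Denote by $\mathsf Q$ the probability measure obtained from $\mathsf{P}_{2n}$ via $M_{\bs{t}}(\LZrainbow{n})$. For each $j$, restricting to the slice $\bs{t}=(0,\ldots,t_j,\ldots,0)$ and applying Girsanov's theorem along the lines of the proof of Lemma~\ref{lem::sleakapprhomu_mart}, the driving function $\xi^{j}$ of $\eta^j$ satisfies
\begin{align*}
\ud\xi_{t_j}^{j} = \sqrt{\kappa}\,\ud \tilde B_{t_j}^j + \kappa\,\partial_{j}\log\LZrainbow{n}\bigl(\covmap_{t_j}(\theta^1),\ldots,\xi_{t_j}^j,\ldots,\covmap_{t_j}(\theta^{2n})\bigr)\,\ud t_j ,
\end{align*}
which, after applying the radial-to-chordal coordinate change~\cite{Schramm-Wilson:SLE_coordinate_changes}, coincides with the driving SDE for chordal $\SLE_\kappa(2,\ldots,2)$ in $\U$ from $\ee^{\ii\theta^j}$ to $\ee^{\ii\theta^{2n+1-j}}$ with force points at the remaining $2n-2$ marked boundary points. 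Iterating this computation in the $2n$-time-parameter and using the domain Markov property of the multi-slit Loewner flow, one concludes that under $\mathsf Q$, the conditional law of any one curve given the rest is chordal $\SLE_\kappa$ in the remaining domain. The uniqueness of global multichordal $\SLE_{\kappa}$ for $\kappa\in(0,4]$, proved in~\cite{BPW:On_the_uniqueness_of_global_multiple_SLEs,Peltola-Wu:Global_and_local_multiple_SLEs_and_connection_probabilities_for_level_lines_of_GFF}, then identifies $\mathsf Q = \QQrainbow{n}$ up to the collision time, which is almost surely not reached by the same argument as in Lemma~\ref{lem::multiradial_collision} (since each marginal is chordal $\SLE_\kappa$ with $\kappa\le 4$, hence simple and avoiding other boundary points).

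\textbf{Main obstacle.} Step~1 is essentially routine once the BPZ equations are identified. The subtle point is Step~2, specifically reconciling the radial parameterization of $M_{\bs{t}}$ (capacity toward the auxiliary interior point $0$, which is \emph{not} a marked point of the multichordal configuration) with the chordal Loewner description of $\QQrainbow{n}$. This requires care in converting the log-derivative of the radial partition function into the drift of the chordal SDE, and invoking the global-to-local equivalence of multichordal SLE descriptions. The argument is insensitive to the choice of interior base point $0\in\U$, so this auxiliary datum drops out upon identification.
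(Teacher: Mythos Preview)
Your Step~1 is correct and matches the paper exactly: $\LZrainbow{n}$ in angular coordinates satisfies the radial BPZ equations~\eqref{eqn::radialBPZ_ppf} with $\aleph=\tilde{\mathfrak{b}}$, so Proposition~\ref{prop::multitime_mart_universal} gives the local martingale property, and the factor $(g_{\bs t}'(0))^{\tilde{\mathfrak b}-\aleph}$ disappears as you note.

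Step~2 contains a genuine error. After Girsanov, the drift of $\xi^j$ is $\kappa\,\partial_j\log\LZrainbow{n}$, but this does \emph{not} coincide with the drift of chordal $\SLE_\kappa(2,\ldots,2)$ from $\ee^{\ii\theta^j}$ to $\ee^{\ii\theta^{2n+1-j}}$. That identification would require $\LZrainbow{n}$ to factor into the explicit product form of a watermelon/multiradial partition function, which it does not: the rainbow pure partition function is only given implicitly (via the PDE--COV--ASY--PLB axioms or a Coulomb gas integral), and already for $n=2$ the marginal of one curve in $\QQrainbow{2}$ is a hypergeometric SLE~\cite{Wu:Convergence_of_the_critical_planar_ising_interfaces_to_hypergeometric_SLE}, not an $\SLE_\kappa(2,2)$. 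Consequently your subsequent passage to the resampling property and the invocation of uniqueness of global multichordal SLE does not go through as written.

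The fix is to skip the attempted simplification and instead recognize the Girsanov SDE $\ud\xi_{t_j}^j=\sqrt\kappa\,\ud\tilde B^j_{t_j}+\kappa\,\partial_j\log\LZrainbow{n}(\cdots)\,\ud t_j$ \emph{directly} as the local Loewner description of $\QQrainbow{n}$ established in~\cite{Peltola-Wu:Global_and_local_multiple_SLEs_and_connection_probabilities_for_level_lines_of_GFF} (local and global multichordal SLE agree for $\kappa\in(0,4]$). The paper does exactly this, deferring to~\cite[Lemmas~2.3~\&~2.4]{FWW:Multiple_Ising_interfaces_in_annulus_and_2N-sided_radial_SLE} for the radial-coordinate version of that identification rather than rederiving it.
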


Note that the fact that $M(\LZrainbow{n})$ is $2n$-time-parameter local martingale follows immediately from Proposition~\ref{prop::multitime_mart_universal} because $\LZrainbow{n}(\theta^1, \ldots, \theta^{2n})$ satisfies the system of radial BPZ equations~\eqref{eqn::radialBPZ_ppf}. 

\begin{proof}
Follows from~\cite[Lemmas~2.3~\&~2.4]{FWW:Multiple_Ising_interfaces_in_annulus_and_2N-sided_radial_SLE} and the definitions of the various conformal maps. 
\end{proof}

Fix $\vartheta\in (\theta^{n}, \theta^1+2\pi)$ and write $\ee^{\ii\vartheta} = y$. 
Later, we will send $\theta^j\to\vartheta$ for all $j\in\{n+1,n+2,\ldots, 2n\}$. 
We write the function $\LZfusion{n}$ defined in~(\ref{eqn::LZfusion_H},~\ref{eqn::LZfusion_cov}) in the angle coordinates 
as in~\eqref{LZfusion_U}:
\begin{align*}
\LZfusion{n}(\theta^1, \ldots, \theta^n, \vartheta)
\; = \;& \LZfusion{n}(\U; \bs x, y) 
\;= \;\LZ_{\defpatt_n}(\bs{\theta})\; \Big( \prod_{j=1}^n |\ee^{\ii\vartheta} - \ee^{\ii\theta^j}| \Big)^{1-\frac{2}{\kappa}(n+2)} ,
\\
\textnormal{where} \quad 
\LZ_{\defpatt_n}(\theta^1, \ldots, \theta^n)
\; = \;& \LZ_{\defpatt_n}(\U; \bs x) 
\;= \; \Big( \prod_{1\leq i<j\leq n} |\ee^{\ii\theta^j} - \ee^{\ii\theta^i}| \Big)^{2/\kappa} ,
\end{align*}
with $\bs x=(\ee^{\ii\theta^1},\ldots, \ee^{\ii\theta^{n}})$. 
Proposition~\ref{prop: block limit} yields
\begin{align}\label{eqn::fusion_unitdisc}
\lim_{\substack{\theta^j\to\vartheta\\ \forall n+1\le j\le 2n}}
\frac{\LZrainbow{n}(\theta^1, \ldots, \theta^{2n})}{\LZ_{\defpatt_n}(\theta^{n+1}, \ldots, \theta^{2n})} 
= A_n \, \LZfusion{n}(\theta^1, \ldots, \theta^n, \vartheta) ,
\end{align}
and Corollary~\ref{cor: control} implies that there exist $C_{\inf}, C_{\sup}\in (0,\infty)$ (independent of $r$) such that 
\begin{align}\label{eqn::control_unitdisc}
C_{\inf} \; \le \; \frac{\LZrainbow{n}(\theta^1, \ldots, \theta^{2n})}{\LZ_{\defpatt_n}(\theta^{n+1}, \ldots, \theta^{2n}) \, \LZfusion{n}(\theta^1, \ldots, \theta^n, \vartheta)}  \; \le \; C_{\sup}, 
\end{align}
for all $r>0$ and $|\ee^{\ii\theta^{n+j}}  - \ee^{\ii\vartheta}| \le r$ and $|\ee^{\ii\theta^j}  - \ee^{\ii\vartheta}|\ge 2r$, for $1 \leq j \leq n$. See also Figure~\ref{fig::control}. 

\bigskip

Now, we are ready to prove Theorem~\ref{thm::multichordal_halfwatermelon}.

\begin{proof}[Proof of Theorem~\ref{thm::multichordal_halfwatermelon}]
The convergence in~\eqref{eqn::LZrainbowtoLZfusion} follows from Proposition~\ref{prop: block limit} and conformal covariance (Eq.~\eqref{eqn::fusion_unitdisc}). 
It remains to prove the convergence of rainbow $\SLE_\kappa$ to half-watermelon $\SLE_\kappa$, for which purpose it is sufficient to consider the polygon
$(\Omega;\bs x, \bs y) = (\U; \ee^{\ii\bs{\theta}})$ and $y = \ee^{\ii\vartheta}$ as above.

Let $\bs{\bar{\eta}}=(\bar{\eta}^{1}, \ldots, \bar{\eta}^{n})\sim\QQrainbow{n}(\U; \ee^{\ii\bs{\theta}})$ be $n$-chordal $\SLE_{\kappa}$ 
with the curves oriented such that
each $\bar{\eta}^{j}$ traverses from $x^{j}=\ee^{\ii\theta^j}$ to $y^{j}=\ee^{\ii\theta^{2n+1-j}}$, for $1\le j\le n$. 
Viewing it as an $n$-tuple of curves $\bs{\eta}=(\eta^{1}, \ldots, \eta^{n})$ we parameterize it using $n$-time-parameter $\bs{t}=(t_1, \ldots, t_n)$, 
and we let $\bs{\mslitdriv}_{\bs{t}} = (\mslitdriv^{1}_{\bs{t}},\ldots,\mslitdriv^{n}_{\bs{t}})$ denote the associated multi-slit driving function~\eqref{eqn::Ntuple_driving}.
Note that there are no curves growing from the points $\bs y=(\ee^{\ii\theta^{n+1}},\ldots, \ee^{\ii\theta^{2n}})$: 
we have $(t_1, \ldots, t_n, t_{n+1}, \ldots, t_{2n})=(t_1, \ldots, t_n, 0, \ldots, 0)$.
For ease, let us write $\bs\vartheta := (\theta^{n+1}, \ldots, \theta^{2n})$ for these spectator points, which all converge to $\vartheta$; 
and let us write $\covmap_{\bs{t}}(\bs\vartheta) := (\covmap_{\bs{t}}(\theta^{n+1}), \ldots, \covmap_{\bs{t}}(\theta^{2n}))$ for their Loewner evolution. 
Now, Lemma~\ref{lem::globalnSLE_mart} shows that the law of $\bs{\eta}$ under $\QQrainbow{n}(\U; \ee^{\ii\bs{\theta}})$ is the same as $\mathsf{P}_{2n}$ tilted by the $2n$-time-parameter local martingale
\begin{align*}
M_{\bs{t}}(\LZrainbow{n})
= \; & \one_{\LE_{\emptyset}(\bs{\eta}_{\bs{t}})}  \,\exp\bigg(\frac{\mathfrak{c}}{2}\blm_{\bs{t}} -\tilde{\mathfrak{b}}\sum_{j=1}^{n}t_j\bigg) \, 
\Big(\prod_{j=1}^{n} \covmap_{\bs{t},j}'(\xi_{t_j}^{j}) \Big)^{\mathfrak{b}}
\, \Big(\prod_{j=n+1}^{2n} \covmap_{\bs{t}}'(\ee^{\ii\theta^{j}}) \Big)^{\mathfrak{b}}
\,\LZrainbow{n}(\bs{\mslitdriv}_{\bs{t}}, \covmap_{\bs{t}}(\bs\vartheta)) ,
\end{align*}
up to the collision time. 
Combining this tilting with Lemma~\ref{lem::rainbowSLE_mart}, we see that the Radon-Nikodym derivative of $\QQrainbow{n}(\U; \bs{x}, \bs{y})$ with respect to $\QQfusion{n}(\U; \bs{x}, y)$ is given by 
\begin{align*}
R_{\bs{t}} 
:= \frac{M_{\bs{t}}(\LZrainbow{n})}{M_{\bs{0}}(\LZrainbow{n})}
\frac{M_{\bs{0}}(\LZfusion{n})}{M_{\bs{t}}(\LZfusion{n})}
= & \; \underbrace{\Big(\prod_{j=n+1}^{2n} \covmap_{\bs{t}}'(\theta^{j}) \Big)^{\mathfrak{b}}
\, \frac{\LZrainbow{n}(\bs{\mslitdriv}_{\bs{t}}, \covmap_{\bs{t}}(\bs\vartheta))}{\LZrainbow{n}(\bs{\mslitdriv}_{\bs{0}}, \bs\vartheta)}}_{=: \; Z_{\bs{t}}} 
\; \frac{\LZfusion{n}(\bs{\mslitdriv}_{\bs{0}}, \vartheta) }{(\covmap_{\bs{t}}'(\vartheta))^{\mathfrak{h}_n} \, \LZfusion{n}(\bs{\mslitdriv}_{\bs{t}}, \covmap_{\bs{t}}(\vartheta)) } .
\end{align*}
The convergence from Eq.~\eqref{eqn::fusion_unitdisc} and the covariances of~(\ref{eqn::LZfusion_H},~\ref{eqn::LZfusion_cov},~\ref{eqn::chordalCOV}) together imply that 
\begin{align*}
\lim_{\substack{\theta^j\to\vartheta\\\forall n+1\le j\le 2n}}Z_{\bs{t}} 
= \; & ( \covmap_{\bs{t}}'(\vartheta) )^{n \mathfrak{b} + \frac{n(n-1)}{\kappa}} \, 
\frac{\LZfusion{n}(\bs{\mslitdriv}_{\bs{t}}, \covmap_{\bs{t}}(\vartheta))}{\LZfusion{n}(\bs{\mslitdriv}_{\bs{0}}, \vartheta)}
= \frac{ (\covmap_{\bs{t}}'(\vartheta) )^{\mathfrak{h}_n} \, \LZfusion{n}(\bs{\mslitdriv}_{\bs{t}}, \covmap_{\bs{t}}(\vartheta)) }{\LZfusion{n}(\bs{\mslitdriv}_{\bs{0}}, \vartheta) } ,
\end{align*}
so $R_{\bs{t}} \to 1$ as $\theta^j\to \vartheta$ for all $n+1\le j\le 2n$, for any $\bs{t}$ before the exit time from $U$. 
We will show that in fact, for any such multi-time, $R_{\bs{t}}$ stays uniformly bounded as $\theta^j\to \vartheta$ for all $n+1\le j\le 2n$.

First, recall that $\bs{\eta}_{\bs{t}}=\cup_{j=1}^n \eta^{j}_{[0,t_j]}$ and $g_{\bs{t}}$ is the conformal map from $\U\setminus\bs{\eta}_{\bs{t}}$ onto $\U$ 
normalized at the origin.
By assumption, $U$ has a positive distance from $y = \ee^{\ii\vartheta}$. 
Pick $\delta>0$ so that $B(y, 16\delta)\cap U=\emptyset$. 
Let us write $R_{\bs{t}}$ in the alternative form
\begin{align*}
R_{\bs{t}} 
:= \frac{M_{\bs{t}}(\LZrainbow{n})}{M_{\bs{0}}(\LZrainbow{n})}
\frac{M_{\bs{0}}(\LZfusion{n})}{M_{\bs{t}}(\LZfusion{n})}
= & \; \Big(\prod_{j=n+1}^{2n} \covmap_{\bs{t}}'(\ee^{\ii\theta^{j}}) \Big)^{\mathfrak{b}} \, ( \covmap_{\bs{t}}'(\vartheta) )^{-\mathfrak{h}_n} \, 
\underbrace{\frac{\LZrainbow{n}(\bs{\mslitdriv}_{\bs{t}}, \covmap_{\bs{t}}(\bs\vartheta))}{\LZfusion{n}(\bs{\mslitdriv}_{\bs{t}}, \covmap_{\bs{t}}(\vartheta)) }}_{=: \; Y_{\bs{t}}} 
\; \underbrace{\frac{\LZfusion{n}(\bs{\mslitdriv}_{\bs{0}}, \vartheta) }{\LZrainbow{n}(\bs{\mslitdriv}_{\bs{0}}, \bs\vartheta)}}_{=: \; 1/Y_{\bs{0}}}  .
\end{align*}
We use~\eqref{eqn::control_unitdisc} to control $Y_{\bs{t}}$. 
To this end, we first investigate the distance between $\{\mslitdriv^{1}_{\bs{t}},\ldots,\mslitdriv^{n}_{\bs{t}}\}$ and
$\{\covmap_{\bs{t}}(\theta^{n+1}), \ldots, \covmap_{\bs{t}}(\theta^{2n})\}$. 
From the standard estimates~(\ref{eqn::muliradial_halfwatermelon_RN_aux1},~\ref{eqn::muliradial_halfwatermelon_RN_aux2}), for all $1\le i,j\le n$, we have 
\begin{align*}
2r := 4\delta \, | g_{\bs{t}}'(y) | \le | g_{\bs{t}}(y) - \ee^{\ii\mslitdriv^{i}_{\bs{t}}} |
\qquad \textnormal{and} \qquad
\big| g_{\bs{t}}'(y) - g_{\bs{t}}'(y^j) \big| \le \frac{256}{225}\delta \, |g_{\bs{t}}'(y)| , 
\qquad \textnormal{for } y^j \in B(y,\delta) ,
\end{align*}
where $y^j = \exp(\ii\theta^{2n+1-j})$. 
Hence, $\smash{\underset{1\le i\le n}{\min} \, \big| g_{\bs{t}}(y) - \ee^{\ii\mslitdriv^{i}_{\bs{t}}} \big| \geq 2r}$
and $g_{\bs{t}}(y^j) \in B(g_{\bs{t}}(y),r)$, and~\eqref{eqn::control_unitdisc} yields 
\begin{align*}
Y_{\bs{0}}\ge C_{\inf} \, \LZ_{\defpatt_n}(\covmap_{\bs{0}}(\bs\vartheta)) 
\qquad \textnormal{and} \qquad
Y_{\bs{t}}\le C_{\sup} \, \LZ_{\defpatt_n}(\covmap_{\bs{t}}(\bs\vartheta)) ,
\qquad \textnormal{for} \quad 
\begin{cases}
B(y,16\delta)\cap U=\emptyset , \\ 
\bs{y} = (\ee^{\ii\theta^{n+1}}, \ldots, \ee^{\ii\theta^{2n}}) \in B(y,\delta) .
\end{cases}
\end{align*}
After collecting these estimates, we obtain
\begin{align*}
R_{\bs{t}} 
\le \; & \frac{C_{\sup}}{C_{\inf}} \; 
\Big(\prod_{j=n+1}^{2n} \covmap_{\bs{t}}'(\ee^{\ii\theta^{j}}) \Big)^{\mathfrak{b}} \, ( \covmap_{\bs{t}}'(\vartheta) )^{-\mathfrak{h}_n} \, 
\frac{\LZ_{\defpatt_n}(\covmap_{\bs{t}}(\bs\vartheta))}{\LZ_{\defpatt_n}(\covmap_{\bs{0}}(\bs\vartheta))} 
\\
= \; & \frac{C_{\sup}}{C_{\inf}} \; 
\Big(\prod_{j=n+1}^{2n} \covmap_{\bs{t}}'(\ee^{\ii\theta^{j}}) \Big)^{\mathfrak{b}} \, ( \covmap_{\bs{t}}'(\vartheta) )^{-\mathfrak{h}_n} \, 
\bigg(\prod_{n+1\leq i<j\leq 2n} \frac{|\covmap_{\bs{t}}(\theta^j) - \covmap_{\bs{t}}(\theta^i)|}{|\theta^j - \theta^i|} \bigg)^{2/\kappa} .
\end{align*}
Finally, we may control the derivatives as follows: from~\eqref{eqn::muliradial_halfwatermelon_RN_aux3}, 
we have
\begin{align*}
\covmap_{\bs{t}}'(\ee^{\ii\theta^{j}}) \lesssim \covmap_{\bs{t}}'(\vartheta)
\qquad \textnormal{and} \qquad
\frac{|\covmap_{\bs{t}}(\theta^j) - \covmap_{\bs{t}}(\theta^i)|}{|\theta^j - \theta^i|}
\lesssim \covmap_{\bs{t}}'(\vartheta) ,
\qquad \textnormal{for $n+1\leq i<j\leq 2n$,}
\end{align*}
which implies that $R_{\bs{t}} \lesssim \frac{C_{\sup}}{C_{\inf}} \lesssim 1$,
for $B(y,16\delta)\cap U=\emptyset$ and $\bs{y} = (\ee^{\ii\theta^{n+1}}, \ldots, \ee^{\ii\theta^{2n}}) \subset B(y,\delta)$.
This shows that the Radon-Nikodym derivative $R_{\bs{t}}$ stays uniformly bounded upon taking the limit as $\theta^j\to \vartheta$ for all $n+1\le j\le 2n$, for any $\bs{t}$ before the exit time from $U$, 
and finishes the proof. 
\end{proof}

To conclude this section, we prove the boundary perturbation property of half-$n$-watermelon $\SLE_{\kappa}$ (Proposition~\ref{prop::halfwatermelon_bp}).
The proof relies on Theorem~\ref{thm::multichordal_halfwatermelon} and Proposition~\ref{prop: block limit}, 
as well as on the well-known boundary perturbation property of multichordal $\SLE_{\kappa}$, that we first recall. 

\begin{lemma}[Boundary perturbation;~{\cite[Proposition~3.4]{Peltola-Wu:Global_and_local_multiple_SLEs_and_connection_probabilities_for_level_lines_of_GFF}}]
\label{lem::globalnSLE_bp} 
Fix $\kappa\in (0,4], \, n\ge 1$, and a $2n$-polygon $(\Omega; \bs{x})$. 
Suppose $K$ is a compact subset of $\overline{\Omega}$ such that $\Omega\setminus K$ is simply connected and $K$ has a positive distance from $\{x^1, \ldots, x^{2n}\}$. 
For $n$-chordal $\SLE_{\kappa}$ curves $\bs{\eta}=(\eta^{1}, \ldots, \eta^{n})$ in $(\Omega; \bs x)$, denote $\cup_{i=1}^n\eta^{i} = \bs{\eta}$.  
Then, the $n$-chordal $\SLE_{\kappa}$ probability measure $\QQrainbow{n}$ 
in the smaller polygon $(\Omega\setminus K; \bs{x})$ is absolutely continuous with respect to that in $(\Omega; \bs{x})$, 
with Radon-Nikodym derivative 
\begin{align}
\label{eqn::globalnSLESLEbp}
\frac{\LZrainbow{n}(\Omega; \bs{x})}{\LZrainbow{n}(\Omega\setminus K; \bs{x})} 
\, \one\{ \bs{\eta}\cap K=\emptyset \} \, \exp\Big(\frac{\mathfrak{c}}{2} \blm(\Omega; \bs{\eta},K)\Big). 
\end{align}
\end{lemma}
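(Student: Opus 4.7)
The plan is to characterize $n$-chordal $\SLE_{\kappa}$ via the chordal resampling property (Definition~\ref{def::globalnSLE}) and combine it with the classical single-curve boundary perturbation for chordal $\SLE_{\kappa}$~\cite{Lawler:Partition_functions_loop_measure_and_versions_of_SLE, LSW:Conformal_restriction_the_chordal_case}. Concretely, I would define a candidate probability measure $\widetilde{\QQ}$ on $\chamberrainbow{n}(\Omega\setminus K; \bs{x})$ by tilting $\QQrainbow{n}(\Omega; \bs{x})$ by the expression~\eqref{eqn::globalnSLESLEbp} (divided by a normalization constant $Z_K$). Since $n$-chordal $\SLE_{\kappa}$ in $(\Omega\setminus K; \bs{x})$ is uniquely characterized by the chordal resampling property~\cite{BPW:On_the_uniqueness_of_global_multiple_SLEs, Miller-Sheffield:Imaginary_geometry2, Peltola-Wu:Global_and_local_multiple_SLEs_and_connection_probabilities_for_level_lines_of_GFF}, the task reduces to verifying that $\widetilde{\QQ}$ satisfies this resampling property and that $Z_K=1$.

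\textbf{Resampling under $\widetilde{\QQ}$.} Fix $j \in \{1,\ldots,n\}$. By the resampling property of $\QQrainbow{n}(\Omega;\bs{x})$ under the untilted measure, the conditional law of $\eta^{j}$ given $\{\eta^{i}\colon i\neq j\}$ is chordal $\SLE_{\kappa}$ from $x^j$ to $x^{2n+1-j}$ in the connected component $\Omega_j$ of $\Omega\setminus\bigcup_{i\neq j}\eta^{i}$ whose boundary contains $x^j$ and $x^{2n+1-j}$. Hence under $\widetilde{\QQ}$ this conditional law is reweighted (up to a factor depending only on $\{\eta^{i}\colon i\neq j\}$) by
\begin{align*}
\one\{\eta^{j}\cap K=\emptyset\} \, \exp\Big(\tfrac{\mathfrak{c}}{2} \big[\blm(\Omega;\bs{\eta},K) - \blm(\Omega; \textstyle\bigcup_{i\neq j}\eta^{i}, K)\big] \Big) .
\end{align*}
The key combinatorial identity is
\begin{align*}
\blm(\Omega;\bs{\eta},K) - \blm(\Omega; \textstyle\bigcup_{i\neq j}\eta^{i}, K) \; = \; \blm(\Omega_j; \eta^{j}, K) ,
\end{align*}
which follows because a loop $\ell \subset \Omega$ contributes to the left-hand side iff it hits both $K$ and $\eta^{j}$ while avoiding $\bigcup_{i\neq j}\eta^{i}$; such loops are confined to $\Omega_j$. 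Applying the single-curve boundary perturbation property for chordal $\SLE_{\kappa}$ in $\Omega_j$ to the hull $K \cap \overline{\Omega}_j$ then identifies the conditional law of $\eta^{j}$ under $\widetilde{\QQ}$ with chordal $\SLE_{\kappa}$ from $x^j$ to $x^{2n+1-j}$ in $\Omega_j \setminus K$, which is exactly the appropriate component of $(\Omega\setminus K)\setminus\bigcup_{i\neq j}\eta^{i}$. This verifies the chordal resampling property for $\widetilde{\QQ}$.

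\textbf{Normalization.} To pin down $Z_K=1$, I would compare the marginal law of a single curve, say $\eta^{1}$, under both $\widetilde{\QQ}$ and $\QQrainbow{n}(\Omega\setminus K; \bs{x})$. The marginal law of $\eta^{1}$ under $\QQrainbow{n}(\Omega;\bs{x})$ is chordal $\SLE_{\kappa}(2,\ldots,2)$ whose partition function relates $\LZrainbow{n}$ to the boundary Poisson kernel (as in the asymptotic behavior of Proposition~\ref{prop: block limit}). Matching the single-curve normalizations --- which are classically known to equal $1$ in the boundary perturbation formula --- forces $Z_K$ to equal the ratio of partition functions already appearing in~\eqref{eqn::globalnSLESLEbp}, divided by itself; hence $Z_K = 1$.

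\textbf{Main obstacle.} The principal technical issue is applying the single-curve boundary perturbation within the rough domain $\Omega_j$, whose boundary contains the other curves $\{\eta^{i}\colon i\neq j\}$. Because $K$ has positive distance from $\{x^1,\ldots,x^{2n}\}$ and because the curves are disjoint and transient for $\kappa\in(0,4]$, this can be handled by an approximation argument: apply the standard smooth-domain version of the boundary perturbation to stopped versions of $\eta^{j}$ on an exhaustion of $\Omega_j$ by smoother subdomains, and pass to the limit using the finite mass of $\blm(\Omega_j;\eta^j,K)$ and bounded convergence. A secondary subtlety is ensuring that the Brownian loop measure identity above holds despite the signed-sum definition~\eqref{eqn::blm_def} --- but this follows from the special case $n=2$ of~\eqref{eqn::blm_def} applied to the two closed sets $\bs{\eta}$ and $K$.
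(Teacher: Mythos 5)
The paper does not prove this lemma; it cites it from \cite[Proposition~3.4]{Peltola-Wu:Global_and_local_multiple_SLEs_and_connection_probabilities_for_level_lines_of_GFF}, where the proof proceeds by discovering the curves one at a time via the Loewner/partition-function description (a martingale argument combined with the domain Markov property), which identifies the Radon--Nikodym derivative directly, including the normalization. Your resampling-plus-uniqueness strategy is a genuinely different route and your verification of the resampling property is correct --- the loop-measure decomposition $\blm(\Omega;\bs{\eta},K)-\blm(\Omega;\cup_{i\neq j}\eta^{i},K)=\blm(\Omega_j;\eta^{j},K\cap\overline{\Omega}_j)$ and the absorption of the $\eta^j$-independent Poisson-kernel ratio into the conditional normalization are both fine, and your remark on approximating the rough domain $\Omega_j$ is the right way to make that step rigorous.

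However, there is a genuine gap in the normalization step, and it is not peripheral: it is where the substantive content of the lemma lives. The resampling property together with uniqueness only tells you that $\widetilde{\QQ} = \QQrainbow{n}(\Omega\setminus K;\bs{x})$ as probability measures; it gives you the RN derivative only up to the multiplicative constant $Z_K$, so the entire claim about the \emph{specific} prefactor $\LZrainbow{n}(\Omega;\bs{x})/\LZrainbow{n}(\Omega\setminus K;\bs{x})$ reduces to showing $Z_K=1$, which is equivalent to the nontrivial identity
\begin{align*}
\E^{\QQrainbow{n}(\Omega;\bs{x})}\Big[\one\{\bs{\eta}\cap K=\emptyset\}\,\exp\big(\tfrac{\mathfrak{c}}{2}\blm(\Omega;\bs{\eta},K)\big)\Big]
= \frac{\LZrainbow{n}(\Omega\setminus K;\bs{x})}{\LZrainbow{n}(\Omega;\bs{x})} .
\end{align*}
Your proposed argument --- comparing the marginal of $\eta^1$ under $\widetilde{\QQ}$ and under $\QQrainbow{n}(\Omega\setminus K;\bs{x})$ --- is circular, because once uniqueness is invoked these two measures \emph{are} equal, so comparing their marginals gives no information about $Z_K$. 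What is actually needed is an independent derivation of the marginal boundary perturbation for the first curve: namely that the $\SLE_{\kappa}(2,\ldots,2)$ marginal transforms with the same partition-function ratio $\LZrainbow{n}(\Omega;\bs{x})/\LZrainbow{n}(\Omega\setminus K;\bs{x})$. That statement is not ``classical'' in the bare single-curve sense; it requires a Loewner martingale computation using the radial/chordal BPZ equation satisfied by $\LZrainbow{n}$ (in the spirit of the paper's own Lemma~\ref{lem::radialSLEkapparho_bp}), after which one closes the argument by induction on $n$, using the cascade relation that the conditional law given $\eta^1$ is $\QQrainbow{n-1}$ in the slit domain. Without either that martingale computation or the induction, the constant $Z_K$ is left undetermined.
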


\begin{proof}[Proof of Proposition~\ref{prop::halfwatermelon_bp}]
By Lemma~\ref{lem::globalnSLE_bp}, we know that the law of $\bs{\eta}$ under $\QQrainbow{n}(\Omega\setminus K; \bs{x}, \bs{y})$ is the same as $\QQrainbow{n}(\Omega; \bs{x}, \bs{y})$ 
weighted by the Radon-Nikodym derivative
\begin{align*}
\frac{\LZrainbow{n}(\Omega; \bs{x}, \bs{y})}{\LZrainbow{n}(\Omega\setminus K; \bs{x}, \bs{y})} \, \one\{\bs{\eta}\cap K=\emptyset\} \, \exp\Big(\frac{\mathfrak{c}}{2}\blm(\Omega; \bs{\eta}, K)\Big). 
\end{align*}
Suppose $U\subset\Omega\setminus K$ is simply connected, has the points $x^1, \ldots, x^n$ on its boundary, and has a positive distance from $y$. 
The asserted Radon-Nikodym derivative~\eqref{eqn::multichordal_bp} follows from gathering these facts:
\begin{itemize}
\item By Proposition~\ref{prop: block limit}, we have
\begin{align*}
\lim_{\substack{y^j\to y\\\forall 1\le j\le n}}\frac{\LZrainbow{n}(\Omega; \bs{x}, \bs{y})}{\LZrainbow{n}(\Omega\setminus K; \bs{x}, \bs{y})}=\frac{\LZfusion{n}(\Omega; \bs{x}, y)}{\LZfusion{n}(\Omega\setminus K; \bs{x}; y)}.
\end{align*}

\item By Theorem~\ref{thm::multichordal_halfwatermelon}, we know that
up to the exit time from $U$, the law of $\bs{\eta}$ under $\QQrainbow{n}(\Omega; \bs{x}, \bs{y})$ converges weakly to $\QQfusion{n}(\Omega; \bs{x}, y)$ as $y^j\to y$ for all $1\le j\le n$.
Similarly, the law of $\bs{\eta}$ under $\QQrainbow{n}(\Omega\setminus K; \bs{x}, \bs{y})$ converges weakly to $\QQfusion{n}(\Omega\setminus K; \bs{x}; y)$ as $y^j\to y$ for all $1\le j\le n$.  
\end{itemize}
Hence, up to the exit time from $U$, the law of $\bs{\eta}$ under $\QQfusion{n}(\Omega\setminus K; \bs{x}; y)$ is the same $\QQfusion{n}(\Omega; \bs{x}, y)$ 
weighted by the Radon-Nikodym derivative~\eqref{eqn::multichordal_bp}. 
Because the set $U$ can be chosen arbitrarily, and all curves in half-$n$-watermelon $\SLE_{\kappa}$ are almost surely transient~\cite{Miller-Sheffield:Imaginary_geometry1}, this yields~\eqref{eqn::multichordal_bp} for the whole process.  
\end{proof}

%%%%%%%%%%%%%%%%
\section{Multiradial SLE with spiral}
\label{sec::Multiradial_final}
We will prove the following two propositions in this section, which then result in Theorem~\ref{thm::multiradial_resampling}. 

\begin{proposition}\label{prop::multiradial_resampling}
Fix $\kappa\in (0,4], \, \mu\in\R, \, p\ge 2$ and a $p$-polygon $(\Omega; \bs{x})$ with $\bs{x}=( x^1, \ldots, x^p)$ and $z\in\Omega$.
The $p$-radial $\SLE_{\kappa}^\mu$ curves $\bs \gamma = (\gamma^{1}, \ldots, \gamma^{p})$ 
in $(\Omega; \bs x; z)$ satisfy the following properties. 
\begin{enumerate}
\item \label{item::multiradial_resampling1}
The marginal law of $\gamma^{1}$ is radial $\SLE_{\kappa}^{\mu}(2, \ldots, 2)$ in $(\Omega; \bs x; z)$ with force points $\hat{\bs x} = (x^2, \ldots, x^p)$. 

\item \label{item::multiradial_resampling2} 
Given $\gamma^{1}$, the conditional law of $(\gamma^{2}, \ldots, \gamma^{p})$ is half-$(p-1)$-watermelon $\SLE_{\kappa}$ in $(\Omega\setminus\gamma^{1}; \hat{\bs x}; z)$ as in Definition~\ref{def::halfwatermelonSLE}. 
\end{enumerate}
\end{proposition}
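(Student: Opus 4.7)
Part~\ref{item::multiradial_resampling1} is exactly Lemma~\ref{lem::multiradial_marginal}: by conformal covariance I reduce to $(\U;\ee^{\ii\bs{\theta}};0)$, and then Lemma~\ref{lem::multiradial_marginal} identifies the marginal law of $\gamma^{1}$ as radial $\SLE_{\kappa}^{\mu}(2,\ldots,2)$ with force points $\hat{\bs x}$. The rest of the plan concerns Part~\ref{item::multiradial_resampling2}.

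The overall idea is to compute the conditional law of $(\gamma^{2},\ldots,\gamma^{p})$ given $\gamma^{1}$ as an explicit tilt of the law of $p-1$ independent radial SLEs in the slit domain $\U\setminus\gamma^{1}$, and then match this tilt with the half-$(p-1)$-watermelon martingale $M_{\bs{t}}(\LZfusion{p-1})$ built in Lemma~\ref{lem::rainbowSLE_mart}. To carry this out, I parameterize $\bs{\gamma}$ by $p$-time-parameter and first evaluate at $\bs{t}=(t_1,0,\ldots,0)$ in order to extract the marginal of $\gamma^{1}_{[0,t_1]}$: Lemma~\ref{lem::multitime_mart_nradial_spiral_z_RN} writes $\PPspiral{p}(\U;\ee^{\ii\bs{\theta}};z)$ as $\mathsf{P}_p$ tilted by $M_{\bs{t}}(\nradpartfn{p}{\mu};z)$, and Part~\ref{item::multiradial_resampling1} combined with Lemma~\ref{lem::sleakapprhomu_mart} identifies the marginal density of $\gamma^{1}_{[0,t_1]}$ as $M_{t_1}(\nradpartfn{1}{\mu;(2,\ldots,2)};z)$. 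Dividing and transporting back to $\U$ via the mapping-out function $g^{1}_{t_1}$, the conditional law of $(\gamma^{2},\ldots,\gamma^{p})$ given $\gamma^{1}_{[0,t_1]}$ is that of $p-1$ independent radial SLEs, tilted by a multi-time-parameter local martingale whose partition function is the ratio
\begin{align*}
\nradpartfn{p}{\mu}(\bs{\mslitdriv}_{\bs{t}};g_{\bs{t}}(z)) \, \big/ \, \nradpartfn{1}{\mu;(2,\ldots,2)}(\mslitdriv^{1}_{\bs{t}},\ldots,\mslitdriv^{p}_{\bs{t}};g_{\bs{t}}(z)).
\end{align*}
The transience of $\gamma^{1}$ under its marginal law (Proposition~\ref{prop::radialSLE_transience}) then lets me take $t_1\to\infty$, so that $\gamma^{1}$ reaches $z$ and turns the interior target into a prime end on $\partial(\U\setminus\gamma^{1})$.

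The remaining task is to identify the limit of the above ratio, so that the tilt becomes the martingale $M_{\bs{t}}(\LZfusion{p-1})$ from Lemma~\ref{lem::rainbowSLE_mart} in the slit domain; uniqueness of the half-watermelon process (Definition~\ref{def::halfwatermelonSLE}) then yields that the conditional law coincides with $\QQfusion{p-1}(\U\setminus\gamma^{1};\hat{\bs x};z)$. This identification follows from Theorem~\ref{thm::multiradial_halfwatermelon} Item~\ref{item::multiradial_halfwatermelonPF} (Lemma~\ref{lem::LGtoLZfusion}) applied to the numerator, together with an analogous explicit asymptotic for the single-curve spiral partition function~\eqref{eqn::pf_radialSLEkapparhomu} applied to the denominator. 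The main technical obstacle is the cancellation of the spiraling contributions: the numerator carries the complex interior weight $(\Delta(\mathfrak{e}_{0,p/2},\mathfrak{m}_{\mu}),\Delta^{*}(\mathfrak{e}_{0,p/2},\mathfrak{m}_{\mu}))$, while the denominator carries the corresponding complex weight with $\bar{\rho}=2(p-1)$, and these must combine to leave only the spinless Kac weight $\mathfrak{h}_{p-1}$ of half-$(p-1)$-watermelon $\SLE_{\kappa}$, which has no magnetic charge. Verifying this cancellation requires tracking the $\arg g'_{\bs{t}}(z)$ contributions as in Lemma~\ref{lem::multitime_mart_nradial_spiral_z_RN}, and upgrading the resulting pointwise convergence of densities to convergence of probability measures will use the uniform almost sure bounds developed in the proof of Theorem~\ref{thm::multiradial_halfwatermelon} Item~\ref{item::multiradial_halfwatermelonMEAS}.
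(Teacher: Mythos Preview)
Your overall strategy --- condition on $\gamma^{1}_{[0,t_1]}$, use transience (Proposition~\ref{prop::radialSLE_transience}) to let $t_1\to\infty$, and identify the limiting conditional law with half-$(p-1)$-watermelon --- is the same as the paper's. The paper packages it more cleanly by invoking the domain Markov property directly: given $\gamma^{1}_{[0,t_1]}$, the conditional law of $(\gamma^{1}|_{[t_1,\infty)},\gamma^{2},\ldots,\gamma^{p})$ is again $p$-radial $\SLE_{\kappa}^{\mu}$ in the slit polygon $(\U\setminus\gamma^{1}_{[0,t_1]};\gamma^{1}_{t_1},\hat{\bs x};0)$. This reduces Item~\ref{item::multiradial_resampling2} to a separate convergence statement (Lemma~\ref{lem::multiradial_spiral_halfwatermelon}): the marginal of $(\gamma^{2},\ldots,\gamma^{p})$ under $\PPspiral{p}(\Omega;\bs{x};z)$ converges weakly to $\QQfusion{p-1}(\Omega;\hat{\bs x};x^{1})$ as $z\to x^{1}$. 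That lemma is proved by comparing this marginal to $\PPspiral{(p-1)}$ (not to independent SLEs), showing the Radon--Nikodym derivative tends to $1$, and then applying Theorem~\ref{thm::multiradial_halfwatermelon}(\ref{item::multiradial_halfwatermelonMEAS}) as a black box.

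There is a concrete error in your displayed ratio. By the remark following Lemma~\ref{lem::sleakapprhomu_mart}, $\nradpartfn{1}{\mu;(2,\ldots,2)}\equiv\nradpartfn{p}{\mu}$ as functions, so the ratio you wrote (both evaluated at $(\bs{\mslitdriv}_{\bs{t}};g_{\bs{t}}(z))$) is identically $1$. The conditional Radon--Nikodym derivative with respect to $\mathsf{P}_{p-1}$ is $M_{(t_1,\bs{s})}/M_{(t_1,\bs 0)}$, whose partition-function part has denominator $\nradpartfn{p}{\mu}$ evaluated after growing \emph{only} $\gamma^{1}$, i.e., at $(\xi^{1}_{t_1},\covmap^{1}_{t_1}(\theta^{2}),\ldots,\covmap^{1}_{t_1}(\theta^{p});g^{1}_{t_1}(z))$, not at $\bs{\mslitdriv}_{\bs{t}}$. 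Accordingly, your description of the spiral cancellation as an identity between two \emph{different} conformal weights is misplaced: there is only one partition function in play, and the spiral disappears in the limit $t_1\to\infty$ through the mechanism analyzed in Lemma~\ref{lem::multiradial_spiral_halfwatermelon} (the $\arg$-terms tend to fixed constants and the conformal-radius ratio tends to $1$), not through an algebraic cancellation between distinct partition functions. Once corrected, your direct route (comparing to independent SLEs and matching with $M_{\bs{t}}(\LZfusion{p-1})$) is in principle workable, but it fuses the two analytic steps that the paper keeps modular; passing through the intermediate measure $\PPspiral{(p-1)}$ localizes the spiral analysis and lets one reuse the uniform bounds from Theorem~\ref{thm::multiradial_halfwatermelon} verbatim.
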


Proposition~\ref{prop::multiradial_resampling} is the first part of Theorem~\ref{thm::multiradial_resampling}, and we will prove it in Section~\ref{subsec::resampling}. 
We then utilize it to prove the boundary perturbation property (Proposition~\ref{prop::multiradial_bp}) for multiradial $\SLE_{\kappa}^\mu$ in Section~\ref{subsec::bp}.

\begin{proposition}\label{prop::multiradial_transience} 
Fix $\kappa\in (0,4], \, \mu\in\R, \, p\ge 2$, and $\bs{\theta}=(\theta^1, \ldots, \theta^p)\in \LX_p$.
The $p$-radial $\SLE_{\kappa}^\mu$ curves $\bs \gamma = (\gamma^{1}, \ldots, \gamma^{p})$ 
in $(\U; \ee^{\ii\bs{\theta}}; 0)$ are transient in the following senses: 
\begin{enumerate}
\item \label{item::multiradial_marg} 
Parameterizing $\bs{\gamma}$ by $p$-time-parameter $\bs{t}=(t_1, \ldots, t_p) \in \R^p$, we have almost surely
\begin{align}\label{eqn::transience_multitime}
\gamma^{j}_{t_j}\to 0, \quad\textnormal{for all }1\le j\le p, \quad \textnormal{as }\min\{t_1, \ldots, t_p\}\to\infty.
\end{align}

\item \label{item::multiradial_transience} 
Parameterizing $\bs \gamma = (\gamma^{1}, \ldots, \gamma^{p})$ 
by common-time-parameter $t \in \R$ \textnormal{(}defined in Section~\ref{subsec::transience} and~\textnormal{\cite{Healey-Lawler:N_sided_radial_SLE})}, we have almost surely
\begin{align}\label{eqn::transience_commontime}
\gamma^{j}_t\to 0, \quad\textnormal{for all }1\le j\le p, \quad \textnormal{as }t\to\infty. 
\end{align}
\end{enumerate}
\end{proposition}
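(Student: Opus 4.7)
The plan is to deduce both items of Proposition~\ref{prop::multiradial_transience} from the marginal law analysis (Item~\ref{item::multiradial_resampling1} of Proposition~\ref{prop::multiradial_resampling}) combined with the single-curve transience established in Appendix~\ref{app:transience}.

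For Item~\ref{item::multiradial_marg}, I would apply Item~\ref{item::multiradial_resampling1} of Proposition~\ref{prop::multiradial_resampling} to each curve $\gamma^j$, after the trivial relabeling of starting points that puts $x^j$ in the distinguished first slot. The marginal law of $\gamma^j$ is thus radial $\SLE_{\kappa}^{\mu}(2,\ldots,2)$ in $(\U;\ee^{\ii\bs\theta};0)$ with $p-1$ nonnegative force points of strength $\rho=2$. Crucially, because $\kappa\in(0,4]$, Lemma~\ref{lem::multiradial_collision} guarantees that the multi-time process almost surely never reaches the collision time, so this marginal identification holds for the \emph{entire} curve $\gamma^j$ (as $t_j$ ranges over $[0,\infty)$) and not merely up to some finite stopping time. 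Proposition~\ref{prop::radialSLE_transience} from Appendix~\ref{app:transience} then yields $\gamma^j_{t_j}\to 0$ almost surely as $t_j\to\infty$, for each fixed $j$. The intersection of $p$ probability-one events has probability one, so almost surely every curve is transient. Since $\min\{t_1,\ldots,t_p\}\to\infty$ forces each coordinate to diverge, we conclude~\eqref{eqn::transience_multitime}.

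For Item~\ref{item::multiradial_transience}, the common-time parameterization of Section~\ref{subsec::transience} (following~\cite{Healey-Lawler:N_sided_radial_SLE}) grows the $p$ curves simultaneously so that their individual radial capacities are all equal to the common parameter $t$; that is, it is the restriction of the $p$-time-parameter to the diagonal $\bs t(t)=(t,\ldots,t)$. (That this defines a well-defined evolution for all $t\in[0,\infty)$ again uses Lemma~\ref{lem::multiradial_collision} to rule out collisions in finite time.) Item~\ref{item::multiradial_transience} is then the specialization of Item~\ref{item::multiradial_marg} to this diagonal, and~\eqref{eqn::transience_commontime} follows.

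The hard part of this program is not the proposition itself but the appendix input: Proposition~\ref{prop::radialSLE_transience} extending Lawler's transience result from $\SLE_\kappa(2)$ to general radial $\SLE_\kappa^\mu(\bs\rho)$ with multiple nonnegative force points and a spiral drift. The naive approach via absolute continuity with respect to $\SLE_\kappa$ (Lemma~\ref{lem::sleakapprhomu_mart}) only describes the law up to the first time a force point is disconnected from the origin, which can occur in finite time, so it cannot directly deliver the $t\to\infty$ limit. Bridging this gap is the role of the Bessel-process estimate (Lemma~\ref{lem::Bessel_transience}) and the monotone coupling flagged in the introduction. Once Proposition~\ref{prop::radialSLE_transience} is in hand, Proposition~\ref{prop::multiradial_transience} follows by the elementary packaging above.
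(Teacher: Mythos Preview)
Your argument for Item~\ref{item::multiradial_marg} is correct and in fact cleaner than the paper's. The paper argues transience of $\gamma^1$ via Proposition~\ref{prop::radialSLE_transience}, but then handles $\gamma^2,\ldots,\gamma^p$ differently: it invokes Item~\ref{item::multiradial_resampling2} of Proposition~\ref{prop::multiradial_resampling} to identify the conditional law as half-$(p-1)$-watermelon $\SLE_\kappa$ in $\U\setminus\gamma^1$, cites transience of chordal $\SLE_\kappa(2,\ldots,2)$ from~\cite{Miller-Sheffield:Imaginary_geometry1}, and then pushes this through the conformal map $\psi\colon\U\to\U\setminus\gamma^1$ via a local-connectedness argument ensuring $\psi$ extends continuously to $\overline{\U}$. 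Your route --- apply the marginal identification (Lemma~\ref{lem::multiradial_marginal} / Proposition~\ref{prop::multiradial_resampling}, Item~\ref{item::multiradial_resampling1}) to \emph{each} $\gamma^j$ by symmetry and invoke Proposition~\ref{prop::radialSLE_transience} $p$ times --- avoids the conditional-law machinery, the external transience input for half-watermelon, and the Carath\'eodory extension step. The paper's route, on the other hand, exercises the full resampling picture and makes the role of half-watermelon explicit.

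Your Item~\ref{item::multiradial_transience} has a gap: the common-time parameterization is \emph{not} the diagonal $\bs t(t)=(t,\ldots,t)$ in the $p$-time parameterization. In $p$-time each $t_j$ is the individual capacity of $\gamma^j$ alone, whereas common-time requires $a^j_{\bs t}=(\covmap'_{\bs t,j}(\xi^j_{t_j}))^2=1$ along the evolution, which fails on the $p$-time diagonal since $\covmap'_{\bs t,j}(\xi^j_{t_j})\neq 1$ once the other curves are nontrivial. The correct link is the time change~\eqref{eqn::multi_common}, and what you need is the inequality~\eqref{eqn::timechange_ineq}, namely $t\le t_j(t)\le pt$: as common-time $t\to\infty$, each multi-time coordinate $t_j(t)\to\infty$, and then your Item~\ref{item::multiradial_marg} gives $\gamma^j_{t_j(t)}\to 0$. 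This is exactly how the paper closes the argument.
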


We prove Proposition~\ref{prop::multiradial_transience} in Section~\ref{subsec::transience} using Propositions~\ref{prop::multiradial_resampling} and~\ref{prop::radialSLE_transience} (transience) from Appendix~\ref{app:transience}.

\begin{proof}[Proof of Theorem~\ref{thm::multiradial_resampling}]
With Propositions~\ref{prop::multiradial_resampling} and~\ref{prop::multiradial_transience} at hand, it only remains to verify the radial resampling property. 
To this end, note that because half-watermelon $\SLE_{\kappa}$ satisfies the chordal resampling property, 
the radial one immediately follows from Proposition~\ref{prop::multiradial_resampling} and symmetry 
(we may consider either $j \neq 1$, or $j \neq p$; and Item~\ref{item::multiradial_resampling2} in Proposition~\ref{prop::multiradial_resampling} also holds for $\gamma^{p}$ instead of $\gamma^{1}$ with identical proof).
\end{proof}

\subsection{Marginal and conditional laws --- proof of Proposition~\ref{prop::multiradial_resampling}}
\label{subsec::resampling}

Recall that $\nradpartfn{p}{\mu}$ is the partition function defined in~\eqref{eq:multiradial_partition_function}--\eqref{eqn::Gmu_cov}, 
and $\PPspiral{p}$ denotes the law of $p$-radial $\SLE_{\kappa}^{\mu}$ with spiraling rate $\mu$. 
To prove Proposition~\ref{prop::multiradial_resampling}, we first derive a variant of Item~\ref{item::multiradial_halfwatermelonMEAS} of Theorem~\ref{thm::multiradial_halfwatermelon},
stated in Lemma~\ref{lem::multiradial_spiral_halfwatermelon} below,  whose proof idea is similar to that in the proof of Theorem~\ref{thm::multiradial_halfwatermelon}.

\begin{lemma} \label{lem::multiradial_spiral_halfwatermelon}
Fix $\kappa\in (0,4], \, p\ge 2$, a $p$-polygon $(\Omega; \bs{x})$ with $\bs{x}=( x^1, \ldots, x^p)$, and denote $\hat{\bs{x}} := (x^2, \ldots, x^p)$. 
Suppose $U\subset \Omega$ is simply connected, has the points $\hat{\bs{x}}$
on its boundary, and has a positive distance from~$x^1$. 
Then, as $z\to x^1$, the marginal law of $\hat{\bs{\gamma}}=(\gamma^2, \ldots, \gamma^p)$ under the law $\PPspiral{p}$ of $p$-radial $\SLE_{\kappa}^{\mu}$ with spiraling rate $\mu$ in $(\Omega; \bs{x}; z)$  
converges weakly to the law $\QQfusion{p-1}$ of half-$(p-1)$-watermelon $\SLE_\kappa$ in $(\Omega; \hat{\bs{x}}; x^1)$, up to the exit time from $U$.
\end{lemma}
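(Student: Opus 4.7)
Following Step~1 of the proof of Theorem~\ref{thm::multiradial_halfwatermelon}, Item~\ref{item::multiradial_halfwatermelonMEAS}, I parameterize $\hat{\bs{\gamma}}$ by $(p-1)$-time parameter $\hat{\bs{t}}=(t_2,\ldots,t_p)$, i.e., set $t_1=0$ in the $p$-time parameter $\bs{t}=(t_1,\ldots,t_p)$ of $\bs{\gamma}$. Lemma~\ref{lem::multitime_mart_nradial_spiral_z_RN} expresses the marginal law of $\hat{\bs{\gamma}}_{\hat{\bs{t}}}$ under $\PPspiral{p}(\Omega;\bs{x};z)$ as the tilting by $M_{(0,\hat{\bs{t}})}(\nradpartfn{p}{\mu};z)$ of the product of $p-1$ independent radial $\SLE_\kappa$'s, while Lemma~\ref{lem::rainbowSLE_mart} (with $n=p-1$ and target angle $\theta^1$) represents $\QQfusion{p-1}(\Omega;\hat{\bs{x}};x^1)$ as the tilting of the same reference by $M_{\hat{\bs{t}}}(\LZfusion{p-1})$. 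Since $g^1_0=\mathrm{id}$, we have $\mslitdriv^1_{(0,\hat{\bs{t}})}=\covmap_{\hat{\bs{t}}}(\theta^1)$ and $g_{(0,\hat{\bs{t}})}=g_{\hat{\bs{t}}}$, and many common factors (the loop-measure exponential, the capacity exponents, and the covering-map derivatives for $j\ge 2$) cancel in the resulting Radon--Nikodym derivative $R_{\hat{\bs{t}}}(z)$. Thus $R_{\hat{\bs{t}}}(z)/R_{\bs{0}}(z)$ reduces to the normalized partition-function ratio
\[
\frac{\nradpartfn{p}{\mu}(\U;\ee^{\ii\bs{\mslitdriv}_{(0,\hat{\bs{t}})}};g_{\hat{\bs{t}}}(z))\big/\LZfusion{p-1}(\hat{\bs{\mslitdriv}}_{\hat{\bs{t}}},\covmap_{\hat{\bs{t}}}(\theta^1))}{\nradpartfn{p}{\mu}(\U;\ee^{\ii\bs{\theta}};z)\big/\LZfusion{p-1}(\hat{\bs{\theta}},\theta^1)},
\]
with $\hat{\bs{\theta}}:=(\theta^2,\ldots,\theta^p)$ and $\hat{\bs{\mslitdriv}}_{\hat{\bs{t}}}:=(\mslitdriv^2_{\hat{\bs{t}}},\ldots,\mslitdriv^p_{\hat{\bs{t}}})$, multiplied by an explicit product of derivative factors of $g_{\hat{\bs{t}}}$ at $z$ and $x^1$.

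\paragraph*{Bulk--boundary fusion of the partition function.}
The analytic heart of the proof is a fusion identity analogous to Lemma~\ref{lem::LGtoLZfusion}: combining the explicit formula~\eqref{eqn::nradpartfn_mu_z} for $\nradpartfn{p}{\mu}$, the formula~\eqref{LZfusion_U} for $\LZfusion{p-1}$, and the definition~\eqref{eqn:LGmu_pequalsone} of the single-curve spiral partition function $\nradpartfn{1}{\mu}$, a direct calculation (matching the Coulomb gas weights in~\eqref{eqn::universal_parameters},~\eqref{eqn::spiral_conformal_weight},~\eqref{eqn::Kac_conformal_weight}) yields an identity of the form
\[
\frac{\nradpartfn{p}{\mu}(\Omega;\bs{x};z)}{\nradpartfn{1}{\mu}(\Omega;x^1;z)\,\LZfusion{p-1}(\Omega;\hat{\bs{x}};x^1)}
\;=\; \big(\CR(\Omega;z)\big)^{\alpha}\,\prod_{j=2}^{p}\bigg(\frac{H(\Omega;x^j;z)}{\CR(\Omega;z)\,H(\Omega;x^j,x^1)}\bigg)^{\beta} ,
\]
for explicit exponents $\alpha,\beta$. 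By Lemmas~\ref{lem::CR_cvg} and~\ref{lem::PoissonKerCR_convergence}, each geometric factor on the right is asymptotically invariant under removing $\hat{\bs{\gamma}}_{\hat{\bs{t}}}\subset U$ from $\Omega$ as $z\to x^1$, since $x^1$ has positive distance from $U$. Substituting this identity both at $\hat{\bs{t}}$ and $\bs{0}$ into the displayed ratio in the previous paragraph, and using the conformal covariance~\eqref{eqn:LGmu_pequalsone} of $\nradpartfn{1}{\mu}$ to absorb the residual derivative factors into $\nradpartfn{1}{\mu}(\Omega\setminus\hat{\bs{\gamma}}_{\hat{\bs{t}}};x^1;z)/\nradpartfn{1}{\mu}(\Omega;x^1;z)$ (which also converges to $1$ as $z\to x^1$ by the same two lemmas), the a.s.~convergence $R_{\hat{\bs{t}}}(z)/R_{\bs{0}}(z)\to 1$ as $z\to x^1$ follows for any $\hat{\bs{t}}$ before the exit time from $U$.

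\paragraph*{Uniform control and main obstacle.}
Weak convergence then follows by combining the a.s.~limit with uniform boundedness of $R_{\hat{\bs{t}}}(z)$ as $z\to x^1$, for each $\hat{\bs{t}}$ before the exit time from $U$. This is obtained, exactly as in Step~2 of the proof of Theorem~\ref{thm::multiradial_halfwatermelon}, by standard distortion estimates for the univalent map $g_{\hat{\bs{t}}}$ on a small disc around $x^1$ disjoint from $U$: Koebe's one-quarter, growth and distortion theorems for the real factors, and the rotation theorems~\cite[Theorems~3.5 and~3.7]{Duren:Univalent_functions} for the spiral phase $\LA^{\mu}_{p\mathrm{\textnormal{-}rad}}$ appearing in~\eqref{eqn::nradpartfn_mu_z}, paralleling the estimates~(\ref{eqn::muliradial_halfwatermelon_RN_aux1})--(\ref{eqn::muliradial_halfwatermelon_RN_control3}). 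The main obstacle is the derivation of the bulk--boundary fusion identity in the previous paragraph: the bulk conformal weights of $\nradpartfn{p}{\mu}$ and $\nradpartfn{1}{\mu}$ at $z$ differ by both a real part $\tfrac{p^2-1}{2\kappa}$ (absorbed by the explicit $\CR(\Omega;z)$ power) and an imaginary (spin) part $\ii\mu(p-1)/\kappa$, so one must verify carefully that after normalizing by $\nradpartfn{1}{\mu}(\Omega;x^1;z)$ the remaining quantity is real and conformally invariant, thereby producing the geometric factorization above; the spin contribution, which is intrinsically non-canonical, is tracked via the conformal covariance~\eqref{eqn::Gmu_cov} and is compatible with the rotation estimates used in the uniform bound.
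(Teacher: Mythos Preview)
Your overall strategy---writing both the marginal of $\hat{\bs{\gamma}}$ under $\PPspiral{p}$ and the law $\QQfusion{p-1}$ as tiltings of the same product measure $\mathsf{P}_{p-1}$, and then analyzing the resulting Radon--Nikodym derivative with the geometric lemmas and distortion/rotation estimates---is sound and closely parallels the proof of Theorem~\ref{thm::multiradial_halfwatermelon}. The paper, however, takes a more modular route: it compares the marginal of $\hat{\bs{\gamma}}$ under $\PPspiral{p}(\U;\bs{x};z)$ with $\PPspiral{(p-1)}(\U;\hat{\bs{x}};z)$ (both targeting the \emph{same} bulk point $z$), obtains a simple tilting $M_{\bs{s}}$, and then invokes the already-established convergence $\PPspiral{(p-1)}\to\QQfusion{p-1}$ from Theorem~\ref{thm::multiradial_halfwatermelon}. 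This factorization is cleaner because in $\nradpartfn{p}{\mu}/\nradpartfn{(p-1)}{\mu}$ the spiral phases at $\theta^2,\ldots,\theta^p$ cancel, leaving only a single argument term attached to $\theta^1$ and $\arg\hat{g}_{\bs{s}}'(z)$ to be controlled.

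Your claimed bulk--boundary fusion identity, as written, is not correct: normalizing by $\nradpartfn{1}{\mu}(\Omega;x^1;z)$ does \emph{not} make the quotient real. Already for $(\Omega;z)=(\U;0)$ one computes
\[
\frac{\nradpartfn{p}{\mu}(\U;\bs{x};0)}{\nradpartfn{1}{\mu}(\U;x^1;0)\,\LZfusion{p-1}(\U;\hat{\bs{x}};x^1)}
=\exp\Big(\frac{\mu}{\kappa}\sum_{j=2}^{p}\theta^j\Big)\,\prod_{j=2}^{p}\big|\ee^{\ii\theta^j}-\ee^{\ii\theta^1}\big|^{\frac{2}{\kappa}(p+2)-1},
\]
so a residual spiral phase in $\sum_{j\ge 2}\theta^j$ persists (and for general $z$ there are further $\arg\varphi'(z)$ terms). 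This is not fatal to your approach: you can still split the derivative into a real part handled by Lemmas~\ref{lem::CR_cvg}--\ref{lem::Poissonkernel_cvg} and a phase handled by the rotation-theorem bounds~(\ref{eqn::muliradial_halfwatermelon_RN_aux4}--\ref{eqn::muliradial_halfwatermelon_RN_control3}), applied at each of $\mslitdriv^1_{\hat{\bs{t}}},\ldots,\mslitdriv^p_{\hat{\bs{t}}}$. But this means you are effectively redoing the spiral bookkeeping of Theorem~\ref{thm::multiradial_halfwatermelon} rather than eliminating it; the paper's choice to normalize by $\nradpartfn{(p-1)}{\mu}$ instead absorbs that bookkeeping into the already-proven theorem.
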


\begin{proof}
By conformal invariance of both measures $\PPspiral{p}$ and $\QQfusion{p-1}$, it suffices to consider the polygon $(\Omega; \bs{x})=(\U; \ee^{\ii\bs{\theta}})$ with $\bs{\theta} = (\theta^1, \ldots, \theta^p)\in\LX_{p}$. 
Denote $\bs{s}=(t_2, \ldots, t_p)\in \R_{>0}^{p-1}$; 
\begin{align*}
\hat{g}_{\bs{s}}:=g_{(0,t_2, \ldots, t_p)}
\qquad \textnormal{and} \qquad  
\hat{\mslitdriv}_{\bs{s}}^j:=\mslitdriv_{(0,t_2, \ldots, t_p)}^j, \quad \textnormal{for }2\le j\le p, 
\end{align*}
and let $\hat{\covmap}_{\bs{s}}$ be the covering map of $\hat{g}_{\bs{s}}$. 
By Lemma~\ref{lem::multitime_mart_nradial_spiral_z_RN}, 
the marginal law of $\hat{\bs{\gamma}}$ under $\PPspiral{p}(\U;\bs{x};z)$ 
is the same as $\PPspiral{(p-1)}(\U;\hat{\bs{x}};z)$ tilted by 
\begin{align*}
M_{\bs{s}} := \; &  |\hat{g}_{\bs{s}}'(z)|^{\frac{2p-1}{2\kappa}} \, (\hat{\covmap}_{\bs{s}}'(\theta^1))^\mathfrak{b} \, \exp\Big(\! -\frac{\mu}{\kappa} \arg \hat{g}_{\bs{s}}'(z) \Big) \frac{\nradpartfn{p}{\mu}\big(\U; \exp(\ii \hat{\covmap}_{\bs{s}}(\theta^1)),\exp(\ii\hat{\mslitdriv}_{\bs{s}}^2), \ldots, \exp(\ii\hat{\mslitdriv}_{\bs{s}}^p); \hat{g}_{\bs{s}}(z)\big)}{\nradpartfn{(p-1)}{\mu}\big(\U; \exp(\ii\hat{\mslitdriv}_{\bs{s}}^2), \ldots, \exp(\ii\hat{\mslitdriv}_{\bs{s}}^p); \hat{g}_{\bs{s}}(z)\big)}  
\end{align*}
Using equations~(\ref{eqn:spiralpartitionfunction},~\ref{eqn::nradpartfn_mu_z}) and the identities and~(\ref{eqn::CR_H_and_U},~\ref{eqn::Poissonkernel_disc},~\ref{eqn::Poissonkernel_disc_int}) $\mathfrak{h}_p=\frac{p(p+2)}{\kappa}-\frac{p}{2}$, we obtain
\begin{align*}
M_{\bs{s}} = \; & 
\big( H(\U;\hat{g}_{\bs{s}}(z),\exp(\ii\hat{\covmap}_{\bs{s}}(\theta^1)))\big)^{\frac{p-1}{\kappa}+\mathfrak{b}} 
\bigg(\prod_{j=2}^{p} \frac{H(\U; \hat{g}_{\bs{s}}(z), \exp(\ii\hat{\mslitdriv}_{\bs{s}}^j))}{\CR(\U; \hat{g}_{\bs{s}}(z))H(\U; \exp(\ii \hat{\covmap}_{\bs{s}}(\theta^1)), \exp(\ii\hat{\mslitdriv}_{\bs{s}}^j))} \frac{|\hat{g}'_{\bs{s}}(z)|}{\hat{\covmap}_{\bs{s}}'(\theta^1)}\bigg)^{\frac{1}{\kappa}}
\\
\; &  \times  \Big( \frac{\CR(\U;\hat{g}_{\bs{s}}(z))}{|\hat{g}_{\bs{s}}'(z)|} \Big)^{-\frac{1}{2\kappa}} \, (\hat{\covmap}_{\bs{s}}'(\theta^1))^{\frac{p-1}{\kappa}+\mathfrak{b}}   \, \exp \Big( \frac{\mu}{\kappa} \Big( 2 \arg(1-\hat{g}_{\bs{s}}(z)\exp(-\ii \hat{\covmap}_{\bs{s}}(\theta^1))) + \hat{\covmap}_{\bs{s}}(\theta^1) - \arg \hat{g}_{\bs{s}}'(z) \Big) \Big),
\end{align*}
Using then the identities 
\begin{align*}
\CR(\U; \hat{g}_{\bs{s}}(z)) = \; & |\hat{g}_{\bs{s}}'(z)| \, \CR(\U \setminus \hat{\bs{\gamma}}_{\bs{s}}; z) , \\
H(\U\setminus \hat{\bs{\gamma}}_{\bs{s}};z,x^1)= \; & \hat{\covmap}_{\bs{s}}'(\theta^1) \, H(\U;\hat{g}_{\bs{s}}(z),\exp(\ii\hat{\covmap}_{\bs{s}}(\theta^1))) ,
\end{align*}
we then find the Radon-Nikodym derivative of the marginal law of $\hat{\bs{\gamma}}$ as
\begin{align} \label{eqn::multiradial_spiral_halfwatermelon_aux1}
\begin{split}
\frac{\ud \PPspiral{p}(\U; \bs{x}; z)}{\ud \PPnospiral{(p-1)}(\U;\hat{\bs{x}};z)} (\hat{\bs{\gamma}})
= \frac{M_{\bs{s}}}{M_{\bs{0}}} 
= \; & \bigg(\prod_{j=2}^p \frac{Y_{\bs{s}}^{j}(z, x^1)}{Y_{\bs{0}}^{j}(z, x^1)} \bigg)^{\frac{1}{\kappa}}
\; \Big( \frac{\CR(\U \setminus \hat{\bs{\gamma}}_{\bs{s}}; z)}{\CR(\U; z)} \Big)^{-\frac{1}{2\kappa}} \; \Big( \frac{H(\U\setminus \hat{\bs{\gamma}}_{\bs{s}};z,x^1)}{H(\U;z,x^1)} \Big)^{\frac{p-1}{\kappa}+\mathfrak{b}}   \\
\; & \times \bigg(\prod_{j=2}^p \; \frac{\CR(\U; z)H(\U; x^1,x^j)}{H(\U; z, x^j)}\bigg)^{\frac{1}{\kappa}}
 \, \exp \left( \frac{\mu}{\kappa} (I_{\bs{s}}(z, x^1)-I_{\bs{0}}(z, x^1)) \right) ,
\\[.5em]
\textnormal{where}
\quad 
Y_{\bs{s}}^{j}(z, x^1) := \; & \frac{|\hat{g}_{\bs{s}}'(z)|}{\hat{\covmap}_{\bs{s}}'(\theta^1)}\frac{H(\U; \hat{g}_{\bs{s}}(z), \exp(\ii\hat{\mslitdriv}_{\bs{s}}^j))}{\CR(\U; \hat{g}_{\bs{s}}(z)) \, H(\U; \exp(\ii\hat{\covmap}_{\bs{s}}(\theta^1)), \exp(\ii\hat{\mslitdriv}_{\bs{s}}^j))}, \\
I_{\bs{s}}(z, x^1):= \; & 2 \arg(1-\hat{g}_{\bs{s}}(z)\exp(-\ii \hat{\covmap}_{\bs{s}}(\theta^1))) + \hat{\covmap}_{\bs{s}}(\theta^1) - \arg \hat{g}_{\bs{s}}'(z).
\end{split}
\end{align}
We already know that the law $\smash{\PPspiral{(p-1)}(\Omega; \hat{\bs{x}}; z)}$ 
converges weakly to the law $\QQfusion{p-1}(\Omega; \hat{\bs{x}}; x^1)$,  
up to the exit time from $U$ (thanks to Item~\ref{item::multiradial_halfwatermelonMEAS} of Theorem~\ref{thm::multiradial_halfwatermelon}). 
It thus suffices to show that 
$M_{\bs{s}}/M_{\bs{0}} \to 1$ as $z\to x^1$, 
and 
that $M_{\bs{s}}/M_{\bs{0}}$ stays uniformly bounded for any $\bs{s}$ before the exit time from $U$.

\medskip

{\bf Step~1.}
We first prove that a.s., $M_{\bs{s}}/M_{\bs{0}}\to 1$  as $z\to x^1$. 
On the one hand, we have
\begin{align} \label{eqn::multiradial_spiral_halfwatermelon_lim1}
\begin{split}
\lim_{z\to x^1} I_{\bs{s}}(z, x^1) = \; &  2 \arg \big(1-\hat{g}_{\bs{s}}(\ee^{\ii\theta^1})\exp(-\ii \hat{\covmap}_{\bs{s}}(\theta^1))\big) + \hat{\covmap}_{\bs{s}}(\theta^1) - \arg \hat{g}_{\bs{s}}'(\ee^{\ii\theta^1})=0,
\\ 
\lim_{z\to x^1} I_{\bs{0}}(z, x^1) = \; & 0,
\end{split}
\end{align}
using the identities similar to~\eqref{eq:arg_g}. 
On the other hand,~(\ref{eqn::CR_cvg},~\ref{eqn::Poissonkernel_cvg},~\ref{eqn::PoissonKer_convergence}) together imply
\begin{align} \label{eqn::multiradial_spiral_halfwatermelon_lim2}
\begin{cases}
\displaystyle 
\lim_{z\to x^1} \frac{H(\U\setminus \hat{\bs{\gamma}}_{\bs{s}};z,x^1)}{H(\U;z,x^1)} = 1 , \\[.9em]
\displaystyle
\lim_{z\to x^1} \frac{\CR(\U \setminus \hat{\bs{\gamma}}_{\bs{s}}; z)}{\CR(\U; z)} = 1 , 
\end{cases}
\qquad 
\begin{cases}
\displaystyle
\lim_{z\to x^1}Y_{\bs{0}}^{\ell}(z,x^1) = 1 , \\[.5em]
\displaystyle
\lim_{z\to x^1} Y_{\bs{s}}^{\ell}(z,x^1)=1 , \quad \textnormal{for } 2\le \ell \le p ,
\end{cases}
\end{align} 
Plugging~(\ref{eqn::multiradial_spiral_halfwatermelon_lim1},~\ref{eqn::multiradial_spiral_halfwatermelon_lim2}) into~\eqref{eqn::multiradial_spiral_halfwatermelon_aux1}, we obtain $M_{\bs{s}}/M_{\bs{0}}\to 1$.

\medskip

{\bf Step~2.}
Second, we show that $M_{\bs{s}}/M_{\bs{0}}$ stays uniformly bounded as $z\to y$ when $\hat{\bs{\gamma}}_{\bs{s}}\subset U$. Using the monotonicity $H(\U\setminus \hat{\bs{\gamma}}_{\bs{s}};z,x^1)\le H(\U;z,x^1)$ and $\CR(\U \setminus \hat{\bs{\gamma}}_{\bs{s}}; z)\ge \CR(\U \setminus U; z)$, we have 
\begin{align} \label{eqn::multiradial_spiral_halfwatermelon_aux2}
\frac{M_{\bs{s}}}{M_{\bs{0}}}\le \bigg(\prod_{j=2}^p \frac{Y_{\bs{s}}^{j}(z, x^1)}{Y_{\bs{0}}^{j}(z, x^1)} \bigg)^{\frac{1}{\kappa}} \bigg( \frac{\CR(\U \setminus U; z)}{\CR(\U; z)} \bigg)^{-\frac{1}{2\kappa}} \exp \left( \frac{\mu}{\kappa} (I_{\bs{s}}(z, x^1)-I_{\bs{0}}(z, x^1)) \right).
\end{align}
As before, we can now control $M_{\bs{s}}/M_{\bs{0}}$ and conclude the proof by using the following estimates: 
\begin{itemize}
\item Picking $\delta>0$ such that $B(x^1,16\delta)\cap U=\emptyset$ and combining~(\ref{eqn::muliradial_halfwatermelon_RN_aux1}--\ref{eqn::muliradial_halfwatermelon_RN_aux3}), we have
\begin{align} \label{eqn::multiradial_spiral_halfwatermelon_aux3}
Y_{\bs{s}}^{j}(z, x^1)\lesssim 1, \quad \textnormal{ for } z\in B(x^1,\delta).
\end{align}

\item Lemma~\ref{lem::CR_cvg} shows that $z \mapsto \frac{\CR(\U \setminus U; z)}{\CR(\U; z)}$ 
is a continuous function of $z\in \overline{\U} \setminus \overline{U}$. Thus, we have
\begin{align} \label{eqn::multiradial_spiral_halfwatermelon_aux4}
\frac{\CR(\U \setminus U; z)}{\CR(\U; z)} \gtrsim 1 , \quad \textnormal{ for } z\in \overline{B(x^1,\delta)}.
\end{align}
\item From~(\ref{eqn::muliradial_halfwatermelon_RN_aux4}--\ref{eqn::muliradial_halfwatermelon_RN_control3}) we have
\begin{align} \label{eqn::multiradial_spiral_halfwatermelon_aux5}
|I_{\bs{s}}(z, x^1)|\lesssim 1, \quad \textnormal{ for } z\in B(x^1,\delta/100).
\end{align}
\end{itemize}
From~(\ref{eqn::multiradial_spiral_halfwatermelon_aux2}--\ref{eqn::multiradial_spiral_halfwatermelon_aux5}), 
we see that $M_{\bs{s}}/M_{\bs{0}}$ stays uniformly bounded when $z\in B(x^1,\delta/100)$ and $\hat{\bs{\gamma}}_{\bs{s}}\subset U$. 
\end{proof}

\begin{proof}[Proof of Proposition~\ref{prop::multiradial_resampling}]
By conformal invariance, it suffices to consider the polygon $(\Omega; \bs{x}; z)=(\U; \ee^{\ii\bs{\theta}}; 0)$ with $\bs{\theta}= (\theta^1, \ldots, \theta^p)\in\LX_{p}$. 
For each $j\in \{1,\ldots, p\}$, let $\gamma^{j}$ be radial $\SLE_{\kappa}$ in $(\U; \ee^{\ii\theta^j}; 0)$ 
and let $\mathsf{P}_p$ 
be the probability measure on $\bs{\gamma}=(\gamma^{1}, \ldots, \gamma^{p})$ under which the curves are independent. 
Recall from Definition~\ref{def::multiradialSLEspiral} that 
$\PPspiral{p}$ is defined as $\mathsf{P}_p$ tilted by the $p$-time-parameter local martingale
$\smash{M(\nradpartfn{p}{\mu})}$ defined in Eq.~\eqref{eqn::multiradialSLE_mart}.
By Lemmas~\ref{lem::multiradial_marginal}~\&~\ref{lem::multiradial_collision}, for any $t_1 \geq 0$ the law of $\gamma^1|_{[0,t_1]}$ under $\PPspiral{p}$ is the same as 
radial $\SLE_\kappa$ in $\smash{(\U; \ee^{\ii\theta^1}; 0)}$ tilted by 
$M_{(t_1, 0, \ldots, 0)}(\nradpartfn{p}{\mu})$, given by Eq.~\eqref{eqn::multiradialSLE_mart_cor}. 
Thus, the \emph{marginal law} of $\gamma^1$ is radial $\SLE_{\kappa}^{\mu}(2, \ldots, 2)$ as derived in Lemma~\ref{lem::multiradial_marginal}. 
This proves Item~\ref{item::multiradial_resampling1} of Proposition~\ref{prop::multiradial_resampling}.

Next, by the domain Markov property, the \emph{conditional law} of 
$\smash{(\gamma^1|_{[t_1,\infty)}, \gamma^2, \ldots, \gamma^p)}$ given $\smash{\gamma^1_{[0,t_1]}}$ 
is $p$-radial $\SLE_{\kappa}^{\mu}$ in the remaining domain $\smash{(\U\setminus\gamma^1_{[0,t_1]}; \gamma^1_{t_1}, \hat{\bs{x}}; 0)}$. 
Moreover, we have $\gamma^1_{t_1}\to 0$ as $t_1\to\infty$ almost surely due to Proposition~\ref{prop::radialSLE_transience} in Appendix~\ref{app:transience}, 
and moreover, the curve $\gamma^1$ is simple (since $\kappa \leq 4$). 
Therefore, Lemma~\ref{lem::multiradial_spiral_halfwatermelon} shows that the marginal law of the last $(p-1)$ curves of $p$-radial $\SLE_{\kappa}^{\mu}$ 
in the polygon $(\U\setminus\gamma^1_{[0,t_1]}; \gamma^1_{t_1}, \hat{\bs{x}}; 0)$ 
converges weakly to half-$(p-1)$-watermelon $\SLE_{\kappa}$ in the polygon $(\U\setminus\gamma^1; \hat{\bs{x}}; 0)$ as $t_1\to\infty$. 
The conditional law of $\hat{\bs{\gamma}}=(\gamma^2, \ldots, \gamma^p)$ given $\gamma^1$ is thus that of half-$(p-1)$-watermelon $\SLE_{\kappa}$, as claimed. 
This shows Item~\ref{item::multiradial_resampling2} and finishes the proof of Proposition~\ref{prop::multiradial_resampling}.
\end{proof}

\subsection{Boundary perturbation --- proof of Proposition~\ref{prop::multiradial_bp}}
\label{subsec::bp}

Now, we will derive the boundary perturbation property for radial $\SLE_{\kappa}^{\mu}(2, \ldots, 2)$, see Lemma~\ref{lem::radialSLEkapparho_bp}. (See~\cite{Dubedat:SLE_kappa_rho_martingales_and_duality} for the analogue boundary perturbation property for $\SLE_{\kappa}(\bs{\rho})$ in the chordal case.)
It will be a key ingredient in the proof of Proposition~\ref{prop::multiradial_bp} in the end of this section,
together with the boundary perturbation property for half-$n$-watermelon $\SLE_{\kappa}$ (Proposition~\ref{prop::halfwatermelon_bp}) and Theorem~\ref{thm::multiradial_resampling}.

\begin{lemma}[Boundary perturbation]\label{lem::radialSLEkapparho_bp}
Fix $\kappa\in (0,4], \, \mu\in\R, \, p\ge 2$ and a $p$-polygon $(\Omega; \bs{x}; z)$ with $\bs{x}=( x^1, \ldots, x^p)$ and $z\in\Omega$. 
Denote $\hat{\bs{x}}=(x^2, \ldots, x^p)$. 
Consider radial $\SLE_{\kappa}^{\mu}(2, \ldots, 2)$ curve $\gamma$ in $(\Omega; \bs{x};z)$. 
Suppose $K$ is a compact subset of $\overline{\Omega}$ such that $\Omega\setminus K$ is simply connected and contains $z$, 
and $K$ has a positive distance from $\{x^1, \ldots, x^p\}$. 
Then, the radial $\SLE_{\kappa}^{\mu}(2, \ldots, 2)$ probability measure in the smaller polygon $(\Omega\setminus K; \bs{x}; z)$ 
is absolutely continuous with respect to that in $(\Omega; \bs{x}; z)$, with Radon-Nikodym derivative
\begin{align}
\label{eqn::radialSLEkapparho_bp_RN}
\frac{\nradpartfn{p}{\mu}(\Omega;\bs{x};z)}{\nradpartfn{p}{\mu}(\Omega\setminus K;\bs{x};z)} \, 
\frac{\LZfusion{p-1} (\Omega\setminus (\gamma\cup K);\hat{\bs{x}};z)}{\LZfusion{p-1} (\Omega\setminus \gamma;\hat{\bs{x}};z)} \,  \one\{\gamma\cap K=\emptyset \} \, \exp\Big(\frac{\mathfrak{c}}{2} \blm(\Omega;\gamma,K)\Big). 
\end{align}
\end{lemma}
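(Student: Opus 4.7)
The plan is to combine Lemma~\ref{lem::sleakapprhomu_mart} with the standard radial $\SLE_\kappa$ boundary perturbation (Lemma~\ref{lem::radialSLE_bp} specialized to $\mu=0$) in order to express the Radon--Nikodym derivative of $\SLE_\kappa^\mu(2,\ldots,2)$ between $\Omega\setminus K$ and $\Omega$ at any stopping time as a ratio of running multiradial partition functions, and then to extract the $\LZfusion{p-1}$ correction in the transience limit $t\to\infty$ via a \emph{fusion asymptotic}. Writing $\PP_\Omega$ for the law of $\SLE_\kappa^\mu(2,\ldots,2)$ in $(\Omega;\bs x;z)$ and $\PP^{0}_\Omega$ for that of standard $\SLE_\kappa$ in $(\Omega;x^1;z)$, Lemma~\ref{lem::sleakapprhomu_mart} with $\bs\rho=(2,\ldots,2)$ (for which $\nradpartfn{1}{\mu;(2,\ldots,2)} = \nradpartfn{p}{\mu}$, as noted after~\eqref{eqn::pf_radialSLEkapparhomu}) says that $\PP_\Omega$ equals $\PP^{0}_\Omega$ tilted by the martingale $M(\nradpartfn{p}{\mu})$. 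Conformal covariance~\eqref{eqn::Gmu_cov} then allows one to rewrite this tilt in the domain-invariant form
\begin{align*}
\frac{d\PP_\Omega}{d\PP^{0}_\Omega}\bigg|_{\mathcal F_t}(\gamma)
\; = \; \frac{\nradpartfn{p}{\mu}(\Omega\setminus\gamma_{[0,t]};\gamma_t,\hat{\bs x};z)\,\nradpartfn{1}{0}(\Omega;x^1;z)}{\nradpartfn{1}{0}(\Omega\setminus\gamma_{[0,t]};\gamma_t;z)\,\nradpartfn{p}{\mu}(\Omega;\bs x;z)},
\end{align*}
where the $\nradpartfn{1}{0}$-terms appear only as normalization factors (and absorb the otherwise ill-defined boundary derivative $|g_t'(\gamma_t)|^{\mathfrak b}$, as in Lemma~\ref{lem::multitime_mart_nradial_spiral_z_RN}).

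The same identity holds in $\Omega\setminus K$. Combining the two via $\tfrac{d\PP_{\Omega\setminus K}}{d\PP_\Omega} = \tfrac{d\PP_{\Omega\setminus K}/d\PP^{0}_{\Omega\setminus K}}{d\PP_\Omega/d\PP^{0}_\Omega}\cdot\tfrac{d\PP^{0}_{\Omega\setminus K}}{d\PP^{0}_\Omega}$ and using Lemma~\ref{lem::radialSLE_bp} (with $\mu=0$) for the last factor, a direct computation shows that all $\nradpartfn{1}{0}$-terms cancel exactly, leaving
\begin{align*}
\frac{d\PP_{\Omega\setminus K}}{d\PP_\Omega}\bigg|_{\mathcal F_t}(\gamma)
\; = \; \frac{\nradpartfn{p}{\mu}(\Omega;\bs x;z)}{\nradpartfn{p}{\mu}(\Omega\setminus K;\bs x;z)}\, \frac{\nradpartfn{p}{\mu}((\Omega\setminus K)\setminus\gamma_{[0,t]};\gamma_t,\hat{\bs x};z)}{\nradpartfn{p}{\mu}(\Omega\setminus\gamma_{[0,t]};\gamma_t,\hat{\bs x};z)} \,\one\{\gamma_{[0,t]}\cap K=\emptyset\}\,\exp\!\Big(\tfrac{\mathfrak c}{2}\blm(\Omega;\gamma_{[0,t]},K)\Big).
\end{align*}

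To conclude, I will take $t\to\infty$. By Proposition~\ref{prop::radialSLE_transience} (transience of $\SLE_\kappa^\mu(2,\ldots,2)$), almost surely $\gamma_t\to z$; the indicator and the loop-measure exponential pass to their full-curve counterparts by monotone convergence. The crucial remaining claim is the \emph{fusion asymptotic}
\begin{align*}
\lim_{t\to\infty}\frac{\nradpartfn{p}{\mu}((\Omega\setminus K)\setminus\gamma_{[0,t]};\gamma_t,\hat{\bs x};z)}{\nradpartfn{p}{\mu}(\Omega\setminus\gamma_{[0,t]};\gamma_t,\hat{\bs x};z)}
\; = \; \frac{\LZfusion{p-1}(\Omega\setminus(\gamma\cup K);\hat{\bs x};z)}{\LZfusion{p-1}(\Omega\setminus\gamma;\hat{\bs x};z)} ,
\end{align*}
which I will establish by showing that, as the boundary point $\gamma_t$ approaches the interior point $z$, one has $\nradpartfn{p}{\mu}(A;\gamma_t,\hat{\bs x};z)\sim\Phi(A;\gamma_t;z)\cdot\LZfusion{p-1}(A;\hat{\bs x};z)$ for each of the two choices $A\in\{\Omega\setminus\gamma_{[0,t]},(\Omega\setminus K)\setminus\gamma_{[0,t]}\}$. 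Here the normalization $\Phi$ is a product of suitable powers of the conformal radius $\CR(A;z)$, the boundary Poisson kernel $H(A;\gamma_t;z)$, and the spiral phase factor $\exp(-\tfrac{\mu p}{\kappa}\arg(\cdots))$ from~\eqref{eqn::Gmu_cov}, all depending only on the local geometry of $A$ near the merging point $\gamma_t\to z$. This asymptotic extends Lemma~\ref{lem::LGtoLZfusion} (and the spiral analog implicit in Lemma~\ref{lem::multiradial_spiral_halfwatermelon}) to the case where the interior-to-boundary merging happens exactly at a marked boundary point, and can be verified directly from the explicit unit-disc formulas~\eqref{eq:multiradial_partition_function} and~\eqref{eqn::nradpartfn_mu_z}. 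Because $K$ stays a positive distance from $z$, the local geometry of $A$ near the merging point is asymptotically insensitive to $K$; thus Lemmas~\ref{lem::CR_cvg} and~\ref{lem::PoissonKerCR_convergence} give $\Phi(\Omega\setminus\gamma_{[0,t]};\gamma_t;z)/\Phi((\Omega\setminus K)\setminus\gamma_{[0,t]};\gamma_t;z)\to 1$, so the normalizations cancel in the ratio and the desired $\LZfusion{p-1}$-ratio emerges.

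The main obstacle will be establishing this fusion asymptotic precisely in the spiral case $\mu\neq 0$: since $\LZfusion{p-1}$ carries no spiral dependence, one must carefully disentangle the residual spiral phase factor coming from~\eqref{eqn::Gmu_cov} and verify that it too localizes near the merging point so as to cancel in the ratio. A secondary technical point is upgrading the almost-sure convergence at time $t$ to genuine $L^1$-convergence of the Radon--Nikodym derivative, for which one needs uniform bounds as $\gamma_t\to z$; these can be obtained by the same Schwarz reflection, Koebe distortion and growth theorem, and argument-bound estimates developed in Step~2 of the proof of Theorem~\ref{thm::multiradial_halfwatermelon}, adapted to the present setting.
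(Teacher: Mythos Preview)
Your approach is correct and shares the paper's overall architecture: obtain the running Radon--Nikodym derivative at time $t$ as the partition-function ratio you display, then send $t\to\infty$ using transience. The paper arrives at the same running formula (its process $N_t$), but by a different route at each stage.

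For the running ratio, you compose Lemma~\ref{lem::sleakapprhomu_mart} with the running form of Lemma~\ref{lem::radialSLE_bp}; the paper instead writes down $N_t$ explicitly and verifies by a direct It\^o computation that it is a local martingale whose Girsanov tilt produces the $\SLE_\kappa^\mu(2,\ldots,2)$ driving SDE in $\U\setminus K$. For uniform integrability, the paper does \emph{not} bound the Radon--Nikodym derivative: instead it uses a Kolmogorov extension argument, showing the stopped tilted measures $\mathsf Q_n^*$ are compatible and coincide with the law of $\SLE_\kappa^\mu(2,\ldots,2)$ in $\U\setminus K$ up to each $T_n$, then invokes Proposition~\ref{prop::radialSLE_avoidboundary} to see that this process stays a positive distance from $K$, so the extension is already a probability measure. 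For the $t\to\infty$ limit of the partition-function ratio and the spiral phases, the paper's key tool is Corollary~\ref{cor::phitKtoid}: as $t\to\infty$ the normalizing map $g_{t,K}\colon\U\setminus g_t(K)\to\U$ tends to the identity, so $g_{t,K}'(0)\to1$ and $\covmap_{t,K}(\xi_t)-\xi_t\to0$, $\covmap_{t,K}(V_t^j)-V_t^j\to0$; combined with Theorem~\ref{thm::multiradial_halfwatermelon}(\ref{item::multiradial_halfwatermelonPF}) this gives the $\LZfusion{p-1}$ ratio directly. Your fusion-asymptotic argument should ultimately yield the same conclusion, but note that Lemmas~\ref{lem::CR_cvg} and~\ref{lem::PoissonKerCR_convergence} are stated for a fixed domain with $z\to x$; here the domain $\Omega\setminus\gamma_{[0,t]}$ is itself varying, and the clean formulation of ``local geometry near the merging point is insensitive to $K$'' in that setting is precisely Corollary~\ref{cor::phitKtoid}. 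The paper's route avoids both the moving-domain adaptation and the need for separate uniform-bound estimates.
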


\begin{proof}
It again suffices to consider the polygon $(\Omega; \bs{x}; z)=(\U; \ee^{\ii\bs{\theta}}; 0)$ with $(\theta^1, \ldots, \theta^p)\in\LX_{p}$, and a subset $K \subset \overline{\U}$ as in the statement. 
Let $g_K \colon \U\setminus K \to \U$ (resp.~$g_{t,K} \colon \U \setminus g_t(K) \to \U$ for $t \geq 0$)
be the conformal bijection normalized at the origin and $\covmap_K$ (resp.~$\covmap_{t,K}$) its covering map.

We claim that law of radial $\SLE_{\kappa}^{\mu}(2, \ldots, 2)$, whose driving function $\xi_t$ in $\U$ solves the SDE~\eqref{eqn::SLEkapparhomu_SDE} with $\rho_j=2$ for all $j$, 
in the smaller domain $\U\setminus K$ is the same as that in $\U$ tilted by 
\begin{align*}
N_t = \; & \one\{\gamma_{[0,t]} \cap K=\emptyset \} \exp\Big(\frac{\mathfrak{c}}{2} \blm(\U;\gamma_{[0,t]},K)\Big) 
\, (g_{t,K}'(0))^{\tilde{\mathfrak{b}} +\frac{p^2-1}{2\kappa}-\frac{\mu^2}{2\kappa}}
( \covmap_{t,K}'(\xi_t) )^\mathfrak{b} \Big( \prod_{j=2}^p \covmap_{t, K}'(V_t^j) \Big)^\mathfrak{b}
\\
\; & \times \frac{\nradpartfn{p}{\mu}(\covmap_{t,K}(\xi_t),\covmap_{t,K}(V_t^2),\ldots,\covmap_{t,K}(V_t^p))}{\nradpartfn{p}{\mu}(\xi_t,V_t^2,\ldots,V_t^p)}
\\
= \; & \one\{\gamma_{[0,t]} \cap K=\emptyset \} \exp\Big(\frac{\mathfrak{c}}{2} \blm(\U;\gamma_{[0,t]},K)\Big) 
\, (g_{t,K}'(0))^{\tilde{\mathfrak{b}} +\frac{p^2-1}{2\kappa}-\frac{\mu^2}{2\kappa}}
(\covmap_{t,K}'(\xi_t) )^\mathfrak{b} \Big( \prod_{j=2}^p \covmap_{t, K}'(V_t^j) \Big)^\mathfrak{b}
\\
\; & \times \exp\Big( \frac{\mu}{\kappa} ( \covmap_{t,K}(\xi_t)-\xi_t ) \Big) 
\prod_{j=2}^{p} \exp\Big( \frac{\mu}{\kappa} ( \covmap_{t,K}(V_t^j)-V_t^j ) \Big)
\\
\; & \times
\bigg( \underset{1\leq j \leq p}{\prod} \, \frac{\sin \Big(\tfrac{1}{2} \big( \covmap_{t,K}(V_t^j) - \covmap_{t,K}(\xi_t) \big) \Big)}{\sin \Big(\tfrac{1}{2} \big( V_t^j - \xi_t \big) \Big)} \bigg)^{\frac{2}{\kappa}} 
\bigg( \underset{2\leq i < j \leq p}{\prod} \, \frac{\sin \Big(\tfrac{1}{2} \big( \covmap_{t,K}(V_t^j) - \covmap_{t,K}(V_t^i) \big) \Big)}{\sin \Big(\tfrac{1}{2} \big( V_t^j - V_t^i \big) \Big)} \bigg)^{\frac{2}{\kappa}} ,
\end{align*}
which is moreover a uniformly integrable martingale.
First, we verify that $(N_t)_{t\ge 0}$ is a local martingale. 
Standard computations using the radial Loewner equation~\eqref{eq:single_radial_Loewner_equation} and the evolution~\eqref{eqn::mt_def} yield
\begin{align*}
\frac{\partial_t g'_{t,K}(0)}{g'_{t,K}(0)} = \; & (\covmap_{t,K}'(\xi_t))^2-1, 
&& \partial_t \blm(\U;\gamma_{[0,t]},K) = -\frac{1}{3} \LS \covmap_{t,K}(\xi_t) + \frac{1}{6}(1-\covmap'_{t,K}(\xi_t)^2), 
\\
\partial_t \covmap_{t,K}(\xi_t) = \; & -3\covmap_{t,K}''(\xi_t), 
&& \qquad \; \frac{\partial_t \covmap'_{t,K}(\xi_t)}{\covmap'_{t,K}(\xi_t)} = \frac{1}{2} \bigg( \frac{\phi''_{t,K}(\xi_t)}{\covmap'_{t,K}(\xi_t)}\bigg)^2 - \frac{4}{3}\frac{\phi'''_{t,K}(\xi_t)}{\covmap'_{t,K}(\xi_t)} 
- \frac{1}{6} \big((\covmap'_{t,K}(\xi_t))^2 - 1 \big), 
\\
\partial_t V_t^j = \; & \cot \bigg(\frac{V_t^j-\xi_t}{2}\bigg), 
&& \qquad \partial_t \covmap_{t,K}(V_t^j) = (\covmap'_{t,K}(\xi_t))^2 \, \cot \bigg(\frac{\covmap_{t,K}(V_t^j)- \covmap_{t,K}(\xi_t)}{2}\bigg),
\end{align*}
\vspace*{-6mm}
\begin{align*}
\textnormal{and} \qquad  \qquad 
\frac{\partial_t \covmap'_{t,K}(V_t^j)}{\covmap'_{t,K}(V_t^j)} = \; & \frac{1}{2} \csc^2 \bigg(\frac{V_t^j-\xi_t}{2}\bigg)
- \frac{1}{2} (\covmap'_{t,K}(\xi_t))^2 \, \csc^2 \bigg(\frac{\covmap_{t,K}(V_t^j)- \covmap_{t,K}(\xi_t)}{2}\bigg).
\end{align*}
Combining these with~\eqref{eqn::SLEkapparhomu_SDE} yields 
(writing $\bs{V}_t = (V_t^2, \ldots, V_t^p)$)
\begin{align}\label{eq:mgleN}
\frac{\ud N_t}{N_t} 
= \; & \sqrt{\kappa} \, \mathfrak{b} \, \frac{\phi''_{t,K} (\xi_t)}{\covmap'_{t,K} (\xi_t)} \, \ud B_t
+ \frac{\mu}{\sqrt{\kappa}} ( \covmap'_{t,K}(\xi_t) - 1 ) \, \ud B_t 
\\
\; & + \frac{1}{\sqrt{\kappa}} \sum_{j=2}^{p} \bigg( \cot \bigg(\frac{V_t^j-\xi_t}{2}\bigg) - \covmap'_{t,K}(\xi_t) \, \cot \bigg(\frac{\covmap_{t,K}(V_t^j)- \covmap_{t,K}(\xi_t)}{2}\bigg) \bigg) \ud B_t 
\; =: \; F_K(\xi_t,\bs{V}_t) \, \ud B_t,
\nonumber
\end{align}
which implies that $(N_t)_{t\ge 0}$ is a local martingale.

Second, we prove that $N=(N_t)_{t\ge 0}$ is a uniformly integrable martingale and the law of radial $\SLE_{\kappa}^{\mu}(2, \ldots, 2)$ in the smaller domain $\U\setminus K$ is the same as that in $\U$ tilted by $N$. 
Denote by $T_n$ the first time when the distance between the curve and $K$ reaches $1/n$. 
Then, $(N_{t\wedge T_n})_{t\ge 0}$ is a bounded martingale. 
Denote by $\mathsf{Q}$ the probability measure of $\SLE_{\kappa}^{\mu}$ in $(\U; \ee^{\ii\theta}; 0)$ and by $\mathsf{Q}_n^*$ the probability measure obtained by weighting $\mathsf{Q}$ by $N_{T_n}$. 
Combined with~\eqref{eq:mgleN}, Girsanov's theorem yields
\begin{align*}
\ud B_t = \; & \ud B^*_t + F_K(\xi_t,\bs{V}_t) \, \ud t, \\
\ud \covmap_{t,K}( \xi_t ) = \; & \sqrt{\kappa}  \, \covmap'_{t,K}(\xi_t) \, \ud B_t^* + \mu \, (\covmap'_{t,K}(\xi_t))^2 \ud t + \sum_{j=2}^{p}  \cot \bigg( \frac{\covmap_{t,K}( \xi_t)-\covmap_{t,K}(V_t^j)}{2}\bigg) (\covmap'_{t,K}(\xi_t))^2 \ud t,
\end{align*}
where $B_t^*$ is Brownian motion under $\mathsf{Q}_n^*$. 
Thus, $\mathsf{Q}_n^*$ is nothing but the law of a (time-changed) $\SLE_{\kappa}^{\mu}(2, \ldots, 2)$ process in $\U\setminus K$ up to time $T_n$. 
Moreover, the sequence $\{\mathsf{Q}_n^*\}_n$ is compatible by definition, so Kolmogorov's extension theorem guarantees that
there exists a probability measure $\mathsf{Q}_{\infty}^*$ such that $\mathsf{Q}_{\infty}^*$ is the same as $\mathsf{Q}_n^*$ up to $T_n$ for all $n$. 
In other words, $\mathsf{Q}_{\infty}^*$ is the same as the law of radial $\SLE_{\kappa}^{\mu}(2, \ldots, 2)$ in $\U\setminus K$ up to $T_n$ for all $n$. 
Note also that radial $\SLE_{\kappa}^{\mu}(2, \ldots, 2)$ in $\U\setminus K$ has a positive distance from $K$ almost surely (see Proposition~\ref{prop::radialSLE_avoidboundary}). 
Thus, $\mathsf{Q}_{\infty}^*$ is indeed the same as the law of radial $\SLE_{\kappa}^{\mu}(2, \ldots, 2)$ in $\U\setminus K$ for all time. 
This shows that radial $\SLE_{\kappa}^{\mu}(2, \ldots, 2)$ in $\U\setminus K$ is the same as that in $\U$ tilted by $(N_t)_{t\ge 0}$, 
and $(N_t)_{t\ge 0}$ is a uniformly bounded martingale, as claimed.  

Finally, let us consider the a.s. limiting value $\underset{t\to\infty}{\lim} \, N_t$. 
From the definitions~(\ref{eqn:spiralpartitionfunction},~\ref{eqn::Gmu_cov}), we obtain
\begin{align}
N_t = \; & \one\{\gamma_{[0,t]} \cap K=\emptyset \} \exp\Big(\frac{\mathfrak{c}}{2} \blm(\U;\gamma_{[0,t]},K)\Big) 
\, (g_{t,K}'(0))^{\tilde{\mathfrak{b}} +\frac{p^2-1}{2\kappa} - \frac{\mu^2}{2\kappa}}
 \Big( \covmap_{t,K}'(\xi_t) \, \prod_{j=2}^p \covmap_{t, K}'(V_t^j) \Big)^\mathfrak{b}
\nonumber
\\
\; & \times \frac{\nradpartfn{p}{0}(\U;\covmap_{t,K}(\xi_t),\covmap_{t,K}(V_t^2),\ldots,\covmap_{t,K}(V_t^p))}{\nradpartfn{p}{0}(\U;\xi_t,V_t^2,\ldots,V_t^p)}
\; \exp\Big( \frac{\mu}{\kappa} ( \covmap_{t,K}(\xi_t)-\xi_t ) \Big) 
\prod_{j=2}^{p} \exp\Big( \frac{\mu}{\kappa} ( \covmap_{t,K}(V_t^j)-V_t^j ) \Big)
\nonumber
\\
= \; & \one\{\gamma_{[0,t]} \cap K=\emptyset \} \exp\Big(\frac{\mathfrak{c}}{2} \blm(\U;\gamma_{[0,t]},K)\Big) 
\, (g_{t,K}'(0))^{-\frac{\mu^2}{2\kappa}}
\nonumber
\\
\; & \times \frac{\nradpartfn{p}{0}(\U\setminus (K\cup \gamma_{[0,t]}); \gamma_t, \hat{\bs{x}};0)}{\nradpartfn{p}{0}(\U\setminus \gamma_{[0,t]}; \gamma_t, \hat{\bs{x}};0)} 
\; \exp\Big( \frac{\mu}{\kappa} ( \covmap_{t,K}(\xi_t)-\xi_t ) \Big) 
\prod_{j=2}^{p} \exp\Big( \frac{\mu}{\kappa} ( \covmap_{t,K}(V_t^j)-V_t^j ) \Big) .
\label{eqn::radialSLEkapparho_bp_aux0} 
\end{align}
On the one hand, by Corollary~\ref{cor::phitKtoid} in Appendix~\ref{app:transience}, we have
\begin{align}\label{eqn::radialSLEkapparho_bp_aux1}
\lim_{t\to \infty} g_{t,K}'(0)=1, \qquad \lim_{t\to \infty} ( \covmap_{t,K}(\xi_t)-\xi_t)=0, \qquad \lim_{t\to \infty} ( \covmap_{t,K}(V_t^j)-V_t^j )=0 , \quad \textnormal{ for } 2\le j\le p.
\end{align}
On the other hand, the convergence~\eqref{eqn::LGtoLZfusion} in Theorem~\ref{thm::multiradial_halfwatermelon}(\ref{item::multiradial_halfwatermelonPF}) gives 
\begin{align} \label{eqn::radialSLEkapparho_bp_aux3}
\; & \lim_{t\to \infty} \frac{\nradpartfn{p}{0}(\U\setminus (K\cup\gamma_{[0,t]}); \gamma_t, \hat{\bs{x}};0)}{\nradpartfn{p}{0}(\U\setminus \gamma_{[0,t]}; \gamma_t, \hat{\bs{x}};0)}\notag\\
= \; & \lim_{t\to\infty}\Big(\frac{\CR(\U\setminus (K\cup\gamma_{[0,t]}); 0)}{\CR(\U\setminus \gamma_{[0,t]}; 0)}\Big)^{\mathfrak{h}_n - \tilde{\mathfrak{b}} - \frac{n^2-1}{2\kappa}} \; \frac{\LZfusion{p-1} (\U\setminus (K\cup\gamma);\hat{\bs{x}};0)}{\LZfusion{p-1} (\U\setminus \gamma;\hat{\bs{x}};0)}\notag\\
= \; & \lim_{t\to\infty} (g_{t, K}'(0))^{\tilde{\mathfrak{b}} + \frac{n^2-1}{2\kappa} - \mathfrak{h}_n} \; \frac{\LZfusion{p-1} (\U\setminus (K\cup\gamma);\hat{\bs{x}};0)}{\LZfusion{p-1} (\U\setminus \gamma;\hat{\bs{x}};0)}
\; =\; \frac{\LZfusion{p-1} (\U\setminus (K\cup\gamma);\hat{\bs{x}};0)}{\LZfusion{p-1} (\U\setminus \gamma;\hat{\bs{x}};0)}. 
\end{align}
Plugging~(\ref{eqn::radialSLEkapparho_bp_aux1},~\ref{eqn::radialSLEkapparho_bp_aux3}) into~\eqref{eqn::radialSLEkapparho_bp_aux0}, we obtain
\begin{align*}
N_{\infty} := \lim_{t\to\infty}N_t =
 \one\{\gamma\cap K=\emptyset\} \, \exp\Big(\frac{\mathfrak{c}}{2} \blm(\U; \gamma, K)\Big) \, \frac{\LZfusion{p-1} \left(\U\setminus \left(K\cup\gamma\right);\hat{\bs{x}};0\right)}{\LZfusion{p-1} \left(\U\setminus \gamma;\hat{\bs{x}};0\right)}.
\end{align*}
We thus conclude that radial $\SLE_{\kappa}^{\mu}(2, \ldots, 2)$ is the smaller domain $\U\setminus K$ is absolutely continuous with respect to that in $\U$, 
with Radon-Nikodym derivative given by $N_{\infty}/N_0$. 
Lastly, note that 
\begin{align*}
N_0 = (g_K'(0))^{\tilde{\mathfrak{b}} +\frac{p^2-1}{2\kappa}-\frac{\mu^2}{2\kappa}}
\Big( \prod_{j=1}^p \covmap_{K}'(\theta^j) \Big)^\mathfrak{b}
\; \frac{\nradpartfn{p}{\mu}(\covmap_K(\theta^1), \ldots, \covmap_K(\theta^p))}{\nradpartfn{p}{\mu}(\theta^1, \ldots, \theta^p)}=\frac{\nradpartfn{p}{\mu}(\U\setminus K; \bs{x}; 0)}{\nradpartfn{p}{\mu}(\U; \bs{x}; 0)} 
\end{align*}
by~\eqref{eqn::Gmu_cov}, so $N_{\infty}/N_0$ gives the sought Radon-Nikodym derivative in~\eqref{eqn::radialSLEkapparho_bp_RN}. 
\end{proof}

\begin{proof}[Proof of Proposition~\ref{prop::multiradial_bp}]
The case where $p=1$ is addressed in Lemma~\ref{lem::radialSLE_bp}, so we focus on $p\ge 2$. 
The proof is a combination of Theorem~\ref{thm::multiradial_resampling}, Lemma~\ref{lem::radialSLEkapparho_bp} and Proposition~\ref{prop::halfwatermelon_bp}. 
Let $\bs{\gamma}=(\gamma^1, \ldots, \gamma^p)$ and $\bs{\gamma}_K=(\gamma^1_K, \ldots, \gamma^p_K)$ 
be $p$-radial $\SLE_{\kappa}^{\mu}$ processes in $(\Omega; \bs{x};z)$ and $(\Omega\setminus K; \bs{x};z)$, respectively,
and denote
\begin{align*}
\bs{\gamma}=\bigcup_{i=1}^p\gamma^i,\qquad \hat{\bs{\gamma}}=\bigcup_{i=2}^p\gamma^i;\qquad 
\bs{\gamma}_K=\bigcup_{i=1}^p\gamma^i_K, \qquad \hat{\bs{\gamma}}_K=\bigcup_{i=2}^p\gamma^i_K.
\end{align*}
\begin{itemize}
\item From Lemma~\ref{lem::radialSLEkapparho_bp}, we see that the law of $\gamma_K^1$ is the same as the law of $\gamma^1$ weighted by 
\begin{align*}
\frac{\nradpartfn{p}{\mu}(\Omega;\bs{x};z)}{\nradpartfn{p}{\mu}(\Omega\setminus K;\bs{x};z)} \, 
\frac{\LZfusion{p-1} (\Omega\setminus (\gamma^1\cup K);\hat{\bs{x}};z)}{\LZfusion{p-1} (\Omega\setminus \gamma^1;\hat{\bs{x}};z)} \, \one\{\gamma^1\cap K=\emptyset \} \, \exp\Big(\frac{\mathfrak{c}}{2} \blm(\Omega;\gamma^1,K)\Big). 
\end{align*} 
\item From Theorem~\ref{thm::multiradial_resampling}, we see that
\begin{itemize}
\item given $\gamma^1$, the conditional law of $\hat{\bs{\gamma}}$ is the same as multichordal $\SLE_{\kappa}$ in $(\Omega\setminus \gamma^1; \hat{\bs{x}}; z)$;
\item given $\gamma^1_K$, the conditional law of $\hat{\bs{\gamma}}_K$ is the same as multichordal $\SLE_{\kappa}$ in $(\Omega\setminus (\gamma_K^1\cup K); \hat{\bs{x}}; z)$. 
\end{itemize}
\end{itemize}
Combining these facts with Proposition~\ref{prop::halfwatermelon_bp}, 
we see that on the event $\gamma^1=\gamma_K^1$, the conditional law of $\hat{\bs{\gamma}}_K$ is the same as $\hat{\bs{\gamma}}$ weighted by 
\begin{align*}
\frac{\LZfusion{p-1} (\Omega\setminus \gamma^1;\hat{\bs{x}};z)}{\LZfusion{p-1} (\Omega\setminus (\gamma^1\cup K);\hat{\bs{x}};z)} \, \one\{\hat{\bs{\gamma}}\cap K=\emptyset\} \, \exp\Big(\frac{\mathfrak{c}}{2}\blm(\Omega\setminus\gamma^1; \hat{\bs{\gamma}}, K)\Big) ,
\end{align*}
and that the law of $\bs{\gamma}_K$ is the same as the law of $\bs{\gamma}$ weighted by~\eqref{eqn::multiradialSLEbp}: 
\begin{align*}
& \; \frac{\nradpartfn{p}{\mu}(\Omega;\bs{x};z)}{\nradpartfn{p}{\mu}(\Omega\setminus K;\bs{x};z)} \, \one\{\gamma^1\cap K=\emptyset,\hat{\bs{\gamma}}\cap K=\emptyset\} \, \exp\Big(\frac{\mathfrak{c}}{2} \blm(\Omega;\gamma^1,K)+\frac{\mathfrak{c}}{2}\blm(\Omega\setminus\gamma^1; \hat{\bs{\gamma}}, K)\Big)\\
=& \; \frac{\nradpartfn{p}{\mu}(\Omega;\bs{x};z)}{\nradpartfn{p}{\mu}(\Omega\setminus K;\bs{x};z)} \, \one\{\bs{\gamma}\cap K=\emptyset\} \, \exp\Big(\frac{\mathfrak{c}}{2}\blm(\Omega; \bs{\gamma}, K)\Big), 
\end{align*}
This concludes the proof. 
\end{proof}

\subsection{Radial resampling --- proof of Proposition~\ref{prop::multiradial_transience}}
\label{subsec::transience}

Multiradial $\SLE_\kappa$ was first constructed in the so-called common (capacity) time parameterization in the work~\cite{Healey-Lawler:N_sided_radial_SLE} 
of Healey and Lawler. 
Roughly speaking, in the common parameterization each curve is locally growing at the same rate and the total capacity of the $p$ curves at time $t$ equals $pt$.
The usage of the common parameterization enabled the authors of~\cite{Healey-Lawler:N_sided_radial_SLE} to construct multiradial $\SLE_\kappa$ using weighting by an appropriately normalized Brownian loop measure term.  

\paragraph*{Common-time-parameter.}
Consider a $p$-tuple 
$\bs{\gamma}_{\bs{t}} := ( \gamma^{1}_{t_1}, \ldots, \gamma^{p}_{t_p} )$ of curves.
Recall that $g_{\bs{t}}$ is the conformal map from $\smash{\U \setminus \cup_{j=1}^p \gamma^{j}_{[0,t_j]}}$ onto $\U$ normalized at the origin, i.e., $g_{\bs{t}}(0)=0$ and $g'_{\bs{t}}(0)>0$. 
We say that $\bs{\gamma}_{\bs{t}}$ has \emph{common-time-parameterization} if  
\begin{align}\label{eqn::commontime_def}
a_{\bs{t}}^{j} := (\partial_{t_j} \log g'_{\bs{t}}(0))|_{\bs t = (t, \ldots, t)} = 1 
, \qquad 
\textnormal{for each} \quad 0 \leq  t < \min_{1\le i\le p}t_i
\textnormal{ and } 1\le j\le p.
\end{align}
\cite[Lemma~3.2]{Healey-Lawler:N_sided_radial_SLE} implies that 
$(\covmap_{\bs{t},j}' (\xi_{t}^{j}))^2 = 1$ for each $t$ as above, and the time change 
\begin{align}\label{eqn::multi_common}
\ud t_j(t) = \frac{\ud t}{(\covmap'_{\bs{t},j} ( \xi_{t_j(t)}^{j} ))^2} 
\end{align}
applied to the multi-time-parameter $\bs{t}(t) = (t_1(t), \ldots, t_p(t))$ gives the common-time-parameter, and
\begin{align}
\label{eqn::timechange_ineq}
t\le t_j(t) \le pt, \qquad \textnormal{ for all } 1\le j\le p .
\end{align}
The lifetime of the process is the first time when $\bigcup_{j=1}^p \gamma^{j}_{[0,t]}$ hits the origin or two of the curves in $\bs{\gamma}_{t}$ meet.

\begin{proof}[Proof of Proposition~\ref{prop::multiradial_transience}]
The case where $p=1$ is addressed in Proposition~\ref{prop::radialSLE_transience} in Appendix~\ref{app:transience}, so we focus on $p\ge 2$. 
Let $\bs{\gamma}=(\gamma^1, \ldots, \gamma^p)$ be $p$-radial $\SLE_{\kappa}^{\mu}$ in $(\U; \ee^{\ii\bs{\theta}};0)$. 
The transience~\eqref{eqn::transience_multitime} in the $p$-time-parameterization is a consequence of the following observations:
\begin{itemize}
\item By Lemma~\ref{lem::multiradial_marginal} and Proposition~\ref{prop::radialSLE_transience}, the marginal law of $\gamma^1$ is radial $\SLE_{\kappa}^{\mu}(2, \ldots, 2)$, which is almost surely generated by a continuous curve and is almost surely transient. 

\item Thus, almost surely, the set $\gamma^1\cup\partial\U$ is locally connected,
so any conformal map $\psi \colon \U \to \U\setminus\gamma^1$ extends continuously to $\overline{\U}$ by Carath\'eodory's theorem~\cite[Theorems~2.1~\&~2.6, and Corollary~2.8]{Pommerenke:Boundary_behaviour_of_conformal_maps}). 

\item Let $\hat{\bs{\eta}}=(\eta^2, \ldots, \eta^p)$ be half-$(p-1)$-watermelon $\SLE_{\kappa}$ in $(\U; \psi^{-1}(\ee^{\ii\theta^2}), \ldots, \psi^{-1}(\ee^{\ii\theta^p}); \psi^{-1}(0))$. 
Then, by Proposition~\ref{prop::multiradial_resampling}, the conditional law of $\hat{\bs{\gamma}}=(\gamma^2, \ldots, \gamma^p)$ given $\gamma^1$ is the same as $\psi(\hat{\bs{\eta}})$. 
Since the half-$n$-watermelon $\SLE_{\kappa}$ curves $\hat{\bs{\eta}}$ are almost surely transient by~\cite[Section~4]{Miller-Sheffield:Imaginary_geometry1} 
and $\psi$ extends continuously to the boundary, the curves $\hat{\bs{\gamma}}$ are also almost surely transient. 
\end{itemize} 
The transience~\eqref{eqn::transience_commontime} in the common-time-parameterization then follows from this and~\eqref{eqn::timechange_ineq}. 
\end{proof}

%%%%%%%%%%%%%%%%%%%%%
%%%%%%%%%%%%%%%%%%%%%%%%%
\appendix
\section{Radial SLE with force points: transience}
\label{app:transience}

For the radial resampling property to make sense, one has to know that the curves are transient.
In this appendix, we establish the transience for  radial $\SLE_{\kappa}^{\mu}(\bs{\rho})$ processes with non-negative weights ($\rho_j\ge 0$ for all $j$).
With one force point, the transience for radial $\SLE_{\kappa}(\rho)$ with $\rho=2$ was proven in~\cite[Theorem~1.3]{Lawler:Continuity_of_radial_and_two-sided_radial_SLE_at_the_terminal_point} 
(see also~\cite{Field-Lawler:Escape_probability_and_transience_for_SLE}). 
We generalize Lawler's proof to derive the transience of radial SLE with multiple force points and spiral in Proposition~\ref{prop::radialSLE_transience},
whose proof we finish in Section~\ref{subsec::radialSLE_continuity}. 
The key ingredient to the proof is Lemma~\ref{lem::S2} --- indeed, with Lemma~\ref{lem::S2} at hand, 
the rest of the proof is exactly the same as it is in~\cite[proof of Theorem~1.3]{Lawler:Continuity_of_radial_and_two-sided_radial_SLE_at_the_terminal_point}. 
To derive this key ingredient, we will utilize an estimate on radial Bessel processes in Lemma~\ref{lem::Bessel_transience} and a monotone coupling argument in Section~\ref{subsec::monotone_coupling}. 

\begin{proposition}\label{prop::radialSLE_transience}
Fix $\kappa\in (0,4], \, \mu\in\R, \, p\ge 2, \, \bs{\rho}=(\rho_2, \ldots, \rho_p)\in\R^{p-1}$, and $\bs{\theta}=(\theta^1, \ldots, \theta^p)\in\LX_p$. Assume $\rho_j\ge 0$ for all $2\le j\le p$. 
Then, radial $\SLE_{\kappa}^{\mu}(\bs{\rho})$ in $(\U; \ee^{\ii\bs{\theta}}; 0)$ is almost surely generated by a continuous curve $\gamma$ and $\smash{\underset{t\to\infty}{\lim} \, \gamma_t=0}$. 
\end{proposition}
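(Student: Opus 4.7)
The plan is to adapt the strategy of~\cite[Theorem~1.3]{Lawler:Continuity_of_radial_and_two-sided_radial_SLE_at_the_terminal_point}, which handles the case $p=2$, $\mu=0$, $\rho_2=2$, to the present setting of arbitrary spiraling rate $\mu\in\R$ and several non-negative force-point weights $\rho_j\geq 0$. First I would establish that the Loewner chain is generated by a continuous simple curve up to the lifetime. By Lemma~\ref{lem::sleakapprhomu_mart}, radial $\SLE_{\kappa}^{\mu}(\bs{\rho})$ is locally absolutely continuous with respect to ordinary radial $\SLE_{\kappa}$, which for $\kappa\in(0,4]$ is almost surely generated by a continuous simple curve. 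Since $\rho_j\geq 0$, the drift terms in~\eqref{eqn::SLEkapparhomu_SDE} repel $\xi_t$ away from each $V_t^j$, so no force point is ever disconnected from the origin. Hence local absolute continuity extends to a coupling on every compact time interval, giving continuity globally in time up to the potential lifetime.

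Showing transience $\gamma_t\to 0$ as $t\to\infty$ is the substantive part. Since the curve is parameterized by capacity, the conformal radius of the component of $\U\setminus\gamma_{[0,t]}$ containing $0$ decays like $\ee^{-t}$, so by Koebe's one-quarter theorem $\dist(0,\gamma_{[0,t]}^c)\to 0$. What must be ruled out is that $\gamma_t$ fails to converge (for instance, by winding or repeatedly approaching and receding from $0$). Following Lawler, this is controlled by coupling the Loewner flow with suitable return-time estimates for the driving SDE and then invoking harmonic-measure and distortion estimates; the geometric side of that argument is insensitive to the precise form of the driving SDE and carries over once the correct probabilistic input is supplied. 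The new probabilistic ingredients are a quantitative bound on radial Bessel processes of the relevant dimension (Lemma~\ref{lem::Bessel_transience}) and a monotone coupling (Section~\ref{subsec::monotone_coupling}) which exploits the positivity $\rho_j\geq 0$ to compare the multi-force-point driving SDE with cleaner Bessel-type SDEs, one coordinate at a time. Combining these yields Lemma~\ref{lem::S2}, the key return estimate asserting that $\xi_t-V_t^j$ does not linger near $0$ for too long.

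The main obstacle I expect is proving Lemma~\ref{lem::S2} in the present generality. Concretely, the differences $\xi_t-V_t^j$ satisfy SDEs of radial Bessel type \emph{modulo} bounded-drift perturbations coming from the other force points and the spiral $\mu\,\ud t$; a~priori the secular drift from $\mu$ together with contributions from several non-zero $\rho_j$ can cooperate with the Bessel drift in ways that spoil naive one-dimensional estimates. The monotone coupling is designed precisely for this: by comparing $\xi_t-V_t^j$ with scalar radial Bessel processes for which Lemma~\ref{lem::Bessel_transience} provides sharp bounds, one obtains two-sided control on the joint process and assembles Lemma~\ref{lem::S2}. Once Lemma~\ref{lem::S2} is available, the remainder of Lawler's argument --- combining exponential decay of conformal radius with the return estimate to rule out pathological oscillation of the tip --- transplants essentially verbatim, yielding both the existence of $\lim_{t\to\infty}\gamma_t$ and the identification of this limit with $0$.
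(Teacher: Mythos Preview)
Your proposal is correct and takes essentially the same approach as the paper: local absolute continuity with respect to ordinary radial $\SLE_\kappa$ (via Lemma~\ref{lem::sleakapprhomu_mart}) handles continuity of the curve, and transience is reduced to the key estimate Lemma~\ref{lem::S2}, obtained from the radial Bessel bound (Lemma~\ref{lem::Bessel_transience}) together with the monotone coupling of Section~\ref{subsec::monotone_coupling}, after which Lawler's geometric argument applies verbatim. One small point you glossed over that the paper makes explicit: Lemma~\ref{lem::S2} is stated under the auxiliary hypothesis $\theta^p-\theta^2\le \pi/4$, and the paper uses the deterministic exponential contraction of $\Diff_t^p-\Diff_t^2$ (Lemma~\ref{lem::expdecay}) to arrange this after a finite waiting time before invoking the estimate.
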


\begin{proposition}\label{prop::radialSLE_avoidboundary}
Assume the same setup as in Proposition~\ref{prop::radialSLE_transience}. Radial $\SLE_{\kappa}^{\mu}(\bs{\rho})$ in $(\U; \ee^{\ii\bs{\theta}}; 0)$ hits the boundary $\partial\U$ only at $\ee^{\ii\theta^1}$.  
\end{proposition}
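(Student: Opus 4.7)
The plan is to combine the local absolute continuity with respect to radial $\SLE_\kappa$ provided by Lemma~\ref{lem::sleakapprhomu_mart}, the transience from Proposition~\ref{prop::radialSLE_transience}, and the radial Bessel analysis of Lemma~\ref{lem::Bessel_transience} and Section~\ref{subsec::monotone_coupling}. Write $Q$ for the law of radial $\SLE_\kappa^\mu(\bs\rho)$ and $P$ for that of radial $\SLE_\kappa$ in $(\U;\ee^{\ii\theta^{1}};0)$. By Lemma~\ref{lem::sleakapprhomu_mart}, $Q$ is locally absolutely continuous with respect to $P$ up to the stopping time $\tau$ at which some $\ee^{\ii\theta^{j}}$, $j\neq 1$, first gets disconnected from the origin. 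Since $\kappa\in(0,4]$, under $P$ the curve is almost surely a simple continuous curve with $\gamma_{(0,\infty)}\subset\U$; hence $\U\setminus\gamma_{[0,t]}$ stays simply connected with all of $\partial\U\setminus\{\ee^{\ii\theta^{1}}\}$ on its boundary, so no force point ever gets disconnected and $\tau=\infty$ $P$-almost surely.

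The crucial intermediate step, and the main obstacle, is to show that under $Q$ the tip never reaches a force point. From the SDE~\eqref{eqn::SLEkapparhomu_SDE}, each angular difference $X_t^{j}:=V_t^{j}-\xi_t$ satisfies
\[
\ud X_t^{j} = \Big(1+\tfrac{\rho_j}{2}\Big)\cot\Big(\tfrac{X_t^{j}}{2}\Big)\,\ud t + \sum_{k\neq j}\tfrac{\rho_k}{2}\cot\Big(\tfrac{V_t^{k}-\xi_t}{2}\Big)\,\ud t - \mu\,\ud t - \sqrt{\kappa}\,\ud B_t,
\]
whose drift near $X_t^{j}=0$ behaves like $(2+\rho_j)/X_t^{j}$. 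Using the monotone coupling of Section~\ref{subsec::monotone_coupling} to discard the cross-terms involving other force points, $|X_t^{j}|$ is dominated below by a radial Bessel process of effective dimension $d_j := 1 + 2(2+\rho_j)/\kappa\ge 2$ (since $\kappa\le 4$ and $\rho_j\ge 0$); by Lemma~\ref{lem::Bessel_transience} such a process almost surely never reaches $0$. Hence $\gamma_t\neq\ee^{\ii\theta^{j}}$ for all $t\ge 0$ under $Q$, and combined with Proposition~\ref{prop::radialSLE_transience} (which extends $\gamma$ to a continuous map on $[0,\infty]$ with $\gamma_\infty=0$), compactness yields $\inf_{t\ge 0}|\gamma_t-\ee^{\ii\theta^{j}}|>0$ $Q$-a.s.\ for each $j\neq 1$.

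With this uniform separation in hand, I would introduce the bounded localizing times
\[
\tau_n := \tau\wedge n \wedge \inf\Big\{t\ge 0 : \min_{2\le j\le p}|\gamma_t-\ee^{\ii\theta^{j}}|\leq 1/n\Big\}, \qquad n\in\N.
\]
On $[0,\tau_n]$, the Radon-Nikodym derivative $M_t(\nradpartfn{1}{\mu;\bs{\rho}})$ from~\eqref{eqn::slekapparhomu_mart} is uniformly bounded: the capacity factor satisfies $g_t'(0)=\ee^t\leq\ee^n$, the covering-map derivatives $\covmap_t'(\theta^{j})$ are controlled by Koebe's distortion theorem as long as the tip stays at distance $\ge 1/n$ from $\ee^{\ii\theta^{j}}$, and the partition function $\nradpartfn{1}{\mu;\bs{\rho}}$ is bounded on the corresponding locus. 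Thus $Q|_{\mathcal{F}_{\tau_n}}\ll P|_{\mathcal{F}_{\tau_n}}$, and the $P$-null event ``$\gamma_t\in\partial\U\setminus\{\ee^{\ii\theta^{1}}\}$ for some $t\in(0,\tau_n]$'' is also $Q$-null. By the uniform separation above, $Q$-almost surely $\tau_n=\tau\wedge n$ for all sufficiently large $n$, so $\tau_n\uparrow\tau$. Hence $\gamma$ a.s.\ does not touch $\partial\U\setminus\{\ee^{\ii\theta^{1}}\}$ on $(0,\tau)$; on $\{\tau<\infty\}$ the curve would necessarily touch $\partial\U$ precisely at time $\tau$ (since a simple arc can disconnect a boundary point from $0$ only upon touching $\partial\U$), yielding a contradiction. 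Therefore $\tau=\infty$ $Q$-a.s., and $\gamma$ meets $\partial\U$ only at $\ee^{\ii\theta^{1}}$, as claimed.
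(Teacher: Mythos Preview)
Your overall architecture---show the force points are never swallowed, hence $\tau=\infty$, then transfer boundary-avoidance from radial $\SLE_\kappa$ via absolute continuity on finite time intervals---is exactly the paper's proof of the one-force-point case (Lemma~\ref{lem::radialSLE_avoidboundary2}), lifted to $p\ge 2$. The endgame with the localizing times $\tau_n$ and transience is fine (indeed somewhat redundant once you know $\tau=\infty$).

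The substantive gap is in the step ``$|X_t^j|$ is dominated below by a radial Bessel process via the monotone coupling of Section~\ref{subsec::monotone_coupling}.'' Two issues. First, the lemma you cite, Lemma~\ref{lem::Bessel_transience}, gives a tail estimate and requires the strict inequality $\kappa<4\alpha$; at the boundary case $\kappa=4$, $\rho_j=0$ it is vacuous. The non-hitting statement you need is Lemma~\ref{lem::Bessel_avoid_0and2pi}. Second, and more seriously, the coupling actually provided in Lemma~\ref{lem::coupleXDeltaP} goes the \emph{wrong way} for your purpose: it bounds $\Diff_t^{2}$ from \emph{above} and $\Diff_t^{p}$ from \emph{below} by a radial Bessel with $\alpha=1+\bar\rho/2$. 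You need a lower bound on $\Diff_t^{2}$ near $0$, and for that the cross-terms $\sum_{j\ge 3}\tfrac{\rho_j}{2}\cot(\Diff_t^{j}/2)$ are not obviously nonnegative---they can be very negative if $\Diff_t^{p}$ is near $2\pi$. One can rescue this using the deterministic bound $\Diff_t^{p}-\Diff_t^{2}\le \theta^p-\theta^2<2\pi$ from Lemma~\ref{lem::expdecay} to show the cross-terms are bounded below whenever $\Diff_t^{2}$ is small, and then run a scale-function argument as in Lemma~\ref{lem::Bessel_avoid_0and2pi}; but this is not the coupling you invoked.

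The paper sidesteps this difficulty by a different comparison. Rather than analyzing the force-point processes, it takes an \emph{arbitrary} boundary point $\theta\in(\theta^1,\theta^2)$ and observes that since $\cot$ is decreasing on $(0,2\pi)$,
\[
\sum_{j=2}^{p}\frac{\rho_j}{2}\cot\Big(\frac{V_t^j-\xi_t}{2}\Big)\;\ge\;\frac{\bar\rho}{2}\,\cot\Big(\frac{V_t^p-\xi_t}{2}\Big).
\]
This yields a monotone coupling (in the spirit of Lemma~\ref{lem::coupleXDeltaP}, now in the correct direction) with the process $\covmap_t(\theta)-\xi_t$ under $\SLE_\kappa^\mu(\bar\rho)$ with a \emph{single} force point at $\theta^p$. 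The latter never hits $0$ by Lemma~\ref{lem::radialSLE_avoidboundary2}, hence neither does the former; symmetry handles the other side. So the paper reduces the multi-force-point case to the single-force-point case rather than directly to a pure Bessel, which is what makes the comparison go through cleanly.
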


The analogous result to Proposition~\ref{prop::radialSLE_avoidboundary} is well understood in the chordal setting (e.g.~\cite{Dubedat:Duality_of_SLE}). 
Because we have not been able to identify a reference for the radial setting, we provide a proof in Section~\ref{subsec::radialSLE_avoidboundary},
both for readers' convenience and for later reference.

\subsection{Radial Bessel processes}

Fix $\alpha>0, \, \mu\in\R$, and $0<\kappa\le 4\alpha$.
Let $X =(X_t)_{t \geq 0}$ be the (unique) solution to the SDE 
\begin{align}\label{eqn::Bessel_SDE}
\ud X_t=-\sqrt{\kappa} \, \ud B_t + \big( \alpha \cot(X_t/2)-\mu\big)  \, \ud t, \qquad X_0 \in (0,2\pi),
\end{align}
where $B_t$ is standard one-dimensional Brownian motion. 

\begin{lemma} \label{lem::Bessel_avoid_0and2pi}
Fix $\alpha>0, \, \mu\in\R$, and $0<\kappa\le 4\alpha$.  The solution $(X_t)_{t \geq 0}$ of~\eqref{eqn::Bessel_SDE} never hits $0$ nor $2\pi$. 
\end{lemma}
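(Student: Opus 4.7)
The plan is to apply the classical scale function method for one-dimensional regular diffusions in order to show that both boundaries of the open interval $(0,2\pi)$ are inaccessible for the SDE~\eqref{eqn::Bessel_SDE}. The drift $b(x) = \alpha \cot(x/2) - \mu$ is smooth on every compact subinterval of $(0,2\pi)$ and the diffusion coefficient is the nonzero constant $\sqrt{\kappa}$, so the SDE has a pathwise unique strong solution up to the first hitting time $\tau := \inf\{t \ge 0 : X_t \in \{0, 2\pi\}\}$. It suffices to show $\tau = \infty$ almost surely.

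First, I would compute the scale function explicitly by solving $\tfrac{\kappa}{2} f''(x) + b(x) f'(x) = 0$, which yields
\[
f'(x) = \exp\!\Bigl(\tfrac{2\mu x}{\kappa}\Bigr) \, \bigl|\sin(x/2)\bigr|^{-4\alpha/\kappa}, \qquad x \in (0, 2\pi),
\]
and the natural-scale function $f(x) := \int_\pi^x f'(y) \, \ud y$. By It\^o's formula the stopped process $Y_t := f(X_{t\wedge\tau})$ is a continuous local martingale. The key calculation is the boundary behavior of $f$: near $x = 0$ we have $\sin(x/2) \sim x/2$, hence $f'(x) \sim c \, x^{-4\alpha/\kappa}$. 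The hypothesis $\kappa \le 4\alpha$ is used precisely here: it gives $4\alpha/\kappa \ge 1$, so $f'$ fails to be integrable at $0$ (including the critical case $\kappa = 4\alpha$, where the divergence is logarithmic), and consequently $f(0^+) = -\infty$. The symmetric calculation near $x = 2\pi$ yields $f(2\pi^-) = +\infty$, so $f \colon (0, 2\pi) \to \R$ is a diffeomorphism onto all of $\R$.

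Finally, I would deduce $\tau = \infty$ from these boundary values via the Dambis--Dubins--Schwarz representation $Y_t = W_{\langle Y \rangle_t}$ for a standard Brownian motion $W$. On the event $\{\tau < \infty\}$, the continuity of $X$ forces $\lim_{t \uparrow \tau} Y_t \in \{-\infty, +\infty\}$. However, if $\langle Y \rangle_\tau < \infty$ then $Y$ is a time-change of $W$ on a bounded interval and therefore converges to a finite limit, contradicting $\pm\infty$; and if $\langle Y \rangle_\tau = \infty$ then $\lim_{t\uparrow\tau} Y_t = \lim_{s\to\infty} W_s$, which does not exist because Brownian motion oscillates between $\pm\infty$. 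Either case yields a contradiction, so $\tau = \infty$ almost surely. An equivalent formulation is to invoke Feller's test for explosion directly with the computed scale function.

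The step that requires the most care is the verification that $f$ actually diverges at both endpoints; this is where the precise hypothesis $\kappa \le 4\alpha$ enters, rather than any stronger inequality. Once this boundary analysis is in hand, the rest of the argument is a standard 1-D diffusion routine and mirrors the scheme of~\cite{Lawler:Continuity_of_radial_and_two-sided_radial_SLE_at_the_terminal_point} specialised to the radial Bessel-with-spiral equation.
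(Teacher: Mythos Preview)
Your argument is correct and uses the same key boundary computation as the paper---namely, that $\exp(2\mu x/\kappa)\,|\sin(x/2)|^{-4\alpha/\kappa}$ fails to be integrable at each endpoint precisely when $\kappa \le 4\alpha$---but the packaging is genuinely different. The paper does not use the scale function or Feller's test. Instead it chooses an auxiliary $f$ satisfying $\tfrac{\kappa}{2}f'' + b f' = 1$ (not $0$), so that $f(X_{t\wedge s\wedge T_\eps}) - t\wedge s\wedge T_\eps$ is a bounded martingale; optional stopping then gives the explicit bound $\PP[T_\eps \le s] \le s / \bigl(f(\eps)\wedge f(2\pi-\eps)\bigr)$, and the same non-integrability at the boundary forces the denominator to diverge as $\eps \to 0$. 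The paper's route is somewhat more elementary (no Dambis--Dubins--Schwarz machinery, no appeal to Feller's criterion) and yields a quantitative hitting-probability estimate along the way, whereas your scale-function approach is the textbook one-dimensional diffusion argument and is perhaps more recognisable. Both are short, and both reduce to the same integral divergence; there is no substantive gap in your proposal.
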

\begin{proof}
Define $T_{\eps}:=\inf\{t \geq 0 \colon X_t\le \eps \textnormal{ or } X_t\ge 2\pi-\eps \}$ and consider the density
\begin{align*}
f(\theta):=\frac{2}{\kappa} \int_{\theta}^{X_0} \ud y \int_{y}^{X_0} \ud u \, 
\Big(\frac{\sin(u/2)}{\sin(y/2)}\Big)^{4\alpha/\kappa}
\exp \big( \tfrac{2\mu}{\kappa} (y-u)\big) \; \ge 0 ,\qquad \theta\in (0,2\pi) ,
\end{align*}
which satisfies the differential equation
\begin{align*}
\frac{\kappa}{2} f''(\theta)+ f'(\theta) \big( \alpha \cot(\theta/2)-\mu \big) = 1 ,\qquad \theta\in (0,2\pi) .
\end{align*}
It\^o's formula shows that, for any $s \geq 0$, the process $\big(f(X_{t\wedge s\wedge T_\eps})-t\wedge s\wedge T_\eps \big)_{t\ge 0}$ is a bounded martingale.
Optional stopping theorem thus gives
\begin{align} \label{eqn::Bessel_avoid_0and2pi_aux1}
\E[f(X_{s\wedge T_\eps})]-\E[s\wedge T_\eps]=f(X_0)-0=0.
\end{align}
Using the estimate
\begin{align*}
\E[f(X_{s\wedge T_\eps})]\ge \PP[T_\eps\le s] \, \big( f(\eps) \wedge f(2\pi-\eps) \big),
\end{align*}
we obtain
\begin{align} \label{eqn::Bessel_avoid_0and2pi_aux2}
\PP[T_\eps\le s] \le \frac{\E[s\wedge T_\eps]}{f(\eps) \wedge f(2\pi-\eps)} \le \frac{s}{f(\eps) \wedge f(2\pi-\eps)}.
\end{align}
Now, since $\kappa\le 4\alpha$, we have $f(\eps), \, f(2\pi-\eps)\to \infty$ as $\eps\to 0$. 
Combining this with~\eqref{eqn::Bessel_avoid_0and2pi_aux2} and letting $\eps\to 0$, we obtain $\PP[T_0\le s]=0$. 
Letting $s\to\infty$, we the obtain $\PP[T_0< \infty]=0$, yielding the asserted result.
\end{proof}

The proof of the transience in Proposition~\ref{prop::radialSLE_transience} will need the following estimate.
\begin{lemma}\label{lem::Bessel_transience}
Fix $\alpha>0, \, \mu\in\R$, and $0<\kappa< 4\alpha$. 
For any $t_0>0$, there exists a constant $C_{\eqref{eqn::Bessel_transience}}\in (0,\infty)$ depending on $\alpha$, $\kappa$, $\mu$, and $t_0$ such that 
\begin{align}\label{eqn::Bessel_transience}
\PP\Big[\min_{0\le t\le t_0} X_t\le \eps X_0\Big] \le C_{\eqref{eqn::Bessel_transience}} \, \eps^{4\alpha/\kappa-1} , \qquad \eps > 0.
\end{align}
\end{lemma}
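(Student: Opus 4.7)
The plan is to combine a scale-function bound for the probability of reaching $\eps X_0$ before some level $M < 2\pi$ with a pathwise SDE comparison that recycles the estimate to handle the time constraint. Concretely, for $M \in (X_0, 2\pi)$ define
\[
f_M(x) := \int_x^M \frac{\exp(2\mu y/\kappa)}{\sin(y/2)^{4\alpha/\kappa}} \, dy, \qquad x \in (0,M],
\]
and check directly that $\tfrac{\kappa}{2} f_M'' + (\alpha \cot(x/2) - \mu) f_M' = 0$, so that $f_M(X_t)$ is a local martingale by It\^o's formula. Since $4\alpha/\kappa > 1$, the integrand behaves like a constant times $y^{-4\alpha/\kappa}$ near $y = 0$, yielding $f_M(x) \sim C_0 \, x^{1-4\alpha/\kappa}$ as $x \to 0^+$ with $C_0 = C_0(\alpha,\kappa) > 0$. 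Setting $T_\eps := \inf\{t \ge 0 : X_t \le \eps X_0\}$ and $T_M := \inf\{t \ge 0 : X_t \ge M\}$, optional stopping on the bounded martingale $(f_M(X_{t \wedge T_\eps \wedge T_M}))_{t \ge 0}$ combined with $f_M(M) = 0$ gives
\[
\PP_{X_0}[T_\eps < T_M] \;=\; \frac{f_M(X_0)}{f_M(\eps X_0)} \;\le\; C_M \, \eps^{4\alpha/\kappa - 1},
\]
with $C_M$ depending on $\alpha, \kappa, \mu, X_0, M$ but not on $\eps$.

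To incorporate the time constraint, I would use two further ingredients. First, since $b(x) := \alpha \cot(x/2) - \mu$ is strictly decreasing on $(0,2\pi)$ and the diffusion coefficient is constant, the standard SDE comparison theorem yields a pathwise monotone coupling: solutions $X^{y_1}, X^{y_2}$ of~\eqref{eqn::Bessel_SDE} driven by the same Brownian motion with $y_1 \le y_2$ satisfy $X^{y_1}_t \le X^{y_2}_t$ for all $t \ge 0$, whence the hitting-time monotonicity $\PP_{y_2}[T_\eps \le t_0] \le \PP_{y_1}[T_\eps \le t_0]$. Second, by Lemma~\ref{lem::Bessel_avoid_0and2pi} the process does not reach $2\pi$ in finite time, so by monotone convergence $\PP_{X_0}[T_M \le t_0] \to 0$ as $M \to 2\pi$, and we may fix $M^* = M^*(X_0, t_0, \alpha, \kappa, \mu) \in (X_0, 2\pi)$ with $\PP_{X_0}[T_{M^*} \le t_0] \le \tfrac{1}{2}$.

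Applying the strong Markov property at $T_{M^*}$ then gives
\[
\PP_{X_0}[T_\eps \le t_0] \;\le\; \PP_{X_0}[T_\eps < T_{M^*}] \,+\, \PP_{X_0}[T_{M^*} \le t_0] \cdot \PP_{M^*}[T_\eps \le t_0],
\]
and the monotonicity $\PP_{M^*}[T_\eps \le t_0] \le \PP_{X_0}[T_\eps \le t_0]$ combined with $\PP_{X_0}[T_{M^*} \le t_0] \le \tfrac{1}{2}$ lets one absorb the final term into the left-hand side, yielding $\PP_{X_0}[T_\eps \le t_0] \le 2 C_{M^*} \eps^{4\alpha/\kappa - 1}$, which is the claim with $C_{\eqref{eqn::Bessel_transience}} = 2 C_{M^*}$. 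The two points warranting care are the precise asymptotic $f_M(x) \sim C_0 \, x^{1 - 4\alpha/\kappa}$ as $x \to 0^+$ (a Laplace-type estimate) and the correct direction of the coupling produced by the monotone drift $b$; both are routine but easy to invert by mistake.
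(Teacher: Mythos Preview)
Your argument is structurally close to the paper's and would prove the bound for each fixed $X_0$, but it does not give the constant independent of $X_0$ that the statement demands (and that the subsequent applications, e.g.\ Lemma~\ref{lem::Delta2} and Lemma~\ref{lem::S2}, actually need, since there the estimate is invoked with random, varying initial data). The problem is your choice of $M^*$: you need $M^* > X_0$ and $\PP_{X_0}[T_{M^*}\le t_0]\le\tfrac12$, but by your own monotone coupling $x\mapsto \PP_x[T_{M^*}\le t_0]$ is increasing, so this forces $M^*\uparrow 2\pi$ as $X_0\uparrow 2\pi$. In that regime the integrand in $f_{M^*}$ has a second singularity at $2\pi$, both $f_{M^*}(X_0)$ and $f_{M^*}(\eps X_0)$ diverge, and in fact $f_{M^*}(X_0)/f_{M^*}(\eps X_0)\to 1$ rather than behaving like $\eps^{4\alpha/\kappa-1}$; so $C_{M^*}$ blows up and your final constant $2C_{M^*}$ is not uniform.

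The paper avoids this by fixing the upper barrier at $\pi$ and splitting into $X_0\le\pi/2$ versus $X_0\in(\pi/2,2\pi)$. For $X_0\le\pi/2$ it runs essentially your renewal argument (optional stopping with $f_\pi$, then strong Markov) with explicit two-sided bounds on $f_\pi$ that are uniform on $(0,\pi]$; for $X_0>\pi/2$ it reduces to the first case via the hitting time of $\pi/2$. Your monotone coupling actually makes this reduction cleaner than the paper's return-time step: once you have a uniform bound for $X_0\le\pi/2$, then for $X_0>\pi/2$ you can write
\[
\PP_{X_0}\Big[\min_{[0,t_0]} X\le \eps X_0\Big]\le \PP_{X_0}\Big[\min_{[0,t_0]} X\le 2\pi\eps\Big]\le \PP_{\pi/2}\Big[\min_{[0,t_0]} X\le 2\pi\eps\Big]
\]
by the coupling, and the last term equals $\PP_{\pi/2}[\min X\le 4\eps\cdot\pi/2]\le C\,(4\eps)^{4\alpha/\kappa-1}$. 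So the fix is simply to pin $M^*$ (say $M^*=\pi$), restrict your renewal argument to $X_0\le\pi/2$ with uniform constants, and then use your coupling to cover the remaining range.
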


\begin{proof} 
Define $T_{\eps}:=\inf\{t \geq 0 \colon X_t\le \eps X_0 \textnormal{ or }X_t\ge \pi \}$.
We first show that  
\begin{align}\label{eqn::Bessel_estimate1}
\PP[X_{T_{\eps}}=\eps X_0]\le C_{\eqref{eqn::Bessel_estimate1}} \, \eps^{4\alpha/\kappa-1},\qquad\textnormal{where }C_{\eqref{eqn::Bessel_estimate1}}=\exp \big( \tfrac{4\pi|\mu| }{\kappa} \big) \big(\tfrac{\pi}{2}\big)^{4\alpha/\kappa}. 
\end{align}
It suffices to show~\eqref{eqn::Bessel_estimate1} for $X_0\in (0,\pi)$. 
Let us consider the density
\begin{align*}
f(\theta):=\int_{\theta}^{\pi} \exp \big( \tfrac{2\mu u}{\kappa} \big) \, \big(\sin \big(\tfrac{u}{2} \big) \big)^{-4\alpha/\kappa} \ud u,\qquad \theta\in (0,\pi) ,
\end{align*}
which satisfies the differential equation
\begin{align*}
f''(\theta)+f'(\theta) \bigg(\frac{4\alpha}{2\kappa} \cot (\theta/2) 
- \frac{2\mu}{\kappa}\bigg)=0,\qquad \theta\in (0,\pi) ,
\end{align*}
so It\^o's formula shows that $(f(X_{t\wedge T_{\eps}}))_{t\ge 0}$ is a bounded martingale.
Optional stopping theorem thus gives
\begin{align}\label{eqn::Bessel_aux1}
\PP[X_{T_{\eps}}=\eps X_0] \, f(\eps X_0)=f(X_0). 
\end{align}
Now, because $u/\pi \le \sin (u/2) \le u/2$ for $u\in (0,\pi)$, we see that
\begin{gather*}
\; \frac{2^{4\alpha/\kappa} \, (\pi^{-(4\alpha/\kappa -1)} - \theta^{-(4\alpha/\kappa-1)})}{1 - \tfrac{4\alpha}{\kappa}}
\, \exp \big( \! -\tfrac{2|\mu|\pi}{\kappa}\big)
\; = \; \exp \big( \! -\tfrac{2|\mu|\pi}{\kappa}\big) \int_{\theta}^{\pi} (u/2)^{-4\alpha/\kappa}\ud u 
\\
\; \le \; f(\theta) \le \; \exp \big( \tfrac{2|\mu|\pi}{\kappa}\big) \int_{\theta}^{\pi} (u/\pi)^{-4\alpha/\kappa}\ud u 
\; = \; \frac{\pi^{4\alpha/\kappa} \, (\pi^{-(4\alpha/\kappa -1)} - \theta^{-(4\alpha/\kappa-1)})}{1 - \tfrac{4\alpha}{\kappa}}
\, \exp \big(\tfrac{2|\mu|\pi}{\kappa}\big) .
\end{gather*}
Plugging these estimates into~\eqref{eqn::Bessel_aux1}, we obtain~\eqref{eqn::Bessel_estimate1}.

Next, we show~\eqref{eqn::Bessel_transience} when $X_0\in (0,\pi/2]$. Define
$\tilde{T}_\eps:=\inf \{ t>T_\eps \colon X_t=X_{0} \}$.
We have
\begin{align}
\;&\PP\Big[\min_{0\le t \le t_0} X_t \le \eps X_0\Big] \notag\\
\le \; &\PP \big[ X_{T_\eps}=\eps X_0\big] + \PP \Big[ T_{\eps}<\tilde{T}_\eps<t_0, \; X_{T_\eps}=\pi, \; \min_{\tilde{T}_\eps \le t \le t_0} X_t \le \eps X_0 \Big] \notag\\
= \; &\PP \big[ X_{T_\eps}=\eps X_0\big] + \PP\big[T_{\eps}<\tilde{T}_\eps<t_0, \; X_{T_\eps}=\pi\big] \; \PP \Big[ \min_{\tilde{T}_\eps \le t \le t_0} X_t \le \eps X_0 \Bigcond T_{\eps}<\tilde{T}_\eps<t_0, X_{T_\eps}=\pi\Big] \notag\\
\le \; &\PP \big[ X_{T_\eps}=\eps X_0\big] + \PP\big[T_{\eps}<\tilde{T}_\eps<t_0, \; X_{T_\eps}=\pi\big] \; \PP \Big[ \min_{0 \le t \le t_0} X_t \le \eps X_0 \Big] .\label{eqn::Bessel_aux2}
\end{align}
There is positive probability that $X$ starting at $\pi$ stays in $(\pi/2,3\pi/2)$ up to time $t_0$. As $X_0\in (0,\pi/2]$, there exists a constant $C_{\eqref{eqn::Bessel_aux3}}\in (0,1)$ depending on $\alpha$, $\kappa$, $\mu$, and $t_0$ such that
\begin{align}
\label{eqn::Bessel_aux3}
\PP \big[\tilde{T}_\eps\ge t_0 \cond X_{T_\eps}=\pi\big] \ge C_{\eqref{eqn::Bessel_aux3}}.
\end{align}
Plugging~\eqref{eqn::Bessel_aux3} into~\eqref{eqn::Bessel_aux2}, we obtain
\begin{align*}
\PP\Big[\min_{0\le t \le t_0} X_t \le \eps X_0\Big]\le \PP \big[ X_{T_\eps}=\eps X_0\big] + (1-C_{\eqref{eqn::Bessel_aux3}}) \, \PP \Big[ \min_{0 \le t \le t_0} X_t \le \eps X_0 \Big]. 
\end{align*}
Combining with~\eqref{eqn::Bessel_estimate1}, we obtain
\begin{align*}
\PP\Big[\min_{0\le t \le t_0} X_t \le \eps X_0\Big] \le \frac{C_{\eqref{eqn::Bessel_estimate1}}}{C_{\eqref{eqn::Bessel_aux3}}}\, \eps^{4\alpha/\kappa-1},
\end{align*}
which yields the asserted estimate~\eqref{eqn::Bessel_transience} when $X_0\in (0,\pi/2]$.

Finally, we show~\eqref{eqn::Bessel_transience} when $X_0\in (\pi/2,2\pi)$. 
It suffices to prove the result for $\eps<1/4$. Define $T':=\inf\{ t\geq 0 \colon X_t=\pi/2 \}$.
Since~\eqref{eqn::Bessel_transience} holds with $C_{\eqref{eqn::Bessel_transience}}=\frac{C_{\eqref{eqn::Bessel_estimate1}}}{C_{\eqref{eqn::Bessel_aux3}}}$ when $X_0=\pi/2$, we have
\begin{align}\label{eqn::Bessel_aux4}
\PP\Big[ \min_{T'<t<t_0} X_t \le \eps \pi/2 \Big] \le \PP\Big[ \min_{T'\le t<T'+t_0} X_t \le \eps \pi/2 \Big] \le \frac{C_{\eqref{eqn::Bessel_estimate1}}}{C_{\eqref{eqn::Bessel_aux3}}} \, \eps^{4\alpha/\kappa-1}.
\end{align}
Therefore, we obtain
\begin{align*}
\PP\Big[\min_{0\le t\le t_0}X_t\le \eps X_0\Big] \le \; & \PP\Big[\min_{0\le t\le t_0}X_t\le 2\eps\pi \Big] 
\, = \, \PP\Big[\min_{T'\le t\le t_0}X_t\le 2\eps\pi \Big] 
\, \le \, \frac{C_{\eqref{eqn::Bessel_estimate1}}}{C_{\eqref{eqn::Bessel_aux3}}} \, (4\eps)^{4\alpha/\kappa-1} ,
&& \textnormal{[by~\eqref{eqn::Bessel_aux4}]}
\end{align*}
which yields the asserted estimate~\eqref{eqn::Bessel_transience} when $X_0\in (\pi/2,2\pi)$. 
This concludes the proof.
\end{proof}

\subsection{A monotone coupling}
\label{subsec::monotone_coupling}

Let us now come back to the setup in Proposition~\ref{prop::radialSLE_transience}. 
Recall that the driving process for radial $\SLE_{\kappa}^{\mu}(\bs\rho)$ with $\bs\rho = (\rho_2, \ldots, \rho_p)$ satisfies the SDE~\eqref{eqn::SLEkapparhomu_SDE}:
\begin{align*}
\begin{cases}
\ud \xi_t = \sqrt{\kappa} \, \ud B_t 
- \displaystyle \sum_{j=2}^p \frac{\rho_j}{2} \cot \bigg(\frac{V_t^{j} - \xi_t}{2}\bigg) \ud t + \mu \, \ud t , 
\qquad \xi_0 = \theta^1; \\
\ud V_t^{j} =  \displaystyle  \cot \bigg(\frac{V_t^{j} - \xi_t}{2}\bigg) \ud t, \quad V_0^{j}=\theta^{j}, 
\qquad j\in \{2, \ldots, p\} ,
\end{cases}
\end{align*}
where $B_t$ is standard one-dimensional Brownian motion. 
Consider the difference $\Diff_{t} := V_t^2-\xi_t$ of the driving process $\xi_t$ and the first force point $V_t^2$, 
which solves the SDE
\begin{align*}
\ud\Diff_{t} = - \sqrt{\kappa} \, \ud B_t + \Big(1 + \frac{\rho_2}{2} \Big) \cot (\Diff_{t}/2) \, \ud t
+ \sum_{j=3}^p \frac{\rho_j}{2} \cot (\Diff_{t}^{j}/2) \, \ud t - \mu \, \ud t 
\end{align*}
writing $\Diff_{t}^{j} := V_t^j-\xi_t$, for $j\in \{3, \ldots, p\}$.
We will show in Lemma~\ref{lem::Delta2} that it 
satisfies analogous conclusions as the radial Bessel process in Lemma~\ref{lem::Bessel_transience}. 
Consequently, we derive an estimate for the process $\sin (\tfrac{\Diff_{t}}{2})$ in Lemma~\ref{lem::S2}, which will also be a key ingredient in the proof of Proposition~\ref{prop::radialSLE_transience}.

\begin{lemma}\label{lem::Delta2}
Assume that $\theta^p - \theta^2 \le\pi/4$ and $\rho_j\ge 0$ for all $2\le j\le p$.
For any $t_0>0$, there exists a constant $C_{\eqref{eqn::Delta2_estimate2}}\in (0,\infty)$ depending on $\kappa$, $\bs\rho$, $\mu$, and $t_0$ such that 
\begin{align}\label{eqn::Delta2_estimate2}
\PP\Big[\min_{0\le t\le t_0}\Diff_{t}\le \eps \, \Diff_{0}\Big]\le C_{\eqref{eqn::Delta2_estimate2}} \,\eps^{\beta_2},\qquad\textnormal{where }\beta_2 = \tfrac{2(2+\rho_2)}{\kappa} - 1 > 0.
\end{align}
\end{lemma}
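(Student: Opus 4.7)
The plan is to realize $\Diff_t$ as a stochastic majorant of the radial Bessel process $X_t$ of Lemma~\ref{lem::Bessel_transience} with parameter $\alpha = 1+\rho_2/2$, so that the target exponent $\beta_2 = \frac{4\alpha}{\kappa}-1 = \frac{2(2+\rho_2)}{\kappa}-1$ matches the one produced by that lemma. Driving $X$ by the same Brownian motion $B$ that appears in~\eqref{eqn::SLEkapparhomu_SDE} and starting from $X_0 = \Diff_0$, the difference of the drifts of $\Diff_t$ and $X_t$ reads
\begin{align*}
\alpha\big(\cot(\Diff_t/2) - \cot(X_t/2)\big) + \sum_{j=3}^{p} \frac{\rho_j}{2}\cot(\Diff_t^j/2),
\end{align*}
and the second sum is non-negative whenever each $\Diff_t^j$ lies in $(0,\pi)$, thanks to $\rho_j \ge 0$. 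A standard Tanaka-type argument applied to $(X_t-\Diff_t)^+$, exploiting the strict monotonicity of $x \mapsto \alpha\cot(x/2)$ on $(0,\pi)$, will then give $\Diff_t \ge X_t$ up to the first time this positivity condition fails.

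To control that stopping time, I would use the hypothesis $\theta^p - \theta^2 \le \pi/4$ together with a monotonicity of the spread of the force points. Since $V_t^j > V_t^2$ for $j \ge 3$ and $\cot$ is strictly decreasing on $(0,\pi)$, the identity $(V_t^j - V_t^2)' = \cot(\Diff_t^j/2) - \cot(\Diff_t/2)$ shows that $V_t^j - V_t^2$ is non-increasing as long as every $\Diff_t^j \in (0,\pi)$, so $\Diff_t^j - \Diff_t \le \pi/4$ in that regime. Setting $\tau := \inf\{t \ge 0 : \Diff_t \ge 3\pi/4\}$, this gives $\Diff_t^j \le \pi$ throughout $[0,\tau]$, and hence the coupling $\Diff_t \ge X_t$ is valid on $[0,\tau]$.

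Splitting the target event by whether $\tau > t_0$ or $\tau \le t_0$, the part $\{\tau > t_0\}$ is immediately controlled by Lemma~\ref{lem::Bessel_transience} applied to $X$, which delivers $C\eps^{\beta_2}$. For the complementary part $\{\tau \le t_0\}$, I would appeal to the strong Markov property at $\tau$: at that time the configuration still satisfies $V_\tau^p - V_\tau^2 \le \pi/4$ (by the monotonicity of the spread) and $\Diff_\tau \ge 3\pi/4$, so the comparison can be restarted from a value bounded below by an absolute constant. Since $\eps\Diff_0$ is small, the probability that $\Diff$ descends from $\Diff_\tau$ to $\eps\Diff_0$ within the remaining time is again $O(\eps^{\beta_2})$, and iterating over successive excursions of $\Diff$ above $3\pi/4$ in $[0,t_0]$ produces a bound of the required form once the constants are collected.

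The hard part will be making this iteration quantitative: I must ensure the expected number of excursions of $\Diff$ above $3\pi/4$ in $[0,t_0]$ is bounded, so that the resulting constant $C_{\eqref{eqn::Delta2_estimate2}}$ remains finite and depends only on $\kappa, \bs\rho, \mu$ and $t_0$. A cleaner alternative would be to construct, analogously to the density $f$ in the proof of Lemma~\ref{lem::Bessel_transience}, a scale function adapted to the full SDE of $\Diff_t$ and estimate the hitting probability directly; the obstacle there is that such a scale function would have to absorb the random drift contributions from $\{\Diff_t^j\}_{j\ge 3}$, which makes an explicit construction elusive.
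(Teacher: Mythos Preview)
Your coupling with $\alpha = 1 + \rho_2/2$ is sound and does deliver the correct exponent on the good interval $[0,\tau]$, but the iteration over excursions is a genuine gap that you yourself flag. To close it you would need to restart a fresh Bessel process at each downcrossing time and bound the expected number of up--down crossings of $[\pi/2,3\pi/4]$ by $\Diff_t$ on $[0,t_0]$; this can be done (the drift of $\Diff_t$ is bounded while $\Diff_t\in[\pi/2,3\pi/4]$, since $\Diff_t^j-\Diff_t\le\pi/4$ forces $\Diff_t^j\le\pi$), but it is additional work you have not supplied.

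The paper takes precisely the ``cleaner alternative'' you set aside as elusive. The point you are missing is that the scale function need not absorb the \emph{random} drifts --- a uniform bound suffices. Before the first time $T_\eps$ that $\Diff_t$ hits either $\eps\,\Diff_0$ or $\pi$, one has $\Diff_t^j \le \Diff_t + (\Diff_0^p-\Diff_0) \le \pi + \pi/4 = 5\pi/4$, so $\cot(\Diff_t^j/2) \ge \cot(5\pi/8)$, a fixed negative constant. Folding this into a modified linear drift $\hat\mu := \mu - \sum_{j\ge 3}\frac{\rho_j}{2}\cot(5\pi/8)$, the explicit density $f(\theta) = \int_\theta^\pi \exp(2\hat\mu u/\kappa)\,(\sin(u/2))^{-(1+\beta_2)}\,\ud u$ makes $f(\Diff_{t\wedge T_\eps})$ a bounded supermartingale, and optional stopping gives $\PP[\Diff_{T_\eps}=\eps\,\Diff_0]\le C\,\eps^{\beta_2}$ directly.

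For the passage from this hitting estimate to $\min_{[0,t_0]}$, the paper avoids any iteration by a sup-trick: setting $q(\eps):=\sup\PP[\min_{[0,t_0]}\Diff_t\le\eps\,\Diff_0]$ over all admissible initial data with $\Diff_0\le\pi/4$, one gets $q(\eps)\le C\,\eps^{\beta_2}+(1-c)\,q(\eps)$, where $c>0$ is the probability that after $\Diff_t$ reaches $\pi$ the process stays large for time $t_0$. That probability is controlled via the coupling of Lemma~\ref{lem::coupleXDeltaP} with $\alpha = 1+\tfrac{1}{2}\bar\rho$ applied to $\Diff_t^p$ (not to $\Diff_t$). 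The inequality then solves to $q(\eps)\le (C/c)\,\eps^{\beta_2}$ in one step.
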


To derive~\eqref{eqn::Delta2_estimate2}, we will compare $\Diff^{p}$ and $\Diff = \Diff^{2}$ to the radial Bessel process $X^\alpha$ solving~\eqref{eqn::Bessel_SDE}. 
It turns out that the former can be coupled with $X^\alpha$ so that it stays above $X^\alpha$ for all time;
and the latter can be coupled with $X^\alpha$ so that it stays below $X^\alpha$ for all time.

\begin{lemma}\label{lem::coupleXDeltaP}
Assume $\rho_j\ge 0$ for all $2\le j\le p$. 
Write $\bs{\Diff}_t = (\Diff_{t}^{2},\ldots,\Diff_{t}^{p}) = (V_{t}^{2}-\xi_t,\ldots,V_{t}^{p}-\xi_t)$. 
Consider $X^\alpha = (X^\alpha_t)_{t \geq 0}$ solving~\eqref{eqn::Bessel_SDE}, 
with $\alpha = 1 + \tfrac{1}{2} \big(\rho_2+\cdots+\rho_p \big)$. 
\begin{itemize}[leftmargin=*]
\item There exists a coupling between $X^\alpha$ and $(\bs{\Diff}_t)_{t \geq 0}$ such that $X^\alpha_0=\Diff_{0}^{p}$ and $X^\alpha_t\le \Diff_{t}^{p}$ for all $t\geq 0$. 
\item There exists a coupling between $X^\alpha$ and $(\bs{\Diff}_t)_{t \geq 0}$ such that $X^\alpha_0=\Diff_{0}^{2}$ and $X^\alpha_t\ge \Diff_{t}^{2}$ for all $t\geq 0$.
\end{itemize}
\end{lemma}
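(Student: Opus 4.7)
\textbf{Proof proposal for Lemma~\ref{lem::coupleXDeltaP}.}
The plan is to construct both couplings simultaneously on the same probability space by driving $X^\alpha$ by the \emph{same} Brownian motion $B$ that appears in the SDE~\eqref{eqn::SLEkapparhomu_SDE} for $\xi$, and then to conclude via a one-dimensional pathwise comparison (of Yamada--Watanabe / Ikeda--Watanabe type). Concretely, starting from $X^\alpha_0 = \Diff_0^p$ (respectively $X^\alpha_0 = \Diff_0^2$), let $Y_t := \Diff_t^p - X^\alpha_t$ (respectively $Y_t := \Diff_t^2 - X^\alpha_t$). Since the radial Loewner SDE gives $\ud \Diff_t^j = -\sqrt{\kappa}\,\ud B_t + b^j(\bs{\Diff}_t)\,\ud t$ with the same diffusion coefficient $-\sqrt{\kappa}$ as $X^\alpha$, the martingale parts cancel and $Y_t$ is a continuous process of locally finite variation, so Tanaka's formula (or direct integration) reduces the problem to a sign check on the drift. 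For coupling~1 we want $Y_t \ge 0$, and compute
\begin{equation*}
\tfrac{\ud}{\ud t} Y_t \; = \; \Big(1 + \tfrac{\rho_p}{2}\Big) \cot\Big(\tfrac{\Diff_t^p}{2}\Big)
+ \sum_{k=2}^{p-1} \tfrac{\rho_k}{2} \cot\Big(\tfrac{\Diff_t^k}{2}\Big)
- \mu
- \alpha \cot\Big(\tfrac{X^\alpha_t}{2}\Big) + \mu .
\end{equation*}
When $Y_t \le 0$, i.e.\ $\Diff_t^p \le X^\alpha_t$, we use that $\cot(\cdot/2)$ is strictly decreasing on $(0,2\pi)$ together with the ordering $\Diff_t^k \le \Diff_t^p \le X^\alpha_t$ for $k<p$, which yields $\cot(\Diff_t^k/2) \ge \cot(X^\alpha_t/2)$; combined with $\rho_k \ge 0$ and $\alpha = 1 + \tfrac{1}{2}(\rho_2 + \cdots + \rho_p)$, this gives $\tfrac{\ud}{\ud t}Y_t \ge 0$ whenever $Y_t \le 0$, so $Y_t^- \equiv 0$ and $\Diff_t^p \ge X^\alpha_t$. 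The argument for coupling~2 is symmetric: on $\{Y_t \ge 0\}$ we have $\Diff_t^k \ge \Diff_t^2 \ge X^\alpha_t$ for $k>2$, giving $\cot(\Diff_t^k/2) \le \cot(X^\alpha_t/2)$, hence $\tfrac{\ud}{\ud t} Y_t \le 0$ whenever $Y_t \ge 0$, and $\Diff_t^2 \le X^\alpha_t$.

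The main obstacle will be dealing with the singularities of $\cot(\cdot/2)$ at $0$ and $2\pi$: the comparison only makes literal sense while all relevant processes stay in $(0,2\pi)$. I would therefore introduce stopping times $\tau_\delta^- := \inf\{ t \ge 0 \colon \min_j \Diff_t^j \le \delta \}$ and $\tau_\delta^+ := \inf\{ t \ge 0 \colon \Diff_t^p \ge 2\pi - \delta \}$, run the comparison on $[0, \tau_\delta^- \wedge \tau_\delta^+]$ where all coefficients are Lipschitz, and then send $\delta \to 0$. By Lemma~\ref{lem::Bessel_avoid_0and2pi} the process $X^\alpha$ a.s.\ stays in $(0,2\pi)$ (here $\alpha \ge 1$ and $\kappa \le 4 \le 4\alpha$), so in the limit the coupling bound $\Diff_t^p \ge X^\alpha_t > 0$ extends to all $t$ before $\tau_0^+$, and $\Diff_t^2 \le X^\alpha_t < 2\pi$ extends to all $t$ before $\tau_0^-$; in both cases the bound passes to the full lifetime of the radial $\SLE_\kappa^\mu(\bs\rho)$ process, which is all that is needed in the applications.
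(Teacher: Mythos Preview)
Your proposal is correct and is essentially the same coupling as in the paper: both drive $X^\alpha$ and $\bs{\Diff}$ by the same Brownian motion so that the difference is a finite-variation process, and both conclude by a drift-sign check. Your write-up is in fact slightly more explicit than the paper's, since you invoke the preserved ordering $\Diff_t^2<\cdots<\Diff_t^p$ directly at the moment $Y_t$ touches~$0$, whereas the paper phrases the same comparison through the auxiliary ODE for $Y_t^\ell$ and only records the ordering at $t=0$.
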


\begin{proof}
Given $X^\alpha$, let $\bs Y_t = (Y_t^{2}, \ldots, Y_t^{p})$ be the solution to the following SDE system: 
for $2\le \ell\le p$, 
\begin{align}\label{eqn::Delta_Y_coupling}
\begin{split}
\ud Y_t^{\ell} = \; & \alpha \cot\Big(\tfrac{X^\alpha_t}{2}\Big) \ud t - \cot\Big(\tfrac{X^\alpha_t-Y_t^{\ell}}{2}\Big) \ud t - \sum_{j=2}^p\frac{\rho_j}{2} \cot\Big(\tfrac{X^\alpha_t-Y_t^{j}}{2}\Big) \ud t , 
\\
Y_0^{\ell} = \; & \Diff_{0}^{p}-\Diff_{0}^{\ell} .
\end{split}
\end{align}
As the right-hand side of this SDE is locally Lipschitz, there is a unique strong solution to $(\bs Y_t)_{t \geq 0}$. 
Note that $\bs{\Diff}_t$ then satisfies $\bs{\Diff}_t^{\ell} =X^\alpha_t-Y_t^{\ell}$, for $2\le \ell\le p$.
This gives a coupling between $X^\alpha$ and $\bs\Diff$, and it remains to check that 
$\Diff_{t}^{p}=X^\alpha_t-Y_t^{p}\ge X^\alpha_t$ for all $t \geq 0$. To this end, since $Y_t^{p}$ solves~\eqref{eqn::Delta_Y_coupling} with $\ell = p$,
\begin{align}\label{eqn::Delta_Yp}
\ud Y_t^{p} = \; & \bigg( \cot\Big(\tfrac{X^\alpha_t}{2}\Big) - \cot\Big(\tfrac{X^\alpha_t-Y_t^{p}}{2}\Big) \bigg) \ud t + \sum_{j=2}^{p} \frac{\rho_j}{2} \bigg( \cot\Big(\tfrac{X^\alpha_t}{2}\Big) - \cot\Big(\tfrac{X^\alpha_t-Y_t^{j}}{2}\Big) \bigg) \ud t , 
\end{align}
and we have $0 = Y_0^{p} < \cdots< Y_0^{2}$, while 
the cotangent is creasing on $(0,\pi)$ and we assumed that $\rho_j \geq 0$ for all $j$, we see that 
the right-hand side of~\eqref{eqn::Delta_Yp} is negative, whence $Y_t^{p}\le 0$ for all $t \geq 0$.
\end{proof}

\begin{proof}[Proof of Lemma~\ref{lem::Delta2}]
For $\Diff_{t} = \Diff_{t}^{2}$, define 
$T_\eps:=\inf \{ t\ge 0 \colon \Diff_{t}\le \eps \, \Diff_{0}\textnormal{ or } \Diff_{t}\ge \pi \}$.
We first show that
\begin{align}\label{eqn::Delta2_estimate1}
\begin{split}
\PP[\Diff_{T_{\eps}}^{2} =\eps \, \Diff_{0}]\le  \; & C_{\eqref{eqn::Delta2_estimate1}} \, \eps^{\beta_2}, 
\\
\textnormal{where} \qquad
C_{\eqref{eqn::Delta2_estimate1}} = \; & \exp \big( \tfrac{4\pi|\hat{\mu}|}{\kappa} \big) \big(\tfrac{\pi}{2}\big)^{\beta_2+1} ,
\qquad \textnormal{and} \qquad
\hat{\mu} := \mu-\sum_{j=3}^p\tfrac{\rho_j}{2}\cot(5\pi/8) .
\end{split}
\end{align}
It suffices to prove~\eqref{eqn::Delta2_estimate1} for $\Diff_{0}\in (0,\pi)$. Let us consider the density
\begin{align*}
f(\theta):=\int_{\theta}^{\pi} \frac{\exp \big( \tfrac{2\hat\mu u}{\kappa} \big) \, \ud u}{\big(\sin \big(\tfrac{u}{2} \big) \big)^{1+\beta_2}}  \; \le 0 ,\qquad \theta\in (0,\pi) ,
\end{align*}
which satisfies the differential equation
\begin{align*}
f''(\theta)+f'(\theta)\bigg(\frac{\beta_2+1}{2}\cot(\theta/2)-\frac{2\hat{\mu}}{\kappa}\bigg)=0,\qquad \theta\in (0,\pi). 
\end{align*}
Using It\^{o}'s formula, we compute
\begin{align*}
\ud f( \Diff_{t}) 
=\;& \sqrt{\kappa} f'( \Diff_{t} ) \ud B_t 
\; + \; \Big( \underbrace{\frac{\kappa}{2} f''( \Diff_{t}) + f'( \Diff_{t} )   \Big( \cot(\Diff_{t}/2) + \sum_{j=2}^{p} \frac{\rho_j}{2} \cot(\Diff_{t}^{j}/2) -\mu \Big) }_{=: \, F_{t}}\Big) \ud t.
\end{align*}
When $0 \le t\le T_\eps$, since $f'\le 0$ and $\rho_j\ge 0$ and 
\begin{align*}
\Diff_{t}^{j} \, \le \, \Diff_{t}+\Diff_{0}^{p}-\Diff_{0} \, \le \,  \pi+\Diff_{0}^{p}-\Diff_{0} \, \le \, 5\pi/4,
\qquad 3\le j\le p ,
\end{align*}
we see that
\begin{align*}
F_t\le  \frac{\kappa}{2} f''( \Diff_{t}) + f'( \Diff_{t})  \bigg( \frac{(\beta_2+1) \kappa}{4} \cot(\Diff_{t}/2)  -\hat{\mu}\bigg) =0.
\end{align*}
This shows that $(f( \Diff_{t\wedge T_\eps}))_{t\ge 0}$ is a bounded supermartingale, and optional stopping theorem yields
\begin{align*}
\PP [ \Diff_{T_\eps}^{2}=\eps \, \Diff_{0} ] f(\eps \, \Diff_{0})  \le f(\Diff_{0}).
\end{align*}
which gives the desired~\eqref{eqn::Delta2_estimate1}. 

Next, we show~\eqref{eqn::Delta2_estimate2} for $\Diff_{0}\in (0,\pi/4]$. 
Define 
\begin{align*}
q(\eps):=\sup\Big\{\PP \Big[ \min_{0 \le t \le t_0} \Diff_{t} \le \eps \, \Diff_{0} \Big] \colon \Diff_{0}\in(0,\pi/4] , \;  \Diff_{0} = \Diff_{0}^{2}\le\Diff_{0}^{3}\le\cdots\le\Diff_{0}^{p}\le \Diff_{0}+\pi/4 \Big\}.
\end{align*}
It suffices to show that
\begin{align}\label{eqn::Delta2_estimate2_first}
q(\eps)\le \frac{C_{\eqref{eqn::Delta2_estimate1}}}{C_{\eqref{eqn::Bessel_aux3}}} \, \eps^{\beta_2}.
\end{align}
Define $\tilde{T}_\eps:=\inf \{ t>T_\eps \colon \Diff_{t}=\Diff_{0} \}$ and $\hat{T}^p_\eps:=\inf \{ t>T_\eps \colon \Diff_{t}^{p}=\pi \}$. 
For $\Diff_{0}\in(0,\pi/4]$, we have
\begin{align}
&\PP\Big[\min_{0\le t \le t_0} \Diff_{t} \le \eps \, \Diff_{0}\Big] \notag\\
\le \;&\PP[ \Diff_{T_\eps} =\eps \, \Diff_{0}] + \PP \Big[ T_{\eps}<\tilde{T}_\eps<t_0, \; \Diff_{T_\eps}=\pi, \; \min_{\tilde{T}_\eps \le t \le t_0} \Diff_{t} \le \eps \, \Diff_{0} \Big] \notag\\
= \;&\PP [ \Diff_{T_\eps}=\eps \, \Diff_{0}] + \PP[T_{\eps}<\tilde{T}_\eps<t_0, \; \Diff_{T_\eps}=\pi]\;\PP \Big[ \min_{\tilde{T}_\eps \le t \le t_0} \Diff_{t} \le \eps \, \Diff_{0} \Bigcond T_{\eps}<\tilde{T}_\eps<t_0, \; \Diff_{T_\eps}=\pi\Big]\notag\\
\le \;&\PP [ \Diff_{T_\eps}=\eps \, \Diff_{0}] + \PP[T_{\eps}<\tilde{T}_\eps<t_0, \; \Diff_{T_\eps}=\pi]\; q(\eps).\label{eqn::Delta2_estimate_aux2}
\end{align}
On the one hand, when $\Diff_{T_\eps}=\pi$, since $\Diff_{t}^{p}-\Diff_{t}< \pi/4$ and $\Diff_{0}\le \pi/4$, we have $\Diff_{\hat{T}^p_\eps}^{p}=\pi$ and $\hat{T}^p_\eps<\tilde{T}_\eps$. 
Thus, we have
\begin{align}\label{eqn::Delta2_estimate_aux3}
\PP[T_{\eps}<\tilde{T}_\eps<t_0, \; \Diff_{T_\eps}=\pi] \le \PP [ \hat{T}^p_\eps<\tilde{T}_\eps<t_0, \; \Diff_{\hat{T}^p_\eps}^{p}=\pi ].
\end{align}
On the other hand, since $\Diff_{t}^{p}-\Diff_{t}< \pi/4$ and $\Diff_{0}\le \pi/4$, if $\Diff_{t}^{p}$ stays in $[\pi/2,3\pi/2]$ for $\hat{T}^p_\eps\le t\le t_0$, 
then we have $\tilde{T}_\eps\ge t_0$. 
Combining  this with Lemma~\ref{lem::coupleXDeltaP} and Equation~\eqref{eqn::Bessel_aux3}, we find that
\begin{align}\label{eqn::Delta2_estimate_aux4}
\PP [ \hat{T}^p_\eps<\tilde{T}_\eps<t_0, \; \Diff_{\hat{T}^p_\eps}^{p}=\pi ]\le 1-C_{\eqref{eqn::Bessel_aux3}}.
\end{align}
Combining~(\ref{eqn::Delta2_estimate1},~\ref{eqn::Delta2_estimate_aux2},~\ref{eqn::Delta2_estimate_aux3},~\ref{eqn::Delta2_estimate_aux4}), we obtain
\begin{align*}
q(\eps) \le C_{\eqref{eqn::Delta2_estimate1}} \, \eps^{\beta_2} + (1-C_{\eqref{eqn::Bessel_aux3}}) q(\eps),
\end{align*}
which implies~\eqref{eqn::Delta2_estimate2_first}.

Finally, we show~\eqref{eqn::Delta2_estimate2} for $\Diff_{0}\in (\pi/4,2\pi)$. It suffices to show the result for $\eps<1/8$. 
Defining $T'=\inf\{ t \ge 0 \colon \Diff_{t}=\pi/4 \}$ and using~\eqref{eqn::Delta2_estimate2_first} we obtain
\begin{align*}
\PP \Big[ \min_{T' \le t \le t_0} \Diff_{t} \le \frac{\eps \pi}{4} \Big]\le \PP \Big[ \min_{T' \le t \le T'+t_0} \Diff_{t} \le \frac{\eps \pi}{4} \Big]\le \frac{C_{\eqref{eqn::Delta2_estimate1}}}{C_{\eqref{eqn::Bessel_aux3}}} \, \eps^{\beta_2}.
\end{align*}
Therefore, we have 
\begin{align*}
\PP \Big[ \min_{0 \le t \le t_0} \Diff_{t} \le \eps \, \Diff_{0} \Big] \le& \PP \Big[ \min_{0 \le t \le t_0} \Diff_{t} \le 2\eps\pi \Big] 
\le \PP \Big[ \min_{T' \le t \le t_0} \Diff_{t} \le 2\eps\pi \Big] 
\le \frac{C_{\eqref{eqn::Delta2_estimate1}}}{C_{\eqref{eqn::Bessel_aux3}}} \, (8\eps)^{\beta_2},
\end{align*}
as desired in~\eqref{eqn::Delta2_estimate2} with $C_{\eqref{eqn::Delta2_estimate2}}=8^{\beta_2}\frac{C_{\eqref{eqn::Delta2_estimate1}}}{C_{\eqref{eqn::Bessel_aux3}}}$.
\end{proof}

\begin{lemma}\label{lem::S2}
Assume that $\theta^p - \theta^2 \le\pi/4$ and $\rho_j\ge 0$ for all $2\le j\le p$.
For any $t_0>0$, there exists a constant $C_{\eqref{eqn::S2_estimate}}\in (0,\infty)$ depending on $\kappa$, $\bs\rho$, $\mu$, and $t_0$ such that 
\begin{align}\label{eqn::S2_estimate}
\PP \Big[ \min_{0\le t \le t_0} \sin \big(\tfrac{\Diff_{t}}{2} \big) \le \eps \, \sin \big(\tfrac{\Diff_{0}}{2} \big) \Big] \le C_{\eqref{eqn::S2_estimate}} \, \eps^{\beta_2},\qquad\textnormal{where }\beta_2= \frac{2(2+\rho_2)}{\kappa} - 1 > 0 .
\end{align}
\end{lemma}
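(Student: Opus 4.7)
The plan is to reduce Lemma~\ref{lem::S2} to Lemmas~\ref{lem::Bessel_transience} and~\ref{lem::Delta2} via a dichotomy on how $\sin(\Diff_t/2)$ can become small. Since $\sin(\theta/2)$ on $(0,2\pi)$ vanishes only at the endpoints $0$ and $2\pi$, setting $c_\eps := 2\arcsin(\eps\sin(\Diff_0/2))$ (so that $c_\eps \le \pi\eps\sin(\Diff_0/2)$), the event $\{\sin(\Diff_t/2)\le\eps\sin(\Diff_0/2)\}$ coincides with $\{\Diff_t \in (0,c_\eps)\cup(2\pi-c_\eps,2\pi)\}$. Thus the event in~\eqref{eqn::S2_estimate} is contained in $A^- \cup A^+$, where $A^- := \{\min_{t\le t_0}\Diff_t \le c_\eps\}$ and $A^+ := \{\max_{t\le t_0}\Diff_t \ge 2\pi-c_\eps\}$, and it suffices to show that each has probability at most a constant times $\eps^{\beta_2}$.

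For $A^-$, I would apply Lemma~\ref{lem::Delta2} directly with the ratio $\eps' = c_\eps/\Diff_0$. A short case split guarantees $\eps' \le C\eps$ uniformly in $\Diff_0$: when $\Diff_0 \le \pi$ we use $\sin(\Diff_0/2)\le\Diff_0/2$ to obtain $c_\eps/\Diff_0 \le \pi\eps/2$, and when $\Diff_0 > \pi$ we use $\sin(\Diff_0/2)\le 1$ together with $\Diff_0 > \pi$ to obtain $c_\eps/\Diff_0 \le \eps$. Lemma~\ref{lem::Delta2} then yields $\PP[A^-] \le C\eps^{\beta_2}$.

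For $A^+$, the key is to transport the estimate to a radial Bessel process, so that Lemma~\ref{lem::Bessel_transience} applies. I would invoke the second monotone coupling of Lemma~\ref{lem::coupleXDeltaP}, which produces a radial Bessel process $X^\alpha$ (with $\alpha = 1 + (\rho_2+\cdots+\rho_p)/2$) satisfying $X^\alpha_0 = \Diff_0$ and $X^\alpha_t \ge \Diff_t$ for all $t\ge 0$. Then $\hat X_t := 2\pi - X^\alpha_t$ again solves~\eqref{eqn::Bessel_SDE}, with $-\mu$ in place of $\mu$ (via $\cot(\pi-x)=-\cot x$); hence it is itself a radial Bessel process, starting at $\hat X_0 = 2\pi-\Diff_0$, and on $A^+$ we have $\min_{t\le t_0}\hat X_t \le c_\eps$. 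A parallel case split shows $c_\eps/\hat X_0 \le C\eps$: when $\Diff_0 \le \pi$ we use $2\pi-\Diff_0 \ge \pi$ and $c_\eps\le\pi\eps$; when $\Diff_0 > \pi$ we use $\sin(\Diff_0/2) = \sin((2\pi-\Diff_0)/2) \le (2\pi-\Diff_0)/2$. Lemma~\ref{lem::Bessel_transience} applied to $\hat X$ then yields $\PP[A^+]\le C\eps^{4\alpha/\kappa-1}\le C\eps^{\beta_2}$, the last inequality using $4\alpha/\kappa-1\ge\beta_2$ and the harmless assumption $\eps\le 1$.

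The main obstacle is really just the bookkeeping of the case analysis $\Diff_0\lessgtr\pi$, which appears twice (once for $A^-$ and once for $A^+$) because we need to bound both $c_\eps/\Diff_0$ and $c_\eps/(2\pi-\Diff_0)$ uniformly. Once this is in place, summing the two contributions delivers~\eqref{eqn::S2_estimate}.
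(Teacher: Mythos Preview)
Your argument is correct and follows essentially the same strategy as the paper: split according to whether $\Diff_t$ approaches $0$ or $2\pi$, control the former via Lemma~\ref{lem::Delta2} and the latter via the second coupling of Lemma~\ref{lem::coupleXDeltaP} combined with Lemma~\ref{lem::Bessel_transience} applied to the reflected process $2\pi-X^\alpha$. The paper compresses your treatment of $A^+$ into the single estimate~\eqref{eqn::Delta2_estimate3} (with exponent $\beta$ rather than $\beta_2$) and then simply states that combining~\eqref{eqn::Delta2_estimate2} and~\eqref{eqn::Delta2_estimate3} yields~\eqref{eqn::S2_estimate}; your explicit definition of $c_\eps$ and the case split $\Diff_0\lessgtr\pi$ supply exactly the bookkeeping hidden in that ``combining'' step.
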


\begin{proof} 
We first show that
\begin{align}\label{eqn::Delta2_estimate3}
\PP\Big[\max_{0\le t\le t_0}\Diff_{t}\ge 2\pi-\eps(2\pi- \Diff_{0})\Big]\le C_{\eqref{eqn::Bessel_transience}}\eps^{\beta},\qquad\textnormal{where }\beta= \frac{2(2+\rho_2+\cdots+\rho_p)}{\kappa} - 1 > 0 , 
\end{align}
and $C_{\eqref{eqn::Bessel_transience}}$ is defined in~\eqref{eqn::Bessel_transience}. 
We use the coupling from Lemma~\ref{lem::coupleXDeltaP} between  $X^\alpha = (X^\alpha_t)_{t \geq 0}$ solving~\eqref{eqn::Bessel_SDE} and $\bs\Diff$ such that $X_0^{\alpha}=\Diff_{0}$ and $X_t^{\alpha}\ge \Diff_{t}$ for all $t$, to obtain~\eqref{eqn::Delta2_estimate3}:
\begin{align}
\PP\Big[\max_{0\le t\le t_0}\Diff_{t}\ge 2\pi-\eps(2\pi- \Diff_{0})\Big]\le \PP\Big[\max_{0\le t\le t_0} X_t^{\alpha}\ge 2\pi-\eps(2\pi- X_0^{\alpha})\Big]\le C_{\eqref{eqn::Bessel_transience}} \, \eps^{\beta}. 
&& \textnormal{[by~\eqref{eqn::Bessel_transience}]}
\end{align}
Combining~\eqref{eqn::Delta2_estimate2} and~\eqref{eqn::Delta2_estimate3}, we then obtain~\eqref{eqn::S2_estimate}.
\end{proof}

Lemma~\ref{lem::S2} is the key ingredient in the proof of Proposition~\ref{prop::radialSLE_transience}. 
Let us point out that the assumption $\Diff_{0}^{p}-\Diff_{0} = \theta^p - \theta^2 \le \pi/4$ in Lemma~\ref{lem::S2} cannot be easily dropped. 
Whereas, it turns out that the difference $\Diff_{t}^{p}-\Diff_{t}$ decays exponentially fast
(cf.~Lemma~\ref{lem::expdecay}) which makes it possible to apply Lemma~\ref{lem::S2} in the proof of Proposition~\ref{prop::radialSLE_transience}. 

\begin{lemma}\label{lem::expdecay}
The difference $\Diff_{t}^{p}-\Diff_{t}$ decays exponentially fast to $0$ as $t\to\infty$\textnormal{:} 
\begin{align}\label{eqn::expdecay}
0 \, \le \; & \Diff_{t}^{p}-\Diff_{t} \, \le \,  (\Diff_{0}^{p}-\Diff_{0}) \,  \exp(-C_{\eqref{eqn::expdecay}}t), 
\\
\nonumber
\textnormal{where} \qquad 
C_{\eqref{eqn::expdecay}} = \; &
\inf\Big\{\frac{\sin(u/2)}{u} \colon 0\le u\le \Diff_{0}^{p}-\Diff_{0}\Big\}>0.
\end{align}
\end{lemma}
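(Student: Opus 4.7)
The key observation is that $\Diff_{t}^{p}-\Diff_{t} = V_{t}^{p}-V_{t}^{2}$, so the Brownian noise in the driving function $\xi_t$ completely cancels and the quantity to be controlled satisfies a deterministic ODE. Write $D_t := V_{t}^{p}-V_{t}^{2}$. From the SDE system~\eqref{eqn::SLEkapparhomu_SDE} we read off
\begin{align*}
\frac{\ud D_t}{\ud t} = \cot \bigg(\frac{\Diff_{t}^{p}}{2}\bigg) - \cot \bigg(\frac{\Diff_{t}^{2}}{2}\bigg).
\end{align*}
The plan is to exhibit this right-hand side as a negative quantity comparable to $-D_t$ and then conclude by Gr\"onwall.

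Using the elementary identity $\cot(a)-\cot(b) = \sin(b-a)/(\sin(a)\sin(b))$ with $a=\Diff_{t}^{p}/2$ and $b=\Diff_{t}^{2}/2$, we rewrite
\begin{align*}
\frac{\ud D_t}{\ud t} = -\frac{\sin(D_t/2)}{\sin(\Diff_{t}^{p}/2)\,\sin(\Diff_{t}^{2}/2)}.
\end{align*}
Since the sine factors in the denominator are at most $1$ in absolute value (and remain strictly positive because $\Diff_{t}^{2}, \Diff_{t}^{p}\in(0,2\pi)$ throughout, as follows from Lemma~\ref{lem::Bessel_avoid_0and2pi} applied to the monotone couplings of Lemma~\ref{lem::coupleXDeltaP}), we obtain the cleaner inequality
\begin{align*}
\frac{\ud D_t}{\ud t} \le -\sin(D_t/2).
\end{align*}
In particular $D_t$ is non-increasing, so $D_t\in(0,D_0]$ for all $t\ge 0$, and $D_t$ remains strictly positive by uniqueness for this ODE.

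Next I use monotonicity of the auxiliary function $u\mapsto \sin(u/2)/u$, which is strictly decreasing on $(0,2\pi)$ (this is just the standard decreasing sinc function after rescaling). Therefore, for every $u\in(0,D_0]$,
\begin{align*}
\frac{\sin(u/2)}{u} \;\ge\; \frac{\sin(D_0/2)}{D_0} \;=\; \inf\Big\{\tfrac{\sin(u/2)}{u}\colon 0\le u\le D_0\Big\} \;=\; C_{\eqref{eqn::expdecay}}>0,
\end{align*}
the positivity being guaranteed by the standing assumption $D_0 = \Diff_{0}^{p}-\Diff_{0} = \theta^p-\theta^2 < 2\pi$ coming from $\bs{\theta}\in\LX_p$. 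Applied pointwise with $u=D_t$, this gives $\sin(D_t/2)\ge C_{\eqref{eqn::expdecay}}D_t$, whence
\begin{align*}
\frac{\ud D_t}{\ud t} \le - C_{\eqref{eqn::expdecay}}\, D_t,
\end{align*}
and Gr\"onwall's inequality yields $D_t\le D_0\exp(-C_{\eqref{eqn::expdecay}}t)$, which is the claim. The non-negativity of $D_t$ in~\eqref{eqn::expdecay} is immediate from the ordering of the force points along the circle, preserved by the deterministic ODEs for the $V^{j}_t$.

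There is essentially no hard step here: the only mildly delicate point is confirming that the denominator $\sin(\Diff_{t}^{p}/2)\sin(\Diff_{t}^{2}/2)$ stays bounded away from causing blow-up, which is handled by the avoidance results already available in this appendix (Lemma~\ref{lem::Bessel_avoid_0and2pi}). Everything else is a one-variable ODE comparison.
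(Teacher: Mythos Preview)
Your proof is correct and follows essentially the same approach as the paper: compute $\tfrac{\ud}{\ud t}(\Diff_t^p-\Diff_t)=\cot(\Diff_t^p/2)-\cot(\Diff_t^2/2)\le -\sin\big((\Diff_t^p-\Diff_t)/2\big)$, then use the lower bound $\sin(u/2)\ge C_{\eqref{eqn::expdecay}}u$ on $[0,\Diff_0^p-\Diff_0]$ and Gr\"onwall. You simply make the trigonometric identity and the monotonicity of $u\mapsto\sin(u/2)/u$ explicit where the paper leaves them implicit.
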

\begin{proof}
We have
\begin{align*}
\frac{\ud }{\ud t}( \Diff_{t}^{p}-\Diff_{t} ) = \cot \big( \tfrac{\Diff_{t}^{p}}{2} \big) - \cot \big( \tfrac{\Diff_{t}}{2} \big)  \le -\sin \big( \tfrac{\Diff_{t}^{p}-\Diff_{t}}{2} \big) , \qquad t \geq 0 . 
\end{align*}
Because $0\le \Diff_{t}^{p}-\Diff_{t}\le \Diff_{0}^{p}-\Diff_{0}$, we see that
\begin{align*}
\frac{\ud }{\ud t}( \Diff_{t}^{p}-\Diff_{t}) 
\le  -\sin \big( \tfrac{\Diff_{t}^{p}-\Diff_{t}}{2} \big) 
\le -C_{\eqref{eqn::expdecay}}\, ( \Diff_{t}^{p}-\Diff_{t} ). 
\end{align*}
This implies the sought~\eqref{eqn::expdecay}. 
\end{proof}

\subsection{Transience --- proof of Proposition~\ref{prop::radialSLE_transience} and consequences}
\label{subsec::radialSLE_continuity}

\begin{proof}[Proof of Proposition~\ref{prop::radialSLE_transience}]
We first gather some notation and terminology.
\begin{itemize}[leftmargin=*]
\item We denote $\U_n:=\ee^{-n} \U$, and $\tau_n:=\inf \{ t\ge 0 \colon |\gamma_t|=\ee^{-n} \}$, and $\LF_n:=\sigma ( \{ \gamma_t \colon 0\le t\le \tau_n \} )$, for $n \in \N$.  

\item A \emph{crosscut} of a domain $\Omega$ is the image of a simple curve $\eta \colon (0,1)\to \Omega$ with $\eta_{0+},\eta_{1-} \in \partial \Omega$.

\item We denote by $H_n = H_n(\gamma)$ the connected component of $\U \setminus \gamma_{(0,\tau_n]}$ containing the origin.

\item We denote $\partial_n^0:=\partial H_n \setminus \gamma_{(0,\tau_n]}$, which is either empty or an open subarc of $\partial \U$.

\item For $0<k<n$, we denote by $V_{n,k}$ the connected component of $H_n \cap \U_k$ that contains the origin.

\item We denote $\partial_{n,k}:=\partial V_{n,k} \cap H_n$. 
Note that the connected components of $\partial_{n,k}$ comprise a collection $\LA_{n,k}$ of open subarcs of $\partial \U_k$, 
which are crosscuts of appropriate domains. 

\item Indeed, each arc $\eta \in \LA_{n,k}$ is a crosscut of $H_n$ such that $H_n \setminus \eta$ has two connected components. 

We denote by $V_{n,k}(\eta)$ the connected component of $H_n\setminus \eta$ that does not contain the origin. 
Note that $V_{n,k}(\eta)\cap V_{n,k}(\eta')=\emptyset$ for different $\eta,\eta'\in \LA_{n,k}$.

\item If $\partial_n^0\neq \emptyset$, then there is a unique arc $\eta^*=\eta^*_{n,k} \in \LA_{n,k}$ such that $\partial_n^0 \subset \partial V_{n,k}(\eta^*)$.
\end{itemize}

In the proof, there are two main steps:
\begin{itemize}[leftmargin=5em]
\item [{\bf Step~1.}] 
(Cf.~\cite[Proposition~4.4~and~Theorem~1.2]{Lawler:Continuity_of_radial_and_two-sided_radial_SLE_at_the_terminal_point}). 
There exists a constant $C_{\eqref{eqn::SLEcontinuity_Step1.1}}\in (0,\infty)$ depending on $\kappa$, $\bs\rho$ and $\mu$ such that for all $k,n \in \N_{>0}$, 
if $\eta=\eta^*_{n+k,k}$, then we have
\begin{align}\label{eqn::SLEcontinuity_Step1.1}
\PP [ \gamma_{[\tau_{n+k},\infty)} \cap \eta \neq \emptyset \cond \LF_{n+k} ] \le C_{\eqref{eqn::SLEcontinuity_Step1.1}} \,  \ee^{-n\beta_2/2},\qquad \textnormal{where } \beta_2 = \frac{2(2+\rho_2)}{\kappa} - 1 > 0.
\end{align}
Moreover, for all positive integers $k\ge j\ge 1$ and $n\ge 1$, we have
\begin{align}\label{eqn::SLEcontinuity_Step1.2}
\PP [ \gamma_{[\tau_{n+k},\infty)} \subset \U_j \cond \LF_{n+k} ] \ge \big( 1- C_{\eqref{eqn::SLEcontinuity_Step1.1}} \, \ee^{-n\beta_2/2} \big) \one \{ \gamma_{[\tau_k,\tau_{n+k}]} \subset \U_j \}.
\end{align}

\item [{\bf Step~2.}]
(Cf.~\cite[Theorem~1.3]{Lawler:Continuity_of_radial_and_two-sided_radial_SLE_at_the_terminal_point}).
There exist constants $C_{\eqref{eqn::SLEcontinuity_Step2}}\in (0,\infty)$ and $u\in (0,\infty)$ depending on $\kappa$, $\bs\rho$ and $\mu$ such that for all $k,n \in \N_{>0}$, we have
\begin{align}\label{eqn::SLEcontinuity_Step2}
\PP [ \gamma_{[\tau_{n+k},\infty)} \cap \partial \U_k \neq \emptyset ] \le C_{\eqref{eqn::SLEcontinuity_Step2} \,} \ee^{-un}.
\end{align}
\end{itemize}
We will prove~(\ref{eqn::SLEcontinuity_Step1.1},~\ref{eqn::SLEcontinuity_Step1.2}) in Lemma~\ref{lem::SLEcontinuity_Step1}. 
Thereafter, using the same analysis as in the proof of~\cite[Theorem~1.3]{Lawler:Continuity_of_radial_and_two-sided_radial_SLE_at_the_terminal_point},  
we obtain~\eqref{eqn::SLEcontinuity_Step2} (see also Remark~\ref{rem:Greg_lemma}). 
Lastly, knowing~\eqref{eqn::SLEcontinuity_Step2}, Borel-Cantelli lemma yields $\smash{\underset{t\to\infty}{\lim}} \, \gamma_t=0$, which gives the desired transience of the curve.
\end{proof}

\begin{lemma}\label{lem::SLEcontinuity_Step1}
There exists a constant $C_{\eqref{eqn::SLEcontinuity_Step1.1}}\in (0,\infty)$ depending on 
$\kappa$, $\bs\rho$, and $\mu$ such that~\textnormal{(\ref{eqn::SLEcontinuity_Step1.1},~\ref{eqn::SLEcontinuity_Step1.2})} hold. 
\end{lemma}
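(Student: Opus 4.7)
The plan is to follow Lawler's strategy from \cite{Lawler:Continuity_of_radial_and_two-sided_radial_SLE_at_the_terminal_point} (proof of his Theorem~1.3), replacing his one-force-point Bessel estimate by Lemma~\ref{lem::S2}. Both bounds \eqref{eqn::SLEcontinuity_Step1.1} and \eqref{eqn::SLEcontinuity_Step1.2} will follow from a single hitting estimate for the conformally mapped continuation. First, I would use the domain Markov property (Lemma~\ref{lem::sleakapprhomu_mart}) at the stopping time $\tau_{n+k}$: conditionally on $\LF_{n+k}$, the image $g_{\tau_{n+k}}(\gamma_{[\tau_{n+k},\infty)})$ is radial $\SLE_\kappa^\mu(\bs\rho)$ in $\U$ started at $\ee^{\ii\xi_{\tau_{n+k}}}$ with force points at $\ee^{\ii V^j_{\tau_{n+k}}}$. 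Under $g_{\tau_{n+k}}$, the crosscut $\eta = \eta^*_{n+k,k}$ becomes a crosscut $\tilde\eta$ of $\U$ whose two endpoints lie on $\partial\U$, and the event $\{\gamma_{[\tau_{n+k},\infty)} \cap \eta \neq \emptyset\}$ transforms into the event that the continuation SLE crosses $\tilde\eta$.

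The geometric input to verify is that the angular length of $\tilde\eta$, as well as the angular distance from either endpoint of $\tilde\eta$ to the tip $\xi_{\tau_{n+k}}$ on $\partial\U$, is at most $\mathrm{C} \ee^{-n}$. This is because $\eta$ sits on $\partial\U_k$ while the tip $\gamma_{\tau_{n+k}}$ sits on $\partial\U_{n+k}$, so there are $n$ nested annular regions $\U_j \setminus \overline{\U_{j+1}}$, $k \leq j < n+k$, separating them; a standard Beurling projection and extremal-length argument combined with Koebe distortion bounds (applied to $g_{\tau_{n+k}}$, normalized at $0$ with $g'_{\tau_{n+k}}(0) = \ee^{\tau_{n+k}}$) shows that the harmonic measure of $\eta$ from $0$ in $H_{n+k}$, which equals the normalized length of $\tilde\eta$ on $\partial\U$, is bounded by $\mathrm{C}\ee^{-n}$.

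Given this, the continuation SLE can cross $\tilde\eta$ only if the driving function $\xi_t$ eventually reaches the angular sector occupied by $\tilde\eta$, which forces the angular separation between $\xi_t$ and the endpoint of $\tilde\eta$ playing the role of ``first force point'' (treated via the monotone coupling of Section~\ref{subsec::monotone_coupling}) to drop from an initial value of order $1$ down to order $\ee^{-n}$; equivalently, the process $\sin(\Delta_t/2)$ must drop to a factor $\mathrm{C}\ee^{-n/2}$ of its initial value. Lemma~\ref{lem::S2}, applied with $\eps = \mathrm{C}\ee^{-n/2}$ and $t_0$ absorbed into the constant (uniformity in $t_0$ being obtainable by a dyadic decomposition of the capacity parameter, since returns happen on one scale at a time), then yields the bound $\mathrm{C}\ee^{-n\beta_2/2}$, which is exactly~\eqref{eqn::SLEcontinuity_Step1.1}. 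The second bound \eqref{eqn::SLEcontinuity_Step1.2} is an immediate corollary: applying \eqref{eqn::SLEcontinuity_Step1.1} with $k$ replaced by $j \leq k$ to the crosscut $\eta^*_{n+k,j}$, I would note that on $\{\gamma_{[\tau_k,\tau_{n+k}]} \subset \U_j\}$ the component $V_{n+k,j}$ contains the origin and reaches the tip of the slit, the free arc $\partial_{n+k}^0$ lies on the opposite side of $\eta^*_{n+k,j}$, and therefore the event $\{\gamma_{[\tau_{n+k},\infty)} \not\subset \U_j\}$ necessarily includes the event $\{\gamma_{[\tau_{n+k},\infty)}\cap \eta^*_{n+k,j} \neq \emptyset\}$.

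The hard part will be making the geometric step rigorous: $\tilde\eta$ is an \emph{interior} crosscut of $\U$ rather than a boundary arc, so one cannot directly apply a boundary-hitting estimate but must instead reduce the crossing event to a stopping-time event for the $\Delta_t^j$ processes via topology (the continuation is transient in the capacity parameterization, so if it does not cross $\tilde\eta$ before some deterministic capacity then it stays on the ``origin side'' forever). A secondary subtlety is that Lemma~\ref{lem::S2} is stated with only the first force point $V^2$ playing a role; the reduction to this setting uses the monotone coupling (Lemmas~\ref{lem::coupleXDeltaP} and~\ref{lem::Delta2}) together with the assumption $\rho_j \geq 0$ to compare $\sin(\Delta_t/2)$ to the radial Bessel process from above and below, as in the proof of Lemma~\ref{lem::S2} itself.
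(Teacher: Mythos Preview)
Your overall strategy matches the paper's: both follow Lawler, use the geometric fact that hitting $\eta^*_{n+k,k}$ forces $\sin(\tfrac{1}{2}\Diff_t)$ to drop by a factor $\mathrm{C}\ee^{-n/2}$ (the paper simply cites \cite[Lemma~2.10]{Lawler:Continuity_of_radial_and_two-sided_radial_SLE_at_the_terminal_point} for this), feed this into Lemma~\ref{lem::S2}, and deduce~\eqref{eqn::SLEcontinuity_Step1.2} from~\eqref{eqn::SLEcontinuity_Step1.1} via the topological observation \cite[Lemma~2.5]{Lawler:Continuity_of_radial_and_two-sided_radial_SLE_at_the_terminal_point}. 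Two points in your write-up deserve correction, however.

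First, you misidentify the subtlety in invoking Lemma~\ref{lem::S2}. That lemma is already stated and proved for the full multi-force-point process $\SLE_\kappa^\mu(\bs\rho)$; the monotone coupling is used \emph{inside} its proof, so no further reduction is needed. What you do have to check is its standing hypothesis $\Diff^{p}_{0}-\Diff^{2}_{0}\le\pi/4$, applied at the random initial time $\tau_{n+k}$. This is not automatic from the initial data $\bs\theta$. The paper handles it by combining the exponential contraction $\Diff^{p}_{t}-\Diff^{2}_{t}\le(\Diff^{p}_{0}-\Diff^{2}_{0})\ee^{-C_{\eqref{eqn::expdecay}}t}$ of Lemma~\ref{lem::expdecay} with the deterministic bound $\tau_n\ge n-\log 4$, so that the hypothesis holds once $n$ exceeds an explicit threshold $n_0$. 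You never mention this step, and without it Lemma~\ref{lem::S2} cannot be applied.

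Second, your handling of the time window (``$t_0$ absorbed into the constant \ldots\ by a dyadic decomposition'') is the right idea but is exactly where the work lies. The paper makes it precise by first proving the one-step bound
\[
\PP\big[\sigma^*_{n+k,k}<\tau_{n+k+1}\,\big|\,\LF_{n+k}\big]\le C\,\ee^{-n\beta_2/2},
\]
using that $\tau_{n+k+1}-\tau_{n+k}\le 1+\log 4$ so that the fixed choice $t_0=1+\log 4$ suffices in Lemma~\ref{lem::S2}, and then summing the resulting geometric series over successive scales. Your sketch should be rewritten along these lines rather than appealing to an unspecified uniformity in $t_0$.
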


\begin{proof}
First, we prove~\eqref{eqn::SLEcontinuity_Step1.1} (cf.~\cite[Proposition~4.4]{Lawler:Continuity_of_radial_and_two-sided_radial_SLE_at_the_terminal_point}). 
Define $\sigma^*_{n+k,k}:=\inf \{ t\ge \tau_{n+k} \colon \gamma_t\in \eta \}$. 
It~suffices to show there exists a constant $C_{\eqref{eqn::SLEcontinuity_Step1.1_suffice}}$ such that 
\begin{align}\label{eqn::SLEcontinuity_Step1.1_suffice}
\PP [ \sigma^*_{n+k,k}<\tau_{n+k+1} \cond \LF_{n+k} ] \le C_{\eqref{eqn::SLEcontinuity_Step1.1_suffice}} \, \ee^{-n\beta_2/2},
\end{align}
for then we can iterate and sum over $n$. 
We may conclude with the help of the following three facts. 
\begin{itemize}

\item It was proved in~\cite[Lemma~2.10]{Lawler:Continuity_of_radial_and_two-sided_radial_SLE_at_the_terminal_point} that there exists a constant $C_{\eqref{eqn::SLEcontinuity_aux1}}\in (0,\infty)$ such that
\begin{align}\label{eqn::SLEcontinuity_aux1}
\sin \big( \tfrac{1}{2}\Diff_{\sigma^*_{n+k,k}} \big) \le C_{\eqref{eqn::SLEcontinuity_aux1}} \, \ee^{-n/2} \, \sin \big( \tfrac{1}{2}\Diff_{\tau_{n+k}} \big) .
\end{align} 

\item  We have the following estimate on $\tau_n$:
\begin{align} \label{eqn::SLEcontinuity_aux2}
n-\log 4\le \tau_{n}\le n.
\end{align}
Indeed, on the one hand, applying Koebe's one-quarter theorem (e.g.~\cite[Theorem~2.3]{Duren:Univalent_functions})
with $g^{-1}_{\tau_n}$, we have $\frac{1}{4}\ee^{-\tau_n}\U\subset \U\setminus \gamma_{[0,\tau_n]}$, which implies the lower bound of~\eqref{eqn::SLEcontinuity_aux2}. On the other hand, since $\U_n\subset \U \setminus \gamma_{[0,\tau_n]}$, we have $g'_{\tau_n}(0)\le \ee^{n}$, which implies the upper bound of~\eqref{eqn::SLEcontinuity_aux2}.
\item When 
\begin{align*}
n \; > \;  n_0 := \frac{1}{C_{\eqref{eqn::expdecay}}} \, \log\Big(\tfrac{\Diff_{0}^{p}-\Diff_{0}}{\pi} \Big) + \Big( 1 + \frac{1}{C_{\eqref{eqn::expdecay}}} \Big) \log 4  ,
\end{align*}
combining~\eqref{eqn::SLEcontinuity_aux2} and Lemma~\ref{lem::expdecay}, we have $\Diff_{\tau_n}^{p}-\Diff_{\tau_n}^{2}<\pi/4$. 
Furthermore by Lemma~\ref{lem::S2}, we have
\begin{align} \label{eqn::SLEcontinuity_aux3}
\PP \bigg[ \min_{\tau_{n+k} \le t \le \tau_{n+k}+1+\log 4} \sin \big( \tfrac{\Diff_{t}}{2} \big) 
\le \eps \, \sin \big( \tfrac{\Diff_{\tau_{n+k}}}{2} \big) \cond \LF_{n+k} \bigg] \le C_{\eqref{eqn::S2_estimate}} \, \eps^{\beta_2}.
\end{align}
\end{itemize}
Gathering these estimates yields
\begin{align*}
&\PP [ \sigma^*_{n+k,k}<\tau_{n+k+1} \cond \LF_{n+k} ] \\
\le \; &\PP [ \sigma^*_{n+k,k}<\tau_{n+k}+1+\log 4 \cond \LF_{n+k} ] 
&& \textnormal{[by~\eqref{eqn::SLEcontinuity_aux2}]} \\
\le \; & \PP \bigg[ \min_{\tau_{n+k} \le t \le \tau_{n+k}+1+\log 4} \sin \big( \tfrac{\Diff_{t}}{2} \big)  
\le \sin \big( \tfrac{1}{2}\Diff_{\sigma^*_{n+k,k}} \big) 
\le C_{\eqref{eqn::SLEcontinuity_aux1}} \, \ee^{-n/2} \, \sin \big( \tfrac{1}{2}\Diff_{\tau_{n+k}} \big)  \cond \LF_{n+k} \bigg] 
&& \textnormal{[by~\eqref{eqn::SLEcontinuity_aux1}]} \\
\le \; & C_{\eqref{eqn::S2_estimate}} \, C_{\eqref{eqn::SLEcontinuity_aux1}}^{\beta_2} \, \ee^{-n\beta_2/2} ,
&& \textnormal{[by~\eqref{eqn::SLEcontinuity_aux3}]} \\
\end{align*}
which implies~\eqref{eqn::SLEcontinuity_Step1.1_suffice} and thus~\eqref{eqn::SLEcontinuity_Step1.1}. 
We now obtain~\eqref{eqn::SLEcontinuity_Step1.2} from~\eqref{eqn::SLEcontinuity_Step1.1} because 
by~\cite[Lemma~2.5]{Lawler:Continuity_of_radial_and_two-sided_radial_SLE_at_the_terminal_point}, if $\gamma_{[\tau_k,\tau_{n+k}]} \subset \U_j$, then in order for $\gamma_{[\tau_{n+k},\infty)}$ to intersect $\U_j$, it has to intersect $\eta$.
\end{proof}

\begin{remark}\label{rem:Greg_lemma}
A cautious reader might observe that we rely on~\cite[Lemma~2.10]{Lawler:Continuity_of_radial_and_two-sided_radial_SLE_at_the_terminal_point}, but not on~\cite[Lemma~2.9]{Lawler:Continuity_of_radial_and_two-sided_radial_SLE_at_the_terminal_point}\footnote{A corrected modification of~\cite[Lemma~2.9]{Lawler:Continuity_of_radial_and_two-sided_radial_SLE_at_the_terminal_point} appears in~\cite[Lemma~5.5]{Field-Lawler:Escape_probability_and_transience_for_SLE}; see also~\cite[Appendix~D]{Abuzaid-Peltola:Large_deviations_of_radial_SLE0}.}. 
The proof of~\cite[Lemma~2.10]{Lawler:Continuity_of_radial_and_two-sided_radial_SLE_at_the_terminal_point} strongly uses the fact that the crosscut $\eta$ separates the origin from $\partial \U$ in $\U\setminus \gamma_{[0,\tau_{n+k}]}$.
The obtained estimate is, in a sense, weaker than that derived 
in~\cite[Lemma~2.9]{Lawler:Continuity_of_radial_and_two-sided_radial_SLE_at_the_terminal_point}~\&~\cite[Lemma~5.5]{Field-Lawler:Escape_probability_and_transience_for_SLE}.
The proof of~\cite[Theorem~1.3]{Lawler:Continuity_of_radial_and_two-sided_radial_SLE_at_the_terminal_point}, which we follow in this appendix,  
combines this weaker estimate with a Markov chain argument, which loses the explicit $\kappa$-dependence of the exponent $u$ in~\eqref{eqn::SLEcontinuity_Step2} 
(which is not needed to prove the transience of the radial $\SLE_{\kappa}(\rho)$ curve).
\end{remark}

\begin{corollary}\label{cor::phitKtoid}
Assume the same setup as in Proposition~\ref{prop::radialSLE_transience}.
Let $\gamma$ be radial $\SLE_{\kappa}^{\mu}(\bs\rho)$ in $(\U; \ee^{\ii\bs{\theta}}; 0)$ and let $(g_t)_{t \geq 0}$ the the associated mapping-out function as in~\eqref{eq:single_radial_Loewner_equation}. 
Suppose $K$ is a compact subset of $\overline{\U}$ such that $\U\setminus K$ is simply connected and contains the origin, 
and $K$ has a positive distance from $\{\ee^{\ii\theta^1}, \ldots, \ee^{\ii\theta^p}\}$. 
Consider the conformal map $g_{t,K} \colon \U \setminus g_t(K)\to \U$ normalized at the origin, i.e., $g_{t,K}(0)=0,g'_{t,K}(0)>0$, and the associated covering map $\covmap_{t,K}$.  
Then, conditionally on $\{\gamma\cap K=\emptyset\}$, we have almost surely
\begin{align}\label{eqn::phitKtoid_1}
\lim_{t\to\infty}g_{t, K}'(0)=1,\qquad \lim_{t\to \infty} ( \covmap_{t,K}(\xi_t)-\xi_t )=0, \qquad \lim_{t\to\infty} (\covmap_{t, K}(V_t^j)-V_t^j)=0, \textnormal{ for }2\le j\le p. 
\end{align}
\end{corollary}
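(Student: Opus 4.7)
The proof plan rests on Proposition~\ref{prop::radialSLE_transience}: conditional on $\{\gamma\cap K=\emptyset\}$ we have $\gamma_t\to 0$ almost surely, and so the conformal radius $\CR(\U\setminus\gamma_{[0,t]},0)=\ee^{-t}\to 0$. The task is to translate this into the claim that $g_t(K)$ gets squeezed arbitrarily close to $\partial\U$ as $t\to\infty$; the three limits will follow from standard estimates for the normalized map $g_{t,K}$.

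First, I would show that $g_t(K)\subset\{|w|\ge 1-\epsilon_t\}\cup B_t$, where $\epsilon_t\to 0$ and $B_t\subset\partial\U$ is a union of arcs of total arclength tending to $0$. The interior part is controlled by Koebe distortion applied to $g_t^{-1}$: since $(g_t^{-1})'(0)=\ee^{-t}$, for any $z\in K\cap\U$ with $|z|\ge\delta_K>0$ (a uniform lower bound since $0\notin K$), one gets the quantitative estimate $|g_t(z)|\ge 1-C_K\,\ee^{-t/2}$, uniformly in $z\in K\cap\U$. The boundary part is controlled by harmonic measure: $|g_t(K\cap\partial\U)|/(2\pi)$ equals the harmonic measure from $0$ in $\U\setminus\gamma_{[0,t]}$ of $K\cap\partial\U$, which vanishes because Brownian motion from $0$ is eventually trapped by the accumulating curve and exits through $\gamma_{[0,t]}$ with probability tending to $1$.

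Second, with this squeezing in hand, I would apply the Schwarz lemma and monotonicity of conformal radius to the normalized map $g_{t,K}\colon\U\setminus g_t(K)\to\U$. Since the inclusion $\{|w|<1-\tilde\epsilon_t\}\subset\U\setminus g_t(K)$ holds for some $\tilde\epsilon_t\to 0$, one has $\CR(\U\setminus g_t(K),0)\ge 1-\tilde\epsilon_t$ and hence $g_{t,K}'(0)\in[1,(1-\tilde\epsilon_t)^{-1}]$, yielding the first limit. The two boundary limits are obtained by verifying that the moving points $\ee^{\ii\xi_t}=g_t(\gamma_t)$ and $\ee^{\ii V_t^j}=g_t(\ee^{\ii\theta^j})$ stay at uniform positive distance from the shrinking arcs $g_t(K\cap\partial\U)$ (which follows from conformal invariance of harmonic measure together with the fact that $\gamma_t$ and the marked boundary points are separated from $K$ in the original domain $\U\setminus\gamma_{[0,t]}$). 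A standard estimate for a conformal map close to the identity on its domain then converts the smallness of $g_t(K)$ into $\covmap_{t,K}(\xi_t)-\xi_t\to 0$ and $\covmap_{t,K}(V_t^j)-V_t^j\to 0$.

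The main obstacle will be the last step: making the identity-like behavior of $g_{t,K}$ quantitatively uniform at the \emph{moving} boundary points $\ee^{\ii\xi_t}$ and $\ee^{\ii V_t^j}$. In particular, because $\xi_t$ need not converge and in fact rotates around $\partial\U$ when $\mu\neq 0$, one must show that its separation from the small bad arcs $g_t(K\cap\partial\U)$ does not degenerate even as these arcs shrink to (possibly unpredictable) limit points, and that the analogous bound can be obtained simultaneously and uniformly for the force points $V_t^j$.
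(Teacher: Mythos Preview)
Your strategy for the first limit is correct and essentially parallel to the paper's: the paper also uses the Schwarz bound $1\le g_{t,K}'(0)\le 1/\dist(0,g_t(K))$, and obtains $\dist(0,g_t(K))\to1$ from the vanishing of the harmonic measure of $g_t(K)$ seen from the origin (this uses Beurling projection implicitly; your Koebe--distortion computation is an equally valid alternative and arguably more direct).

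For the remaining two limits the paper takes a different, softer route than your explicit separation-plus-distortion plan. It observes that $\{g_{t,K}^{-1}\}_{t\ge0}$ is a normal family of self-maps of $\U$ (Montel), so every subsequential limit fixes $0$ with derivative $1$ there (by the first limit), and Schwarz's lemma forces the limit to be the identity; hence $g_{t,K}^{-1}\to\mathrm{id}$ uniformly on compacta of $\U$. The paper then simply writes that ``this yields the other two limits.'' The obstacle you correctly flag --- uniformity at the \emph{moving boundary} points $\ee^{\ii\xi_t}$, $\ee^{\ii V_t^j}$ --- is glossed over in the paper's argument as well, and your worry is justified: these points need \emph{not} stay uniformly separated from $g_t(K)$, since all of $\partial\U$ gets compressed under $g_t$. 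One clean way to close the gap (for either approach) avoids any separation argument: for an arc $J\subset\partial\U$ disjoint from $g_t(K)$, a direct harmonic-measure comparison between $\U$ and $\U\setminus g_t(K)$ shows that $\covmap_{t,K}-\mathrm{id}$ has oscillation at most $2\pi\omega_t$ on $J$, where $\omega_t\to0$ is the harmonic measure of $g_t(K)$ from $0$; the remaining additive constant is pinned to $0$ by the normalization $\arg g_{t,K}'(0)=0$ together with the interior convergence already established. This delivers the boundary limits uniformly on $\partial\U\setminus g_t(K)$, with no need to locate the bad set relative to the driving or force points.
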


\begin{proof}
Schwarz lemma (e.g.~\cite[Page~3]{Duren:Univalent_functions}) gives the bounds $1 \leq g_{t, K}'(0) \leq 1/\dist(0,g_{t}(K))$,
as noted in \cite[Lemma~17]{Jahangoshahi-Lawler:Multiple-paths_SLE_in_multiply_connected_domains}. 
The first limit in~\eqref{eqn::phitKtoid_1} 
follows by observing that the harmonic measure of $g_{t}(K)$ in $\U \setminus g_{t}(K)$ seen from zero converges almost surely to zero as $t \to \infty$, because $\gamma(t) \to 0$ as $t \to \infty$ by Proposition~\ref{prop::radialSLE_transience}.
Moreover, $\{g_{t, K}^{-1}\}_{t \geq 0}$ is a normal family by Montel's theorem (e.g.~\cite[Page~7]{Duren:Univalent_functions}), so there is a.s. a uniformly convergent subsequential limit 
$\smash{g_{t_n, K}^{-1} \overset{t_n \to \infty}{\to} g_{\infty, K}^{-1}}$, and the derivatives also converge uniformly (by Cauchy's integral formula). 
Combined with Schwarz lemma, the first limit in~\eqref{eqn::phitKtoid_1} now shows that $g_{\infty, K}^{-1} \colon \U \to \U$ must be the identity map. This yields the other two limits in~\eqref{eqn::phitKtoid_1}.
\end{proof}

\subsection{Proof of Proposition~\ref{prop::radialSLE_avoidboundary}}
\label{subsec::radialSLE_avoidboundary}
\begin{lemma} \label{lem::radialSLE_avoidboundary1}
Fix $\kappa\in (0,4]$, $\mu\in\R$ and $\theta\in\R$.
Radial $\SLE_{\kappa}^{\mu}$ in $(\U; \ee^{\ii\theta}; 0)$ intersects $\partial\U$ only at $\ee^{\ii\theta}$.  
\end{lemma}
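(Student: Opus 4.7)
The plan is to reduce the statement to the well-known boundary-avoidance property for standard radial $\SLE_\kappa$ (no drift) via an absolute-continuity (Girsanov) argument, and then combine it with transience to handle what happens as $t \to \infty$.

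First, I would observe that by conformal invariance it suffices to consider $\theta = 0$, and that the curve $\gamma$ is a.s. generated by a continuous simple path for $\kappa \in (0,4]$ (this follows, e.g., from the absolute continuity with standard radial $\SLE_\kappa$ discussed below, together with the Rohde--Schramm theorem). Fix an arbitrary $T>0$ and set $\tau_T := T$. By the computation at the end of the proof of Lemma~\ref{lem::sleakapprhomu_mart} (specialized to $p=1$ with no force points), the law of $\gamma|_{[0,T]}$ under $\PP^{\mu}$ (radial $\SLE_\kappa^\mu$) is the same as the law of $\gamma|_{[0,T]}$ under $\PP^{0}$ (standard radial $\SLE_\kappa$) weighted by
\begin{align*}
R_T \; = \; \exp\Big( L_T - \tfrac{1}{2}\langle L,L\rangle_T \Big), \qquad L_t = \tfrac{\mu}{\kappa}\xi_t, \quad \langle L,L\rangle_t = \tfrac{\mu^2}{\kappa} t ,
\end{align*}
which is a bounded positive martingale on $[0,T]$ (and Girsanov's theorem identifies the resulting SDE with~\eqref{eqn::SLEkapparhomu_SDE} for $p=1$, $\mu \in \R$). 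In particular, $\PP^{\mu} \ll \PP^0$ and $\PP^0 \ll \PP^{\mu}$ on $\sigma(\gamma|_{[0,T]})$.

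Second, I would invoke the classical fact that for $\kappa \in (0,4]$, the standard radial $\SLE_\kappa$ curve in $(\U; 1; 0)$ almost surely satisfies $\gamma_{(0,\infty)} \cap \partial \U = \emptyset$. In the chordal case this is Rohde--Schramm; the radial case can be obtained either by the same argument adapted to the radial Loewner equation, or by transferring the statement locally via the Schramm--Wilson coordinate change~\cite{Schramm-Wilson:SLE_coordinate_changes} mapping radial to chordal $\SLE_\kappa$ up to the first time the image of the target point is swallowed. Combined with the absolute continuity from the first step, this gives
\begin{align*}
\PP^{\mu}\big[\gamma_{(0,T]} \cap \partial \U \neq \emptyset \big]
\; = \; 0 \qquad \text{for every } T>0.
\end{align*}

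Finally, I would pass to the limit $T \to \infty$ using transience. By Proposition~\ref{prop::radialSLE_transience} applied with $p=1$ (empty $\bs\rho$), under $\PP^{\mu}$ we have $\gamma_t \to 0$ almost surely as $t \to \infty$, so a.s. there is a (random) time $T_0<\infty$ after which $|\gamma_t| \le 1/2$ and hence $\gamma_t \notin \partial\U$. Consequently
\begin{align*}
\{\gamma_{(0,\infty)} \cap \partial\U \neq \emptyset\}
\; \subset \; \bigcup_{N \in \N} \{\gamma_{(0,N]} \cap \partial\U \neq \emptyset\},
\end{align*}
a countable union of $\PP^{\mu}$-null events, hence itself null. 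This yields the desired conclusion.

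The only nontrivial point is the boundary-avoidance for \emph{unperturbed} radial $\SLE_\kappa$ with $\kappa\le 4$; everything else is a short Girsanov plus transience argument. I would make sure to state this auxiliary fact as a citation (reducing to~\cite{Rohde-Schramm:Basic_properties_of_SLE} via~\cite{Schramm-Wilson:SLE_coordinate_changes}) rather than reprove it here.
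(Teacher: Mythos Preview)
Your approach is correct in spirit, though it takes a different route from the paper. The paper does not invoke Girsanov or the $\mu=0$ case at all: instead it fixes an arbitrary boundary point $\ee^{\ii\theta'}$ with $\theta<\theta'<\theta+2\pi$, observes that the difference $\covmap_t(\theta')-\xi_t$ solves the radial Bessel SDE~\eqref{eqn::Bessel_SDE} with $\alpha=1$, and applies Lemma~\ref{lem::Bessel_avoid_0and2pi} (valid since $\kappa\le 4=4\alpha$) to conclude that this process never hits $0$ or $2\pi$; hence no boundary point other than $\ee^{\ii\theta}$ is ever swallowed, and transience finishes the job. This is more self-contained because the radial Bessel lemma is already proved in the appendix, whereas your route outsources the $\mu=0$ boundary-avoidance to Rohde--Schramm via the Schramm--Wilson coordinate change. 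Your approach does buy generality in that it would work for any drift whose Girsanov weight is a true martingale, but in this particular paper the direct Bessel argument is the natural one.

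One small correction: the exponential martingale $R_T=\exp\big(\tfrac{\mu}{\sqrt{\kappa}}B_T-\tfrac{\mu^2}{2\kappa}T\big)$ is \emph{not} bounded on $[0,T]$ as you claim; it is a true martingale by Novikov's condition (the drift is constant), which is all that Girsanov requires, and strict positivity gives the mutual absolute continuity. This does not affect the validity of your argument, only the wording.
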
 

\begin{proof}
Let us first note that $\covmap_t(\theta')-\xi_t$ never hits $0$ or $2\pi$ at any finite time $t$ by Lemma~\ref{lem::Bessel_avoid_0and2pi}, since
\begin{align} \label{eqn::radialSLE_avoidboundary1}
\ud (\covmap_t(\theta')-\xi_t)=-\sqrt{\kappa} \ud B_t+ \cot \bigg( \frac{\covmap_t(\theta')-\xi_t}{2} \bigg) \ud t - \mu \, \ud t , \qquad \theta<\theta'<\theta+2\pi .
\end{align}
Therefore, for any $0<\eps<2\pi$ and any finite time $t$, 
the points $\ee^{\ii(\theta^1+\eps)}$ and $\ee^{\ii(\theta^1-\eps)}$ cannot be swallowed by $\gamma_{[0,t]}$. 
The result follows letting $\eps\to 0$ and $t\to \infty$, as radial $\SLE_{\kappa}^{\mu}$ 
is transient by Proposition~\ref{prop::radialSLE_transience}. 
\end{proof}

\begin{lemma} \label{lem::radialSLE_avoidboundary2}
Fix $\kappa\in (0,4]$, $\mu\in\R$, $\rho\ge 0$ and $\bs{\theta}=(\theta_1, \theta_2)\in\LX_2$.
Radial $\SLE_{\kappa}^{\mu}(\rho)$ in $(\U; \ee^{\ii\bs{\theta}}; 0)$ intersects $\partial\U$ only at $\ee^{\ii\theta^1}$.  
\end{lemma}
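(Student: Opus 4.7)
The plan is to show that for every $\theta' \in (\theta^1, \theta^1 + 2\pi)$, the process $U^{\theta'}_t := \covmap_t(\theta') - \xi_t$ almost surely never reaches $0$ or $2\pi$ in finite time; since this exactly means that $\ee^{\ii\theta'}$ is never swallowed by the curve, combining this with the transience proved in Proposition~\ref{prop::radialSLE_transience} forces $\gamma$ to meet $\partial\U$ only at $\ee^{\ii\theta^1}$. The case $\theta' = \theta^2$ is immediate: subtracting~\eqref{eqn::SLEkapparhomu_SDE} (with $p=2$) from the Loewner equation~\eqref{eq:single_radial_Loewner_equation_cov} for $\covmap_t(\theta^2) = V_t^2$ shows that $\Diff_t := V_t^2 - \xi_t$ satisfies
\begin{align*}
\ud \Diff_t \; = \; -\sqrt{\kappa}\, \ud B_t \; + \; (1 + \rho/2)\cot(\Diff_t/2)\, \ud t \; - \; \mu\, \ud t ,
\end{align*}
which is precisely the radial Bessel SDE~\eqref{eqn::Bessel_SDE} with $\alpha = 1 + \rho/2$; since $\rho \ge 0$ and $\kappa \le 4 \le 4\alpha$, Lemma~\ref{lem::Bessel_avoid_0and2pi} gives $\Diff_t \in (0, 2\pi)$ for all $t \ge 0$ almost surely.

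For $\theta' \neq \theta^2$, the analogous computation yields
\begin{align*}
\ud U^{\theta'}_t \; = \; -\sqrt{\kappa}\, \ud B_t \; + \; \cot(U^{\theta'}_t/2)\, \ud t \; + \; D_t\, \ud t , \qquad D_t \; := \; \tfrac{\rho}{2}\cot(\Diff_t/2) - \mu ,
\end{align*}
where the random coefficient $D_t$ is almost surely finite but not uniformly bounded. To deal with this, I would localize using the stopping times $\tau_M := \inf\{ t \ge 0 : \Diff_t \notin (1/M,\, 2\pi - 1/M) \}$, which satisfy $\tau_M \uparrow \infty$ almost surely by the preceding paragraph, and observe the deterministic bound $|D_t| \le C_M := \tfrac{\rho}{2}\cot(1/(2M)) + |\mu|$ on $[0, \tau_M]$. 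I would then sandwich $U^{\theta'}_t$ between the strong solutions $Y^{\pm}_t$, driven by the same Brownian motion and starting from $U^{\theta'}_0$, of
\begin{align*}
\ud Y^{\pm}_t \; = \; -\sqrt{\kappa}\, \ud B_t \; + \; \cot(Y^{\pm}_t/2)\, \ud t \; \pm \; C_M\, \ud t ,
\end{align*}
which are themselves instances of the radial Bessel SDE~\eqref{eqn::Bessel_SDE} (with $\alpha = 1$ and $\mu$ replaced by $\mp C_M$); since $\kappa \le 4 = 4\alpha$, Lemma~\ref{lem::Bessel_avoid_0and2pi} again ensures $Y^{\pm}_t \in (0, 2\pi)$ for all $t \ge 0$.

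The final step is pathwise one-dimensional SDE comparison, applied up to the hitting time $\sigma := \inf\{t \ge 0 : U^{\theta'}_t \in \{0, 2\pi\}\}$, on which the drift $\cot(u/2)$ is locally Lipschitz and the pointwise drift ordering $\cot(u/2) - C_M \le \cot(u/2) + D_t \le \cot(u/2) + C_M$ holds for $t \in [0, \tau_M]$; this produces the sandwich $Y^{-}_t \le U^{\theta'}_t \le Y^{+}_t$ on $[0, \sigma \wedge \tau_M)$. The main technical point will be propagating this comparison across the singular drift at the endpoints of $(0, 2\pi)$: any realization with $\sigma \wedge \tau_M < \infty$ and $U^{\theta'}_{\sigma \wedge \tau_M} \in \{0, 2\pi\}$ would contradict the strict inclusion $Y^{\pm}_t \in (0, 2\pi)$, forcing $\sigma \ge \tau_M$ for every $M$; letting $M \to \infty$ gives $\sigma = \infty$ almost surely, and combined with Proposition~\ref{prop::radialSLE_transience} this completes the proof.
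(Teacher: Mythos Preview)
Your argument is correct, but it takes a different route from the paper. The paper's proof handles the force point $\theta^2$ exactly as you do (via Lemma~\ref{lem::Bessel_avoid_0and2pi} with $\alpha = 1 + \rho/2$), but then, rather than treating each other boundary point $\theta'$ individually, it observes that since $\ee^{\ii\theta^2}$ is never disconnected from the origin, Lemma~\ref{lem::sleakapprhomu_mart} gives absolute continuity of $\SLE_\kappa^\mu(\rho)$ with respect to ordinary radial $\SLE_\kappa$ on every finite time interval; the conclusion then follows immediately from Lemma~\ref{lem::radialSLE_avoidboundary1} and transience.

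Your approach is more hands-on and self-contained: it avoids the Girsanov machinery entirely and instead sandwiches $U_t^{\theta'}$ between two auxiliary radial Bessel processes with $\alpha = 1$ after localizing the random drift $D_t$. This works, and in fact anticipates the comparison/coupling strategy that the paper uses later in the proof of Proposition~\ref{prop::radialSLE_avoidboundary} for multiple force points. The trade-off is that you must be careful with the one-dimensional comparison theorem near the singular endpoints $\{0,2\pi\}$ --- which you acknowledge --- whereas the paper's absolute-continuity argument sidesteps this by transferring a null set from a measure under which the question is already settled. Both routes are valid; the paper's is shorter here because Lemma~\ref{lem::radialSLE_avoidboundary1} and Lemma~\ref{lem::sleakapprhomu_mart} are already in hand.
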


\begin{proof}
The process $\covmap_t(\theta^2)-\xi_t$ never hits $0$ or $2\pi$ at any finite time $t$ by Lemma~\ref{lem::Bessel_avoid_0and2pi}: 
from~\eqref{eqn::SLEkapparhomu_SDE}, we have
\begin{align} \label{eqn::radialSLE_avoidboundary2_aux1}
\ud (\covmap_t(\theta^2)-\xi_t)=-\sqrt{\kappa} \ud B_t + \big(1+\tfrac{\rho_2}{2} \big) \cot \bigg( \frac{\covmap_t(\theta^2)-\xi_t}{2} \bigg) \ud t - \mu \, \ud t,
\end{align}
Therefore, the point $\ee^{\ii\theta^2}$ cannot be swallowed by $\gamma$. 
Thus, by Lemma~\ref{lem::sleakapprhomu_mart}, we see that the law of radial $\SLE_{\kappa}^{\mu}(\rho_2)$ in $(\U;\ee^{\ii\bs\theta};0)$ is absolutely continuous with respect to the law of 
radial $\SLE_{\kappa}$ in $(\U;\ee^{\ii\theta^1};0)$ up any finite time $t$. 
So by Lemma~\ref{lem::radialSLE_avoidboundary1}, up to any finite time $t$, radial $\SLE_{\kappa}^{\mu}(\rho_2)$ in $(\U;\ee^{\ii\bs\theta};0)$ touches 
$\partial\U$ only at $e^{\ii\theta^1}$. 
The result follows letting $t\to \infty$, as radial $\SLE_{\kappa}^{\mu}(\rho_2)$ is transient by Proposition~\ref{prop::radialSLE_transience}. 
\end{proof}

\begin{proof}[Proof of Proposition~\ref{prop::radialSLE_avoidboundary}]
Assume $\theta^1<\theta<\theta^2$. From~\eqref{eqn::SLEkapparhomu_SDE}, 
we have
\begin{align} \label{eqn::slespiral_avoidboundary_aux1}
\ud (\covmap_t(\theta)-\xi_t)=-\sqrt{\kappa} \ud B_t+ \cot \bigg( \frac{\covmap_t(\theta)-\xi_t}{2} \bigg) \ud t + \sum_{j=2}^p \frac{\rho_j}{2} \cot \bigg( \frac{\covmap_t(\theta^j)-\xi_t}{2} \bigg) \ud t - \mu \, \ud t.
\end{align}
Note that
\begin{align*}
\sum_{j=2}^p \frac{\rho_j}{2} \cot \bigg( \frac{\covmap_t(\theta^j)-\xi_t}{2} \bigg) \ge  \frac{\bar{\rho}}{2} \cot \bigg( \frac{\covmap_t(\theta^p)-\xi_t}{2} \bigg), \quad \textnormal{where } \bar{\rho}=\sum_{j=2}^p \rho_j.
\end{align*}
Let $\tilde{\Diff}_t$ denote the process $\covmap_t(\theta)-\xi_t$ for $\SLE_{\kappa}(\bar{\rho})$ in $(\U;\ee^{\ii\theta^1},\ee^{\ii\theta^p};0)$, 
and $\Diff_t$ the process $\covmap_t(\theta)-\xi_t$ for $\SLE_{\kappa}(\bs{\rho})$ in $(\U;\ee^{\ii\bs{\theta}};0)$.
Applying a similar argument as in the proof of Lemma~\ref{lem::coupleXDeltaP}, we can construct a coupling between $\Diff_t$ and $\tilde{\Diff}_t$ such that $\tilde{\Diff}_t\le \Diff_t$.  
By Lemma~\ref{lem::radialSLE_avoidboundary2}, we can conclude that $\tilde{\Diff}_t$ never hits $0$ at any finite time $t$. 
The coupling implies that $\Diff_t$ never hits $0$ at any finite time $t$ --- therefore, for any $0<\eps<\min\{\theta^2-\theta^1,2\pi-\theta^p+\theta^1\}$ and for any finite time $t$,
the point $\ee^{\ii (\theta^1+\eps)}$ cannot be swallowed by $\gamma_{[0,t]}$ from the left hand side. 
By symmetry, $\ee^{\ii (\theta^1-\eps)}$ cannot be swallowed by $\gamma_t$ from the right hand side. 
Again, the result follows letting $\eps\to 0$ and $t\to \infty$, as radial $\SLE_{\kappa}^{\mu}(\bs{\rho})$ 
is transient by Proposition~\ref{prop::radialSLE_transience}. 
\end{proof}
%%%%%%%%%%%%%%%%%%%%%%%%%%

%%%%%%%%%%%%%%%%%%%%%%%%%
{\small
\bigskip{}
\bibliographystyle{annotate}
\newcommand{\etalchar}[1]{$^{#1}$}

}
\end{document}